\newcommand{\MEMS}[2]{#2}
\newcommand{\textgras}[1]{\textbf{#1}}
\DeclareFontFamily{U}{dutchcal}{\skewchar\font=45 }
\DeclareFontShape{U}{dutchcal}{m}{n}{<-> s*[1.0] dutchcal-r}{}
\DeclareMathAlphabet{\mathlcal}{U}{dutchcal}{m}{n}
\DeclareMathAlphabet{\mathpzc}{OT1}{pzc}{m}{it}
\numberwithin{equation}{section}
\numberwithin{figure}{section}
\DeclareMathOperator{\cov}{Cov}
\DeclareMathOperator{\dis}{dis}
\DeclareMathOperator{\lgth}{length}
\DeclareMathOperator{\CVS}{CVS}
\DeclareMathOperator{\Aut}{Aut}
\newcommand{\BHP}{\mathsf{BHP}}
\newcommand{\BP}{\mathsf{BP}}
\newcommand{\cEP}{\mathsf{EP}}
\newcommand{\Qd}{\mathsf{Qd}}
\newcommand{\sfS}{\mathsf{S}}
\newcommand{\BS}[2]{\sfS^{[#1]}_{#2}}
\newcommand{\Sl}{\mathsf{Sl}}
\newcommand{\sfa}{\mathsf{a}}
\newcommand{\sfh}{\mathsf{h}}
\newcommand{\sfl}{\mathsf{l}}
\newcommand{\sfx}{\mathsf{x}}
\newcommand{\Half}{\mathbf{Half}}
\newcommand{\Plane}{\mathbf{Plane}}
\newcommand{\Quad}{\mathbf{Quad}}
\newcommand{\FQuad}{\mathbf{FQuad}}
\newcommand{\Slice}{\mathbf{Slice}}
\newcommand{\FSlice}{\mathbf{FSlice}}
\newcommand{\rmL}{\mathrm{L}}
\newcommand{\EP}{\mathrm{EP}}
\newcommand{\Qn}{Q_n}
\newcommand{\Qdn}{\mathrm{Qd}_n}
\newcommand{\Sln}{\mathrm{Sl}_n}
\newcommand{\Mark}{\mathrm{Mark}}
\newcommand{\E}{\mathbb{E}}
\newcommand{\M}{\mathbb{M}} 
\newcommand{\N}{\mathbb{N}} 
\renewcommand{\P}{\mathbb{P}}
\newcommand{\R}{\mathbb{R}}
\newcommand{\Rp}{\mathbb{R}_{\ge 0}}
\newcommand{\Rm}{\mathbb{R}_{\le 0}}
\newcommand{\Z}{\mathbb{Z}} 
\newcommand{\Zp}{\mathbb{Z}_{\ge 0}}
\newcommand{\Zm}{\mathbb{Z}_{\le 0}}
\newcommand{\CC}{\mathcal{C}}
\newcommand{\WW}{\mathcal{W}}
\newcommand{\FF}{\mathcal{F}}
\newcommand{\GG}{\mathcal{G}}
\newcommand{\ch}{\mathlcal{h}}
\newcommand{\cL}{\mathcal{L}}
\newcommand{\RR}{\mathcal{R}}
\newcommand{\binRR}{\mathbin{\mathcal{R}}}
\newcommand{\binRRn}{\mathbin{\mathcal{R}_n}}
\newcommand{\cT}{\mathcal{T}}
\newcommand{\X}{\mathcal{X}}
\newcommand{\Y}{\mathcal{Y}}
\newcommand{\ZZ}{\mathcal{Z}}
\newcommand{\bA}{\mathbf{A}}
\newcommand{\bB}{\mathbf{B}}
\newcommand{\bd}{\mathbf{d}}
\newcommand{\be}{\mathbf{e}}
\newcommand{\bff}{\mathbf{f}}
\newcommand{\bH}{\mathbf{H}}
\newcommand{\bh}{{\boldsymbol h}}
\newcommand{\bl}{{\boldsymbol l}}
\newcommand{\bLambda}{{\boldsymbol \Lambda}}
\newcommand{\blambda}{{\boldsymbol \lambda}}
\newcommand{\bL}{{\boldsymbol L}}
\newcommand{\bsl}{\mathbf{sl}}
\newcommand{\bqd}{\mathbf{qd}}
\newcommand{\bm}{\mathbf{m}}
\newcommand{\bM}{\mathbf{M}}
\newcommand{\RbM}{\smash{\overset{\rightarrow}{\vphantom{o}\smash{\bM}}}}
\newcommand{\bp}{\mathbf{p}}
\newcommand{\bq}{\mathbf{q}}
\newcommand{\bQ}{\mathbf{Q}}
\newcommand{\RbQ}{\smash{\overset{\rightarrow}{\vphantom{o}\smash{\bQ}}}}
\newcommand{\RbQnl}{{\RbQ}_{n,\bl}^{[g]}}
\newcommand{\bQnl}{{\bQ}_{n,\bl}^{[g]}}
\newcommand{\bQnln}{{\bQ}_{n,\bl_n}^{[g]}}
\newcommand{\bQl}{{\bQ}_{\bl}^{[g]}}
\newcommand{\RbQnof}[2]{{\RbQ}_{n,#1}^{[#2]}}
\newcommand{\RbQnln}{{\RbQ}_{n,\bl_n}^{[g]}}
\newcommand{\RbQnlnO}{{\RbQ}_{n,\bl_n 0}^{[g]}}
\newcommand{\RbQnzero}{{\RbQ}_{n,\varnothing}^{[0]}}
\newcommand{\bs}{\mathbf{s}}
\newcommand{\bS}{\mathbf{S}}
\newcommand{\RbS}{\smash{\overset{\rightarrow}{\vphantom{o}\smash{\bS}}}}
\newcommand{\SR}{\smash{\overset{\rightarrow}{\vphantom{o}\smash{\mathrm{S}}}}}
\newcommand{\bt}{\mathbf{t}}
\newcommand{\bbr}{\mathbf{r}}
\newcommand{\bT}{\mathbf{T}}
\newcommand{\bx}{{\boldsymbol x}}
\newcommand{\by}{{\boldsymbol y}}
\newcommand{\Sgb}{\mathbf\Sigma_{b}^{[g]}}
\newcommand{\Sof}[2]{\mathbf\Sigma_{#2}^{[#1]}}
\newcommand{\bpi}{{\boldsymbol\pi}}
\newcommand{\bmu}{{\boldsymbol\mu}}
\newcommand{\bnu}{{\boldsymbol\nu}}
\newcommand{\bzero}{{\boldsymbol 0}}
\newcommand{\pzl}{\mathpzc{l}}
\newcommand{\Mi}{\mathscr{M}}
\newcommand{\scS}{\mathscr{S}}
\newcommand{\fX}{\mathfrak{X}}
\newcommand{\fW}{\mathfrak{W}}
\newcommand{\fD}{\mathfrak{D}}
\newcommand{\fS}{\bar{\mathfrak{T}}}
\newcommand{\fT}{\mathfrak{T}}
\newcommand{\fC}{\mathfrak{C}}
\newcommand{\fL}{\mathfrak{L}}
\newcommand{\dGH}{\mathrm{d}_{\mathrm{GH}}}
\newcommand{\dGHP}{\mathrm{d}_{\mathrm{GHP}}}
\newcommand{\dist}{\mathrm{dist}}
\newcommand{\Leb}{\mathrm{Leb}}
\newcommand{\ind}{\mathbf{1}}
\newcommand{\build}[3]{\mathrel{ \mathop{\kern 0pt#1}\limits_{#2}^{#3}}}
\newcommand{\ton}{\build{\longrightarrow}{n\to\infty}{}}	\newcommand{\tod}{\build{\longrightarrow}{n\to\infty}{(d)}}
\newcommand{\eps}{\varepsilon}
\newcommand{\rst}{{\raisebox{-.15em}{$|$}}{}}
\newcommand{\rsts}{{\text{\scriptsize\raisebox{-.15em}{$|$}}}{}}
\newcommand{\binR}{\mathbin{R}}
\DeclareMathAlphabet\mathbfcal{OMS}{cmsy}{b}{n}
\newcommand\sO{\scalebox{.5}[.62]{$\mathbfcal{O}$}}
\newcommand\bO{\mathcal{O}}
\newcommand{\cllbracket}{\mathclose{\llbracket}}
\newcommand{\sas}{{\smash{*}}}
\newcommand{\kk}{k}
\newcommand{\bb}{b}
\newcommand{\pp}{p}
\newcommand{\comp}{\bowtie}
\newcommand{\tp}{{\mathclap{\,\cdot}\circ}}
\newcommand{\br}{\bar}
\newcommand{\wt}[1]{\mkern 1mu\widetilde{\mkern-1mu#1\mkern-1mu}\mkern 1mu}
\newcommand{\un}[1]{\mkern 1mu\underline{\mkern-1mu#1\mkern-1mu}\mkern 1mu}
\newcommand{\dun}[1]{\mkern 1mu\smash{\underline{\vphantom{o_o}\smash{\underline{\mkern-1mu#1\mkern-1mu}}}}\mkern 1mu}
\newcommand{\dunderline}[1]{\smash{\underline{\vphantom{o_o}\smash{\underline{#1}}}}}
\newcommand{\dov}[1]{\mkern 1mu\widehat{\mkern-1mu#1\mkern-1mu}\mkern 1mu}
\newcommand{\overl}[1]{\mkern 3mu\overline{\mkern-3mu#1\mkern-1mu}\mkern 1mu}
\newtheorem*{clm}{Claim}
\newtheorem{thm}{Theorem}[section]
\newtheorem{lmm}[thm]{Lemma}
\newtheorem{prp}[thm]{Proposition}
\newtheorem{defn}[thm]{Definition}
\newtheorem{crl}[thm]{Corollary}
\newtheorem{rem}[thm]{Remark}
\theoremstyle{definition}
\newtheorem*{ack}{Acknowledgment}
\renewcommand{\d}{\mathrm{d}}
\title{Compact {B}rownian surfaces {II}. Orientable surfaces}
\author{J\'er\'emie Bettinelli\thanks{LIX, CNRS, \'Ecole polytechnique, Institut Polytechnique de Paris}
\and
Gr\'egory Miermont\thanks{UMPA, ENS de Lyon, and Institut Universitaire de France}}
\begin{document}

\maketitle

\begin{abstract}
Fix an arbitrary compact orientable surface with a boundary and consider a uniform bipartite random quadrangulation of this surface with~$n$ faces and boundary component lengths of order~$\sqrt n$ or of lower order. Endow this quadrangulation with the usual graph metric renormalized by $n^{-1/4}$, mark it on each boundary component, and endow it with the counting measure on its vertex set renormalized by $n^{-1}$, as well as the counting measure on each boundary component renormalized by $n^{-1/2}$. We show that, as $n\to\infty$, this random marked measured metric space converges in distribution for the Gromov--Hausdorff--Prokhorov topology, toward a random limiting marked measured metric space called a \emph{Brownian surface}. 

This extends known convergence results of uniform random planar quadrangulations with at most one boundary component toward the \emph{Brownian sphere} and toward the \emph{Brownian disk}, by considering the case of quadrangulations on general compact orientable surfaces. Our approach consists in cutting a Brownian
surface into elementary pieces that are naturally related to the Brownian sphere and the Brownian disk and their noncompact analogs, the Brownian plane and the Brownian half-plane, and to prove convergence results for these elementary pieces, which are of independent interest. 
\end{abstract}

\begin{ack}
We thank G.\ Chapuy for stimulating 
discussions during the elaboration of this work, and in particular for
his encouragements to deal with general compact surfaces with a
boundary. Thanks to J.\ Bouttier for bringing our attention to the fact
that the semigroup property of discrete slices might have an
interesting continuum counterpart.
Thanks are also due to X.\ Sun for discussions around the problem of
defining Brownian surfaces in the context of LQG and the question of
the random modulus of these surfaces, and also for sharing the work
\cite{ARS22mod}. We finally thank N.\ Holden and J.\ Miller for their
comments on a first version of this paper, as well as anonymous referees for their thorough reading. 
\end{ack}

\tableofcontents

\section{Introduction}\label{sec:introduction}

\subsection{Context}

Random maps, seen as discrete models of random $2$-dimensional
geometries, have generated a sustained interest in the last couple of
decades. 
An important instance of this line of research are the results by Le~Gall~\cite{legall11} and
the second author~\cite{miermont11}, showing that a uniform random
quadrangulation of the sphere with~$n$ faces, seen as a random finite
metric space by endowing its vertex set with the usual graph metric
renormalized by~$n^{-1/4}$, converges in distribution toward the
so-called \emph{Brownian sphere}, or \emph{Brownian map}. The aim of the present work is to
generalize this result to the case of general compact
orientable surfaces.
Let us start with some elements of context.

\paragraph{Random surfaces as scaling limits of random maps.}
While the idea that continuum random geometries should be obtained as
scaling limits of random maps originates from the physics literature
on 2-dimensional quantum gravity \cite{Pol81,david85,KnPoZa88}, this question was first
approached in the mathematical literature in the pioneering work
of Chassaing and Schaeffer~\cite{CSise}, who studied the model of
uniformly chosen random quadrangulations of
the sphere, and found in particular
that the proper scaling factor in this case was~$n^{-1/4}$.
Marckert
and Mokkadem~\cite{mm01} then constructed a candidate limiting space
today called the \textit{Brownian sphere}, and showed the convergence toward it
in another topology than the Gromov--Hausdorff
topology. Le~Gall~\cite{legall06} later showed that the sequence of
rescaled metric spaces associated with uniform random quadrangulations
of the sphere was relatively compact. Finally, Le~Gall~\cite{legall11}
and the second author~\cite{miermont11} showed by two independent approaches
that the previous sequence converges toward the Brownian sphere.

It is known that the Brownian sphere arises as a 
universal scaling limit for
many models of planar maps \emph{that are uniformly chosen in a certain class, given
their face degrees}, and provided that face 
degrees are typically all of the same order of magnitude; see
\cite{legall11,BeLG,ABAl17,BeJaMi14,abr14,CuLG19,ABAl21,marzouk22}. See
also~\cite{LGMi09} for models of maps that fall out of  
this universality class. 

The scaling limits of quadrangulations on surfaces that are more
general than the sphere were considered by the first author
in~\cite{bettinelli11b,Bet16geo}, who showed similar results to 
the above, but only up to extraction of appropriate subsequences,
leaving a gap that amounts to uniquely characterize the limit. This
gap was filled in the particular case of the disk topology in our
previous work~\cite{BeMi17}. In particular, we showed that a uniform
quadrangulation of genus~$0$ with one boundary component having~$n$
internal faces and perimeter~$2l_n$ weakly converges, once scaled by
the factor $n^{-1/4}$ and when $l_n\sim L\sqrt{2n}$, toward a
random metric space called the \emph{Brownian disk of
perimeter~$L$}.
Two
alternate constructions of Brownian disks were proposed by Le~Gall
\cite{LGa19dis,legall22}, allowing in particular to show that
Brownian disks arise as connected components of the complement of metric balls in the Brownian
sphere, conditionally given their areas and boundary lengths. See also
\cite{MiSh210,BeCuKo18,LGRi20}.

Besides the case of the sphere and the disk, only a few results have been
obtained for maps on compact surfaces. Namely, it has been shown that uniform
quadrangulations of a given compact surface with a boundary exhibit
scaling limits~\cite{bettinelli10,bettinelli11,Bet16geo}, all of
the same topology as the considered surface, and geodesics to a
uniformly chosen points were studied~\cite{Bet16geo}. More
recently, it was shown that uniformly distributed essentially simple
toroidal triangulations (that is, triangulations of the torus without
contractible loops or double edges forming cycles that are homotopic
to~$0$) also exhibit scaling limits~\cite{BeHuLe19arX}, which are
believed to be the same as for random quadrangulations. See also
\cite{ARS22mod} for a scaling limit result of Boltzmann random maps with annular
topology. 

There has also been a growing
interest in noncompact versions of these models, especially as they
bridge some Brownian surfaces with so-called \emph{uniform infinite
  random maps}, which are maps with infinitely many faces that first arose in a work by Angel and Schramm~\cite{AnSc03}, as \emph{local limits} of random finite maps. Three main models of
noncompact Brownian surfaces have been identified: the \emph{Brownian
  plane}~\cite{CuLG12Bplane}, the \emph{Brownian
  half-plane}~\cite{GwMi17,BaMiRa}, and the \emph{infinite-volume Brownian
  disk}~\cite{BaMiRa}, which can be thought of as noncompact versions
of the Brownian sphere and Brownian disks, either with unbounded or
bounded boundary. See~\cite{LGRi21} for a framework unifying those
objects. The first two of these models will play an important role in the current
work.

This whole line of research crucially depends on strong combinatorial
techniques, and in particular on bijective approaches \cite{schaeffer98,BdFGmobiles,AlPo15} that
allow to give very detailed quantitative information on the geodesic
paths in random maps and their scaling limits. The present work is
no exception.
See for instance \cite{legall08,miertess,AnKoMi17,MiQi21arX,LGa22sta} for results related to the structure of geodesics in the 
Brownian sphere, 
\cite{LeGallBrownianGeometry} for a recent survey, and
\cite{CurienStFlour} for another approach called \emph{peeling}. We
note, however, that, so far, these methods are restricted to models of maps chosen uniformly, conditionally
given their face degrees, as alluded to above.

\paragraph{Random surfaces via Liouville quantum gravity metrics.}
A line of research parallel to the above consists in building the
limiting spaces directly as continuum random metrics in planar domains
or Riemann surfaces. This approach also finds its roots in the physics
theory of Liouville quantum gravity \cite{Pol81}. In the case of Brownian surfaces, this has first been
implemented by Miller and Sheffield in a series of works \cite{MiSh210,
MiSh21I,MiSh21II,MiSh21III}, where they use a growth model called \emph{Quantum Loewner Evolution} (QLE) to define a random metric on the plane, whose metric balls are
described by QLE, and whose law as an abstract metric space is equal to that of the Brownian
plane. Local variants of the construction allow to define the Brownian
sphere in this way.
The Miller--Sheffield metric is in fact a special element of
a one-parameter family of \emph{Liouville Quantum Gravity} (LQG) metrics,
that have been defined as scaling limits of first-passage percolation
models in mollified exponentiated Gaussian free fields landscapes
\cite{DiDuDuFa20,GwMi21b}. See \cite{DiDuGw23} for an overview of LQG metrics.  

These constructions operate entirely in the continuum, and naturally
ask whether canonical embeddings of random maps in the sphere are
compatible with the convergence toward the Brownian sphere, in the
sense that the metrics induced by the embedding converge to
the random metric of Miller--Sheffield. Such a 
result was recently obtained by Holden and Sun~\cite{HoSu23} (which is
the last piece of a vast research project, described in details in
this reference), who showed the joint convergence of the metric and the area measure
generated by a uniform plane triangulation embedded via the Cardy--Smirnov
embedding in an equilateral triangle. We refer to the overview
article~\cite{GwHoSu23}.

The existence of a canonical conformal structure for
Brownian surfaces was also approached in a more direct way by Gwynne,
Miller and Sheffield in~\cite{GwMiSh20,GwMiSh22}. Their method, which
has been implemented so far for the plane, 
half-plane, sphere and disk topologies, consists in taking limits of discrete embeddings
obtained directly from the continuum limit by considering 
Poisson--Voronoi tessellations with a finer and finer mesh, and showing
that the random walk on the discrete approximation converges to
Brownian motion in the plane. In passing, this allows one to define
Brownian motion on the Brownian surfaces under consideration.

\paragraph{Random surfaces and conformal field theories.}
While the definition of LQG metrics applies to any field that
``locally looks like'' the Gaussian free field, the exact law of the
latter is of crucial importance to obtain the exact law of random
surfaces that arise as scaling limits of maps, and this law can be
obtained from Liouville conformal field theory \cite{Pol81}. Here,
rather than dealing with random metrics, one is rather interested in
the computation of partition functions defined from the field, and it
has been shown recently in a rich body of work -- see
\cite{DaKuRhVa16,GuRhVa19,KuRhVa21} and references therein -- that this theory has a
probabilistic interpretation in terms of Gaussian multiplicative chaoses, which are
random measures defined in terms of the Gaussian free field. This
approach has unveiled fundamental integrability properties for
planar Gaussian multiplicative chaoses, which can be
used to provide exact distributions for various quantities related to
the LQG metrics, hence to the scaling limits of random maps. 
For instance, in~\cite{ARS22mod}, the authors compute the law of the
conformal modulus of a Brownian annulus, which is a member of the
family of Brownian surfaces described in the present work. 

The interplay between these approaches 
provides a wealth of methods to prove various properties of
random surfaces \cite{sheffieldICM}, and the geometric properties of
the Brownian surfaces, as well as the other LQG metrics, are the object of intensive
current research.

\subsection{Generalities and terminology on maps}\label{sec:generalities-maps}

\paragraph{Surface with a boundary.}
Recall that a \emph{surface with a boundary} is a nonempty Hausdorff
topological space in which every point has an open neighborhood
homeomorphic to some open subset of~$\R\times\Rp$. Its
\emph{boundary} is the
set of points having a neighborhood homeomorphic to a neighborhood of $(0,0)$
in~$\R\times\Rp$. When it is nonempty, this set forms a
$1$-dimensional topological manifold. 
In this
work, we will only consider \textgras{orientable} compact connected surfaces with
a (possibly empty) boundary. By the classification theorem, these are characterized up
to homeomorphisms by two nonnegative integers, the genus~$g$ and the
number~$\bb$ of connected components of the boundary. We denote
by~$\Sgb$ the compact orientable surface
of genus~$g$ with~$\bb$ boundary components, which is unique up to
homeomorphisms. It can be obtained from the
connected sum of~$g$ tori, or from the sphere in the case $g=0$, by
removing~$\bb$ disjoint open disks whose boundaries are pairwise
disjoint circles.

\paragraph{Map.}
A \emph{map} is a proper cellular embedding of a finite graph, possibly with
multiple edges and loops, into a compact connected orientable surface
\emph{without} boundary.
Here, the word \emph{proper} means that edges can intersect
only at vertices, and \emph{cellular} means that the
connected components of the complement of the edges, which are called the
\emph{faces} of the map, are homeomorphic to $2$-dimensional open disks.
Maps will always be considered up to orientation-preserving
homeomorphisms of the surface into which they are embedded. The \emph{genus} of a map is defined as the genus of the
surface into which it is embedded; we speak of \emph{plane} maps when the genus is~$0$. We call \emph{half-edge} an oriented edge in a map. With every
half-edge, we may associate in a one-to-one way a \emph{corner}, which
is the angular sector lying to its
left at the origin of the half-edge. Note that this makes sense because the surfaces we are considering are orientable. We say that a corner, or the corresponding half-edge, is \emph{incident} to a face~$f$ if it lies into~$f$. We also say that the face is incident to the corner or the half-edge in this case. The number of half-edges (or equivalently, of corners) incident to a face is called its \emph{degree}.

A map is \emph{rooted} if it comes with a distinguished corner -- or,
equivalently, a half-edge -- called the \emph{root}. Rooting is a
very useful notion as it allows to kill the symmetries of a map. In
fact, when dealing with nonrooted maps, we will systematically count
them by weighting each map~$\bm$ by a factor $1/\Aut(\bm)$, where $\Aut(\bm)$
denotes the number of automorphisms of~$\bm$. The latter is also equal to
$2\,|E(\bm)|/R(\bm)$, where $E(\bm)$ is the edge set of~$\bm$, and $R(\bm)$ 
is the number of distinct rooted
maps that can be obtained from the nonrooted map~$\bm$. Therefore, with
this convention, the weighted number of nonrooted maps in a given
family of maps with a given number $e$  
of edges is simply the cardinality of the set of rooted maps from this
same family, divided by $2e$.

\paragraph{Map with holes.}
We will consider maps with pairwise distinct distinguished elements, generically denoted by~$\ch_1$, $\ch_2$, \dots, $\ch_\kk$, that can be either \textgras{faces} or \textgras{vertices}. %
These distinguished elements are called the \emph{holes} of the map, a
given hole being called either an \emph{external face} or an
\emph{external vertex}, depending on its nature. The nondistinguished
faces and vertices are called the \emph{internal faces} and 
\emph{internal vertices}. The degree of a hole, also called its \emph{perimeter},
is defined as~$0$ for an external vertex or as the degree of the face
for an external face. Beware that the boundaries of the external faces
are in general neither simple curves, nor pairwise disjoint. As a
result, the object obtained by removing them from the surface in which
the map is embedded is not necessarily a surface. Note 
that, however, removing from every external face an open disk whose
closure is included in the (open) face results in a surface with a boundary. 

\paragraph{Bipartite map.}
Finally, we say that a map is \emph{bipartite} if its vertex set can be partitioned into two subsets such that no edge links
two vertices of the same subset.

\paragraph{Tuples.}
The many tuples considered in this work will conventionally be denoted by a
boldface font letter (possibly with a subscript) and their coordinates with the same letter
in a normal font, with the index written as a superscript, as in
$\bx=(x^1,\ldots,x^r)$ for instance. When~$\bx$ is a tuple of real
nonnegative numbers, we set $\|\bx\|=\sum_{i=1}^r x^i$. We denote by $\bx\by$ the concatenation of~$\bx$ with~$\by$. Finally, when concatenating with a $1$-tuple, we often identify it with its unique coordinate, writing for instance $\bx 0=(x^1,\ldots,x^r,0)$.

\subsection{The Gromov--Hausdorff--Prokhorov topology}\label{secGHP}

In this paper, a \emph{metric measure space} is a triple
$(\X,d_\X,\mu_\X)$, where $(\X,d_\X)$ is a nonempty \textgras{compact} metric space and
$\mu_\X$ is a finite Borel measure on $\X$. We say that two metric
measure spaces $(\X,d_\X,\mu_\X)$ and $(\Y,d_\Y,\mu_\Y)$ are
\emph{isometry-equivalent} if there exists an isometry~$\phi$ from
$(\X,d_\X)$ onto $(\Y,d_\Y)$ such that $\mu_\Y=\phi_*\mu_\X$. This
defines an equivalence relation on the class of all metric measure
spaces. If $(\X,d_\X,\mu_\X)$ and $(\Y,d_\Y,\mu_\Y)$
are two metric measure spaces, the 
\emph{Gromov--Hausdorff--Prokhorov metric} (\emph{GHP metric} for short)
is defined by 
\begin{equation}\label{eqGHP}
 \dGHP\big((\X,d_\X,\mu_\X),(\Y,d_\Y,\mu_\Y)\big)=
\inf_{\substack{\hphantom{\psi}\llap{\scriptsize$\phi$}\,:\,\X\to\ZZ\\\psi\,:\,\rlap{\scriptsize$\Y$}\hphantom{\X}\to\ZZ}}
 \left\{d_\ZZ^{\mathrm{H}}(\phi(\X),\psi(\Y))\vee
   d_\ZZ^{\mathrm{P}}(\phi_*\mu_\X,\psi_*\mu_\Y)\right\},
\end{equation}
where the infimum is taken over all choices of compact metric spaces
$(\ZZ,d_\ZZ)$, and all isometric maps~$\phi$, $\psi$ from~$\X$, $\Y$ to~$\ZZ$,
where~$d_\ZZ^{\mathrm{H}}$ is the \emph{Hausdorff metric} on compact subsets of~$\ZZ$, and $d_\ZZ^{\mathrm{P}}$ is the \emph{Prokhorov metric} on finite
positive measures on~$\ZZ$, defined as follows. First, for any $\eps>0$ and any closed subset $A\subseteq\ZZ$, we denote by
\[A^\eps = \Big\{ z\in \ZZ\,:\, \inf_{y\in A} d_\ZZ(z,y) < \eps\Big\}\]
its $\eps$-enlargement. Then, for any compact subsets~$A$, $B\subseteq\ZZ$,
\[d_\ZZ^{\mathrm{H}}(A,B)=\inf \{ \eps >0\,:\, A\subseteq B^\eps\text{ and }B\subseteq A^\eps \},\]
and, for any finite Borel measures~$\mu$, $\nu$ on~$\ZZ$,
\MEMS{
\begin{multline*}
d_\ZZ^{\mathrm{P}}(\mu,\nu)=\\\inf \big\{\eps >0\,:\, \text{for all closed } A\subseteq\ZZ,\,
	\mu(A)\le \nu(A^\eps) + \eps \text{ and } \nu(A)\le \mu(A^\eps)+\eps\big\}.
\end{multline*}
}{%
\[d_\ZZ^{\mathrm{P}}(\mu,\nu)=\inf \big\{\eps >0\,:\, \text{for all closed } A\subseteq\ZZ,\,
	\mu(A)\le \nu(A^\eps) + \eps \text{ and } \nu(A)\le \mu(A^\eps)+\eps\big\}.\]
}

Equation~\eqref{eqGHP} defines a metric on the
set~$\M$ of isometry-equivalence classes of metric measure spaces, making
it a complete and separable metric space. The references \cite[Chapter~27]{villani09} as well as \cite{AbDeHo13,LGa19dis} discuss
relevant aspects of the GHP topology, with some variations, as the exact
definition of the metric may differ from place to place. 

More generally, for $\ell$, $m\geq
0$, we will
consider \emph{$\ell$-marked, $m$-measured metric spaces} of the form
$(\X,d_\X,\bA,\bmu_\X)$, where
\begin{itemize}
\item $(\X,d_\X)$ is a nonempty compact metric space,
\item $\bA$ is an $\ell$-tuple, called \emph{marking}, of nonempty compact subsets of~$\X$,
called \emph{marks},
\item
$\bmu_\X$ is an $m$-tuple of finite Borel measures on $\X$.
\end{itemize}
We often consider
marks that are singletons; in this case, we identify the singleton
with the point it contains. We define the \emph{$\ell$-marked, $m$-measured Gromov--Hausdorff--Prokhorov metric} (still \emph{GHP metric} for short) on such spaces by
\begin{multline*}
\dGHP^{(\ell,m)}\big((\X,d_\X,\bA,\bmu_\X),(\Y,d_\Y,\bB,\bmu_\Y)\big)\\=
	\inf_{\substack{\hphantom{\psi}\llap{\scriptsize$\phi$}\,:\,\X\to\ZZ\\\psi\,:\,\rlap{\scriptsize$\Y$}\hphantom{\X}\to\ZZ}}
		\Big\{d_\ZZ^{\mathrm{H}}(\phi(\X),\psi(\Y)) 
			\vee \max_{1\leq i\leq \ell}d_\ZZ^{\mathrm{H}}(\phi(A^i),\psi(B^i))
			\vee \max_{1\leq j\leq m}d_\ZZ^{\mathrm{P}}(\phi_*\mu^j_\X,\psi_*\mu^j_\Y) \Big\},
\end{multline*}
where the infimum is taken over the same family as in~\eqref{eqGHP}. Again, this defines a complete and separable metric on the
set~$\M^{(\ell,m)}$ of isometry-equivalence classes of $\ell$-marked,
$m$-measured metric spaces, where $(\X,d_\X,\bA,\bmu_\X)$ and
$(\Y,d_\Y,\bB,\bmu_\Y)$ are isometry-equivalent if there exists
an isometry~$\phi$ from~$\X$ onto~$\Y$ such that $\phi(A^i)=B^i$ for
$1\leq i\leq \ell$ and $\phi_*\mu^j_\X=\mu^j_\Y$
for $1\leq j\leq m$. Note that we have
$\dGHP^{(0,1)}=\dGHP$. 
Finally, the space $(\M^{(\ell)},\dGH^{(\ell)})=(\M^{(\ell,0)},\dGHP^{(\ell,0)})$ of $\ell$-marked compact metric
spaces without measures is the so-called \emph{$\ell$-marked
Gromov--Hausdorff metric} (\emph{GH metric} for short).

As a first example, we will sometimes use in the present work the \emph{point space} $\{\varrho\}$ consisting of a single point, seen as the element $(\{\varrho\},(\varrho,\dots,\varrho),(0,\dots,0)\}\in \M^{(\ell,m)}$ for any values of~$\ell$ and~$m$.

In what follows, we will often simply use the terminology ``marked''
or ``measured'' instead of ``$\ell$-marked'' or ``$m$-measured'' if the
numbers~$\ell$ or~$m$ are clear from the context. Furthermore, when $m\ge 2$, we will often single out the first measure by writing it as a separate coordinate, writing $(\X,d_\X,\bA,\mu_\X,\bnu_\X)$ for instance. The reason is that this first measure will often be an \emph{area} measure whereas the other will be \emph{boundary} measures, and these have different natural scales, as we will see shortly.

\subsection{The main convergence result}\label{sec:main-conv-result}

\paragraph{Brownian surfaces.}
For $\kk\ge 0$, a \emph{quadrangulation with~$\kk$ holes} is a \textgras{bipartite} map having~$\kk$ holes~$\ch_1$, \dots, $\ch_\kk$ and whose internal faces are all of degree~$4$. For\footnote{We will write $\Zp=\{0,1,2,\dots\}$ the set of nonnegative integers, as well as $\N=\{1,2,3,\dots\}$ the set of positive integers.} $n\in\Zp$ and $\bl=(l^1,\dots,l^\kk) \in (\Zp)^\kk$ (with the convention that $(\Zp)^0= \{\varnothing\}$), we define the set~$\RbQnl$ of all genus~$g$ \emph{rooted} quadrangulations with~$\kk$ holes having~$n$
internal faces, and whose holes~$\ch_1$, \dots, $\ch_{\kk}$ are of respective degrees $2l^1$, \dots, $2l^{\kk}$; 
see Figure~\ref{exmapc} for an example.

\begin{figure}[htb!]
	\centering\includegraphics[width=88mm]{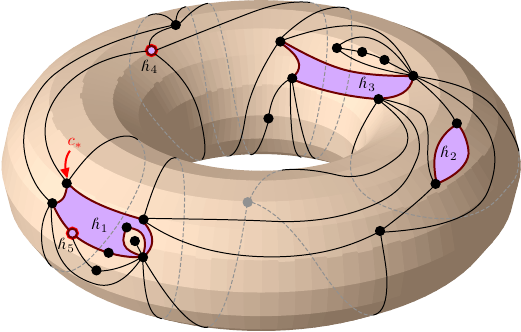}
	\caption{A quadrangulation from $\RbQ_{19,(4,1,2,0,0)}^{[1]}$.
          The root is the corner~$c_*$. Here, $\ch_1$, $\ch_2$, and~$\ch_3$ are external faces, while~$\ch_4$ and~$\ch_5$ are
          external vertices.
          }
	\label{exmapc}
\end{figure}

Likewise, we denote by~$\bQnl$ the set of \emph{nonrooted} quadrangulations
of genus~$g$ with~$n$ internal faces and half-perimeters given by~$\bl$. Since maps
are counted with an inverse factor given by the number of
automorphisms, the weighted cardinality of this set is 
\begin{equation}\label{eqnrbQ} 
  \sum_{\bq\in\bQnl} \frac1{\Aut(\bq)} = \frac{\big|\RbQnl\big|}{4n+2\|\bl\|}, 
\end{equation}
where $|\RbQnl|$ is the cardinality of $\RbQnl$, and
$4n+2\|\bl\|$ is the number of oriented edges, hence of
potential roots, in any element of~$\bQnl$. 

It will be useful to notice for further reference that the quadrangulations with~$\kk$ holes in~$\RbQnl$ or in~$\bQnl$ all have the same number of internal vertices, namely
\begin{equation}\label{nbintvert}
n+\|\bl\|+2-2g-\kk
\end{equation} 
Indeed, let us consider such a map, and denote by~$v$, $e$, $f$, its number of vertices, edges, faces. The number of external faces is thus $f-n$ so that the desired number is $v-\kk+f-n$. Furthermore, counting the corners yields $2e=4n+2\|\bl\|$, and the result follows from the Euler characteristic formula $v-e+f=2-2g$.

If~$\bq$ is a quadrangulation with~$\kk$ holes, we can view it as a
$\kk$-marked, $\kk+1$-measured metric space, in the following way.
We let $V(\bq)$ be the vertex set of~$\bq$, and $d_\bq$ the graph
metric on this set. We let
\[
\partial \bq=\big(V(\ch_1),\ldots,V(\ch_\kk)\big), 
\]
where for $1\le i \le \kk$,
$V(\ch_i)$ is either~$\{\ch_i\}$ if~$\ch_i$ is an external vertex, or
the set of vertices incident to~$\ch_i$ if it is an external face.  We
let~$\mu_\bq$ and $\bnu_{\partial\bq}$ be the measures on $V(\bq)$ and
the elements of $\partial \bq$ defined by:
\[\mu_\bq =\sum_{v\in V(\bq)}\delta_v,\qquad
  \nu_{\partial\bq}^i=\sum_{v\in V(\ch_i)}m_v\,\delta_v,\]
where $m_v$, the \emph{ multiplicity} of $v$, is the number of corners of the face~$\ch_i$ that are
incident to~$v$ (by convention, we set $m_{\ch_i}= 1$ for an external vertex~$\ch_i$). These are respectively called the \emph{area measure} and
\emph{boundary measures}. While we believe that our results also hold when 
$\nu_{\partial\bq}^i$ is replaced by the counting 
measure on~$V(\ch_i)$ (without multiplicities), it turns out that the above definition makes 
matter simpler.  
We associate with the quadrangulation~$\bq$ the space
\[\big(V(\bq),d_\bq,\partial \bq,\mu_\bq,\bnu_{\partial\bq}\big)\in \M^{(\kk,\kk+1)}.\]

Our main result exhibits a family 
\[\BS{g}{\bL},\qquad g\ge 0, \quad \bL\in \bigsqcup_{\kk\geq 0}[0,\infty)^\kk,\]
of random marked measured metric spaces, where $\BS{g}{\bL}$ will be called the
\emph{Brownian surface of genus~$g$ with boundary perimeter 
vector~$\bL$ and unit area}. The latter family describes the scaling limits
of uniform random elements of~$\RbQnln$, in the following sense. 
Define the scaling operator~$\Omega_n$ by 
\begin{equation}\label{defomega}
\Omega_n(\bq)=\Bigg(V(\bq),\Big(\frac{9}{8n}\Big)^{1/4}d_{\bq},\partial
\bq,\frac{1}{n}\mu_{\bq},\frac{1}{\sqrt{8n}}\bnu_{\partial\bq}\Bigg).
\end{equation}
The scaling constants $(8/9)^{1/4}$ and $\sqrt{8}$ are here to make
the upcoming description of $\BS{g}{\bL}$ simpler in Sections~\ref{sec:pfthm1} 
to~\ref{sec:univ}. Our main result is the following.

\begin{thm}\label{mainthm}
Fix $g$, $\kk\geq 0$. Let $\bL=(L^1,\dots,L^\kk)$ be a $\kk$-tuple of nonnegative real numbers and,
for $n\geq 1$, let $\bl_n=(l_n^1,\dots,l_n^\kk) \in (\Zp)^\kk$ be such that
$l_n^i/{\sqrt{2n}} \to L^i$ as $n\to\infty$, for $1\le i \le \kk$. Let $Q_n$ be a random
variable that is uniformly distributed over~$\RbQnln$. Then
\[\Omega_n(Q_n)\tod\BS{g}{\bL}\]
where the convergence holds in distribution in the space $\big(\M^{(\kk,\kk+1)},\dGHP^{(\kk,\kk+1)}\big)$.
\end{thm}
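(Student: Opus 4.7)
The overall strategy is to reduce the general genus-$g$, multi-boundary case to the already-known scaling limit results for the Brownian sphere~\cite{legall11,miermont11} and the Brownian disk~\cite{BeMi15I}, by cutting the random quadrangulation $Q_n$ along well-chosen paths into a collection of planar quadrangulations with boundaries, and then showing that gluing back the resulting limit pieces reproduces $\BS{g}{\bL}$. First, tightness of $(\Omega_n(Q_n))_n$ in $\dGHP^{(\kk,\kk+1)}$, as well as the fact that any subsequential limit has topological type $\Sof{g}{\bb}$ and carries the correct area and boundary measures, follows from the earlier works~\cite{bettinelli10,bettinelli11,bettinelli14gbs}. The content of the theorem therefore lies in the uniqueness of the subsequential limit.

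To identify the limit, the plan is to choose inside $Q_n$ a family of simple paths -- roughly $2g+\max(\kk-1,0)$ of them for compact orientable surfaces with $\kk$ holes -- whose homotopy classes reduce $\Sof{g}{\bb}$ to a planar polygon; this is a discrete ``scheme'' in the spirit of a Chapuy--Marcus--Schaeffer type bijection. Cutting along these paths produces a disjoint union of marked bipartite quadrangulations of the disk, with prescribed boundary perimeters determined by the path lengths and by $\bl_n$. Conditionally on the scheme and on these lengths, the pieces should be uniform over the corresponding classes of planar quadrangulations with boundaries, which is the combinatorial leverage needed to import disk/sphere convergence.

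The paths should be chosen close to geodesic, so that their lengths are of order $n^{1/4}$. Rescaling by $n^{-1/4}$, one then wants three joint convergences: (a)~each planar piece converges in GHP to a Brownian disk of the appropriate perimeter; (b)~the local geometry near each cut path, viewed on the ``two sides'' of the cut, converges to a Brownian half-plane; (c)~far from boundary and cuts, the local geometry converges to a Brownian plane. These local convergences to the noncompact Brownian plane and half-plane, for quadrangulations seen near a typical point or near a typical boundary/cut point, are the ``elementary pieces'' results mentioned in the abstract and are what enable one to pass the cutting procedure itself to the limit. The candidate $\BS{g}{\bL}$ is then defined as the metric space obtained by gluing the Brownian disk limits back along the limits of the cut paths, with the identification parameterized by the boundary-length measures from~\eqref{defomega}, and one must verify that this gluing operation is continuous in $\dGHP^{(\kk,\kk+1)}$ and reproduces (up to isometry) any subsequential limit from~\cite{bettinelli10,bettinelli11,bettinelli14gbs}.

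The hard part is twofold. On the discrete side, one needs a cutting procedure that is simultaneously combinatorially tractable (the conditional distribution of the pieces must be exactly uniform over quadrangulations of the disk of prescribed perimeters), geometrically close to geodesic at scale $n^{1/4}$, and asymptotically self-avoiding so that the ``scheme edges'' do not degenerate. On the continuum side, gluing along one-dimensional interfaces is not in general continuous for the GHP topology; one must establish precise two-sided control of distances transverse to each cut and of the boundary measure along it, which is exactly where the local half-plane convergence enters. Proving these local convergences with sufficient uniformity to allow simultaneous cutting and gluing, and then checking that the glued object does not depend on the choices made in the cutting, is where I expect the bulk of the technical work.
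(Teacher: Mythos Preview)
Your high-level plan---decompose via a scheme into planar pieces, take their scaling limits, glue back---matches the paper, but the nature of the pieces is wrong in a way that would block the argument. Cutting along geodesic paths does \emph{not} produce quadrangulations of the disk converging to Brownian disks: the resulting boundaries are partly geodesic (the cut loci, of length $\Theta(n^{1/4})$, Hausdorff dimension~$1$ in the limit) and partly rough (the original boundary, of length $\Theta(n^{1/2})$, dimension~$2$), so the pieces are not in the Brownian disk universality class and the disk result of~\cite{BeMi15I} cannot be invoked for them. The paper instead introduces two new families of elementary pieces: \emph{composite slices} (one rough base, two geodesic sides) indexed by the boundary half-edges of the scheme, and \emph{quadrilaterals with geodesic sides} (four geodesic sides, no rough part) indexed by the internal half-edges. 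Establishing their scaling limits (Theorems~\ref{thmslslice} and~\ref{thmslquad}) occupies Sections~\ref{seccvcs}--\ref{seccvquad} and is the technical core of the paper.

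Two further corrections. The Brownian plane and half-plane do not enter as local limits ``near cuts'' or ``far from the boundary'' in your sense; they serve as ambient spaces in which \emph{free} versions of the quadrilaterals and slices, respectively, are isometrically embedded, and the piece convergences are pulled back from the known UIPQ$\to$Brownian-plane and UIHPQ$\to$Brownian-half-plane results via this embedding together with an absolute-continuity conditioning argument. And the gluing-continuity problem you flag has a clean resolution you are missing: gluing two marked spaces along \emph{geodesic} marks is continuous for the marked GHP topology (Proposition~\ref{compgluing}), because the glued metric is given by an explicit single-infimum formula~\eqref{dG} with no iterated-quotient pathology. Since every gluing in the reconstruction is along a geodesic side and there are only boundedly many of them (Lemma~\ref{lemnbschemes}), joint convergence of the pieces directly yields convergence of the whole; the paper does not proceed by tightness plus identification of subsequential limits.
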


By our discussion on nonrooted maps, note that the same statement holds
if~$Q_n$ is rather distributed over the set $\bQnln$ of nonrooted
maps, with a probability proportional to the inverse of the number of
automorphisms. Note however that this automorphism number is equal to~$1$ for the vast majority of maps \cite{RiWo95}, so
we expect that our results also hold for genuine uniform random nonrooted maps. 

If $\BS{g}{\bL}=(\X,d_\X,\bA,\mu_\X,\bnu_\X)$, we will call $\mu_\X$
the \emph{area measure}, and $\bnu_\X$ the \emph{boundary measures}. Note that
$\mu_\X$ is a probability measure, since~\eqref{nbintvert} implies that $|V(Q_n)|\sim n$ as $n\to\infty$, while $\nu^i_\X$ has total mass
$L^i$ for $1\leq i\leq \kk$, so $\nu^i_\X$ is the trivial zero measure if $L^i=0$. 

Note that, for $(g,\kk)=(0,0)$, the above result amounts to the aforementioned convergence of plane
quadrangulations to the Brownian sphere \cite{legall11,miermont11},
while for $(g,\kk)=(0,1)$ with $L^1>0$, it corresponds to the convergence of
quadrangulations with a boundary to the Brownian disk~\cite{BeMi17}. Note however that the statement of the present paper is
slightly stronger, since it is formulated in terms of the marked
GHP topology rather than the weaker
GH topology. In the case $(g,\kk)=(0,0)$ of the Brownian sphere, it
amounts to the GHP topology since there are no marks and only one measure; this
stronger forms appears for instance in \cite[Theorem~1.2]{ABW17} and
\cite[Theorem~7]{LGa19dis}.

\paragraph{Topology and Hausdorff dimension.}
Let us also list some basic properties of the limiting metric spaces,
which justify the terminology of \emph{Brownian surfaces}. We say that a metric space is \emph{locally of Hausdorff dimension~$d$} if any nontrivial ball has Hausdorff dimension~$d$.

\begin{prp}\label{proptopo}
Let $\bL=(L^1,\dots,L^\kk)$ be fixed and let $\bb$ denote the number of positive coordinates of~$\bL$. Almost surely, the random metric space $\BS{g}{\bL}$ is
homeomorphic to~$\Sof{g}{\bb}$, is locally of Hausdorff dimension~$4$, and, if $\bb>0$, each of
the~$\bb$ connected components of its boundary, considered as a metric
space by restriction of the metric on $\BS{g}{\bL}$, is locally of Hausdorff
dimension~$2$.
\end{prp}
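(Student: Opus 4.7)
The plan is to reduce the proposition to the corresponding properties of the elementary Brownian pieces --- the Brownian sphere, the Brownian disk, and their noncompact analogues --- via the cutting/gluing decomposition of $\BS{g}{\bL}$ alluded to in the abstract and developed later in the paper. The idea is to exhibit $\BS{g}{\bL}$ as the metric quotient of a finite disjoint union of Brownian disks (with, in the case $(g,\bb)=(0,0)$, a single Brownian sphere instead) under identifications along their boundary circles, whose combinatorial pattern matches that of a polygonal decomposition of the topological surface $\Sof{g}{\bb}$.

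For the topology: each Brownian disk is almost surely homeomorphic to a closed $2$-disk, and each boundary identification is, by construction, a homeomorphism between a pair of closed $1$-dimensional arcs given by reparametrization of boundary coordinates. Consequently, the topological quotient of this disjoint union is homeomorphic to $\Sof{g}{\bb}$. It then suffices to show that the canonical continuous map from this explicit quotient to $\BS{g}{\bL}$ is a bijection, because then, both spaces being compact and Hausdorff, it is automatically a homeomorphism. Surjectivity is built into the construction; injectivity amounts to checking that no \emph{extra} identifications take place in the Brownian metric beyond those encoded by the cutting procedure, i.e.\ that any two points identified in $\BS{g}{\bL}$ lie in the closure of a single cell of the decomposition.

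For the local Hausdorff dimensions: any nontrivial ball in $\BS{g}{\bL}$ contains an open ball entirely included in the image of one of the elementary pieces (with its induced intrinsic metric comparing to the restriction of $d_{\BS{g}{\bL}}$ up to a bi-Lipschitz constant on small scales), so the value $4$ is inherited from the known local Hausdorff dimension of the Brownian sphere and of the interior of the Brownian disk. For the boundary, each connected component of $\partial \BS{g}{\bL}$ corresponding to a positive entry $L^i$ of $\bL$ is by construction one of the boundary circles of a Brownian disk in the decomposition, and the local Hausdorff dimension $2$ of the boundary of a Brownian disk is likewise known.

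The main obstacle is the topological step, specifically the verification that the metric quotient defining $\BS{g}{\bL}$ does not perform any additional identification beyond those prescribed by the topological gluing. This is the standard delicate point in this class of results and is to be handled by arranging the decomposition so that the pieces are separated by canonical curves (typically geodesic-like loops with explicit endpoints) for which one can prove that the Brownian distance between a point in one open cell and a point in another is bounded below by a positive quantity depending only on their positions, ruling out spurious identifications in the metric quotient.
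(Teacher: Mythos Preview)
Your overall strategy---decompose into topological disks, glue according to a combinatorial pattern matching a cell decomposition of $\Sof{g}{\bb}$, and inherit the local Hausdorff dimensions from the pieces---is exactly the paper's approach in Section~\ref{sectopo}. However, two points of your sketch diverge from what the paper actually does.

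First, the elementary pieces are \emph{not} Brownian disks. They are the continuum composite slices $\Sl_{A,L,\Delta}$ and the continuum quadrilaterals $\Qd_{A,\br A,H,\Delta}$ introduced in Sections~\ref{seccvcs} and~\ref{seccvquad}. These are genuinely new objects, and the fact that each is a topological disk with boundary decomposed into geodesic arcs (plus a base, for slices) is the content of Lemma~\ref{topdimep}, proved by embedding free versions of these pieces inside the Brownian half-plane and Brownian plane respectively (Propositions~\ref{sec:brownian-half-plane-1} and~\ref{sec:brownian-plane-its-1}) and then passing to the conditioned versions by absolute continuity. So the reduction is not to the Brownian disk per se but to these auxiliary spaces, whose topology is itself nontrivial.

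Second, and more importantly, you correctly identify the ``no extra identifications'' issue as the crux, but you propose to resolve it by ad hoc lower bounds on cross-cell distances. The paper avoids this entirely: the gluings are performed along \emph{geodesics}, and the explicit formulas~\eqref{dG} and~\eqref{eq:22} for the quotient pseudometric in that case show directly that the metric quotient coincides with the topological quotient (the only points identified are $\gamma(t)\sim\xi(t)$). This is why the paper can simply say that gluing topological disks along the same boundary pattern as in the discrete picture yields $\Sof{g}{\bb}$, without a separate injectivity argument. Likewise, for the Hausdorff dimension, the pieces are \emph{isometrically} embedded in the two-space gluing (and locally isometric away from the glued geodesics in the self-gluing case, by Lemma~\ref{lemdistglue}\ref{distglueii}), so no bi-Lipschitz comparison is needed---the local dimension~$4$ and the boundary dimension~$2$ transfer exactly from Lemma~\ref{topdimep}.
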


This statement is an immediate corollary of a result from~\cite{Bet16geo}, showing that, in the case $\bb=\kk$, any subsequential
limit in distribution of $n^{-1/4}Q_n$ satisfies the stated
properties. The case $\bb<\kk$ is easily obtained from there by the
observation concerning null perimeters at the end of this
section. However, our method of proof of Theorem~\ref{mainthm} will
also provide an alternative and rather transparent proof of
Proposition~\ref{proptopo}, once an analogous statement has
been established for the noncompact analogs of the cases of the sphere
and disk, namely, the Brownian plane and the Brownian half-plane
\cite{CuLG12Bplane,GwMi17,BaMiRa} (see Section~\ref{sectopo}). We also
mention that the case $(g,\kk,\bb)=(0,1,0)$ was obtained
in~\cite{bettinelli11b} (see also \cite{BoGu09}).

\paragraph{A comment on notation.} Throughout this paper, we will often work in fixed topology and consistently use the following pieces of notation, as in the above statement:
\begin{itemize}
	\item $g$ for the genus of the surface;
	\item $\kk$ for the size of the boundary perimeter vector, that is the number of holes in the discrete maps;
	\item $\bb$, as in \emph{boundary}, for the number of nonzero coordinates in the boundary perimeter vector;
	\item $\pp$, as in \emph{puncture}, for the number of null coordinates in the boundary perimeter vector.
\end{itemize}
Beware that the latter two numbers do not always correspond to the
numbers of external faces and external vertices in the discrete maps,
since we only require that $\bl_n/\sqrt{2n}\to\bL$. However, for $n$
sufficiently large, the~$\bb$ holes corresponding to the~$\bb$ nonzero
coordinates in the boundary perimeter vector are external faces; each
of the~$\pp$ remaining holes can be either a vertex or a face but, in
the latter case, it should be thought of as a ``small face'' in the
sense that its perimeter is of order $\sO(\sqrt n)$, and we will see that this implies a diameter of order $\sO(n^{1/4})$.

\paragraph{Method of proof.}
We prove Theorem~\ref{mainthm} by some
surgical methods, and from the known cases $g=0$ and $\kk\in \{0,1\}$.
Heuristically, we will cut~$\Qn$ along well-chosen
geodesics into a \emph{finite number} of elementary pieces of planar
topology, to which we can apply a variant of the cases
$(g,\kk)\in\{(0,0),(0,1)\}$ of Theorem~\ref{mainthm}. The idea of
cutting quadrangulations along geodesics into so-called slices appears
in Bouttier and Guitter \cite{BoGu09,BoGu12}. The use of these slices
and the study of their scaling limits play an important role in Le
Gall's proof~\cite{legall11} of the uniqueness of the Brownian sphere 
(they are called \emph{maps with a piecewise geodesic
  boundary} in this reference) and are crucial to our
study~\cite{BeMi17} in the case of the disk. More specifically, in
the latter reference, we view Brownian disks as a continuum
version of the slice decomposition.

The proof of Theorem~\ref{mainthm}
relies on similar but yet different ideas, and 
will require the introduction of other types of surgeries on objects
that we call \emph{(composite) slices} and \emph{quadrilaterals (with geodesic sides)}. 
The core of the proof of Theorem~\ref{mainthm} consists in
showing scaling limit results for these elementary pieces, as stated
in Theorems~\ref{thmslslice} and~\ref{thmslquad}. We believe
that these results are of independent interest, as elementary pieces
and their scaling limits might serve as building blocks in other
models of random surfaces. In order to prove this result, it turns out
that it is simpler to view the discrete and continuum elementary
pieces as embedded into non-compact version of the Brownian sphere and
disk, namely the Brownian plane and half-plane defined in
\cite{CuLG12Bplane,GwMi17,BaMiRa}. 
We stress that the description of the Brownian half-plane in terms of
gluing of composite slices considered in Section~\ref{secBHP} below is related
to the \emph{slice decomposition of metric bands} property used by
Miller and Qian~\cite{MiQi21arX} for studying geodesic stars in the
Brownian sphere.

Theorem~\ref{mainthm} generalizes the case of the sphere at two
different levels, one given by the positive genus and one given by the
addition of a boundary. Although these two levels of generalization
rely to some extent on similar ideas, the difficulties that 
they generate are of quite different nature. The case of the disk,
which was the focus of~\cite{BeMi17}, relied on relatively
well-understood objects, but 
required gluing an infinite number of such objects, which in principle
could create problems in the limit. On the other hand, the surgery
involved in the general case consists in gluing a
bounded number of objects, but the objects themselves will turn out to
be of a more complicated nature. 

\paragraph{Null perimeter coordinates.}
We end this section by the following observation relating Brownian
surfaces in case of null perimeter coordinates. The operations of adding or removing a mark used in the following proposition are given by Lemmas~\ref{lemrandmark} and~\ref{lem1lip} in Section~\ref{secmarkglue}. 

\begin{prp}\label{propnull}
Let $\bL=(L^1,\dots,L^\kk)\in [0,\infty)^k$, and~$\bL 0=(L^1,\ldots,L^\kk,0)$.
Then the space $\BS{g}{\bL 0}$ has
same distribution as the space $\BS{g}{\bL}$, where, denoting by~$\mu$ the area measure of the latter space, a 
random $\mu$-distributed point has been added to the set of
marks of $\BS{g}{\bL}$ in $(\kk+1)$-th position (and the zero measure
has been added as a trivial $(\kk+1)$-th boundary measure).

Consequently, if  $L^i=0$ for
some given~$i\in \{1,2,\ldots,\kk\}$, and if~$\hat{\bL}$ denotes the
vector~$\bL$ with $i$-th coordinate removed, then
$\BS{g}{\hat{\bL}}$ has same distribution as the space
$\BS{g}{\bL}$ with its $i$-th mark and (trivial) $i$-th boundary measure removed. 
\end{prp}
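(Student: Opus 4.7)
My plan is to derive Proposition~\ref{propnull} by applying Theorem~\ref{mainthm} to two perimeter vectors and comparing the results, using the fact that adding a hole of perimeter zero to a quadrangulation amounts to distinguishing an additional vertex. Concretely, a rooted quadrangulation in $\RbQnlnO$ is the data of an element $\bq \in \RbQnln$ together with the choice of a vertex $v$ of $\bq$ playing the role of the extra external vertex $\ch_{\kk+1}$; the pairwise distinctness of the distinguished elements forces $v$ to be an internal vertex of $\bq$. Crucially, by formula~\eqref{nbintvert}, the number $n + \|\bl_n\| + 2 - 2g - \kk$ of internal vertices does not depend on $\bq$. Consequently, if $Q_n'$ is uniformly distributed over $\RbQnlnO$, then, writing $Q_n' = (Q_n,V_n)$ under the above bijection, $Q_n$ is uniform on $\RbQnln$ and, conditionally on $Q_n$, $V_n$ is uniformly distributed among the internal vertices of $Q_n$.

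By Theorem~\ref{mainthm} applied to the sequence $(\bl_n)$, one has $\Omega_n(Q_n) \tod \BS{g}{\bL}$ in $\M^{(\kk,\kk+1)}$. The uniform distribution on the internal vertices of $Q_n$ differs from the area measure of $\Omega_n(Q_n)$ only by atoms of mass $1/n$ at the at most $\kk$ external vertices and by a multiplicative factor that tends to $1$ since $\|\bl_n\| = \bO(\sqrt n)$. Applying Lemma~\ref{lemrandmark} -- the continuity statement asserting that sampling a random mark from a converging sequence of measures is compatible with GHP convergence -- we obtain the joint convergence of $(\Omega_n(Q_n),V_n)$ toward $(\BS{g}{\bL},V)$, where $V$ is a $\mu$-distributed point in $\BS{g}{\bL}$. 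Augmenting this convergence by the additional boundary measure $\delta_{V_n}/\sqrt{8n}$, whose mass tends to $0$, the left-hand side becomes precisely $\Omega_n(Q_n')$; applying Theorem~\ref{mainthm} again, this time to the vector $\bL 0$, identifies the limit in distribution with $\BS{g}{\bL 0}$. This proves the first claim.

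For the second assertion, I apply the first one to $\hat\bL$ in place of $\bL$, yielding that $\BS{g}{\hat\bL 0}$ is obtained from $\BS{g}{\hat\bL}$ by appending a $\mu$-distributed mark in the last position. Since $\hat\bL 0$ is a permutation of $\bL$ -- they differ only by the location of the null coordinate -- the claim follows by covariance of the Brownian surface under permutation of its coordinates. This covariance is itself inherited from the obvious bijection between $\RbQ_{n,\bm}^{[g]}$ and $\RbQ_{n,\sigma(\bm)}^{[g]}$ for any permutation $\sigma$ of the hole labels, combined with Theorem~\ref{mainthm}. Removing the $i$-th mark and its trivial boundary measure from $\BS{g}{\bL}$ then produces $\BS{g}{\hat\bL}$, as desired. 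The main technical input is the compatibility of uniform sampling with GHP convergence, isolated as Lemma~\ref{lemrandmark}; once that is granted, the remainder of the argument is a combinatorial identification together with permutation invariance.
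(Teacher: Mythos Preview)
Your proof is correct and follows essentially the same approach as the paper's own argument: both exploit the bijection between $\RbQnlnO$ and pairs $(\bq,v)$ with $\bq\in\RbQnln$ and $v$ a uniform internal vertex, invoke Theorem~\ref{mainthm} together with Lemma~\ref{lemrandmark} to pass to the limit, and handle the second assertion by permuting and removing marks (the paper cites Lemma~\ref{lem1lip} for this last step, which you instead spell out via the obvious relabeling bijection).
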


\begin{proof}
Let us fix 
$\bl_n=(l_n^1,\dots,l_n^{\kk}) \in (\Zp)^{\kk}$ such that
$l_n^j\sim \sqrt{2n}\,L^j$ for $1\leq j\leq \kk$, and let~$Q_n$
be uniformly distributed over~$\RbQnln$. Setting $\bl_n0=(l_n^1,\dots,l_n^{\kk},0)$, a uniformly distributed random variable~$Q_n'$
in~$\smash{\RbQnlnO}$ may be obtained by choosing an extra
distinguished external vertex $\ch_{\kk+1}$ uniformly at random among the
internal vertices of~$Q_n$, that is, according to the measure
$\mu_{Q_n}$ conditioned on the set of internal vertices. Since
the number of distinguished vertices in~$Q_n$ is at most~$\kk$, while the total number of vertices is
asymptotically equivalent to $n$, the GHP limit of the
quadrangulation $Q_n'$ rescaled as in Theorem~\ref{mainthm} is the same
as if we had chosen $\ch_{\kk+1}$ uniformly at random among the set of all
vertices of $Q_n$. By Theorem~\ref{mainthm} applied to~$Q_n$ 
and Lemma~\ref{lemrandmark} below, we obtain the result. 
The second part of the statement is obtained by permuting or removing
marks and measures appropriately, as discussed in Lemma~\ref{lem1lip} below. 
\end{proof}

As an example, the Brownian
sphere $\BS{0}{\varnothing}$ can be seen as $\BS{0}{(0,0)}$ by
forgetting its two marks. Anticipating on the construction of the
Brownian surfaces in Section~\ref{sec:pfthm1}, this provides a
nontrivially equivalent construction of the Brownian sphere as the
gluing of one quadrilateral with geodesic sides, rather than the one
from \cite{legall11,miermont11}.

\subsection{Scaling limits of Boltzmann
  quadrangulations}\label{secboltz}

We may also consider scaling limits for models of
quadrangulations with holes in
which the area and perimeters are not fixed, but rather weighted
by Boltzmann factors. We introduce the following sets of nonrooted maps:
\begin{align*}
\bQl &= \bigsqcup_{n\ge 0} \bQnl,		
		&&\text{ for }g\ge 0, \ \bl\in \bigsqcup_{\kk\geq 0}(\Zp)^\kk,\\
\shortintertext{and}
	\bQ^{[g]}(\bb,\pp)&= \bigsqcup_{\bl\in \N^\bb} \bQ^{[g]}_{\bl\bzero^\pp},
		&&\text{ for $g$, $\bb$, $\pp \ge 0$},
\end{align*}
where $\bl\bzero^\pp$ stands for the sequence~$\bl$ to which we append~$\pp$ terms equal to~$0$.

We then let~$\WW$ be the $\sigma$-finite measure on the set of
nonrooted quadrangulations with an arbitrary number of holes and arbitrary genus,
given by
\[\WW(\bq)=\frac{1}{\Aut(\bq)}12^{-|\bq|}\,8^{-\|\partial
  \bq\|},
\]
where~$|\bq|$ is the number of internal faces of~$\bq$, and~$\|\partial\bq\|$ is the sum of the perimeters of its holes. The
reason for the choice of the weights $1/12$ and $1/8$ for the internal
faces and perimeters comes from the following enumeration result,
which will be proved in an extended form in Proposition~\ref{cardRbQ}, in Appendix~\ref{appdata}.
 
\begin{prp}\label{enumQnln}
Fix $\bb\geq 0$ and $\bL\in (0,\infty)^\bb$. Let $(\bl_n,n\geq 0)$ be
a sequence  of integers such that $l_n^i\sim\sqrt{2n}\,L^i$ as
$n\to\infty$ for $1\leq i\leq \bb$. Then there exist a continuous function~$t_g$ of~$\bL$, such that
\[\WW\big(\bQ^{[g]}_{n,\bl_n}\big) \build{\sim}{n\to\infty}{}
	t_g(\bL)\, n^{\frac{5g-7}{2}+\frac{3\bb}{4}}.\]
\end{prp}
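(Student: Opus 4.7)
My plan is to derive the asymptotic by an exact (up to a finite sum) enumeration of $|\RbQ^{[g]}_{n,\bl_n}|$, combined with Stirling-type asymptotic analysis, and then to pass to the $\WW$-weighted count of nonrooted maps using~\eqref{eqnrbQ}, which shifts the exponent by $-1$ through division by $4n+2\|\bl_n\|\sim 4n$.

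The main structural input is a \emph{scheme decomposition} of bipartite quadrangulations of $\Sgb$, in the spirit of what was done for related asymptotic enumeration problems in~\cite{bettinelli11b,bettinelli14gbs}. Concretely, every element of $\RbQ^{[g]}_{n,\bl}$ is to be encoded by: (i) a \emph{scheme} -- a combinatorial map of genus $g$ with $\bb$ boundaries, all of whose non-boundary vertices have degree $\ge 3$; (ii) labels recording how $n$ internal faces and $\|\bl\|$ boundary edges are distributed along the edges/faces of the scheme; and (iii) along each edge of the scheme, a chain of elementary building blocks (slices, or quadrangulations of the disk) realizing the prescribed counts. Since the set of schemes with fixed $(g,\bb)$ is finite, this yields a finite sum over schemes of products of generating functions of explicit building blocks. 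For those building blocks, one has either a closed product formula from the Bouttier--Di Francesco--Guitter bijection (for disk quadrangulations) or the known asymptotic $|\RbQ^{[0]}_{n,l}|\sim c\cdot 12^n\,8^l\,n^{-7/4}$ when $l\sim L\sqrt{2n}$, with an explicit continuous prefactor $c=c(L)$.

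Applying Stirling (or equivalently a local limit theorem for the underlying random walk encoding of the Bouttier--Di Francesco--Guitter mobiles) term by term in the regime $l_n^i\sim\sqrt{2n}\,L^i$, each surviving scheme contributes a power of $n$; the exponent $(5g{-}5)/2$ in~$n$ reflects the internal combinatorial structure of the scheme (matching the classical exponent for rooted quadrangulations of $\Sof{g}{0}$), and each macroscopic boundary contributes an additional factor $n^{3/4}$. Summing these contributions gives
\[
|\RbQ^{[g]}_{n,\bl_n}|\;\sim\;\tilde t_g(\bL)\,12^n\,8^{\|\bl_n\|}\,n^{(5g-5)/2+3\bb/4}\,,
\]
and dividing by $4n+2\|\bl_n\|\sim 4n$ via~\eqref{eqnrbQ} yields the stated exponent $\tfrac{5g-7}{2}+\tfrac{3\bb}{4}$. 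Continuity of $t_g$ on $(0,\infty)^\bb$ follows directly from the fact that the Stirling-type limits produce smooth expressions in the $L^i$'s away from zero.

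The delicate step will be to check that only schemes of maximal ``dimension'' contribute to the leading order: a priori, a scheme with degenerate (too short or too long) chains might produce a competing polynomial factor, so one must verify a separate polynomial bound on the error term from non-dominant schemes and configurations. This is standard but slightly technical, and has to be carried out uniformly over the finite set of schemes for fixed $(g,\bb)$; the essential novelty, relative to the computations already present in~\cite{bettinelli11b,bettinelli14gbs}, is the simultaneous treatment of positive genus and multiple macroscopic boundaries, for which one must track how the $\bb$ boundary lengths interact with the scheme statistics -- but since both effects factor at the level of the leading Stirling asymptotics, this interaction only enters through the explicit, continuous prefactor $t_g(\bL)$.
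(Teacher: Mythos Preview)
Your proposal is correct and takes essentially the same approach as the paper: the scheme decomposition is carried out (after the Chapuy--Marcus--Schaeffer bijection) on the encoding labeled one-face maps rather than directly on the quadrangulations, so the building blocks along scheme edges are labeled forests rather than disk quadrangulations per se, but the enumeration, local limit theorem step, and dominant-scheme analysis are exactly as you outline. The paper implements this in Appendix~\ref{appdata}, culminating in Proposition~\ref{cardRbQ}, and your exponent bookkeeping ($(5g-5)/2+3\bb/4$ for rooted, shifted by $-1$ via~\eqref{eqnrbQ}) matches.
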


The function $t_g(\bL)$ is related to the so-called
\emph{double scaling limit} of maps, as described in 
\cite[Chapter~5]{eynard16}, and its Laplace transform can be computed
by solving Eynard and Orantin's 
topological recursion. The method presented in
Appendix~\ref{appdata} is based on the bijections presented in Section
\ref{secCVS}. 

For any $g\geq 0$, $\pp\geq 0$, $\bL\in [0,\infty)^\pp$ and $A>0$, if
$(\X,d,\bA,\mu)$ is a random variable with same law as
$\smash{\BS{g}{\bL/\sqrt{A}}}$, we define the Brownian surface of
genus~$g$ with boundary perimeter vector~$\bL$ and area~$A$ as a random variable
$\BS{g}{A,\bL}$ with same law as $(\X,A^{1/4}d,\bA,A\mu)$. 
If $\bL\in \bigsqcup_{\bb\geq 0}(0,\infty)^\bb$ and $\pp\geq 0$, we let $\bL\bzero^\pp\in [0,\infty)^{\bb+\pp}$ be the sequence~$\bL$ to which we append~$\pp$ terms equal to~$0$. 

For integers $g$, $\bb$, $\pp\geq 0$, and for $\bL\in (0,\infty)^\bb$, setting $\kk=\bb+\pp$, we define a
$\sigma$-finite measure on $\M^{(\kk,\kk+1)}$ by the formula 
\[\scS^{[g]}_{\bL,\pp}(\cdot)=\int_{(0,\infty)} \d A \,
A^{\frac{5g-7}{2}+\frac{3\bb}{4}+\pp}\, 
	t_g\big(\bL/\sqrt{A}\big)\, \P\big(\BS{g}{A,\bL\bzero^\pp}\in
\cdot\,\big).\]

The measure $\scS^{[g]}_{\bL,\pp}$ is a $\sigma$-finite measure that
``randomizes'' the area measure of the Brownian
surface of genus~$g$ with~$\bb$ boundary components of lengths given by~$\bL$, as well as~$\pp$ marked vertices, in
the sense that its conditional law given having total area~$A$ is that
of~$\BS{g}{A,\bL \bzero^\pp}$.

Recall that the scaling operator~$\Omega_n$ is defined by~\eqref{defomega}; here, we use it for any $n\in(0,\infty)$.

\begin{thm}\label{thmboltz}
Let $g$, $\bb$, $\pp\in \Zp$, let $\kk=\bb+\pp$, let $\bL\in (0,\infty)^\bb$, let $K>0$, and let $F:\M^{(\kk,\kk+1)}\to \R$ be a continuous and bounded
function that is supported on the set of spaces
$(\X,d_\X,\bA,\mu_\X,\bnu_{\X})$ such that $\mu_\X(\X)\in [1/K,K]$.
Let $(\bl_a,a>0)$ be a family where $\bl_a\in\N^\bb$ is such that $l_a^i\sim
\sqrt{2/a}\,L^i$ for $1\leq i\leq \bb$. Then, it holds that
\[
a^{\frac{5(g-1)}{2}+\frac{3\bb}{4}+\pp}\, 
	\WW\left(F\left(\Omega_{a^{-1}}(Q)\right)
		\ind_{\bQ^{[g]}_{\bl_a\bzero^\pp}} \right)
	\build{\longrightarrow}{a\downarrow 0}{} \scS^{[g]}_{\bL,\pp}(F). 
\]
\end{thm}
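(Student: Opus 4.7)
The plan is to deduce Theorem~\ref{thmboltz} from Theorem~\ref{mainthm} and Proposition~\ref{enumQnln}, by decomposing the $\sigma$-finite measure~$\WW$ according to the number~$n$ of internal faces of~$Q$, and then recognising the resulting sum as a Riemann sum for the integral defining $\scS^{[g]}_{\bL,\pp}(F)$. Setting $\beta=\tfrac{5(g-1)}{2}+\tfrac{3\bb}{4}+\pp$, I write
\begin{equation*}
a^{\beta}\, \WW\bigl(F(\Omega_{a^{-1}}(Q))\, \ind_{\bQ^{[g]}_{\bl_a\bzero^\pp}}\bigr) \,=\, \sum_{n\geq 0} a^\beta\, \WW\bigl(\bQ^{[g]}_{n,\bl_a\bzero^\pp}\bigr)\, \E\bigl[F(\Omega_{a^{-1}}(Q_n))\bigr]\,,
\end{equation*}
where~$Q_n$ is a random nonrooted quadrangulation of $\bQ^{[g]}_{n,\bl_a\bzero^\pp}$ sampled with probability proportional to $1/\Aut(\cdot)$. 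By~\eqref{nbintvert}, the total area of $\Omega_{a^{-1}}(Q_n)$ equals $a(n+\|\bl_a\|+2-2g-\bb-\pp)=an(1+o(1))$, so the support condition on~$F$ restricts the sum to the $\bO(1/a)$ nonzero terms with~$an$ in a small neighbourhood of $[1/K,K]$.

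I then analyse the two factors separately. For the enumeration factor, the combinatorial identity
\begin{equation*}
\WW\bigl(\bQ^{[g]}_{n,\bl_a\bzero^\pp}\bigr)\,=\, (n+\|\bl_a\|+2-2g-\bb)^{\underline{\pp}}\, \WW\bigl(\bQ^{[g]}_{n,\bl_a}\bigr)
\end{equation*}
follows from the fact that a map of $\bQ^{[g]}_{n,\bl_a\bzero^\pp}$ is exactly a map of $\bQ^{[g]}_{n,\bl_a}$ endowed with an ordered tuple of $\pp$ distinct distinguished internal vertices, combined with a standard orbit--stabiliser argument for the action of $\Aut$; this is the discrete counterpart of the argument behind Proposition~\ref{propnull}. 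Setting $A=an$, the sequence $\bl_a$ satisfies $l_a^i\sim \sqrt{2n}\cdot L^i/\sqrt{A}$, so Proposition~\ref{enumQnln} applied with~$\bL$ replaced by $\bL/\sqrt A$ yields
\begin{equation*}
a^\beta\, \WW\bigl(\bQ^{[g]}_{n,\bl_a\bzero^\pp}\bigr) \;\sim\; a\cdot A^{(5g-7)/2+3\bb/4+\pp}\, t_g(\bL/\sqrt A)\,.
\end{equation*}
For the expectation factor, I factor $\Omega_{a^{-1}}=T_A\circ\Omega_n$, where~$T_A$ denotes the deterministic rescaling by $A^{1/4}$ on distances, by~$A$ on the area measure, and by~$\sqrt{A}$ on the boundary measures. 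Theorem~\ref{mainthm}, in the nonrooted $1/\Aut$-weighted variant discussed right after its statement, gives $\Omega_n(Q_n)\tod \BS{g}{\bL\bzero^\pp/\sqrt A}$, and hence $\Omega_{a^{-1}}(Q_n)\tod \BS{g}{A,\bL\bzero^\pp}$ for each fixed $A>0$, so that $\E[F(\Omega_{a^{-1}}(Q_n))]\to\E[F(\BS{g}{A,\bL\bzero^\pp})]$. The displayed sum is then a Riemann sum of step~$a$ in the variable $A=an$, whose integrand converges pointwise to $A^{(5g-7)/2+3\bb/4+\pp}\, t_g(\bL/\sqrt A)\, \E[F(\BS{g}{A,\bL\bzero^\pp})]$, and its limit is $\scS^{[g]}_{\bL,\pp}(F)$ by the very definition of this measure.

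The main technical hurdle is to upgrade this pointwise convergence in~$A$ to the local uniform convergence needed to justify the Riemann sum passage. Since~$F$ is bounded and~$T_A$ depends continuously on~$A$, the limiting integrand is continuous on $(0,\infty)$, so it suffices to control the discrete side on a compact interval containing $[1/K,K]$. I would extract the necessary equicontinuity from a uniform-in-$A$ version of the enumeration estimate of Proposition~\ref{enumQnln} (readily accessible from the combinatorial derivation in Appendix~\ref{appdata}) together with a uniform-in-$A$ weak convergence statement for $\Omega_{a^{-1}}(Q_n)$, which can be obtained by coupling the encoding processes of the quadrangulations for different values of~$A$ in a Skorokhod sense, much as in the proof of Theorem~\ref{mainthm}. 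Once this local uniform convergence is established, a standard Riemann sum estimate closes the proof.
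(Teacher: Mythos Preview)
Your overall strategy is the same as the paper's: decompose~$\WW$ according to the number~$n$ of internal faces, write the sum as a Riemann sum in the variable $A=an$, use Theorem~\ref{mainthm} for the conditional expectation factor and Proposition~\ref{enumQnln} for the enumeration factor, then justify interchanging the limit with the integral. Your explicit handling of the~$\pp$ marked points via the falling factorial identity is a clean touch; the paper instead applies the extended enumeration of Proposition~\ref{cardRbQ} (which covers the case $\bl_a\bzero^\pp$ directly) without isolating this step.

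Where you diverge from the paper is in the final technical justification, and here you make things much harder than necessary. You propose to establish \emph{local uniform convergence} in~$A$, and for this you invoke a ``uniform-in-$A$ weak convergence statement for $\Omega_{a^{-1}}(Q_n)$'' obtained by coupling the encoding processes. This is not needed. Once the sum is written as an integral over~$A$ in a compact neighbourhood of $[1/K,K]$, dominated convergence suffices: the conditional expectation factor is trivially bounded by~$\|F\|_\infty$, and the rescaled enumeration factor $a^{\beta-1}\,\WW(\bQ^{[g]}_{\lfloor A/a\rfloor,\bl_a\bzero^\pp})$ admits a uniform bound in~$A$ on compacts, coming directly from the Petrov-type bounds used in the domination hypothesis around~\eqref{eq:6} in Appendix~\ref{appdata}. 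This is exactly what the paper does. So your ``main technical hurdle'' is real, but the tool you reach for (uniform weak convergence) is overkill and would itself require substantial new work; the correct tool is simply the uniform enumeration bound already available from the combinatorics, combined with the boundedness of~$F$.
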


Note that our main result, Theorem~\ref{mainthm}, can be seen as a
``local limit'' version of Theorem~\ref{thmboltz}, in the sense that it gives the
conditional statement of this last result given
$\smash{\bQ^{[g]}_{a^{-1},\bl_a\bzero^\pp}}$, taking $a=1/n$. 
There is also a version of this theorem where the perimeters given 
by~$\bL$ are left free as well. For~$g$, $\bb$, $\pp\geq 0$, we define the
$\sigma$-finite measure
\[\scS^{[g]}_{\bb,\pp}(\cdot)=\int_{(0,\infty)^{\bb}} \d \bL\,
\scS^{[g]}_{\bL,\pp}(\cdot).\]

\begin{crl}\label{corboltz}
Let $g$, $\bb$, $\pp\in \Zp$, $\kk=\bb+\pp$, $K>0$, and $F:\M^{(\kk,\kk+1)}\to \R$ be a continuous and bounded
function that is supported on the set of spaces
$(\X,d_\X,\bA,\mu_\X,\bnu_{\X})$ such that $\mu_\X(\X)$ and
$\nu^i_\X(A^i), 1\leq i\leq \bb$, all lie in $[1/K,K]$. 
Then it holds that
\[
2^{\frac{\bb}{2}} a^{\frac{5(g-1)}{2}+\frac{5\bb}{4}+\pp}\, 
	\WW\left(F\left(\Omega_{a^{-1}}(Q)\right)
		\ind_{\bQ^{[g]}(\bb,\pp)}
		\right)
	\build{\longrightarrow}{a\downarrow 0}{}  \scS^{[g]}_{\bb,\pp}(F). 
\]
\end{crl}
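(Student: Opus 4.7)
The plan is to derive Corollary~\ref{corboltz} from Theorem~\ref{thmboltz} by integrating over the boundary perimeter vector $\bL\in(0,\infty)^\bb$ via a Riemann-sum argument. The disjoint decomposition $\bQ^{[g]}(\bb,\pp)=\bigsqcup_{\bl\in\N^\bb}\bQ^{[g]}_{\bl\bzero^\pp}$ yields
\[
\WW\bigl(F(\Omega_{a^{-1}}(Q))\,\ind_{\bQ^{[g]}(\bb,\pp)}\bigr)
=\sum_{\bl\in\N^\bb}\WW\bigl(F(\Omega_{a^{-1}}(Q))\,\ind_{\bQ^{[g]}_{\bl\bzero^\pp}}\bigr).
\]
Introduce $\Phi_a(\bl):=a^{\frac{5(g-1)}{2}+\frac{3\bb}{4}+\pp}\,\WW\bigl(F(\Omega_{a^{-1}}(Q))\ind_{\bQ^{[g]}_{\bl\bzero^\pp}}\bigr)$ and the rescaled perimeters $\bL_\bl:=\bl\sqrt{a/2}$, so that the image lattice $\{\bL_\bl:\bl\in\N^\bb\}\subseteq(0,\infty)^\bb$ has mesh $\sqrt{a/2}\to0$ as $a\downarrow0$. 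By Theorem~\ref{thmboltz}, $\Phi_a(\bl_a)\to\scS^{[g]}_{\bL,\pp}(F)$ whenever $\bl_a\in\N^\bb$ satisfies $\bL_{\bl_a}\to\bL\in(0,\infty)^\bb$.

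The idea is then to interpret $\sum_{\bl}\Phi_a(\bl)$ as a Riemann sum for $\int_{(0,\infty)^\bb}\scS^{[g]}_{\bL,\pp}(F)\,\d\bL=\scS^{[g]}_{\bb,\pp}(F)$. Extending $\Phi_a$ to the step function $\wt\Phi_a(\bL):=\Phi_a(\lfloor\bL\sqrt{2/a}\rfloor)$ on $(0,\infty)^\bb$, one has
\[
\int_{(0,\infty)^\bb}\wt\Phi_a(\bL)\,\d\bL
	=\bigl(\sqrt{a/2}\bigr)^\bb\sum_{\bl\in\N^\bb}\Phi_a(\bl),
\]
and it remains to establish the convergence of this integral to $\scS^{[g]}_{\bb,\pp}(F)$; after reorganizing the powers of $a$ and of $2$, this yields the announced prefactor $2^{\bb/2}a^{5(g-1)/2+5\bb/4+\pp}$. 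The support assumption on $F$ forces the scaled boundary masses $\nu^i_{\Omega_{a^{-1}}(Q)}(A^i)$ into $[1/K,K]$, which, since these masses are asymptotic to a universal constant times $l^i\sqrt{a/2}$, confines $\wt\Phi_a$ to a fixed compact set $C\subseteq(0,\infty)^\bb$ independently of $a$. Combined with pointwise convergence, a dominated-convergence argument then produces the desired limit.

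The main obstacle is producing the uniform-in-$a$ domination $|\Phi_a(\bl)|\le M_C$ for $\bL_\bl\in C$, needed by dominated convergence. This amounts to a uniform strengthening of Proposition~\ref{enumQnln}: an estimate of the form $\WW(\bQ^{[g]}_{n,\bl})\le K_C\,n^{\frac{5g-7}{2}+\frac{3\bb}{4}}$ valid for $n$ large and $\bl/\sqrt{2n}\in C$, together with the boundedness of $F$. Such a uniform enumeration bound is a natural byproduct of the combinatorial bijections used to prove Proposition~\ref{enumQnln} (Section~\ref{secCVS} and Appendix~\ref{appdata}), since the bijective expressions for $|\RbQ^{[g]}_{n,\bl}|$ are continuous functions of $\bL=\bl/\sqrt{2n}$ that are locally bounded on $(0,\infty)^\bb$, and an equicontinuity argument in $\bL$ then upgrades the Riemann-sum convergence to the form claimed.
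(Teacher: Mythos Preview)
Your overall strategy is the paper's: decompose over $\bl\in\N^\bb$, turn the sum into a Riemann integral in $\bL$ with mesh $\sqrt{a/2}$, use Theorem~\ref{thmboltz} for the pointwise limit $\wt\Phi_a(\bL)\to\scS^{[g]}_{\bL,\pp}(F)$, and close by dominated convergence with the enumeration bounds of Appendix~\ref{appdata}. The paper's one-line proof (end of Section~\ref{secBoltzproof}) says exactly this: ``proved in a very similar way, this time summing over all possible values of the perimeters, which results in the integral with respect to $\d\bL$''.

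There is, however, a genuine gap in your reduction to a compact set. You assert that the support hypothesis on $F$, via $\nu^i_{\Omega_{a^{-1}}(Q)}(A^i)\in[1/K,K]$, forces $\wt\Phi_a$ to be supported in a fixed compact $C\subset(0,\infty)^\bb$. The rescaled $i$-th boundary mass equals $|V(\ch_i)|\sqrt{a/8}$, and since $|V(\ch_i)|\le 2l^i$, the constraint $|V(\ch_i)|\sqrt{a/8}\ge 1/K$ indeed yields $l^i\sqrt{a/2}\ge 1/K$. But the \emph{upper} bound fails: the boundary of $\ch_i$ need not be simple, so $|V(\ch_i)|$ can be much smaller than $2l^i$, and $|V(\ch_i)|\sqrt{a/8}\le K$ places no upper bound on $l^i\sqrt{a/2}$. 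Thus $\wt\Phi_a$ is not compactly supported in $\bL$, and a bound $|\Phi_a(\bl)|\le M_C$ for $\bL_\bl\in C$ alone is insufficient.

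The repair is to drop the compact-support step and produce instead a dominating function integrable over all of $(0,\infty)^\bb$. This is available from the same source you cite: the uniform enumeration bounds in Appendix~\ref{appdata}. The factor $Q_{2h+\|\bl\|}(2n+\|\bl\|)$ in~\eqref{sahll}, controlled by Petrov's estimate~\eqref{PetBound} with $r$ taken large, gives polynomial decay of arbitrary order in $2\sfh+\|\bL\|$ after rescaling, uniformly in $a$; combined with~\eqref{eq:6} this dominates the integrand jointly in $(A,\bL)$ by an integrable function on $[1/K,K]\times(0,\infty)^\bb$. With that global bound in place of your compact-support reduction, your argument goes through.
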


Interestingly, the measure $\scS^{[g]}_{\bL,\pp}$ is finite in the
particular cases $g=0$, $\bb=1$ and $\pp\in \{0,1\}$, or $g=0$, $\bb=2$ and
$\pp=0$; it can be checked that it is infinite in all other cases. By
computing the functions~$t_0(\bL)$ in the case $\bb\in \{1,2\}$, we obtain three probability distributions
by normalizing the measures $\scS^{[0]}_{(L),0}$,
$\scS^{[0]}_{(L),1}$, $\scS^{[0]}_{(L,L'),0}$. Those are the law of the \emph{free Brownian disk} of
perimeter $L\in (0,\infty)$:
\[\mathrm{FBD}_L=\int_0^\infty \d A\, \frac{L^3}{\sqrt{2\pi
    A^5}}\exp\left(-\frac{L^2}{2A}\right)\, 
\P\big(\BS{0}{(L),A}\in \cdot\,\big),\]
the law of the \emph{free pointed Brownian disk} of
perimeter $L\in (0,\infty)$:
\[\mathrm{FBD}^\bullet_L=\int_0^\infty  \d A\, \frac{L}{\sqrt{2\pi
    A^3}}\exp\left(-\frac{L^2}{2A}\right)\, 
\P\big(\BS{0}{(L,0),A}\in \cdot\,\big),\]
and the law of the \emph{free Brownian annulus} of boundary perimeters~$L$, $L'$\,:
\[\mathrm{FBA}_{L,L'}=
\int_0^\infty  \d A\, \frac{(L+L')}{\sqrt{2\pi
    A^3}}\exp\left(-\frac{(L+L')^2}{2A}\right) \,
\P\big(\BS{0}{(L,L'),A}\in \cdot\,\big).\]
Note in particular that $\mathrm{FBD}^\bullet_L=\lim_{\eps\downarrow
  0}\mathrm{FBA}_{L,\eps}$. 
These laws, as well as the associated $\sigma$-finite measures
$\scS^{[0]}_{1,0}$,
$\scS^{[0]}_{1,1}$, $\scS^{[0]}_{2,0}$, play an important
role in~\cite{ARS22mod}. 

In the case $\bb=0$, the two previous statements are in fact the
same, since 
$\scS^{[g]}_{\varnothing,\pp}=\scS^{[g]}_{0,\pp}$. This measure
describes the scaling limit of quadrangulations with no boundary, $\pp$
marked vertices, and free area measure. In this case, the quantity
$t_g(\varnothing)$
is equal to the
classical universal constant $t_g$ arising in map enumeration; see
\cite{BenCan86,LaZv04}. Explicitly, the numbers
$\tau_g=2^{5g-2}\Gamma(\frac{5g-1}{2})t_g$ satisfy $\tau_0=-1$ and the recursion
\[\tau_{g+1}=\frac{(5g+1)(5g-1)}{3}\tau_g+\frac{1}{2}\sum_{h=1}^g\tau_{h}\tau_{g+1-h}\,
,\qquad g\geq 0.\]
In this case, we thus have the following formula 
\[\scS^{[g]}_{0,\pp}(\cdot)=t_g\int_{(0,\infty)} \d A \,
A^{\frac{5g-7}{2}+\pp}\, \P(\BS{g}{A,\bzero^\pp}\in
\cdot\,).\]

\subsection{Perspectives}
A natural question, which we plan to investigate in future works,
is to derive the analog of Theorem~\ref{mainthm} for bipartite
quadrangulations on \emph{nonorientable compact surfaces}, using the
bijective techniques developed in \cite{ChapuyDolega,Bet22}. The first step of showing the existence of subsequential limits for nonorientable
quadrangulations without boundary has been taken in~\cite{ChapuyDolega}. Addressing this question would
complete the catalog of compact Brownian surfaces. 
 
As mentioned in the first section of this introduction, an important
aspect is that of \emph{universality} of the spaces 
$\BS{g}{\bL}$. In fact, we expect these spaces to be the scaling
limits of many other models of random maps on surfaces. In the case
of the Brownian sphere $\BS{0}{\varnothing}$, this was indeed verified
for several models; see the references mentioned above. In the case of
Brownian disks, we showed in~\cite{BeMi17} 
that the spaces $\BS{0}{(L^1)}$ appear as scaling limits of many
conditioned Boltzmann models. This approach to 
universality should generalize to our context, at the price of some
specific technicalities. We will not address this question here, but will comment
more on this in Section~\ref{sec:univ}. 

It would be most interesting to complete the bridge between Brownian
surfaces and LQG metrics and CFT. As was pointed to us by J.\
Miller, in order to define a canonical conformal
structure and Brownian motion on Brownian surfaces, it would
be natural to investigate whether the construction of general
Brownian surfaces given in the present paper, by gluing elementary
pieces of disk topologies along geodesic boundaries, can be made
compatible with the approach of
\cite{GwMiSh20,GwMiSh22} mentioned in the introduction. Knowing that
such a structure exists, one can try to delve even further 
into its integrability properties. The works
\cite{DaKuRhVa16,GuRhVa19} state precise conjectures linking
Liouville CFT with scaling limits of the area measure of random maps
(without boundary) after suitable 
uniformization. In a nutshell, the LQG metrics are local objects
that can be defined globally by using charts and atlases on general
Riemann surfaces. However, fixing a surface  amounts to fixing the
conformal modulus of the LQG metric, while Brownian
surfaces have a random modulus. Hence, the
computation of the law of this modulus is an important question, which 
has been solved by~\cite{ARS22mod} in the case of the annular
topology. It seems that the case of general compact surfaces should be
approachable as well given the recent developments on conformal
bootstrap in Liouville CFT~\cite{GuKuRhVa21,Wu22}. 

We also mention that random surfaces with boundaries of the type studied
in this paper are related
to the study of self-avoiding paths in random geometries. See
\cite{GwMi19a,GwMi21a} for more on this in
the case of the gluing of two Brownian half-planes or disks. It would
be interesting to explicitly describe the scaling limits of self-avoiding paths and
loops on maps of fixed topologies as gluings of Brownian surfaces along
boundaries. As N.\ Holden
pointed to us, this would involve presumably difficult computations of the partition functions
for self-avoiding loops in fixed classes of the fundamental group of
the surface, although this problem simplifies in the case of the
self-avoiding loop on a Brownian sphere \cite{AnHoSu23}.

\subsection{Organization of the paper}

In Section~\ref{secCVS}, we present the extension of the famous
Cori--Vauquelin--Schaeffer bijection allowing to encode a
quadrangulation with a simpler tree-like structure carrying integer
labels on its vertices. We also present a variant
of the bijection, which leads to the definition of the elementary
pieces into which we decompose a
quadrangulation. We finally state the relevant scaling limit results for
these elementary pieces. In Section~\ref{secmarkglue}, we present the surgical
operation we need in order to reconstruct a metric space from its
elementary pieces, namely gluing along geodesic segments. The proof of
Theorem~\ref{mainthm} and Proposition~\ref{proptopo} are
given in Section~\ref{sec:pfthm1}. In
Sections~\ref{seccvcs} and~\ref{seccvquad}, we present the metric
spaces forming the continuum elementary pieces into consideration and explain how they are
natural building blocks of the Brownian plane and half-plane, which
are the noncompact analogs of the Brownian sphere and disk, and we
tweak known convergence results to these noncompact Brownian surfaces
to prove that the continuum elementary pieces are the scaling limits
of the discrete elementary pieces. Finally, we give in
Section~\ref{sec:univ} an alternate description of Brownian surfaces
that does not involve gluing operations and that is closer to the
usual definition of the Brownian sphere and disks.

\section{Variants of the Cori--Vauquelin--Schaeffer bijection}\label{secCVS}

As is customary when studying on scaling limits of maps, this work strongly
relies on powerful encodings of discrete maps by tree-like objects. We
now present variants of the famous Cori--Vauquelin--Schaeffer (CVS)
bijection \cite{CoVa,schaeffer98} between plane quadrangulations and so-called \emph{well-labeled trees}, and its generalizations
by Chapuy--Marcus--Schaeffer \cite{ChMaSc} for higher genera and by Bouttier--Di Francesco--Guitter~\cite{BdFGmobiles} for plane maps with faces of arbitrary degrees. We only give the constructions from the
encoding objects to the considered maps and refer the reader to the
aforementioned works for converse constructions and proofs.

\subsection{Basic construction}\label{sec:basic-construction}

Let $\bm$ be a map, rooted or not, and~$f$ be a face of~$\bm$. Starting from a choice of a corner~$c_0$ in~$f$, we index the
subsequent corners of~$f$ in counterclockwise order as $(c_i,i\in \Z)$ (forming a periodic sequence). Let $\lambda:V(\bm)\to\Z$ be a labeling of the vertices of~$\bm$ by integers. We extend the definition of~$\lambda$ to the corners of the map by setting $\lambda(c)=\lambda(v)$ if~$v$ is the vertex incident to the corner~$c$. In what follows, we will
either consider that~$\lambda$ is defined up to addition of a
constant, or that the value of~$\lambda$ at some corner is fixed, for
instance that $\lambda(c_0)=0$. 

We say that $(\bm,\lambda)$ is \emph{well labeled} inside~$f$ if
$\lambda(c_{i+1})\geq \lambda(c_i)-1$ for every $i\geq 0$. In
particular, if $(\bm,\lambda)$ is well labeled inside~$f$ and~$e$ is
a half-edge of~$\bm$ such that both~$e$ and its reverse~$\bar{e}$ are incident to~$f$, then
$|\lambda(e^+)-\lambda(e^-)|\leq 1$, where~$e^-$, $e^+$ denote
the origin and end of~$e$. Note that this will be the case for
every edge when~$\bm$ is a map with a single face.

Let $(\bm,\lambda)$ be well labeled inside~$f$. With the above
notation, let us define $s(i)=\inf\{j>i:\lambda(c_j)=\lambda(c_i)-1\}\in \Z\cup \{\infty\}$ and the \emph{successor} of~$c_i$ as $s(c_i)=c_{s(i)}$, where~$c_\infty$ is
by convention the unique corner incident to a vertex~$v_*$ that is
added in the interior of~$f$, and which naturally carries the label
$\lambda(v_*)=\min\{\lambda(c_i),i\in \Z\}-1$.  Clearly, $s(c_i)$ is
then well defined for all corners (distinct from~$c_\infty$), and only
depends on the corner~$c_i$ and not on the particular choice of
the index~$i$. The \emph{CVS construction inside the face~$f$} consists 
in
\begin{itemize}
\item 
linking by an arc every corner~$c$ incident to~$f$ to its successor
$s(c)$, in such a way that arcs do not cross, which is always possible
due to the well labeling condition, 
\item
deleting all the edges of $\bm$. 
\end{itemize}
This construction results in an embedded graph\footnote{In general,
  this embedded graph is not a map of the surface into
  consideration. In all the constructions we will use in this work, it
  will, however, always turn out to be a map.} denoted by $\CVS(\bm,\lambda;f)$, whose vertex set consists of~$v_*$ and the vertices
of~$\bm$ incident to~$f$, and whose edges are the arcs between the
corners of $f$ and their successors. By construction, the edges of $\CVS(\bm,\lambda;f)$ are in bijection with the corners of~$\bm$ incident to~$f$. If~$\bm$ is rooted inside~$f$, say at the corner~$c_i$, then $\CVS(\bm,\lambda;f)$ naturally inherits a root at the corner preceding the arc linking~$c_i$ to~$s(c_i)$. 
Note that the well labeling condition, as well as the output $\CVS(\bm,\lambda;f)$,
are invariant under addition of a constant to~$\lambda$, as they should. 

\bigskip
We will also need an \emph{interval variant} of this construction, where
we fix a sequence, also referred to as an \emph{interval}, of subsequent corners $I=\{c_0,c_1,\ldots,c_r\}$ of a
face~$f$ of~$\bm$, and only ask that $(\bm,\lambda)$ is \emph{well
 labeled} on~$I$ in the sense that $\lambda(c_{i+1})\geq
\lambda(c_i)-1$ for $0\le i \le r-1$. In this case, we set $\lambda_*=\min\{\lambda(c_i),0\leq i\leq r\}-1$ and
$\ell=\lambda(c_r)-\lambda_*$. Instead of a single extra corner~$c_\infty$, we introduce inside~$f$ a
sequence of distinct consecutive corners
$c_{r+1}$, $c_{r+2}$, \ldots, $c_{r+\ell}$, incident to new vertices
$v_{r+1}$, $v_{r+2}$, \ldots, $v_{r+\ell}$ with labels
$\lambda(c_r)-1$, $\lambda(c_r)-2$, \ldots, $\lambda_*$. The successor
mapping~$s$ is then defined for all corners except~$c_{r+\ell}$. We let
$\CVS(\bm,\lambda;I)$ be the resulting (nonrooted) embedded graph whose edges are the
arcs. In this embedded graph, the following are of particular interest:
\begin{enumerate}[label=(\textit{\arabic*})]
	\item the \emph{apex}~$v_{r+\ell}$, which will usually be
          denoted with a subscript $*$; 
	\item the \emph{maximal geodesic}, which is the chain of arcs linking~$c_0$, $s(c_0)$, $s(s(c_0))$,
\ldots, $c_{r+\ell}$, and which will always be denoted with the
letter~$\gamma$ and depicted in red \MEMS{(darker color) }{}in the figures;
	\item the \emph{shuttle}, which is the chain of arcs linking
          $c_r$, $c_{r+1}$, \ldots, $c_{r+\ell}$, and which will
          always be denoted with the letter~$\xi$ and depicted in
          green \MEMS{(lighter color) }{}in the figures.
\end{enumerate}
Note that the two latter are paths from the first and last corners of~$I$ to the apex.

The construction generalizes to several intervals~$I$, $J$, \ldots\
that pairwise share at most one extremity. In the case of a shared
extremity, say $I=\{c_0,c_1,\ldots,c_r\}$ and
$J=\{c'_0,c'_1,\ldots,c'_{r'}\}$ with $c_r=c'_0$, one first duplicates
the common corner before applying the construction, in the sense that
the copy~$c_r$ is used in the shuttle of~$I$ and~$c'_0$ is used in the
maximal geodesic of~$J$; see Figure~\ref{figintCVS}. In this construction, each interval
yields a distinct apex, maximal geodesic, and shuttle, and the
construction results in an embedded graph denoted by
$\CVS(\bm,\lambda;I,J,\ldots)$. Plainly, the ordering of the intervals
does not affect the construction. 

\begin{figure}[ht!]	
	\centering\includegraphics[width=.95\linewidth]{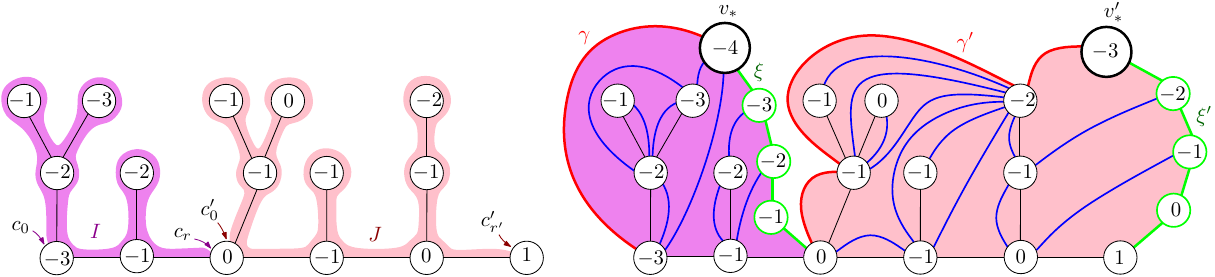}
	\caption{Performing the interval variant of the
          Cori--Vauquelin--Schaeffer bijection with two intervals
          sharing an extremity. The interval~$I$ consists of the
          corners in the purple (darker) area, starting with~$c_0$ and ending
          with~$c_r$, while~$J$ consists of the corners in the red
          (lighter) area, starting with~$c'_0$ and ending
          with~$c'_{r'}$. The interval~$I$ yields the apex~$v_*$,
          maximal geodesic~$\gamma$, and shuttle~$\xi$, while~$J$
          yields respectively~$v'_*$, $\gamma'$, and~$\xi'$. As will
          be the case in all the figures, the maximal geodesics are in
          red \MEMS{(darker colored boundary) }{}and the shuttles in green\MEMS{ (lighter colored boundary)}.}
	\label{figintCVS}
\end{figure}

We make the important observation that any chain 
$c$, $s(c)$, \ldots, $s^i(c)$ of consecutive successors induces
a geodesic chain for the graph metric in the resulting embedded graph
$\CVS(\bm,\lambda;I,J,\dots)$, that is, a path of minimal length between its extremities. This is
simply because, by construction, any arc of the resulting embedded graph links
two vertices~$u$ and~$v$ such that $|\lambda(u)-\lambda(v)|=1$, and
because~$\lambda$ decreases by~$1$ at every step on a chain of consecutive
successors. In particular, the maximal geodesics and shuttles of
$\CVS(\bm,\lambda;I,J,\ldots)$ are geodesic chains.

\subsection{The generalized Chapuy--Marcus--Schaeffer bijection}\label{secCMS}

\paragraph{Encoding quadrangulations.}
As a first example, let us perform this construction on a particular class of maps. For $n\in\Zp$ and $\bl=(l^1,\dots,l^\kk) \in (\Zp)^\kk$, we
let~$\RbM^{[g]}_{n,\bl}$ be the set of labeled rooted maps $(\bm,\lambda)$
satisfying the following properties:
\begin{itemize}
	\item $\bm$ is a map of genus~$g$ with $n+\sum_{i=1}^\kk l^i$ edges, one internal face~$f_*$ and~$\kk$ holes~$\ch_1$, \dots, $\ch_\kk$, rooted at a corner of its internal face~$f_*$;
	\item for all~$i$, the hole~$\ch_i$ is of degree~$l^i$; if it is an external face, then it has a simple boundary\footnote{A face has a \emph{simple boundary} if it is incident to as many vertices as its degree.};
	\item for any $i\neq j$, if~$\ch_i$ and~$\ch_j$ are faces, then they are not incident to any common edge;
	\item $(\bm,\lambda)$ is well labeled inside $f_*$. 
\end{itemize}
We similarly define the set $\bM^{[g]}_{n,\bl}$ of labeled nonrooted maps. Setting $\bl0=(l^1,\dots,l^\kk,0)$, the CVS construction applied to the internal face~$f_*$ provides a bijection between $\smash{\bM^{[g]}_{n,\bl}}$ and~$\smash{\bQ^{[g]}_{n,\bl0}}$, through which the~$k$ first holes correspond, while the extra hole~$\ch_{\kk+1}$ of the quadrangulation is the extra vertex~$v_*$ of the construction. In case of rooted maps, it yields a one-to-two correspondence\footnote{The factor~$2$ comes from the fact that the corners of~$f_*$ correspond to the edges of the resulting map, each edge corresponding to~$2$ half-edges. We refer the
interested reader to \cite[Section~3.1]{Bet16geo} for a presentation of the reverse mapping.} between $\smash{\RbM^{[g]}_{n,\bl}}$ and~$\smash{\RbQ^{[g]}_{n,\bl0}}$.

\paragraph{Decomposition into elementary pieces.}
Let us now perform the construction on the same set of maps~$\smash{\bM^{[g]}_{n,\bl}}$ but with well-chosen intervals. We will decompose a map of~$\bM^{[g]}_{n,\bl}$ into a collection of labeled forests indexed by an underlying structure called the \emph{scheme}. 
For the remainder of this section, we exclude the cases $(g,\kk)\in \{(0,0),(0,1)\}$ leading to encoding objects not entering the upcoming framework. We fix $(\bm,\lambda)\in\bM^{[g]}_{n,\bl}$.

\begin{figure}[htb!]
	\centering\includegraphics[width=.95\linewidth]{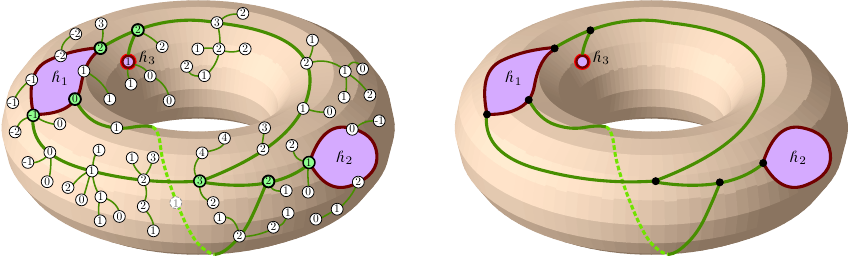}
	\caption{\textup{\textgras{Left.}} A labeled map from $\bM^{[1]}_{63,(6,3,0)}$. The outlined vertices are its nodes and the thicker edges correspond to the map~$\wt\bm$. \textup{\textgras{Right.}} The corresponding scheme.}
	\label{scheme}
\end{figure}

Let~$\wt\bm$ be the nonrooted map obtained from~$\bm$ by iteratively removing all its vertices of degree~$1$ that are not holes. The resulting map~$\wt\bm$ may be seen as a submap of~$\bm$: the map~$\bm$ is obtained from~$\wt\bm$ by appending rooted labeled trees at its corners. 
We call \emph{nodes} of~$\bm$ the following vertices:
\begin{itemize}
	\item the external vertices of~$\bm$;
	\item the vertices of~$\bm$ having degree~$3$ or more in~$\wt\bm$.
\end{itemize}
These nodes are linked in~$\wt\bm$ by maximal
chains of edges not containing any nodes other than their
extremities. 
Replacing every such chain with a single edge yields a nonrooted map~$\bs$,
called the \emph{scheme}~of $\bm$. 
It has one internal face, still denoted by~$f_*$, and~$\kk$ holes, still denoted by~$\ch_1$, \ldots, $\ch_{\kk}$; see Figure~\ref{scheme}.

We denote by~$\vec E(\bs)$ the set of half-edges incident to the
internal face of~$\bs$; this set is partitioned into the set~$\vec
I(\bs)$ of half-edges whose reverses belong to~$\vec E(\bs)$ as well, and the
set~$\vec B(\bs)$ of half-edges whose reverses do not belong to~$\vec
E(\bs)$. (We used the letter~$I$ for \emph{internal} and~$B$ for \emph{boundary}.)
The set $\vec{B}(\bs)$ is further partitioned as
\begin{equation*}
  \vec{B}(\bs)=\bigsqcup_{1\leq r\leq \kk}\vec{B}_r(\bs)
\end{equation*}
where $\vec{B}_r(\bs)$ is either empty if~$\ch_r$ is a vertex, or the set of half-edges of
$\vec{E}(\bs)$ whose reverse are incident to~$\ch_r$ if it is a face.
We consider $e\in\vec E(\bs)$. It corresponds to a chain~$e_1$, \dots,
$e_j$ of half-edges in~$\bm$. Let us denote by~$c_e$ and~$c'_e$ the
corners of~$\wt\bm$ preceding~$e_1$ and succeeding~$e_j$ in the
contour order. In~$\bm$, there are several corners that make up~$c_e$
and~$c'_e$. The corner interval~$I_e$ is the interval of corners
of~$\bm$ from the \textgras{first} corner corresponding to~$c_e$ to the \textgras{first}
corner corresponding to~$c'_e$. Observe that, in~$\bm$, the tree
grafted at~$c_e$ is thus covered by~$I_e$, whereas the tree grafted
at~$c'_e$ is not. 

By construction, $\bigcup_{e\in\vec E(\bs)} I_e$ is equal to the set
of corners of~$f_*$ and each extremity of these intervals is shared by
exactly two such intervals. More precisely, the intervals~$I_e$,
$e\in\vec E(\bs)$, with their last corner removed give a partition of
the corners of~$f_*$. 
Applying the
interval CVS construction $\CVS(\bm,\lambda; \{I_e, e\in\vec
E(\bs)\})$ gives a natural decomposition of the quadrangulation
$(\bq,v_*)=\CVS(\bm,\lambda;f_*)$ into submaps, whose study, starting
in the next section, are the key to this work; see
Figure~\ref{fig:decompq}. These submaps are called the \emph{elementary
 pieces} of $(\bq,v_*)$ and are of two types: 
the ones corresponding to half-edges of~$\vec B(\bs)$ are called
\emph{(composite) slices} and the ones corresponding to half-edges
of~$\vec I(\bs)$ are called \emph{quadrilaterals (with geodesic sides)}. 
They are not rooted and come with distinguished
vertices on their boundaries that will be discussed later on. 

\begin{figure}[htb!]
	\centering\includegraphics[width=.95\linewidth]{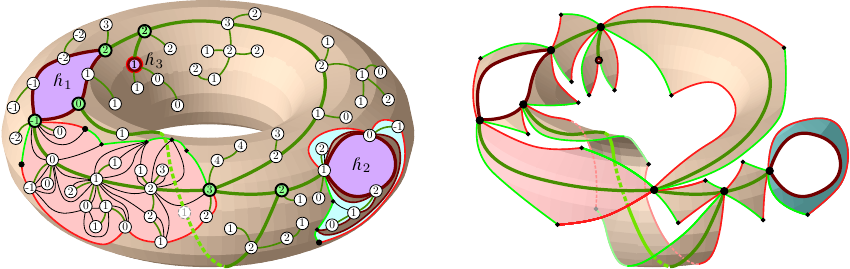}
	\caption{Performing the interval bijection on the labeled map
          from Figure~\ref{scheme}. \textup{\textgras{Left.}} Two
          elementary pieces are represented: one quadrilateral with
          geodesic sides in red\MEMS{ (light color, bottom left)}, and one composite slice
          in blue\MEMS{ (even lighter color, around $\ch_2$)}. \textup{\textgras{Right.}} The interval bijection yields a decomposition into~4 composite slices and 7 quadrilateral with geodesic sides. Here, only the maximal geodesics and shuttles are depicted. We let the edges of the original scheme figure on this output map, but these are neither edges nor chains of edges of this output map (remember that the edges of the original map are never edges of the output map).}
	\label{fig:decompq}
\end{figure}

The elementary piece corresponding to the half-edge $e\in\vec E(\bs)$ is encoded by the part of the labeled map $(\bm,\lambda)$ corresponding to
\begin{itemize}
	\item either the interval~$I_e$ if $e\in\vec B(\bs)$,
	\item or the union $I_e\cup I_{\bar e}$ if $e\in\vec I(\bs)$, where~$\bar e$ denotes the reverse of~$e$.
\end{itemize}
These encoding parts are depicted on Figure~\ref{fig:decompm}. Note that, when $e\in\vec I(\bs)$, the elementary pieces corresponding to~$e$ and to its reverse~$\bar e$ are the same map; only the distinguished vertices on the boundary will differ (more precisely be given in a different order). We refer the reader
to \cite[Section~3.4.1]{Bet16geo} for more on this decomposition, keeping in mind that, in the latter reference, the maps are rooted and the root is encoded in the scheme, which essentially amounts in seeing the root of the map as an extra external vertex. 

\begin{figure}[htb!]
	\centering\includegraphics[width=.95\linewidth]{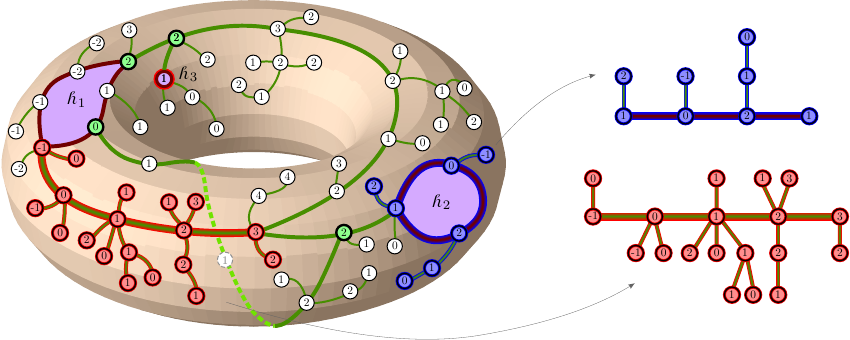}
	\caption{The parts of the labeled map from Figure~\ref{scheme}
          encoding the elementary pieces. The two parts corresponding
          to the quadrilateral with geodesic sides and the composite
          slice from Figure~\ref{fig:decompq} are extracted. The red
          and blue colors match.}
	\label{fig:decompm}
\end{figure}

\paragraph{Finiteness of the number of schemes.}
We will elaborate more on elementary pieces in the next two sections and end this one with a simple combinatorial lemma. We say that a map with holes is a \emph{scheme} if it has one internal face, all its external faces have a simple boundary and do not share a common incident edge, and all its internal vertices have degree~$3$ or more.

\begin{lmm}\label{lemnbschemes}
For fixed values of $(g,\kk)\notin\{(0,0),(0,1)\}$, there are finitely many genus~$g$ schemes with~$\kk$ holes 
and these have at most $3(2g+\kk-1)$ edges and $2(2g+\kk-1)$ vertices.
\end{lmm}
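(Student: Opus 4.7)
\medskip
\noindent\textbf{Proof plan.} The plan is to combine Euler's formula with the degree constraints built into the definition of a scheme to obtain the claimed linear bounds on edges and vertices, and then to deduce finiteness from a standard combinatorial counting argument.

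First, I would fix notation. Let $\bs$ be a scheme of genus $g$ with $\kk$ holes, and split $\kk = \kk_v + \kk_f$ according to the numbers of external vertices and external faces. Writing $v$, $e$, $f$ for the total numbers of vertices, edges, faces, and $v_i = v - \kk_v$ for the number of internal vertices, we have $f = 1 + \kk_f$. Euler's relation $v - e + f = 2 - 2g$ then rearranges to
\[
e = v_i + \kk + 2g - 1\,.
\]

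Next I would exploit the degree hypotheses. By definition every internal vertex has degree $\geq 3$, and, since a map is connected and the excluded cases are precisely those allowing an isolated-point scheme, every external vertex has degree $\geq 1$. The handshake lemma therefore yields $2e \geq 3v_i + \kk_v$. Substituting the Euler expression for $v_i$ and simplifying, one gets
\[
e \leq 3(2g+\kk-1) - \kk_v \leq 3(2g+\kk-1)\,,
\]
and feeding this back into $v = v_i + \kk_v = e - \kk - 2g + 1 + \kk_v$ produces the vertex bound $v \leq 2(2g+\kk-1)$.

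Finally, finiteness follows from the bounded combinatorial data. With at most $2(2g+\kk-1)$ vertices and $3(2g+\kk-1)$ edges, there are finitely many abstract multigraphs (possibly with loops) to consider. On a fixed oriented surface of genus $g$, each such multigraph admits only finitely many cellular embeddings up to orientation-preserving homeomorphism, as maps can be encoded combinatorially by a pair of permutations on the finite set of half-edges. Finally, the choice of which subset of vertices and faces are declared external (and their labeling by the indices $1,\dots,\kk$) is also finite, and the simple-boundary and disjoint-boundaries conditions only restrict this finite set further.

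The subtle point to check carefully is the degree lower bound for external vertices: a priori a scheme could contain an isolated external vertex, in which case our degree count would fail. However, connectedness of maps forces this to happen only when $\bs$ itself consists of a single point, which is precisely the configuration arising in the excluded case $(g,\kk)=(0,1)$ with a vertex hole. The other excluded case $(g,\kk)=(0,0)$ is ruled out automatically, since then the inequality $2e \geq 3v$ combined with Euler's formula forces $e \leq -3$.
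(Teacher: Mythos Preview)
Your proof is correct and follows essentially the same approach as the paper: combine Euler's formula $v-e+f=2-2g$ with $f=1+\kk_f$ and the degree inequality $2e\geq 3v_i+\kk_v$ to bound $e$ and then $v$, and deduce finiteness from the fact that there are only finitely many maps with a bounded number of edges. Your treatment is slightly more detailed on two minor points---the justification that external vertices have degree at least $1$ (via connectedness and the exclusion of $(0,1)$), and the finiteness argument via permutation encodings---but the substance is identical.
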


\begin{proof}
As there is a finite number of maps with a given number of edges, the bound on the number of edges yields the finiteness of the considered set.

Let $v$, $e$, $f$ be the number of vertices, edges and faces of a given scheme as in the statement, and let~$\bb$ be its number of external faces (so that $\pp=\kk-\bb$ are external vertices). By construction, we have $f=\bb+1$ and the vertices are all of degree at least~$3$, except possibly up to~$\pp$ of them, which have degree at least~$1$. The sum of the degrees of the vertices being twice the number of edges, we obtain $2e\geq 3(v-\pp)+\pp$, and we see 
by the Euler characteristic formula $v-e+f=2-2g$ that the considered scheme has at most $6g+3\kk-\pp-3\leq 3(2g+\kk-1)$ edges, and at most $2(2g+\kk-1)$ vertices,
using that $\kk=\pp+\bb$.
\end{proof}

\subsection{Composite slices}\label{sec:comp-sli}

We call \emph{plane forest} a collection
$\bff=(\bt^0, \dots, \bt^{l-1}, \rho^{l})$, for some $l\ge 1$, of rooted plane trees (the
last one being reduced to the vertex-tree), 
which we view systematically as a map by taking an embedding of every~$\bt^i$ in the upper half-plane $\R\times \Rp$, with root~$\rho^i$ at the point $(i,0)$, and in which~$\rho^i$ is linked to~$\rho^{i-1}$ by the line segment between $(i,0)$ and $(i-1,0)$, for
$1\leq i\leq l$. The union of these line segments is called the \emph{floor}
of the forest. 
The resulting embedded graph, which we still denote by~$\bff$ (see e.g.\ the left of Figure~\ref{fig:bdry_tri}) is a nonrooted plane map coming with the two distinguished vertices $\rho=\rho^0$ and $\br\rho=\rho^l$; it is in fact a plane tree, but we insist on calling
it a forest. 

We let~$a$ be the total number of edges of~$\bt^0$, \ldots, $\bt^{l-1}$, and $I=\{c_0,c_1,\ldots,c_{2a+l}\}$ be the interval of corners of~$\bff$
that are incident to the upper half-plane (hence excluding the corners
that are ``below'' the floor), starting from the root corner of~$\bt^0$ and ending with the only corner incident to~$\rho^l$, arranged in the usual contour order. 
We now equip~$\bff$ with an integer-valued labeling
function~$\lambda:V(\bff)\to \Z$, again defined up to addition of a
constant, that we require to satisfy the well labeling condition in the
interval~$I$. In this case, it means that
\begin{itemize}
\item $\lambda(u)-\lambda(v)\in \{-1,0,1\}$ whenever $u$ and $v$ are
  neighboring vertices of the same tree;
\item for $1\le i \le l$, we have
  $\lambda(\rho^{i})\ge \lambda(\rho^{i-1})-1$. 
\end{itemize}

\begin{figure}[htb!]
 \centering
\includegraphics[width=.95\linewidth]{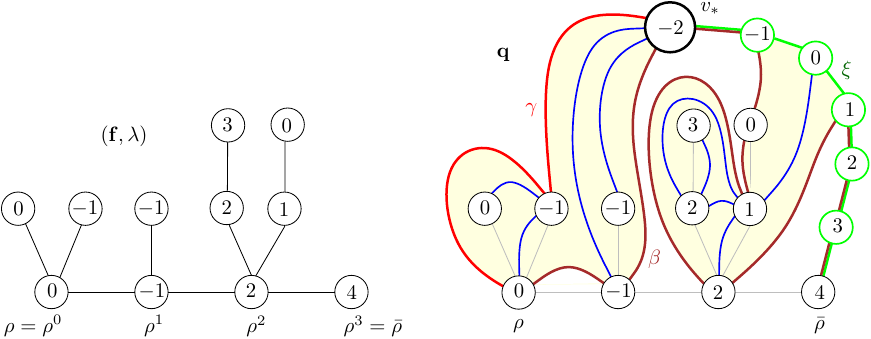} 
\caption{The interval Cori--Vauquelin--Schaeffer bijection giving
  composite slices. On this example, the forest has $a=7$ edges and
  $l=3$ trees (remember that the last vertex-tree does not count as a
  ``real'' tree). The boundary of~$\bq$ has three parts: the maximal
  geodesic~$\gamma$ (in red), the shuttle~$\xi$ (in green) and the
  base~$\beta$ (in burgundy). Its tilt is~$4$.}
 \label{fig:bdry_tri}
\end{figure}

The map $\bsl=\CVS(\bff,\lambda;I)$ is then a nonrooted plane quadrangulation with
one hole having~$a$ internal faces. Setting $\lambda_*=\min\{\lambda(v):v\in V(\bff)\}-1$, we see that the boundary of~$\bsl$ has length
$2(\lambda(\rho^{ l})-\lambda_*+ l)$. It contains three distinguished
vertices: $\rho$, the apex~$v_*$ (the extra vertex with
label~$\lambda_*$), and~$\br\rho$, as well as three distinguished paths:
\begin{enumerate}[label=(\textit{\arabic*})]
\item 
the maximal geodesic~$\gamma$, which has length
$\lambda(\rho^0)-\lambda_*$;
\item the
shuttle~$\xi$, which has length $\lambda(\rho^{
  l})-\lambda_*$;
\item the remaining boundary segment, called the \emph{base} and
  denoted by~$\beta$, consisting in the arcs connecting the root
  vertices of the 
trees. More precisely, if~$c_j$ denotes the last corner of the
tree~$\bt^i$, then the part of the boundary of~$\bsl$ between~$\rho^i$
and~$\rho^{i+1}$ consists in the arc linking~$c_j$ to~$s(c_j)$ and the
successive arcs linking~$c_{j+1}$, $s(c_{j+1})$, $s(s(c_{j+1}))$,
\ldots, $s(c_j)$. As a result, this base has
length $\lambda(\rho^{l})-\lambda(\rho^0)+2 l$. Moreover, any vertex
of the base is at distance at most $\max_{1\leq i\leq
  l}|\lambda(\rho^i)-\lambda(\rho^{i-1})|+1$ from some element of
the set $\{\rho^0,\ldots,\rho^l\}$. 
\end{enumerate}

Note that, as is the case in Figure~\ref{fig:bdry_tri}, the base may
overlap with the other distinguished paths. Furthermore, as noted at
the end of Section~\ref{sec:basic-construction}, the maximal geodesic
and the shuttle are geodesic chains. On the contrary, the base is not
a geodesic in general.

\begin{defn}
A map obtained by this construction will be called a \emph{discrete composite slice}, or simply \emph{slice} for short: its \emph{area} is the integer~$a$,
its \emph{width} is the integer~$l$ and its \emph{tilt} is
defined as the integer
\[\delta=\lambda(\br\rho)-\lambda(\rho).\]
\end{defn}

The terminology of composite slices, width and tilt are borrowed
from~\cite{BouttierHDR}; however, the reader should mind that our 
exact definitions
differ slightly from those in that reference\footnote{In particular, in \cite{BouttierHDR}, the width is the length
of the base, equal to $2l+\delta$ in our notation,  
and the tilt is the opposite~$-\delta$ of what we call the tilt in
this paper.}. Note also that, in the present work, we use the simplified terminology of \emph{slice} in order to designate a composite slice. Beware that these have not to be confused with similar objects existing in the literature, in particular in our previous work \cite{BeMi17}, called \emph{elementary slices} or also slices for short; they actually correspond to composite slices of width~$0$, objects that we do not consider here.

We record the following useful counting result. 
\begin{prp}\label{countslice}
The number of slices with area~$a$, width~$l$ and tilt~$\delta$ is equal to
\[3^a\, \frac{l}{2a+l}\, \binom{2a+l}{a}\, \binom{2l+\delta-1}{l-1},\]
which can also be recast as
\[12^a \, 8^l\,  2^\delta \, Q_l(2a+l)\, P_l(\delta),\]
where $Q_\ell(u)$ is the probability that a simple random walk hits~$-\ell$ for the first time at time~$u$, and $P_\ell(j)=\P(G_1+\dots+G_\ell=j)$, where $G_1$, $G_2$, \dots\ are
independent random variables with shifted Geometric(1/2) law, i.e.,
such that $\P(G_1=j)=2^{-j-2}$ for $j\geq -1$.
\end{prp}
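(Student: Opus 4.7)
The plan is to invert the interval CVS construction of Section~\ref{sec:comp-sli} to reduce the count of slices to a count of labeled forests, and then enumerate the latter by decoupling the shape of the forest from the choice of labeling.

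First, by the interval CVS construction, a discrete composite slice of area~$a$, width~$l$, and tilt~$\delta$ is the image of a labeled forest $(\bff,\lambda)$, where $\bff=(\bt^0,\ldots,\bt^{l-1},\rho^l)$ has $l$ ``genuine'' trees with a total of $a$ edges, and $\lambda$ is an integer-valued well-labeling of $\bff$ on the upper interval~$I$, specified up to an additive constant, with $\lambda(\rho^l)-\lambda(\rho^0)=\delta$. The converse map $\bsl\mapsto(\bff,\lambda)$ is a standard fact established in the works cited at the start of Section~\ref{secCVS}. To kill the additive-constant ambiguity, I normalize by $\lambda(\rho^0)=0$.

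Next, I split the enumeration as (number of forest shapes) $\times$ (number of valid labelings). The number of plane forests with $l$ ordered rooted trees and $a$ total edges equals $\frac{l}{2a+l}\binom{2a+l}{a}$; this is a classical Raney-type identity, obtained, for instance, by encoding the forest contour as a $\pm 1$ lattice path from $0$ to $-l$ of length $2a+l$ that stays nonnegative before its first hit of $-l$, and invoking the cycle lemma. Given such a forest, a valid labeling is determined by two independent pieces of data: (i) for each of the $a$ tree edges, an independent increment in $\{-1,0,+1\}$, yielding a factor $3^a$; and (ii) for $1\le i\le l$, a root-to-root increment $e_i=\lambda(\rho^i)-\lambda(\rho^{i-1})\ge -1$ with $\sum_i e_i=\delta$. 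Setting $f_i=e_i+1\ge 0$ so that $\sum_i f_i=\delta+l$, the number of such tuples is $\binom{\delta+l+l-1}{l-1}=\binom{2l+\delta-1}{l-1}$. Multiplying these three contributions yields the first formula.

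The second formula follows by direct substitution using two standard identities. The ballot/cycle-lemma formula $Q_l(u)=\frac{l}{u}\binom{u}{(u+l)/2}\,2^{-u}$, specialized at $u=2a+l$, gives $\frac{l}{2a+l}\binom{2a+l}{a}=2^{2a+l}Q_l(2a+l)=4^a\cdot 2^l\cdot Q_l(2a+l)$. Observing that $H_i:=G_i+1$ is a standard $\mathrm{Geom}(1/2)$ variable taking values in $\Zp$, the $l$-fold convolution $H_1+\cdots+H_l$ follows a negative-binomial law, so that $P_l(\delta)=\P(\sum_i H_i=\delta+l)=\binom{2l+\delta-1}{l-1}\,2^{-(2l+\delta)}$. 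Plugging these two identities into the first formula yields $3^a\cdot 4^a\cdot 2^l\cdot 2^{2l+\delta}\cdot Q_l(2a+l)\cdot P_l(\delta)=12^a\cdot 8^l\cdot 2^\delta\cdot Q_l(2a+l)\cdot P_l(\delta)$, as claimed. There is no real obstacle here: the only nontrivial input is the bijectivity of the interval CVS construction, and the factorization of the well-labeling condition into tree-internal increments and root-to-root increments is transparent from the definitions.
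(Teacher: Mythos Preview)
Your proof is correct and follows essentially the same approach as the paper's own proof: both decompose the count as (number of forest shapes) $\times$ (number of root labelings) $\times$ (number of internal labelings), yielding the three factors in the stated order, and both recast the formula probabilistically via the ballot-problem identity for $Q_l$ and the negative-binomial identity for $P_l$. You supply more detail on the cycle-lemma and negative-binomial steps than the paper does, but the argument is the same.
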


\begin{proof}
The term $\frac{l}{2a+l}\binom{2a+l}{a}$ is the number of
forests with~$l$ trees and~$a$ non-floor edges, the term
$\binom{2l+\delta-1}{l-1}$ counts the number of possible
ways to well label the roots, and the term~$3^a$ counts the number of
ways to well label the other vertices, since it amounts to choosing a
label difference in $\{-1,0,1\}$ along each edge.

The probabilistic form is a simple exercise using the encoding of forests and geometric walks
by simple walks, yielding $\frac{l}{2a+l}\binom{2a+l}{a}=2^{2a + l}\, Q_l(2a+l)$ and $\binom{2l+\delta-1}{l-1}=2^{2l+\delta} \, P_l(\delta)$. See Section~\ref{secUIHPQ} and \cite[Lemma~6]{bettinelli11b}. 
\end{proof}

An important feature of the construction is that the labels on $V(\bsl)$ inherited from those on~$V(\bff)$ are exactly the relative distances to~$v_*$ in~$\bsl$:
\[d_\bsl(v,v_*)=\lambda(v)-\lambda_*,\qquad v\in V(\bsl),\]
and that the following bound holds:
\begin{equation}\label{dd0bound}
d_\bsl(c_i,c_j)\le \lambda(c_i)+\lambda(c_j)-2\min_{i\le r \le j}\lambda(c_r)+2,\qquad i\le j .
\end{equation}

If $\bsl$ is a slice, using a slightly different convention from that
of Section~\ref{sec:main-conv-result}, we view it as the marked
measured metric space in $\M^{(5,2)}$ given by 
\begin{equation}\label{mmsslice}
\big(V(\bsl),d_\bsl,\partial \bsl,\mu_\bsl,\nu_{\beta}\big)\qquad\text{ with }\qquad \partial\bsl=\big(\beta,\rho,\gamma,\br{\rho},\xi\big),
\end{equation}
where each boundary part is identified with the vertices it contains,
where~$\mu_\bsl$ is the counting measure on the vertices of~$\bsl$
that \textgras{do not belong to the shuttle}, and where~$\nu_\beta$ is
the counting measure (with multiplicities) on $\beta\setminus\{\br\rho\}$. The measures~$\mu_\bsl$ and~$\nu_{\beta}$ are respectively called the \emph{area measure} and the \emph{base measure} of the slice. It might be surprising at this point to include~$\rho$ and $\br{\rho}$ in the marking as these can be found from the other three marks; they are here to enter the framework of geodesic marks introduced in Section~\ref{sec:geodesicmarks}. The idea is that the data of $(\rho,\gamma)$ suffice to recover the maximal geodesic as an \emph{oriented} path, whereas the data of~$\gamma$ (as a set of vertices) do not give the orientation of the path.

\subsection{Quadrilaterals with geodesic sides}\label{sec:quadr-with-geod}

Consider a \emph{double forest}, that is, a pair $(\bff,\br\bff)$ of plane forests with the same number of trees. Let $h\ge 1$ denote this common number of trees and recall that this means that~$\bff$ and~$\br\bff$ have~$h$ trees plus an additional vertex-tree. Similarly to the previous section, we represent it by letting
\begin{itemize}
	\item the floors be both sent to the chain linking the points $(i,0)\in \R^2$, where $0\leq i\leq h$,
	\item the trees of~$\bff$ be contained in the upper half-plane $\R\times \Rp$, the $i$-th tree attached to $(i-1,0)$, for $1\leq i\leq h$,
	\item and the trees of~$\br\bff$ be contained in the lower half-plane, the $i$-th tree attached to $(h-i+1,0)$, for $1\leq i\leq h$.
\end{itemize}
We obtain a nonrooted plane map, which we denote by~${\bff}\cup{\br\bff}$, coming with the two
distinguished vertices $\rho=(0,0)$ and $\br\rho=(h,0)$. Here also, it
is in fact a plane tree having two distinguished vertices. 

We let $I=\big\{c_0, c_1, \ldots, c_{2a+h}\big\}$ be the interval of corners of~${\bff}\cup{\br\bff}$, in
facial order, that are incident to the upper half-plane, and $\br
I=\big\{\br c_0,\br c_1,\ldots,\br c_{2\br a+h}\big\}$ the interval of
those incident to the lower half-plane, where~$a$ (resp.~$\br a$) is
the number of edges in the trees of the upper (resp.\ lower) half-plane. As mentioned during Section~\ref{sec:basic-construction}, we use the slightly unusual convention
that $c_{2a+h}\neq \br c_0$ (and similarly $\br c_{2\br a+h}\neq c_0$): this means that the first corner incident to~$\rho$ is ``split'' in two corners, one in the upper half-plane and
one in the lower half-plane.

Finally, assume that, in its unique face, the map~${\bff}\cup{\br\bff}$ is well labeled by an integer function~$\lambda:V({\bff}\cup{\br\bff})\to\Z$ defined up to addition of a constant: this simply means that $\lambda(u)-\lambda(v)\in \{-1,0,1\}$ whenever~$u$ and~$v$
are neighboring vertices. See Figure~\ref{fig:quadrilateral} for an
example. Note that, equivalently, a well-labeled double forest
$((\bff,\bar{\bff}),\lambda)$ can be seen as a well-labeled \emph{vertebrate}, that is a well-labeled tree with two distinct
distinguished vertices~$\rho$, $\bar{\rho}$, where the interval~$I$
corresponds to the consecutive corners in the contour order from~$\rho$
to~$\bar{\rho}$, which contains all the corners incident to~$\rho$ and stops
at the first corner incident to~$\bar{\rho}$, and~$\bar{I}$ is defined
similarly with the roles of~$\rho$, $\bar{\rho}$ exchanged. 

\begin{figure}[htb!]
 \centering\includegraphics[width=.95\linewidth]{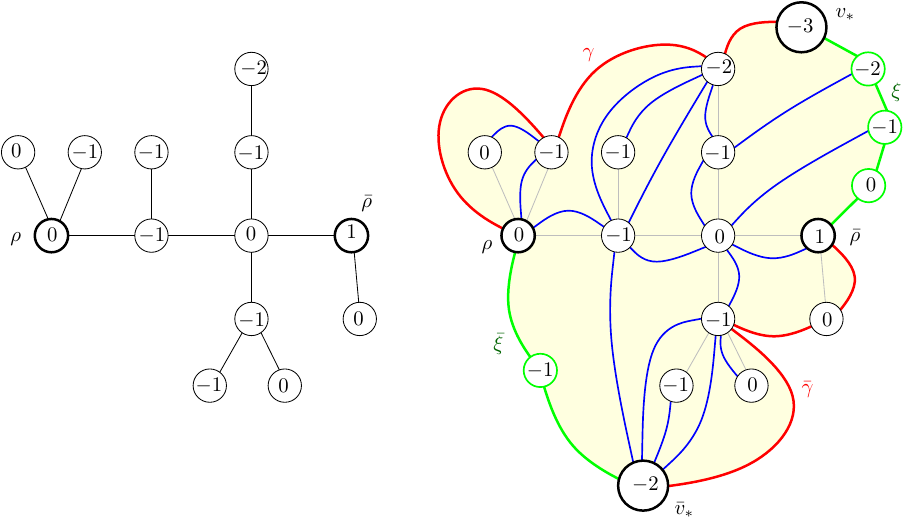} 
\caption{The interval Cori--Vauquelin--Schaeffer bijection giving quadrilaterals with geodesic sides. The quadrilateral with geodesic sides has half-areas~$5$ and~$4$, width~$3$, and tilt~$1$.}
 \label{fig:quadrilateral}
\end{figure}

The map $\bqd=\CVS({\bff}\cup{\br\bff},\lambda;I,\br I)$ is then a nonrooted plane quadrangulation with
one hole having~$a+\br a+h$ internal faces. Its boundary contains the four distinguished vertices~$\rho$, the apex~$v_*$ associated with~$I$, $\br\rho$, and the apex~$\br v_*$ associated with~$\br I$, as well as the maximal geodesics~$\gamma$, $\br\gamma$ and shuttles~$\xi$, $\br\xi$, with obvious notation.

\begin{defn}
The \emph{quadrilateral with geodesic sides}, or simply \emph{quadrilateral} for short, associated with $((\bff,\br\bff),\lambda)$ is by definition $\bqd=\CVS({\bff}\cup{\br\bff},\lambda;I,\br I)$. Its \emph{width}, \emph{half-areas}, and \emph{tilt} are respectively the numbers
\[h,\qquad
a\ \text{ and }\ \br a, \qquad
\lambda(\br\rho)-\lambda(\rho).\] 
\end{defn}

Observe that the parameters of a quadrilateral can be recovered from the map~$\bqd$ and the distinguished vertices~$\rho$, $\br\rho$. Furthermore, the quadrilateral associated with $((\bar{\bff},\bff),\lambda)$ is obtained from the one associated with $((\bff,\bar{\bff}),\lambda)$ simply by switching the distinguished elements $\rho$ with~$\br{\rho}$, $\gamma$ with~$\br\gamma$, and~$\xi$ with~$\br\xi$. It has the same width, its half-areas are switched and its tilt is reversed.

\begin{prp}\label{countquad}
The number of quadrilaterals with half-areas~$a$, $\br a$, width~$h$ and tilt~$\delta$ is equal to
\[12^{a+\br a+h}\, Q_h(2a+h)\, Q_h(2\br a+h)\, M_h(\delta),\]
where $Q_\ell$ has been defined in Proposition~\ref{countslice}, and $M_\ell(j)=\P(U_1+\ldots+U_\ell=j)$, where $U_1$, $U_2$, \dots\ are
independent uniform random variables in $\{-1,0,1\}$. 
\end{prp}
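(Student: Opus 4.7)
The plan is to count the pre-images of the interval CVS construction of Section~\ref{sec:quadr-with-geod}, in direct parallel with the proof of Proposition~\ref{countslice}. By that construction, quadrilaterals with half-areas $a$ and $\br a$, width $h$, and tilt $\delta$ are in one-to-one correspondence with well-labeled double forests $((\bff,\br\bff),\lambda)$ such that $\bff$ has $h$ trees carrying $a$ non-floor edges in total, $\br\bff$ has $h$ trees carrying $\br a$ non-floor edges, the labeling satisfies $|\lambda(u)-\lambda(v)|\leq 1$ on every edge (including the $h$ floor edges shared by the two forests), and $\lambda(\br\rho)-\lambda(\rho)=\delta$. Since labels are defined only up to a global constant, we may normalize $\lambda(\rho)=0$, so that the counting reduces to enumerating triples (shape of $\bff$, shape of $\br\bff$, labeling).

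First I would count the pair of underlying plane forests. By the classical enumeration recalled in the proof of Proposition~\ref{countslice}, the number of plane forests with $h$ trees and $a$ non-floor edges is $\frac{h}{2a+h}\binom{2a+h}{a}$, which, via the encoding by simple walks and the cycle lemma, equals $2^{2a+h}\,Q_h(2a+h)$. Analogously, the forest $\br\bff$ contributes a factor $2^{2\br a+h}\,Q_h(2\br a+h)$.

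Next, given the shape of the double forest, choosing a well-labeling amounts to picking, independently on every edge, a label increment in $\{-1,0,1\}$. The $a+\br a$ tree edges are completely unconstrained and contribute $3^{a+\br a}$. The $h$ floor increments $U_1,\dots,U_h\in\{-1,0,1\}$ are subject to the sole constraint $U_1+\cdots+U_h=\delta$; by the very definition of $M_h$, the number of such sequences is $3^h\,M_h(\delta)$.

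Multiplying these factors and regrouping,
\[
 2^{2a+h}Q_h(2a+h)\cdot 2^{2\br a+h}Q_h(2\br a+h)\cdot 3^{a+\br a+h}\,M_h(\delta)
 =12^{a+\br a+h}\,Q_h(2a+h)\,Q_h(2\br a+h)\,M_h(\delta),
\]
which is the announced formula. The only point requiring any care, although it is clear from the construction, is to check that the parameters $(a,\br a,h,\delta)$ can be unambiguously read off from the quadrilateral equipped with its marked vertices $\rho$, $\br\rho$, so that the bijection respects the invariants used to index both sides; this is precisely the observation already made right after the definition in Section~\ref{sec:quadr-with-geod}, and is the only potential obstacle — essentially a bookkeeping verification rather than a substantial difficulty.
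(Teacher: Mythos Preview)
Your proof is correct and follows essentially the same approach as the paper: count the two plane forests separately via the cycle-lemma identity $\frac{h}{2\alpha+h}\binom{2\alpha+h}{\alpha}=2^{2\alpha+h}Q_h(2\alpha+h)$, then count the labelings as $3^{a+\br a}$ for the tree edges and $3^h M_h(\delta)$ for the constrained floor increments, and multiply. The only cosmetic difference is that the paper groups the forest factors as $4^{a+\br a+h}$ before multiplying by the label count, whereas you keep them separate until the end.
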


\begin{proof}
As in the proof of Proposition~\ref{countslice}, the number of forests with~$h$ floor edges and~$\alpha$ non-floor edges is $2^{2\alpha+h}Q_h(2\alpha+h)$, so the number of double forests with proper parameters is $4^{a+\br a +h} Q_h(2a+h)Q_h(2\br a+h)$ . Then, the number of possible labelings of the floor vertices is the number of walks with~$h$ steps in $\{-1,0,1\}$ going form~$0$ to~$\delta$,
which equals $3^h M_h(\delta)$. The final term
$3^{a+\br a}$ counts the possible labelings of the
non-root vertices in the double forest.
\end{proof}

If $\bqd$ is a quadrilateral, we will view it as a marked measured metric space in $\M^{(6,1)}$ given by
\begin{equation}\label{mmsquad}
\big(V(\bqd),d_\bqd,\partial \bqd,\mu_\bqd\big)\qquad\text{ with }\qquad \partial\bqd=\big(\rho,\gamma, \xi,\br{\rho},\br\gamma,\br\xi\big),
\end{equation}
where each
boundary part is identified with the vertices it contains, and where~$\mu_\bqd$ is the counting measure on the vertices of $\bqd$ that \textgras{do not belong to the shuttles}. We call this measure~$\mu_\bqd$ the \emph{area measure} of the quadrilateral.

\subsection{Scaling limits of elementary pieces}\label{sec:scaling-limits}

In this section, we state two important results that will be crucial
in 
the proof of Theorem~\ref{mainthm}. These show that, under appropriate
hypotheses, random discrete slices and quadrilaterals converge in
distribution in the GHP topology toward ``continuum analogs'' of
these objects. 

We first fix three sequences $(a_n)\in(\Zp)^\N$, $(l_n)\in\N^\N$ and~$(\delta_n)\in\Z^\N$ such that
\begin{equation}\label{anlndn}
\frac{a_n}{n}\ton A>0\ , \qquad
\frac{l_n}{\sqrt{2n}}\ton L>0\qquad\text{ and }\qquad \left(\frac{9}{8n}\right)^{1/4}\delta_n\ton \Delta\in\R.
\end{equation}
Recall that a slice is seen as an element of~$\M^{(5,2)}$ given by~\eqref{mmsslice} and that~$\Omega_n$ is the scaling operator defined in~\eqref{defomega}.

\begin{thm}\label{thmslslice}
Let~$\Sln$ be uniformly distributed among composite slices with
area~$a_n$, width~$l_n$ and tilt~$\delta_n$. Then we have the
convergence
\[\Omega_n(\Sln)\tod\Sl_{A,L,\Delta},\]
in distribution in the space $\big(\M^{(5,2)},\dGHP^{(5,2)}\big)$. 
The limit is called a
\emph{(continuum composite) slice with area~$A$, width~$L$ and
tilt~$\Delta$}.
\end{thm}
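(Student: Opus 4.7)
The plan is to use the interval Cori--Vauquelin--Schaeffer bijection of Section~\ref{sec:comp-sli} to encode $\Sln$ by a uniformly chosen well-labeled plane forest $(\bff_n,\lambda_n)$ with $l_n$ trees, $a_n$ non-floor edges, and tilt $\delta_n$, and to derive the scaling limit of $\Omega_n(\Sln)$ from the scaling limits of the underlying encoding processes, following the strategy of \cite{legall11,miermont11,BeMi15I}.

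First, I would describe the uniform forest $(\bff_n,\lambda_n)$ by three essentially independent discrete processes (the explicit factorization is supported by Proposition~\ref{countslice}): the contour or \L{}ukasiewicz walk of $\bff_n$, which is a simple walk conditioned to first hit $-l_n$ at time $2a_n+l_n$; the sequence of $l_n$ floor-label increments $\lambda_n(\rho^i)-\lambda_n(\rho^{i-1})$, which are i.i.d.\ shifted geometric with parameter $1/2$ conditioned to sum to $\delta_n$; and independent labels uniform in $\{-1,0,1\}$ along each non-floor edge. Under the scalings fixed in~\eqref{anlndn}, with time scale $2a_n+l_n\sim(2A+L\sqrt{2})\,n$, spatial scale $\sqrt{2n}$ for the contour and $(8n/9)^{1/4}$ for the labels, standard invariance principles yield joint convergence: the contour walk converges to a first-passage Brownian bridge on $[0,2A+L\sqrt{2}]$ from $0$ to $-L\sqrt{2}$; the floor labels converge to an independent Brownian bridge of length $L\sqrt{2}$ from $0$ to $\Delta$; and the vertex labels converge to the head of a Brownian snake driven by the contour bridge, with boundary condition along the floor given by this bridge.

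Second, I would upgrade this to GHP convergence of $\Omega_n(\Sln)$. The key inputs are the identity $d_\bsl(v,v_*)=\lambda_n(v)-\min\lambda_n+1$ and the Schaeffer bound~\eqref{dd0bound}, which combined with the H\"older regularity of the limiting label process yield uniform control of rescaled graph distances along the contour. Tightness in $(\M^{(5,2)},\dGHP^{(5,2)})$ and identification of the limit then follow by the compactness and re-rooting arguments of \cite{legall11,miermont11}, suitably adapted to the forest setting as in \cite{BeMi15I}. The limit $\Sl_{A,L,\Delta}$ is defined as the quotient of the encoding interval by the pseudo-distance built from the continuum label process. The five marks are given by explicit subsets of this interval: the base $\beta$ by the floor portion, $\gamma$ by the image of the chain of successors issued from the initial corner $c_0$, $\xi$ by that issued from the final corner $c_{2a+l}$, and $\rho$, $\br{\rho}$ by their floor endpoints. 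The rescaled area and base measures converge as pushforwards of normalized Lebesgue measure on the relevant subintervals.

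The main obstacle is upgrading the GH-type convergence of the underlying space to the \emph{joint} convergence in the enriched topology $\dGHP^{(5,2)}$: one must show Hausdorff convergence of each of the five marked subsets, together with Prokhorov convergence of both measures. Controlling the maximal geodesic $\gamma$ and the shuttle $\xi$ as Hausdorff limits requires uniform estimates on the chains of successors, similar to those used in the study of geodesics in the Brownian sphere, ensuring that these chains concentrate near the corresponding continuum geodesics from the floor to the apex; the base $\beta$ similarly requires the estimate recalled in item~\emph{(3)} of Section~\ref{sec:comp-sli}, bounding the distance from any base vertex to the nearest floor root in terms of the floor label oscillations, which vanish at the right rate under the rescaling. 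The nontrivial tilt $\Delta$ introduces a drift in the floor invariance principle, handled by a Cameron--Martin-type tilt, or alternatively by conditioning on the endpoint of a centered walk; either way, no new compactness difficulty arises. Once these marked/measure refinements are in place, the convergence of the area and base measures follows from routine arguments on empirical measures of tree-like structures, completing the proof.
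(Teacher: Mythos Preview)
Your proposal has the right overall shape but glosses over the one genuinely hard step: identifying the limiting pseudometric. You write that ``identification of the limit then follow[s] by the compactness and re-rooting arguments of \cite{legall11,miermont11}, suitably adapted to the forest setting as in \cite{BeMi15I}.'' This is where the argument breaks. Le~Gall's re-rooting technique relies on invariance of the random map under uniform re-rooting, which holds for the sphere but has no analog for a composite slice with its distinguished geodesic boundary arcs $\gamma$, $\xi$ and its base $\beta$. Tightness of the rescaled distance $D_{(n)}$ follows as you say from~\eqref{dd0bound}, but the set of subsequential limits could a priori be strictly smaller than the continuum pseudometric built from the snake, and nothing in your outline rules this out.

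The paper circumvents this by an entirely different route: it embeds the discrete slice $\Sln$ \emph{isometrically} into the UIHPQ (a crucial observation, argued just before Corollary~\ref{cvCLD0L}: any geodesic in $Q_\infty$ between two points of the slice can be shortcut to stay inside the slice because $\gamma$ and $\xi$ are geodesics). One then imports the convergence of $D_{(n)}$ from the known convergence of the UIHPQ to the Brownian half-plane (Proposition~\ref{cvencbhp}), using the purely deterministic fact (Corollary~\ref{sec:metric-spaces-coded-2}) that the continuum slice metric $D^{(0,L)}$ is the restriction of $D_{X,W}$. This gives the result for the \emph{free} slice; the passage to fixed $(A,L,\Delta)$ is done by an absolute continuity argument (Lemmas~\ref{lemacslice} and~\ref{lemaccontslice}) on an interval $[0,\tau_{l_n^\eps}]$ slightly shorter than $[0,\tau_{l_n}]$, followed by a principle of accompanying laws to let $\eps\to 0$. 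Only after the metric convergence is secured does the paper turn to the marks and measures, via enlarged correspondences and the left/right-minimum characterizations of $\gamma$, $\xi$, $\beta$.

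A minor point: your time scale $2a_n+l_n\sim(2A+L\sqrt{2})n$ is incorrect; since $l_n=O(\sqrt{n})$ one has $(2a_n+l_n)/2n\to A$, and the limiting processes live on $[0,A]$.
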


This theorem will be proved in Section~\ref{seccvcs}, where a detailed
characterization of the limiting object will be given. For the time
being, this theorem should be taken as a definition of the spaces $\Sl_{A,L,\Delta}$. 

The following statement deals with the case
of vanishing areas and widths, and will be useful in Section~\ref{sec:gluing-piec-togeth} below. 

\begin{crl}\label{slslice0}
Let the sequences $(a_n)\in(\Zp)^\N$ and $(l_n)\in(\Zp)^\N$ satisfy 
$l_n=\sO(\sqrt{n})$ and 
$a_n+l_n=\Theta((l_n)^2)$. 
Let~$\Sln$ be the vertex map whenever $l_n=0$, or be uniformly distributed among slices with 
area~$a_n$, width~$l_n$ and tilt~$0$ otherwise. Then we have the 
convergence toward the point space 
\[\Omega_n(\Sln)\tod\{\varrho\},\]
in distribution in the space $\big(\M^{(5,2)},\dGHP^{(5,2)}\big)$. 
\end{crl}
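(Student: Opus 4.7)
The plan is to reduce the claim to Theorem~\ref{thmslslice} by rescaling with the natural scale of the slice, which here is $l_n^2$ rather than $n$. The case $l_n=0$ is handled directly by the stated convention, and when $l_n$ remains bounded the space $\Sln$ has uniformly bounded combinatorial size, so all rescaled quantities appearing in $\Omega_n(\Sln)$ trivially tend to~$0$ as $n\to\infty$; hence from now on I may assume $l_n\to\infty$.

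By a standard subsequence argument, it suffices to prove the stated convergence along an arbitrary subsequence. Using the hypothesis $a_n+l_n=\Theta(l_n^2)$ together with $l_n\to\infty$, I would further extract a subsequence along which $a_n/l_n^2\to A$ for some $A\in(0,\infty)$. Setting $m_n=l_n^2$, the triple $(a_n,l_n,0)$ then satisfies $a_n/m_n\to A>0$, $l_n/\sqrt{2m_n}\to 1/\sqrt{2}>0$, and tilt identically equal to~$0$, so Theorem~\ref{thmslslice} directly yields the convergence in distribution
\[\Omega_{m_n}(\Sln)\tod\Sl_{A,1/\sqrt{2},0}\]
in $\big(\M^{(5,2)},\dGHP^{(5,2)}\big)$.

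The final step is to compare $\Omega_n$ with $\Omega_{m_n}$: reading off~\eqref{defomega}, the two operators differ by a multiplicative factor $(m_n/n)^{1/4}$ on distances, $m_n/n$ on the area measure, and $(m_n/n)^{1/2}$ on the base measure. Since $l_n=\sO(\sqrt{n})$, one has $m_n/n=l_n^2/n\to 0$, and all three of these factors tend to~$0$. Applied to the limit~$\Sl_{A,1/\sqrt{2},0}$, which is a.s.\ a compact space with finite area and base measures, this rescaling forces the diameter of $\Omega_n(\Sln)$ and the total masses of its area and base measures to vanish in probability, yielding $\Omega_n(\Sln)\tod\{\varrho\}$ along the extracted subsequence. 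I do not foresee any genuine obstacle: both the subsequence extraction (to produce a valid sequence of parameters for Theorem~\ref{thmslslice}) and the final comparison of rescalings are routine once Theorem~\ref{thmslslice} is available.
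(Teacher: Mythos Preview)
Your proposal is correct and is precisely the argument the paper has in mind: it explicitly says the corollary is ``a direct consequence of Theorem~\ref{thmslslice}'' and, right after the statement, spells out the same subsequence dichotomy (bounded~$l_n$ versus $l_n\to\infty$ with $a_n/(l_n)^2$ bounded away from $0$ and $\infty$) that you use. The only cosmetic point is that your subsequence argument should be phrased as ``from any subsequence extract a further subsequence,'' which is clearly what you intend.
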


Let us sketch in a few lines why this is indeed a consequence of Theorem~\ref{thmslslice}. By the assumption that $a_n+l_n=\Theta((l_n)^2)$, 
the sequence $((a_n+l_n)/(l_n)^2)$, restricted to the values of~$n$
for which $l_n \ne 0$, is bounded away from~$0$ and~$\infty$.  This compact way of writing this property covers in
fact the two following situations. If $(l_n)$ is a bounded integer
sequence, it simply means that $(a_n)$ is a bounded integer sequence, in which case the statement becomes trivial. If $(l_n)$ is
unbounded, then it means that $(a_n/(l_n)^2)$ is bounded away from~$0$
and~$\infty$. In this case, Theorem~\ref{thmslslice} easily implies that the diameters of $\Omega_{a_n}(\Sln)$ form a tight family of random variables. Since $a_n=\sO(n)$, the conclusion follows. Note that, up to extracting subsequences, we may always assume that we are in one of the two situations discussed above. 

We will derive Theorem~\ref{thmslslice} from the known
convergence of the uniform 
infinite half-planar quadrangulation toward the Brownian
half-plane. The former naturally contains a family of slices
and the latter contains a continuous ``flow'' of continuum slices. These consist in free versions of the objects considered here
so that we will need to finish with a conditioning argument.  

\bigskip
We now turn to quadrilaterals, which are seen as elements of~$\M^{(6,1)}$ given by~\eqref{mmsquad}. We consider four sequences $(a_n)$, $(\br a_n)\in(\Zp)^\N$, $(h_n)\in\N^\N$ and~$(\delta_n)\in\Z^\N$ such that, as $n\to\infty$,
\begin{equation}\label{2anhndn}
\frac{a_n}{n}\to A>0, \quad
\frac{\br a_n}{n}\to \br A>0, \quad 
\frac{h_n}{\sqrt{2n}}\to H>0,\quad 
\left(\frac{9}{8n}\right)^{1/4}\delta_n\to \Delta\in\R.
\end{equation}

\begin{thm}\label{thmslquad}
Let~$\Qdn$ be a random variable uniformly distributed among quadrilaterals with half-areas~$a_n$ and~$\br a_n$, width~$h_n$ and
tilt~$\delta_n$. Then we have the
convergence
\[\Omega_n(\Qdn)\tod\Qd_{A,\br A,H,\Delta},\]
in distribution in the space $\big(\M^{(6,1)},\dGHP^{(6,1)}\big)$. 
The limit is 
called a \emph{continuum quadrilateral with half-areas~$A$ and~$\br A$, width~$H$ and
tilt~$\Delta$}. 
\end{thm}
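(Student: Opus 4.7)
The plan is to follow the strategy announced by the authors for Theorem~\ref{thmslslice}, but to replace the uniform infinite half-planar quadrangulation by the \emph{uniform infinite planar quadrangulation} (UIPQ) and the Brownian half-plane by the \emph{Brownian plane} of~\cite{CuLG12Bplane}. A quadrilateral is a two-apex object with two distinguished root vertices, which naturally fits inside a full-plane rather than a half-plane ambient space. The UIPQ contains a family of discrete quadrilaterals, obtained by extracting the region enclosed between the two pairs of labeled geodesic rays emanating from two distinguished vertices; analogously, the Brownian plane contains a continuous flow of continuum quadrilaterals. The theorem then reduces to a conditioning argument applied on top of the scaling limit of the UIPQ toward the Brownian plane.

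Concretely, the starting point is the interval CVS construction of Section~\ref{sec:quadr-with-geod}. A discrete quadrilateral is encoded by a well-labeled double forest $((\bff,\br\bff),\lambda)$, equivalently by a well-labeled vertebrate. The enumeration in Proposition~\ref{countquad} factorizes cleanly: conditionally on the floor together with its labels (a random walk on $\{-1,0,1\}$ conditioned to equal $\delta_n$ after $h_n$ steps), the upper and lower half-forests and their labels are independent uniform well-labeled forests of prescribed area and width. Under the scaling assumptions~\eqref{2anhndn}, classical invariance principles show that the two contour processes converge to independent Brownian first-passage bridges, the floor labeling converges to a Brownian bridge of duration~$H$ and total displacement~$\Delta$, and the tree labels converge jointly to a continuous Gaussian field indexed by the limiting forests.

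To upgrade this encoding-level convergence to a joint GHP convergence of the metric, the six boundary marks, and the area measure, I would view the discrete encodings inside the UIPQ: the quadrilateral associated with $((\bff_n,\br\bff_n),\lambda_n)$ appears as the region of the UIPQ cut out by the two pairs of labeled geodesic rays from two marked points. The scaling limit of the UIPQ to the Brownian plane~\cite{CuLG12Bplane}, enhanced by joint convergence of these geodesic rays and of the extracted region (a \emph{free} version of the quadrilateral, with unconstrained half-areas, width and tilt), yields a continuum quadrilateral-like region in the Brownian plane. The spaces $\Qd_{A,\br A,H,\Delta}$ are then defined as this region conditioned on the continuous parameter vector $(A,\br A,H,\Delta)$.

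The main obstacle will lie in this final conditioning step. The free version of the discrete quadrilateral has random half-areas, width and tilt, and a joint local limit theorem for $(a_n,\br a_n,h_n,\delta_n)$ that is uniform enough to produce a conditional GHP convergence — rather than merely an unconditional one — will be required. This will follow from a uniform refinement of Proposition~\ref{countquad} combined with classical local limit estimates for the walks behind $Q_h$ and $M_h$, together with continuity of the extraction map in a window around the target parameters; the rescaling $(9/8n)^{1/4}$ of the tilt is the delicate point, since $\delta_n$ is an integer-valued label difference that must be coupled with three independent ``size'' parameters. Once this is in place, the joint convergence of the six boundary marks (the two distinguished vertices $\rho$, $\br\rho$, and the two maximal geodesics and shuttles, which are geodesic chains in the limit by the remark at the end of Section~\ref{sec:basic-construction}) and of the area measure is automatic from the construction.
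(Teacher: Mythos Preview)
Your overall strategy—embed the discrete quadrilateral in the UIPQ, pass to the Brownian plane, then condition—is indeed the one the paper follows. However, you have misidentified the main obstacle, and the gap is genuine.

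In the slice case, Corollary~\ref{sec:metric-spaces-coded-2} says that $\Sl^{(L,L')}$ is \emph{isometrically} embedded in the Brownian half-plane: the intrinsic slice metric is the restriction of the ambient metric. This is what makes Corollary~\ref{cvCLD0L} essentially a corollary. For quadrilaterals there is \emph{no} such statement, and for a structural reason. Reconstructing the ambient space from the quadrilateral requires a self-gluing (Lemma~\ref{sec:gluing-quadr-} and Proposition~\ref{sec:brownian-plane-its}), and self-gluings can, and here do, create shortcuts: the intrinsic quadrilateral pseudometric $\dov{D}^{(0,H)}$ is strictly larger than the restriction of the Brownian plane metric $D_{X,W}$ (and likewise $d_{Q_n}>d_{Q_\infty}$ in general). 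So ``extract the region and inherit the metric from the Brownian plane'' does not give you the continuum quadrilateral; Proposition~\ref{boundgluquad} only gives you a two-sided comparison.

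Consequently the heart of the proof is Theorem~\ref{thmddstar}: the \emph{joint} convergence of $(C_{(n)},\Lambda_{(n)},D_{(n)},\dov{D}_{(n)})$, where $\dov{D}_{(n)}$ is the intrinsic rescaled quadrilateral metric. One first obtains tightness of $\dov{D}_{(n)}$ and a subsequential limit $\wt{D}$, with the a~priori inequalities $D\le \wt{D}\le \dov{D}$. Identifying $\wt{D}=\dov{D}$ requires (i) an analysis of the identification set $\{\wt{D}=0\}$ (Lemma~\ref{lemddovd}), ruling out spurious identifications across the spine, and (ii) a local isometry argument away from the four geodesic sides (Lemma~\ref{lemloc}), after which a length-space argument closes the gap. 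None of this follows from the ambient convergence alone; it is an adaptation of the techniques of \cite[Section~4]{BeMi15I}.

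By contrast, the conditioning step you flag as the main obstacle is routine: Lemma~\ref{sec:conditioned-version} (absolute continuity), Lemma~\ref{lltquad} (local limit theorem), and Lemma~\ref{sec:conditioned-version-1} (coarse metric comparison under truncation) parallel the slice argument with only minor changes, and Proposition~\ref{cvencquadcond} then follows by the principle of accompanying laws exactly as in Section~\ref{secslcond}.
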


As for slices, the proof of this result is postponed, to
Section~\ref{seccvquad}, where a detailed characterization of the
limiting object will be given. The idea of the proof will be similar
to that of Theorem~\ref{thmslslice}, using the uniform infinite planar
quadrangulation and Brownian plane as reference
spaces instead of the half-planar versions mentioned above.

\section{Marking and gluing along geodesics}\label{secmarkglue}

In our previous work~\cite{BeMi17}, we proved Theorem~\ref{mainthm}
in the case of disks (for the GH topology) by writing~$Q_n$ and~$\BS{0}{(L^1)}$ as gluings 
of appropriate subspaces along geodesic segments, namely so-called 
\emph{slices} in the discrete setting and their scaling limits in the 
continuum. The fact that the number of gluings needed was infinite 
caused some difficulties (which we mainly overcame by noticing that any 
geodesic between two typical points may be broken down to a finite 
number of pieces lying in different such subspaces). In contrast, in 
this work, we will only need to consider gluings of a \emph{finite 
  number} of subspaces along geodesic segments. As this operation is 
well behaved in a more general setting, we present it in this 
section. But first, we collect a number of useful lemmas on the GHP topology. 

We will use the following notation. If~$\mu_\X$ is a finite positive measure on a set~$\X$, we let 
$\br{\mu}_\X=\mu_\X/\mu_\X(\X)$ be the normalized probability measure. If $\mu_\X=0$, we use the 
convention $\br{\mu}_\X=0$. If $\bmu=(\mu^1,\ldots,\mu^m)$ is a finite 
family of nonnegative measures, we let 
$\br{\bmu}=(\br{\mu}^1,\ldots,\br{\mu}^m)$.

\subsection{Useful facts on the GHP topology and markings}\label{sec:adding-measures}

Recall the definitions of $\big(\M^{(\ell,m)},\dGHP^{(\ell,m)}\big)$ and $\big(\M^{(\ell)},\dGH^{(\ell)}\big)$ from Section~\ref{secGHP}. 
If the space $(\X,d_\X,\bA,\bmu_\X)$ is an element of $\M^{(\ell,m)}$ and $\bbr\in(\Zp)^m$ is such that $r^j=0$ whenever $\mu_\X^j=0$, 
we may consider the variable
$(\X,d_\X,\bA(x^1_1,\dots,x^1_{r^1})\dots(x ^m_1,\dots,x^m_{r^m}))$
taking values in $\M^{(\ell+\|\bbr\|)}$, where, for each $j\in\{1,\dots,m\}$, the points~$x^j_1$, \dots,
$x^j_{r_j}$ are i.i.d.\ sampled random variables with law~$\br{\mu}^j_\X$ (if the latter measure is~$0$, then this still makes sense since $r^j=0$); we denote by
$\Mark_\bbr((\X,d_\X,\bA,\bmu_\X),\cdot)$ the law of this
random marked metric space. 
Some care is actually needed
here since we are considering isometry classes of metric measure
spaces. See~\cite{miertess} for an accurate definition of this notion, which is
immediately generalized to our setting where we incorporate the extra
marks given by~$\bA$, and several measures. The following lemma states that one can formulate
the GHP convergence entirely in terms of the GH convergence of randomly marked spaces. 

\begin{lmm}\label{lemrandmark}
Let $(\X_n,d_{\X_n},\bA_n,\bmu_{\X_n})$, $n\geq 1$, and
$(\X,d_\X,\bA,\bmu_\X)$ be elements of $\M^{(\ell,m)}$. The following statements are equivalent.
\begin{enumerate}[label=(\textit{\roman*})]
\item\label{lemrandmarki} The space $(\X_n,d_{\X_n},\bA_n,\bmu_{\X_n})$ converges to\MEMS{ the space}{} $(\X,d_\X,\bA,\bmu_\X)$ in $\big(\M^{(\ell,m)},\dGHP^{(\ell,m)}\big)$. 
\item\label{lemrandmarkii} One has $\bmu_{\X_n}(\X_n)\to
\bmu_\X(\X)$ coordinatewise as $n\to\infty$ and, for every
$\bbr\in (\Zp)^m$ such that $r^j=0$ whenever $\mu^j_\X=0$, it
holds that 
\[\Mark_\bbr\big((\X_n,d_{\X_n},\bA_n,\bmu_{\X_n}),\cdot\,\big)\ton\Mark_\bbr\big((\X,d_\X,\bA,\bmu_\X),\cdot\,\big)\]
for weak convergence of probability measures on
$\big(\M^{(\ell+\|\bbr\|)},\dGH^{(\ell+\|\bbr\|)}\big)$. 
\end{enumerate}
\end{lmm}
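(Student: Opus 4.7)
For the direction \ref{lemrandmarki}$\Rightarrow$\ref{lemrandmarkii}, the convergence of the total masses is immediate: taking $A=\ZZ$ in the definition of the Prokhorov distance gives $|\mu^j_{\X_n}(\X_n)-\mu^j_\X(\X)|\le d_\ZZ^{\mathrm P}(\phi_{n*}\mu^j_{\X_n},\psi_{n*}\mu^j_\X)$, which tends to zero by the GHP assumption. For the marked convergence, I would use Skorokhod's representation theorem to realize the GHP convergence almost surely in a common coupling, giving isometric embeddings $\phi_n:\X_n\to\ZZ_n$, $\psi_n:\X\to\ZZ_n$ along which Hausdorff distances and Prokhorov distances of the pushforward measures vanish. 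When $\mu^j_\X\ne 0$, the mass convergence ensures that $\br\mu^j_{\X_n}\to\br\mu^j_\X$ in Prokhorov distance after pushforward to $\ZZ_n$, so by Strassen's theorem we may couple i.i.d.\ samples $x^j_k\sim\br\mu^j_\X$ with i.i.d.\ samples $x^{j,n}_k\sim\br\mu^j_{\X_n}$ so that $d_{\ZZ_n}(\phi_n(x^{j,n}_k),\psi_n(x^j_k))$ is small with high probability; when $\mu^j_\X=0$ there is nothing to sample by assumption on~$\bbr$. These couplings realize a GH-convergence in distribution of the randomly marked spaces.

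For the converse direction \ref{lemrandmarkii}$\Rightarrow$\ref{lemrandmarki}, the plan is the standard scheme of \emph{tightness plus identification}. Applying \ref{lemrandmarkii} with $\bbr=\bzero$ gives that $(\X_n,d_{\X_n},\bA_n)\to(\X,d_\X,\bA)$ in the $\ell$-marked GH topology, hence the sequence is relatively compact in $\M^{(\ell)}$; combined with the fact that the total masses of $\bmu_{\X_n}$ are bounded, one gets relative compactness in $\M^{(\ell,m)}$ for the GHP topology, via a standard criterion (see, e.g., \cite{AbDeHo13}) that a family of marked measured compact spaces is relatively compact as soon as the underlying marked spaces are relatively compact and the total masses of the measures are bounded. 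Any GHP-subsequential limit $(\X',d',\bA',\bmu')$ must have underlying $\ell$-marked space $(\X,d_\X,\bA)$ and total masses $\bmu_\X(\X)$; it remains to show $\bmu'=\bmu_\X$. To do so, I apply the already-proven forward direction to this subsequential convergence, which yields that the $\mathrm{Mark}_\bbr$-laws of the subsequence converge to that of $(\X,d_\X,\bA,\bmu')$. By assumption \ref{lemrandmarkii} these $\mathrm{Mark}_\bbr$-laws also converge to that of $(\X,d_\X,\bA,\bmu_\X)$. The two limits must therefore agree for every~$\bbr$, and the classical fact that the family of joint laws of i.i.d.\ samples characterizes a finite Borel measure on a Polish space (Varadarajan) forces $\bmu'=\bmu_\X$, completing the identification.

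The main obstacle is in fact not the argument \emph{per se}, but rather to bypass the temptation of constructing the GHP coupling directly from empirical measures on random samples: this would require a Glivenko--Cantelli approximation uniformly in~$n$, which is awkward as the measures $\br\mu^j_{\X_n}$ live in different spaces and their rate of convergence to the respective empirical measures is not controlled a priori. The tightness--identification route sidesteps this issue by only using Varadarajan's theorem on the limit space~$\X$. A minor subtlety to handle is that the hypothesis in \ref{lemrandmarkii} only requires $r^j=0$ when the \emph{limit} measure vanishes, so in the forward direction one must verify that subsampling is still well defined when $\mu^j_{\X_n}=0$ for some~$n$ but $\mu^j_\X\ne 0$; this is handled by only invoking the coupling for~$n$ large enough that $\mu^j_{\X_n}>0$, using mass convergence.
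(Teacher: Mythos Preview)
Your proposal is correct and follows essentially the same route as the paper. For \ref{lemrandmarkii}$\Rightarrow$\ref{lemrandmarki}, both you and the paper use the tightness--identification scheme: relative compactness from the $\bbr=\bzero$ case plus bounded masses, then identification of any subsequential limit by matching the $\Mark_\bbr$-laws. The paper phrases the identification step via the law of large numbers for empirical measures $\theta^j_r=r^{-1}\sum_i\delta_{x^j_i}$ (citing \cite[Lemma~5]{LG19BD}), which is exactly the Varadarajan-type statement you invoke; your remark that one should apply this only on the limit space rather than uniformly in~$n$ is precisely what the paper does. For \ref{lemrandmarki}$\Rightarrow$\ref{lemrandmarkii}, the paper simply cites \cite[Proposition~10]{miertess} and \cite[Section~2.2]{LG19BD}, while you give a more explicit Strassen-based coupling sketch; both are standard.
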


\begin{proof}
The implication $\text{\ref{lemrandmarki}}\implies
\text{\ref{lemrandmarkii}}$ is an easy generalization of known
results. See
\cite[Proposition~10]{miertess} for the case where the measures are
probability measures, and \cite[Section~2.2]{LGa19dis} for a generalized
context with finite measures; our extended context of marked measured metric
spaces adds no difficulty. To show the converse implication, we
argue as follows. By taking the trivial case $\bbr=\bzero^m$ of~\ref{lemrandmarkii}, we obtain that $\{(\X_n,d_{\X_n},\bA_n),n\geq
1\}$ is  relatively compact in $\big(\M^{(\ell)},\dGH^{(\ell)}\big)$. Together with the fact that the sequences
$(\mu^j_{\X_n}(\X_n),n\geq 1)$ are bounded, this implies that $\{(\X_n,d_{\X_n},\bA_n,\bmu_{\X_n}),n\geq
1\}$ is relatively compact in
$\big(\M^{(\ell,m)},\dGHP^{(\ell,m)}\big)$. So let
$(\X',d_{\X'},\bA',\bmu_{\X'})$ be a limit in $\M^{(\ell,m)}$
 along some subsequence of $(\X_n,d_{\X_n},\bA_n,\bmu_{\X_n})$. By using the
implication
$\text{\ref{lemrandmarki}}\implies\text{\ref{lemrandmarkii}}$, we
obtain that, for every $\bbr$ such that $r^j=0$ whenever $\mu_\X^j=0$, 
\[\Mark_\bbr\big((\X,d_\X,\bA,\bmu_\X),\cdot\,\big)=\Mark_\bbr\big((\X',d_{\X'},\bA',\bmu_{\X'}),\cdot\,\big)\,
.\]
Now, let~$m'$ be the number of nonzero elements of $\bmu_\X$, fix
$r>0$ and set $r^j=r\ind_{\{\mu_\X^j\neq 0\}}$. We let 
$(\X,d_\X,(A^1,\ldots,A^\ell,x^1_1,\dots,x^1_{r^1},\ldots,x^m_1,\ldots,x^m_{r^m}))$ be the $(\ell+rm')$-marked metric space with law $\Mark_\bbr((\X,d_\X,\bA,\bmu_\X),\cdot)$,
and  set $\boldsymbol{\theta}_r=(\theta^j_r,1\leq j\leq m)$,
where $\theta^j_r=r^{-1}\sum_{i=1}^{r}\delta_{x^j_i}$ if
$\mu_\X^j\neq 0$ and $\theta^j_r=0$ if $\mu_\X^j=0$. 
It is a consequence of the law of large numbers that
$(\X,d_\X,\bA,\boldsymbol{\theta}_r)$ converges almost surely in
$\M^{(\ell,m)}$, as $r\to\infty$, 
to $(\X,d_\X,\bA,\br{\bmu}_\X)$; see
for instance \cite[Lemma~5]{LGa19dis}. 
Applying this same result to
$(\X',d_{\X'},\bA',\bmu_{\X'})$ âllows to show that $(\X',d_{\X'},\bA',\br{\bmu}'_{\X'})$ is
isometry-equivalent to $(\X,d_\X,\bA,\br{\bmu}_\X)$.  Since
$\bmu_\X(\X)=\bmu_{\X'}(\X')$ is the limit of $\bmu_{\X_n}(\X_n)$, we conclude. 
\end{proof}

We also recall that, often, the most useful way to estimate
GH distances is via the notion of \emph{distortion of a correspondence}.
A \emph{correspondence} between two sets~$\X$ and~$\Y$
is a subset $\RR \subseteq \X\times \Y$ whose
coordinate projections are~$\X$ and~$\Y$. We will often write $x\binRR y$
instead of $(x,y)\in \RR$. 
If~$\X$ and~$\Y$ are endowed
with the metrics~$d_\X$ and~$d_\Y$, the \emph{distortion} of
the correspondence~$\RR$ is the
number
\[\dis(\RR)=\sup\big\{|d_\X(x,x')-d_\Y(y,y')|\,:\,x\binRR y,\ x'\binRR y'\big\}.\]
If $\bA=(A^1,\ldots,A^\ell)$ and $\bB=(B^1,\ldots,B^\ell)$ are
markings of~$\X$ and of~$\Y$, we say that the correspondence $\RR$
between $\X$ and $\Y$ is \emph{compatible} with the markings if for every $1\le i\le \ell$, $\RR \cap (A^i\times B^i)$ is a correspondence between~$A^i$ and $B^i$. 

\begin{lmm}[{\cite[Section~6.4]{miertess}}]\label{lemcorresGH} 
It holds that
\[\dGH^{(\ell)}\big((\X,\bA,d_\X),(\Y,\bB,d_\Y)\big)
=\frac 12 \inf_\RR \dis(\RR),\]
where the infimum is taken over correspondences compatible with the markings. 
\end{lmm}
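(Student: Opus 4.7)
The plan is to reproduce the classical argument (as in \cite{miertess} for the unmarked case) and verify that each step carries through with the additional bookkeeping of the marks. There are two directions to establish, and in both the mark-compatibility condition imposes no real new difficulty, just the need to check that correspondences restrict properly and that Hausdorff distances between corresponding marked sets stay controlled.

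For the upper bound $\dGH^{(\ell)} \le \frac12 \inf_\RR \dis(\RR)$, I would fix a correspondence $\RR$ compatible with the markings, set $\eps = \dis(\RR)/2$, and define a pseudometric on the disjoint union $\ZZ_0 = \X \sqcup \Y$ by keeping $d_\X$ and $d_\Y$ on each side and setting
\[
 d_{\ZZ_0}(x,y) = \inf\bigl\{ d_\X(x,x') + \eps + d_\Y(y',y) : x' \binRR y' \bigr\},\qquad x\in\X,\ y\in\Y.
\]
A direct check (identical to the unmarked proof) shows that this is a pseudometric whose quotient $\ZZ$ is a compact metric space into which $\X$ and $\Y$ embed isometrically via the canonical maps $\phi,\psi$, and that $d_\ZZ^{\mathrm{H}}(\phi(\X),\psi(\Y)) \le \eps$. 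For each mark index $i$, compatibility of $\RR$ means that every $a \in A^i$ has some $b \in B^i$ with $a\binRR b$, and vice versa; the definition above then gives $d_{\ZZ_0}(a,b)\le \eps$, so $d_\ZZ^{\mathrm{H}}(\phi(A^i),\psi(B^i))\le \eps$ as well.

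For the lower bound, I would fix isometric embeddings $\phi: \X \to \ZZ$, $\psi:\Y \to \ZZ$, let $\eta$ be strictly greater than the maximum over $\phi(\X),\psi(\Y)$ and over each pair $\phi(A^i),\psi(B^i)$ of the Hausdorff distances, and set
\[
 \RR = \bigl\{ (x,y) \in \X \times \Y : d_\ZZ(\phi(x),\psi(y)) \le \eta \bigr\}.
\]
Because each $\phi(A^i)$ is within Hausdorff distance $\eta$ of $\psi(B^i)$, every $a\in A^i$ has some $b\in B^i$ with $(a,b)\in\RR$, and symmetrically; hence $\RR\cap(A^i\times B^i)$ is a correspondence between $A^i$ and $B^i$, which together with the case $i=0$ (taking $A^0=\X$, $B^0=\Y$) shows $\RR$ is a correspondence compatible with the markings. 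The triangle inequality in $\ZZ$ gives $\dis(\RR)\le 2\eta$, and letting $\eta$ decrease to the marked Hausdorff supremum yields $\inf_\RR \dis(\RR)\le 2\,\dGH^{(\ell)}$.

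There is no genuine obstacle here; the only point that warrants care is checking the mark compatibility of $\RR$ in the lower bound, which requires that the Hausdorff distance on each marked pair $(A^i,B^i)$ is bounded by $\eta$ independently — but this is built into the definition of $\dGHP^{(\ell,m)}$ via the $\max_{1\le i\le \ell}$ inside the infimum. Given that this lemma is explicitly attributed to \cite{miertess}, the most economical presentation is probably just to refer there for the unmarked argument and point out that the marked case follows by the same construction applied coordinate-wise.
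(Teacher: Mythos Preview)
Your proof is correct and follows the standard argument. The paper itself gives no proof for this lemma, simply citing \cite[Section~6.4]{miertess}; your sketch is exactly the construction one finds there, with the mark-compatibility checks added in the natural places.
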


Correspondences are also useful for estimating GHP distances when used
together with the notion of couplings, which are measures on the
product of the two spaces to be compared. The following is a
direct adaptation of \cite[Lemma~4]{LGa19dis}, which treats the
case of~$\M^{(0,1)}$.  

\begin{lmm}\label{lemcorresP}
Let $(\X,d_\X,\bA,\bmu_\X)$ and $(\Y,d_\Y,\bB,\bmu_\Y)$ be elements
of~$\M^{(\ell,m)}$ for some $\ell$, $m\geq 0$. Let $\eps>0$, and~$\RR$ be a correspondence between~$\X$ and~$\Y$ compatible with the markings and of distortion bounded
above by~$\eps$. For $1\leq j\leq m$, let~$\nu^j$ be a finite measure on the product
$\X\times \Y$ such that $\nu^j(\RR^c)< \eps$ and, letting~$p_\X$, $p_\Y$
be the coordinate projections onto~$\X$ and~$\Y$, 
\[d_\X^{\mathrm{P}}(\mu^j_\X,(p_\X)_*\nu^j)\vee
d_\Y^{\mathrm{P}}(\mu^j_\Y,(p_\Y)_*\nu^j)< \eps.\]
Then
$\dGHP^{(\ell,m)}((\X,d_\X,\bA,\bmu_\X),(\Y,d_\Y,\bB,\bmu_\Y))\leq 3\eps$.
\end{lmm}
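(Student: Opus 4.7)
The plan is to build, out of the correspondence $\RR$ and the couplings $\nu^j$, an explicit ambient space $(\ZZ,d_\ZZ)$ into which $(\X,d_\X)$ and $(\Y,d_\Y)$ embed isometrically, and then bound separately the Hausdorff distances of the images and of the corresponding marks, and the Prokhorov distances of the pushed-forward measures. This reduces the statement to a small bookkeeping exercise on enlargements.

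First, take $\ZZ=\X\sqcup \Y$ and define $d_\ZZ$ to agree with $d_\X$ on $\X\times\X$, with $d_\Y$ on $\Y\times\Y$, and, for $x\in\X$, $y\in\Y$, set
\[
d_\ZZ(x,y)=\inf\bigl\{d_\X(x,x')+d_\Y(y',y):(x',y')\in\RR\bigr\}+\eps/2.
\]
A routine check (using $\dis(\RR)\leq\eps$ to verify the triangle inequality, as in the proof of Lemma~\ref{lemcorresGH}) shows that $d_\ZZ$ is a metric and that the canonical injections $\phi:\X\to\ZZ$ and $\psi:\Y\to\ZZ$ are isometric. By construction, whenever $x\binRR y$ one has $d_\ZZ(\phi(x),\psi(y))\leq\eps/2$. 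Since the coordinate projections of $\RR$ are onto, this immediately gives $d_\ZZ^{\mathrm H}(\phi(\X),\psi(\Y))\leq\eps/2$. Moreover, because $\RR$ is compatible with the markings, each restriction $\RR\cap(A^i\times B^i)$ is itself a correspondence between $A^i$ and $B^i$, so the same reasoning yields $d_\ZZ^{\mathrm H}(\phi(A^i),\psi(B^i))\leq\eps/2$ for $1\leq i\leq \ell$.

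The measure part is where the couplings $\nu^j$ come in. Fix $j$ and a closed subset $A\subseteq\ZZ$. Set $B=\phi^{-1}(A)$, which is closed in $\X$. Using the hypothesis $d_\X^{\mathrm P}(\mu^j_\X,(p_\X)_*\nu^j)<\eps$, we get
\[
\phi_*\mu^j_\X(A)=\mu^j_\X(B)\leq (p_\X)_*\nu^j(B^\eps)+\eps
=\nu^j(B^\eps\times\Y)+\eps.
\]
Splitting according to whether a pair lies in $\RR$ and using $\nu^j(\RR^c)<\eps$, we bound $\nu^j(B^\eps\times\Y)$ by $\nu^j\bigl((B^\eps\times\Y)\cap\RR\bigr)+\eps$. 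For $(x,y)\in\RR$ with $x\in B^\eps$, pick $x'\in B$ with $d_\X(x,x')<\eps$ and any $y'$ with $(x',y')\in\RR$; then $d_\Y(y,y')\leq\dis(\RR)+d_\X(x,x')<2\eps$ and $d_\ZZ(\psi(y'),\phi(x'))\leq\eps/2$, so $\psi(y)\in A^{5\eps/2}$, i.e.\ $y\in\psi^{-1}(A^{5\eps/2})$. Using the hypothesis on $(p_\Y)_*\nu^j$ once more yields
\[
\nu^j\bigl((B^\eps\times\Y)\cap\RR\bigr)\leq (p_\Y)_*\nu^j\bigl(\psi^{-1}(A^{5\eps/2})\bigr)\leq \mu^j_\Y\bigl(\psi^{-1}(A^{3\eps})\bigr)+\eps=\psi_*\mu^j_\Y(A^{3\eps})+\eps,
\]
where for the middle inequality we enlarge by $\eps/2$ in $\Y$ and observe that $\psi$ is isometric so enlargements in $\Y$ push to enlargements in $\ZZ$. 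Summing the three $\eps$'s gives $\phi_*\mu^j_\X(A)\leq \psi_*\mu^j_\Y(A^{3\eps})+3\eps$; the symmetric bound is identical.

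Combining the Hausdorff estimates $\eps/2\leq 3\eps$ with this Prokhorov estimate yields the desired bound $\dGHP^{(\ell,m)}\leq 3\eps$. The only slightly delicate point is the enlargement bookkeeping in the third paragraph, where one must ensure that the Prokhorov-close marginals and the correspondence-close coupling combine so that all accumulated enlargements and additive errors stay within the single budget $3\eps$; this is the only step that differs in any meaningful way from the one-measure, unmarked argument of \cite{LG19BD}.
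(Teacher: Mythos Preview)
Your approach is the standard one and is exactly what the paper defers to \cite{LG19BD} for; the construction of the glued metric on $\X\sqcup\Y$ and the handling of the marks are correct. There is, however, a bookkeeping slip in the Prokhorov step that, as written, does not yield the claimed $3\eps$.

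The issue is the detour through the auxiliary point $y'$. Once you know $(x,y)\in\RR$ and $x\in B^\eps$, you already have $d_\ZZ(\phi(x),\psi(y))\le\eps/2$ and $\phi(x)\in A^\eps$, hence $\psi(y)\in A^{3\eps/2}$ directly; passing through $y'$ only gives the weaker $\psi(y)\in A^{5\eps/2}$. More importantly, the phrase ``enlarge by $\eps/2$ in $\Y$'' is not justified: the Prokhorov hypothesis $d_\Y^{\mathrm P}(\mu^j_\Y,(p_\Y)_*\nu^j)<\eps$ requires enlarging by $\eps$, not $\eps/2$. With your $5\eps/2$ and an honest $\eps$-enlargement you would land in $A^{7\eps/2}$, which only yields $d_\ZZ^{\mathrm P}\le 7\eps/2$. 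Using the direct route instead, you get $(p_\Y)_*\nu^j\bigl(\psi^{-1}(A^{3\eps/2})\bigr)\le \mu_\Y^j\bigl(\psi^{-1}(A^{5\eps/2})\bigr)+\eps\le \psi_*\mu_\Y^j(A^{3\eps})+\eps$, and summing the three $\eps$'s gives the desired bound. So the argument is sound once you drop the unnecessary $y'$ and enlarge by the full $\eps$.
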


Finally, we state an elementary lemma whose proof is straightforward
and omitted.

\begin{lmm}\label{lem1lip}
The mappings
\begin{align*}
(\X,d_\X,(A^1,\ldots,A^\ell),\bmu_{\X})&\longmapsto (\X,d_\X,(A^1\cup
                                         A^2,A^3,\ldots,A^{\ell}),\bmu_\X),\\
  (\X,d_\X,(A^1,\ldots,A^\ell),\bmu_{\X})&\longmapsto (\X,d_\X,(A^1,
                                          \ldots,A^{\ell-1}),\bmu_\X)
  \intertext{are $1$-Lipschitz from $\big(\M^{(\ell,m)},\dGHP^{(\ell,m)}\big)$ to
$\big(\M^{(\ell-1,m)},\dGHP^{(\ell-1,m)}\big)$; the mappings}
  (\X,d_\X,\bA,(\mu^1_{\X},\ldots,\mu^m_\X)) &\longmapsto  (\X,d_\X,\bA,(\mu^1_{\X}+\mu^2_\X,\mu^3_\X,\ldots,\mu^m_\X)) \\
   (\X,d_\X,\bA,(\mu^1_{\X},\ldots,\mu^m_\X)) &\longmapsto  (\X,d_\X,\bA,(\mu^1_{\X},\ldots,\mu^{m-1}_\X) )
\intertext{are\MEMS{}{ respectively} $2$-Lipschitz and $1$-Lipschitz from $\big(\M^{(\ell,m)},\dGHP^{(\ell,m)}\big)$ to
$\big(\M^{(\ell,m-1)},\dGHP^{(\ell,m-1)}\big)$; and, for every
                                               permutation~$\sigma$ of
                                               $\{1,2,\ldots,\ell\}$
                                               and~$\tau$ of $\{1,2,\ldots,m\}$,}
\MEMS{%
\big(\X,d_\X,\bA,\bmu_\X\big)
	&\longmapsto \big(\X,d_\X,\big(A^{\sigma(1)},\ldots,A^{\sigma(\ell)}\big), \big(\mu^{\tau(1)}_\X,\ldots,\mu^{\tau(m)}_\X\big)\big),
}{%
\big(\X,d_\X,(A^1,\ldots,A^\ell),(\mu^1_\X,\ldots,\mu^m_\X)\big)
	&\longmapsto \big(\X,d_\X,\big(A^{\sigma(1)},\ldots,A^{\sigma(\ell)}\big), \big(\mu^{\tau(1)}_\X,\ldots,\mu^{\tau(m)}_\X\big)\big),
}
\end{align*}
is an isometry from $\big(\M^{(\ell,m)},\dGHP^{(\ell,m)}\big)$ onto itself.
\end{lmm}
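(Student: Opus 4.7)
The plan is to verify each claim directly from the variational definition~\eqref{eqGHP} of the GHP distance. In each case, given spaces $(\X,d_\X,\bA,\bmu_\X)$ and $(\Y,d_\Y,\bB,\bmu_\Y)$ and any $\eps$ exceeding their GHP distance, fix isometric embeddings $\phi : \X \to \ZZ$ and $\psi : \Y \to \ZZ$ into a common compact metric space witnessing this up to $\eps$, so that $d_\ZZ^{\mathrm{H}}(\phi(\X),\psi(\Y))$, each $d_\ZZ^{\mathrm{H}}(\phi(A^i),\psi(B^i))$, and each $d_\ZZ^{\mathrm{P}}(\phi_*\mu^j_\X,\psi_*\mu^j_\Y)$ are bounded by $\eps$. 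The goal is then to check that the same embeddings give a bound on the GHP distance of the image spaces under each operation.

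For the two mark operations, the key observation is that the Hausdorff distance between unions is controlled by the maximum of the Hausdorff distances: if $A_1\subseteq A_2^\eps$ and $B_1\subseteq B_2^\eps$ inside $\ZZ$, then $A_1\cup B_1\subseteq(A_2\cup B_2)^\eps$, and symmetrically, so
\[d_\ZZ^{\mathrm{H}}\bigl(\phi(A^1\cup A^2),\psi(B^1\cup B^2)\bigr)\le d_\ZZ^{\mathrm{H}}(\phi(A^1),\psi(B^1))\vee d_\ZZ^{\mathrm{H}}(\phi(A^2),\psi(B^2)).\]
Hence the maximum inside~\eqref{eqGHP} can only decrease after merging, yielding the $1$-Lipschitz property. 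Removing a mark is even more direct, since one simply drops one term of the outer max.

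For measure merging, we use the additivity of the push-forward, $\phi_*(\mu^1_\X+\mu^2_\X)=\phi_*\mu^1_\X+\phi_*\mu^2_\X$, together with the inequality: if $d_\ZZ^{\mathrm{P}}(\phi_*\mu^i_\X,\psi_*\mu^i_\Y)<\eps$ for $i=1,2$, then for any closed $A\subseteq \ZZ$,
\[\phi_*\mu^1_\X(A)+\phi_*\mu^2_\X(A)\le \psi_*\mu^1_\Y(A^\eps)+\psi_*\mu^2_\Y(A^\eps)+2\eps\le \bigl(\psi_*(\mu^1_\Y+\mu^2_\Y)\bigr)(A^{2\eps})+2\eps,\]
and symmetrically, so that $d_\ZZ^{\mathrm{P}}(\phi_*(\mu^1_\X+\mu^2_\X),\psi_*(\mu^1_\Y+\mu^2_\Y))\le 2\eps$. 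This is the source of the factor~$2$ when controlling the merged entry of the outer max; all other entries are unaffected. Measure removal again drops one term of the max and is $1$-Lipschitz.

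Finally, for the permutation statement, the inner max in the defining formula is manifestly invariant under relabeling of marks and measures, and the same embeddings $\phi,\psi$ serve for the permuted spaces, giving an isometry in both directions. The only routine check throughout is that the operations pass to isometry-equivalence classes, which is immediate since they commute with any isometry between representatives (a relabeling of marks or a push-forward of sums of measures being intrinsically defined). I expect no genuine obstacle: the main point to watch is the factor~$2$ appearing in measure merging, which is sharp and comes directly from summing two Prokhorov inequalities on the same enlargement radius.
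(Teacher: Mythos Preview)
Your proof is correct and is exactly the straightforward direct verification from the definition that the paper alludes to; the paper itself omits the proof entirely, stating only that it is ``straightforward and omitted.'' Your handling of each case is sound, including the factor~$2$ for measure merging via the Prokhorov inequality and the observation that the Hausdorff distance of a union is bounded by the max of the constituent Hausdorff distances.
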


\subsection{Geodesics in metric spaces}\label{sec:geod-metr-spac}

We now discuss the important notion of geodesics in metric spaces, as
well as its relations with GHP limits. 

In a metric space $(\X,d_\X)$, compact or not, a \emph{geodesic} is a mapping
$\chi:[0,\pzl]\to \X$ defined on some compact interval\footnote{We allow $\pzl=0$ in this definition.} $[0,\pzl]$ and that is isometric, i.e.,
satisfies
\begin{equation}
  \label{eqgeod}
  d_{\X}(\chi(s),\chi(t))=|t-s|,\qquad 0\leq s,t\leq \pzl.
\end{equation}
The points~$\chi(0)$, $\chi(\pzl)$ are called the \emph{extremities} of~$\chi$, and\MEMS{}{ the quantity} $\pzl=d_\X(\chi(0),\chi(\pzl))$ is the \emph{length} of the geodesic, denoted by $\lgth_{d_\X}(\chi)$, or
simply $\lgth(\chi)$ when there is little risk of ambiguity.
The space $(\X,d_\X)$ is called a \emph{geodesic space} if,
for every pair of points $x$, $y\in \X$, there exists a
geodesic with extremities~$x$, $y$. 

The range $\chi([0,\lgth(\chi)])$ of a geodesic path is called a \emph{geodesic segment}. An
\emph{oriented geodesic segment} is a pair
$(\chi(0),\chi([0,\lgth(\chi)]))$ made 
of a geodesic segment and a 
distinguished extremity, called its \emph{origin}. Note that an
oriented geodesic segment uniquely determines the geodesic~$\chi$,
since~$\chi(t)$ is the unique point at distance~$t$ away from the
origin. For this reason, we will systematically
identify geodesics with oriented geodesic segments and use the same
piece of notation for both of them. 

\medskip
In a marked measured metric space
$(\X,d_\X,\bA,\bmu_\X)$, some pairs $(A^i,A^j)$ of
marks might be oriented
geodesic segments; such pairs are called \emph{geodesic marks}.

\paragraph{Geodesic marks and GHP limits.}\label{sec:geodesicmarks}
The
following proposition states that geodesic marks nicely pass to the
limit in the GHP topology.

\begin{prp}\label{limgeod}
Let $(\X_n,d_{\X_n},\bA_n,\bmu_{\X_n})$, $n\ge 1$, be a sequence of
marked measured compact metric spaces that converges to some 
limit $(\X,d_\X,\bA,\bmu_\X)$ in the GHP topology.
Suppose that~$i$, $j$ are fixed and that, for every~$n$, the pair of marks 
$(A_{n}^i,A_{n}^j)=\gamma_n$ is a geodesic mark. Then the 
pair of marks $(A^i,A^j)=\gamma$ of~$\bA$ is also a geodesic mark. Moreover, it
holds that
\[\lgth(\gamma)=\lim_{n\to\infty}\lgth(\gamma_n).\]
\end{prp}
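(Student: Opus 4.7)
The plan is to lift the GHP convergence to a common ambient metric space and then extract a limit geodesic by an Arzel\`a--Ascoli argument. By the standard representation of GHP convergence (see, e.g., \cite{miertess, LG19BD}), I may assume that all the spaces $\X_n$ and $\X$ are isometrically embedded in a common compact metric space $(\ZZ, d_\ZZ)$ in such a way that $d_\ZZ^{\mathrm{H}}(\X_n, \X) \to 0$ and $d_\ZZ^{\mathrm{H}}(A_n^k, A^k) \to 0$ for every $k$. Since $A_n^i$ is the singleton origin $\{a_n\}$ of $\gamma_n$ (identified with a point via the paper's convention), its Hausdorff limit $A^i$ is forced to be a singleton $\{a\}$ as well, with $a_n \to a$ in $\ZZ$.

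Let $\chi_n : [0, \pzl_n] \to \ZZ$ denote the isometric parameterization of $\gamma_n$ with $\chi_n(0) = a_n$. The lengths $\pzl_n$ are bounded by the diameters of the $\X_n$, which converge, so up to extraction $\pzl_n \to \pzl \in [0, \infty)$. Assuming $\pzl > 0$ (the case $\pzl = 0$ is trivial, as it forces $A_n^j \to \{a\}$), the rescaled maps $\tilde\chi_n(s) = \chi_n(s\pzl_n)$ on $[0,1]$ form a uniformly Lipschitz family with values in the compact space $\ZZ$. By Arzel\`a--Ascoli and a further extraction, $\tilde\chi_n \to \tilde\chi$ uniformly. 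Passing to the limit in the isometric identity $d_\ZZ(\tilde\chi_n(s), \tilde\chi_n(t)) = \pzl_n |s - t|$ shows that $\chi : t \mapsto \tilde\chi(t/\pzl)$ is an isometric embedding of $[0, \pzl]$ into $\ZZ$, hence a geodesic of length $\pzl$.

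It then remains to identify this geodesic with the limit mark. Uniform convergence of the $\tilde\chi_n$ implies Hausdorff convergence of their ranges in $\ZZ$, so $\chi([0, \pzl]) = \lim_n A_n^j = A^j$; similarly $\chi(0) = a \in A^i$, and in particular $\chi$ takes values in $\X$. Therefore $(A^i, A^j)$ is a geodesic mark, realized by $\chi$, and $\lgth(\gamma) = \pzl$. Finally, the length is intrinsically determined by the limit data, $\pzl = \sup_{x \in A^j} d_\X(a, x)$; since this value is the same along every subsequential extraction, the full sequence $(\pzl_n)$ converges to $\pzl$, as claimed. The main technical ingredient is the Arzel\`a--Ascoli extraction; the only mildly subtle point is the observation that singleton origins converge to singleton origins, which is what pins down the parameterizations $\chi_n$ at their starting points and makes the extraction meaningful.
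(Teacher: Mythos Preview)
Your proof is correct and takes a genuinely different route from the paper's. You embed everything in a common compact ambient space, extract a limiting curve by Arzel\`a--Ascoli, and then identify its range with the limit mark via Hausdorff convergence of images. The paper instead works entirely through correspondences (Lemma~\ref{lemcorresGH}): it shows directly that $A^i$ is a singleton $\{x\}$, then for each level $t\le\limsup_n\lgth(\gamma_n)$ it builds a point $c_t\in A^j$ with $d_\X(x,c_t)=t$ by pushing approximate points $\gamma_n(t_n)$ through the correspondences, and finally checks uniqueness of $c_t$ and the geodesic identity $d_\X(c_s,c_t)=|t-s|$ by a distortion estimate.

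Your approach is the standard metric-geometry move and is arguably cleaner once the simultaneous embedding is granted; the paper's proof is more self-contained in that it never leaves the correspondence framework used throughout Section~\ref{secmarkglue}, and avoids having to set up a single ambient space for the whole sequence. One small remark: the simultaneous embedding of \emph{all} $\X_n$ and $\X$ with Hausdorff convergence of the marks is a mild extension of the usual two-space statement and is not quite literally in the references you cite; it follows by chaining the pairwise embeddings through~$\X$, but a sentence to that effect would make the argument fully self-contained.
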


\begin{proof}
The wanted property deals only with the marks and not with the
measures, so it suffices to establish the proposition in the space
$\M^{(\ell)}$ of marked, nonmeasured spaces. Without loss of generality
(by Lemma~\ref{lem1lip}), we may and will assume that $i=1$ and $j=2$. 

By Lemma~\ref{lemcorresGH}, we may find a sequence of
correspondences $\RR_n$ between $\X_n$ and $\X$ that is compatible
with the markings $\bA_n$ and $\bA$, and whose
distortion $\eps_n:=\mathrm{dis}(\RR_n)$ goes to zero. From now on,
we will never need to refer to marks other than the first two. 

Let $y$, $z\in A^1$. Since $A_{n}^1$ contains a single point, which we denote by $x_n=\gamma_n(0)$, we have $x_n\binRRn y$
and $x_n\binRRn z$, so that $d_\X(y,z)\leq \eps_n$ for every $n\geq
1$, entailing $y=z$. So $A^1$ is a singleton, which we denote by $A^1=\{x\}$.

Next, let $a\in A^2$ and $a_n\in A_{n}^2$ be such that $a_n\binRRn a$. Then $|d_\X(x,a)-d_{\X_n}(x_n,a_n)|\leq \eps_n$, which implies
that $d_{\X_n}(x_n,a_n)\to d_\X(x,a)$ as $n\to\infty$, and in
particular, $d_{\X}(x,a)\leq \liminf_{n\to\infty}
\lgth(\gamma_n)$, and therefore 
\[\max_{a\in A^2}d_\X(x,a)\leq \liminf_{n\to\infty}
\lgth(\gamma_n).\]
 
In the other direction, let $t\le\limsup _{n\to\infty}\lgth(\gamma_n)$. We claim that there exists at least a point $c_t\in A^2$ such that $d_\X(x,c_t)=t$. This will entail that $\lgth(\gamma_n)$ converges to $\pzl=\max_{a\in A^2}d_\X(x,a)$. To see the claim, observe that, at least along a suitable extraction, there exists a sequence
$t_n\to t$ such that 
$\lgth(\gamma_n)\geq t_n$. Along this
extraction, let $\gamma_n(t_n)$ be the
unique point of $\gamma_n$ such that $t_n=d_{\X_n}(x_n,\gamma_n(t_n))$,
and let $g_n$ be an element of~$A^2$ such that $\gamma_n(t_n)\binRRn
g_n$. Then $|d_\X(x,g_n)-t_n|\leq \eps_n$, so that, possibly by further
extracting, $(g_n)$ converges to a limit
$c_t\in A^2$. It then holds that $d_\X(x,c_t)=t$, as claimed.

Now fix $s$, $t\in [0,\pzl]$ with $s\leq t$, and let $a$, $b\in A^2$ be such that
$d_\X(x,a)=s$ and $d_\X(x,b)=t$. By the triangle inequality, we have
$d_\X(a,b)\geq t-s$, and, on the other hand, if $a_n\binRRn a$ and
$b_n\binRRn b$ with $a_n$, $b_n\in A_{n}^2$, then
\begin{align*}
 d_\X(a,b)\leq d_{\X_n}(a_n,b_n)+\eps_n
&=|d_{\X_n}(x_n,b_n)-d_{\X_n}(x_n,a_n)|+\eps_n\\
&\leq |d_\X(x,b)-d_\X(x,a)|+3\eps_n\\
&=t-s+3\eps_n
\end{align*}
where in the second line, we have used the fact that $a_n$, $b_n$ lie
on a geodesic having $x_n$ as one of its extremities. Letting $n\to\infty$, this shows that
$d_\X(a,b)=t-s$, and in particular, taking $t=s$ shows that the
point $c_t$ of the preceding paragraph is the unique point of~$A^2$ at distance~$t$ from~$x$. We
conclude that $\gamma$ is an oriented geodesic segment with length~$\pzl$ and origin~$x$.  
\end{proof}

\paragraph{Maps as compact geodesic metric spaces. }\label{sec:comp-geod}
So far, we have been seeing maps as finite metric spaces. We may also interpret a 
map~$\bm$ as a compact geodesic metric space, by viewing each edge as 
isometric to a real segment of length~$1$ (this is called the metric 
graph~\cite{burago01} associated with~$\bm$). Note that the 
restriction of the metric to the subset corresponding to the vertex 
set of~$\bm$ is the graph metric, so that the two metric spaces 
corresponding to~$\bm$ are at $\dGH$-distance less than~$1/2$. In the 
scaling limit, this bears no effects. 

With this point of view on maps, note that, in the notation of
Sections~\ref{sec:comp-sli} and~\ref{sec:quadr-with-geod},  
\begin{itemize}
	\item $(\rho,\gamma)$ and $(\br{\rho},\xi)$ are geodesic marks of~$\bsl$; 
	\item $(\rho,\gamma)$, $(\br{\rho},\xi)$, $(\br{\rho},\br\gamma)$, and $(\rho,\br\xi)$ are geodesic marks of~$\bqd$. 
\end{itemize}

\subsection{Gluing along geodesics}\label{secgluing}

\paragraph{Quotient pseudometrics. }
Let $(\X,d)$ be a pseudometric space, that is, a set equipped with a
symmetric function $d:\X^2\to \Rp\sqcup\{\infty\}$ that vanishes on
the diagonal and satisfies the triangle inequality. Then $\{d=0\}$ is
an equivalence relation on~$\X$, and the quotient set $\X/\{d=0\}$
equipped with  the function induced by~$d$ (still denoted by~$d$ for
simplicity), is a true metric space, meaning that~$d$ is also separated. 

Let~$R$ be an equivalence relation on~$\X$. Let $d/R$ be the largest pseudometric on~$\X$ such that $d/R\leq d$ and that satisfies
$d/R(x,y)=0$ as soon as $x\binR y$. By \cite[Theorem 3.1.27]{burago01},
it is given by the formula
\begin{equation}\label{eq:quotient}
d/R(x,y)=\inf\left\{
\sum_{i=1}^m d(x_i,y_i):\!
\begin{array}{l}
  m\ge 1, x_1,\ldots, x_m,y_1,\ldots,y_m\in \X, \\
	x_1=x,y_m=y, y_i \binR x_{i+1} \mbox{ for }i\in \{1,\ldots,m-1\}
\end{array}
\right\}\! .
\end{equation}
In this setting, the set $\{d/R=0\}$ is another equivalence relation
on~$\X$ that contains~$R$, possibly strictly. We let $(\X,d)/R=(\X/\{d/R=0\},d/R)$ and call it the gluing of $(\X,d)$
along $R$.

A simple observation is that if $R_1$, $R_2$ are two equivalence relations
on~$\X$, then we have the equality of pseudometrics on~$\X$
\begin{equation}
  \label{dR1R2}
  (d/R_1)/R_2=(d/R_2)/R_1=d/R,
\end{equation}
where $R$ is the coarsest equivalence relation containing $R_1\cup
R_2$. This expression is indeed a direct consequence of~\eqref{eq:quotient} and the fact that
$x \binR y$ if and only if there exists some integer~$m$ and points
$x_0=x$, $x_1$, \dots, $x_m=y$ such that $(x_{i-1},x_i)\in R_1\cup R_2$ for
every $i\in \{1,2,\ldots,m\}$.

\paragraph{Gluing two spaces along geodesics. }

Let $(\X,d_\X)$, $(\Y,d_\Y)$ be two pseudometric spaces and~$\gamma$, $\xi$
be two geodesics in~$\X$ and~$\Y$,
respectively, where the definition~\eqref{eqgeod} of a geodesic is
naturally extended to pseudometric spaces.
The pseudometric of the disjoint union $\X\sqcup \Y$ is
defined by
\[d_{\X\sqcup \Y}(x,y)=\begin{cases}
	d_\X(x,y) & \text{ if }x, y\in \X\\
    d_\Y(x,y) & \text{ if }x, y\in \Y\\
    \infty & \text{ otherwise }
  \end{cases}.\]
We define the metric gluing of~$\X$
and~$\Y$ along~$\gamma$ and~$\xi$ by letting $\pzl=\lgth(\gamma)\wedge
\lgth(\xi)$ and by setting
\begin{equation}\label{GXY}
  G(\X,\Y;\gamma,\xi)=(\X\sqcup \Y,d_{\X\sqcup \Y})/R,
\end{equation}
where~$R$ is the coarsest equivalence relation satisfying
$\gamma(t) \binR \xi(t)$ for every $t\in [0,\pzl]$. 

In this particular case, the fact that~$\gamma$ and~$\xi$ are
geodesics greatly simplifies~\eqref{eq:quotient}. Indeed, $y_i \binR
x_{i+1}$ and $y_{i+1} \binR x_{i+2}$ imply that $d_{\X\sqcup
  \Y}(y_i,x_{i+2})=d_{\X\sqcup \Y}(x_{i+1},y_{i+1})$. In other words,
using twice the relation~$R$ does not create shortcuts. As a result,
the pseudometric of the gluing is the function~$d_G$ whose restrictions to $\X\times \X$ and $\Y\times \Y$ are~$d_\X$ and~$d_\Y$ respectively, and
\begin{equation}\label{dG}
d_G(x,y)=d_G(y,x)=\inf_{t\in [0,\pzl]}\big\{d_\X(x,\gamma(t))+d_\Y(\xi(t),y)\big\} \qquad
\text{ if }x\in \X,\, y\in \Y.
\end{equation}

\begin{rem}
In fact, Equation~\eqref{dG} holds in the more general setting of gluing along isometric subspaces~\cite[Chapter~I.5]{BrHa99}, the underlying isometry in our context being $\gamma(t) \mapsto \xi(t)$. We will not need this level of generality here.
\end{rem}

If~$(\X,d_\X,\bA,\bmu_\X)\in \M^{(\ell,m)}$
and~$(\Y,d_\Y,\bB,\bmu_\Y)\in \M^{(\ell',m')}$ are marked measured metric spaces, we may view
$G(\X,\Y;\gamma,\xi)$ as an element of $\M^{(\ell+\ell',m+m')}$ by
assigning marks and measures
\[
(\bp(A^1),\ldots,\bp(A^\ell),\bp(B^1),\ldots,\bp(B^{\ell'}))\quad\text{ and }\quad
(\bp_*\mu_\X^1,\ldots,\bp_*\mu_\X^m,\bp_*\mu_\Y^1,\ldots,\bp_*\mu_\Y^{m'}),
\]
where $\bp:\X\sqcup \Y\to G(\X,\Y;\gamma,\xi)$ is the canonical
projection. With a slight abuse of notation, we will keep denoting
these by~$\bA\bB$ and $\bmu_\X\bmu_\Y$. Observe that~$\gamma$ and~$\xi$ may themselves be part of the marking, in which case
they induce the same marks $\bp(\gamma)=\bp(\xi)$ in the
glued space. 
Observe also that geodesic marks in~$\bA$ or in~$\bB$ remain geodesic
marks in~$\bA\bB$, due to the fact that, by definition, 
$(\X,d_\X)$ and $(\Y,d_\Y)$ are isometrically embedded in
$G(\X,\Y;\gamma,\xi)$.

Finally observe that the gluing of the point space as an element of~$\M^{(\ell,m)}$ with $(\Y,d_\Y,\bB,\bmu_\Y)$ along~$\xi$ only has the effect of prepending $\ell$ times $\xi(0)$ to~$\bB$ and $m$ times the zero measure to~$\bmu_\Y$.

\paragraph{Gluing two geodesics in the same space.}
A similar gluing procedure\footnote{In fact, since we are allowing
points at infinite distance, the gluing $G(\X,\Y;\gamma,\xi)$ could
be seen as a particular case of gluing of a single space along
two geodesics, but we refrain to do so as we will mostly be
interested in gluing true metric spaces.} can be defined for two geodesics~$\gamma$, $\xi$ in the same pseudometric space
$(\X,d_\X)$. We again set $\pzl=\lgth(\gamma)\wedge
\lgth(\xi)$ and then define
\begin{equation}
G(\X;\gamma,\xi)=(\X,d_\X)/R,
\end{equation}
where~$R$ is the coarsest equivalence
relation satisfying $\gamma(t) \binR \xi(t)$ for every $t\in
[0,\pzl]$. The quotient pseudometric $d_G(x,y)$ may be condensed into 
\begin{equation}\label{eq:22}
d_\X(x,y)\wedge
	\inf_{t\in [0,\pzl]}\big\{d_\X(x,\gamma(t))+d_\X(\xi(t),y)\big\}\wedge
	\inf_{t\in
          [0,\pzl]}\big\{d_\X(x,\xi(t))+d_\X(\gamma(t),y)\big\}.
\end{equation}

Similarly to the above, the space $G(\X;\gamma,\xi)$ naturally
inherits the marking~$\bA$ and measures~$\bmu_\X$ that~$\X$ may be endowed with, simply
by pushing those forward by the canonical projection $\X\to
G(\X;\gamma,\xi)$; by a slight abuse of notation, we keep the piece of
notation~$\bA$, $\bmu_\X$ for these inherited objects. Note however
that it
is \emph{not true in general} that geodesic marks in $(\X,d_\X)$
remain geodesic marks in $G(\X;\gamma,\xi)$. Let us state a useful
comparison result between~$d_\X$ and~$d_G$. 

\begin{lmm}\label{lemdistglue}
Let $(\X,d_\X)$ be a pseudometric space with
two distinguished geodesics~$\gamma$, $\xi$. Denote by~$d_G$ the pseudometric on
$G(\X;\gamma,\xi)$ as in~\eqref{eq:22}.
\begin{enumerate}[label=(\textit{\roman*})]
	\item\label{distgluei} For every $x$, $y\in \X$,
\[d_G(x,y)\leq d_\X(x,y)\leq
d_G(x,y)+R(\gamma,\xi),\]
where $R(\gamma,\xi)$ is the Hausdorff distance in the space
$(\X,d_\X)$ between the initial segments of $\gamma$, $\xi$ that are
glued together, i.e., of length $\pzl$.
	\item\label{distglueii} For every $\eps>0$ and $x$, $y\in \X$,
if $d_\X(x,y)<\eps$ and $d_\X(x,\gamma)
\wedge d_\X(y,\gamma)>\eps$ (or if $d_\X(x,\xi)\wedge
\d_\X(y,\xi)>\eps$), it holds that 
\[d_G(x,y)=d_\X(x,y).\]
\end{enumerate}
\end{lmm}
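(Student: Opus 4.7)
My plan is to derive both parts directly from the explicit formula~\eqref{eq:22} for the quotient pseudometric~$d_G$, treating the three shapes of the infimum separately.

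For part~\ref{distgluei}, the inequality $d_G(x,y)\le d_\X(x,y)$ is immediate, since $d_\X(x,y)$ appears as one of the terms in the infimum defining $d_G(x,y)$. For the reverse bound, I would consider which of the three terms in~\eqref{eq:22} approaches the infimum. If the first one does, the bound $d_\X(x,y)\le d_G(x,y)+R(\gamma,\xi)$ is trivial. If the second one does, so that $d_G(x,y)$ is realized (up to $\eta>0$) as $d_\X(x,\gamma(t))+d_\X(\xi(t),y)$ for some $t\in[0,\pzl]$, the triangle inequality gives
\[
d_\X(x,y)\le d_\X(x,\gamma(t))+d_\X(\gamma(t),\xi(t))+d_\X(\xi(t),y)\le d_G(x,y)+\eta+d_\X(\gamma(t),\xi(t)),
\]
and the third term is handled symmetrically. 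It then remains to bound $d_\X(\gamma(t),\xi(t))$ by $R(\gamma,\xi)$ uniformly in $t\in[0,\pzl]$, and to let $\eta\downarrow 0$.

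For part~\ref{distglueii}, $d_G(x,y)\le d_\X(x,y)<\eps$ is immediate, so it suffices to rule out strict inequality. I would argue by contradiction: if $d_G(x,y)<d_\X(x,y)$, then by~\eqref{eq:22} there exists $t\in[0,\pzl]$ such that $d_G(x,y)$ is arbitrarily close to $d_\X(x,\gamma(t))+d_\X(\xi(t),y)$ or to $d_\X(x,\xi(t))+d_\X(\gamma(t),y)$. Under the first hypothesis of~\ref{distglueii}, both $x$ and $y$ are at distance $>\eps$ from every point of $\gamma$, and thus from $\gamma(t)$; in either form of the infimum, one of the two terms of the sum is therefore $>\eps$, forcing $d_G(x,y)\ge \eps$ and contradicting $d_G(x,y)\le d_\X(x,y)<\eps$. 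The second hypothesis is identical with $\gamma$ replaced by $\xi$.

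The main delicate point is the uniform bound $d_\X(\gamma(t),\xi(t))\le R(\gamma,\xi)$ needed in part~\ref{distgluei}: in an abstract metric space, the Hausdorff distance between two geodesic images does not by itself control the parameter-matched distance. Here however the two initial segments that are glued together share the same length $\pzl$ and are matched isometrically by the parameter, so I would argue by first matching the endpoints $\gamma(0),\xi(0)$ and $\gamma(\pzl),\xi(\pzl)$ via the Hausdorff bound and then using the 1-Lipschitz character of $\gamma$ and $\xi$ to propagate the control to every intermediate parameter, which yields the desired uniform bound and closes the argument.
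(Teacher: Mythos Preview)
Your argument for part~\ref{distglueii} is correct and essentially identical to the paper's. For part~\ref{distgluei} your structure also matches the paper's, and you are right to flag the bound $d_\X(\gamma(t),\xi(t))\le R(\gamma,\xi)$ as the crux: the paper simply inserts it in the middle of its chain of inequalities without comment.

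The gap, however, is genuine and your proposed fix does not close it. Take $\X=\R^2$ with the Euclidean metric, $\gamma(t)=(t,0)$ and $\xi(t)=(1-t,\eps)$ for $t\in[0,1]$; both are geodesics of length~$1$, and the images $\gamma([0,1])$, $\xi([0,1])$ are parallel segments with Hausdorff distance $R(\gamma,\xi)=\eps$. Yet $d_\X(\gamma(0),\xi(0))=\sqrt{1+\eps^2}$. Your endpoint-matching idea breaks precisely here, because the Hausdorff-nearest point of $\xi$ to $\gamma(0)$ is $\xi(1)$, not $\xi(0)$: the Hausdorff distance sees only the unparametrized images and cannot detect that the two geodesics are oppositely oriented, so no $1$-Lipschitz propagation from the endpoints can recover control of the \emph{parameter-matched} distance. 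In fact this same example shows that part~\ref{distgluei} as stated is false: with $x=\gamma(0)$ and $y=\xi(0)$ one has $d_G(x,y)=0$ while $d_\X(x,y)=\sqrt{1+\eps^2}>\eps=d_G(x,y)+R(\gamma,\xi)$. What the chain of inequalities (both yours and the paper's) actually proves is the bound with $R(\gamma,\xi)$ replaced by $\sup_{t\in[0,\pzl]}d_\X(\gamma(t),\xi(t))$, and this weaker form is all that is invoked in the paper's applications.
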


\begin{proof}
Let us first prove~\ref{distgluei}. The first inequality is a direct consequence of~\eqref{eq:22}. To 
prove the other bound, simply observe that
for every  $t\in [0,\pzl]$ we have
\begin{align*}
	d_\X(x,y)&\leq d_\X(x,\gamma(t))+d_\X(\gamma(t),\xi(t))+d_\X(\xi(t),y)\\
	&\leq  d_\X(x,\gamma(t))+d_\X(\xi(t),y)+R(\gamma,\xi). 
\end{align*}
Taking the infimum over $t$, and then applying the same reasoning with the
roles of~$\gamma$, $\xi$ interchanged, we obtain the result
by~\eqref{eq:22}.

\medskip
The proof of~\ref{distglueii} is even more straightforward. Under our assumptions, it holds
that both $d_\X(x,\gamma(t))+d_\X(y,\xi(t))$ and
$d_\X(x,\xi(t))+d_\X(y,\gamma(t))$ are greater than $\eps>d_\X(x,y)$, for every
choice of $t$, so that $d_G(x,y)$ must be  equal to $d_\X(x,y)$. 
\end{proof}

\paragraph{Gluing and GHP limits. }
From now on, we mostly focus on compact geodesic spaces. The gluing of one or 
two geodesic spaces along geodesics is again a geodesic space by general results presented
in~\cite{burago01}. The gluing operation also preserves the
compactness of the spaces that are glued together. Furthermore, if a
compact metric space is the Gromov--Hausdorff limit of a sequence of
compact geodesic spaces, then it is also a compact geodesic
space \cite[Theorem~7.5.1]{burago01}. 

The next result shows that the gluing operations behave well with
respect to the GHP metric. For simplicity, we state it with the first marks of the markings but it obviously holds up to index permutations, using Lemma~\ref{lem1lip} for instance.

\begin{prp}\label{compgluing}
Let $(\X_n,d_{\X_n},\bA_n,\bmu_{\X_n})$,
$(\Y_n,d_{\Y_n},\bB_n,\bmu_{\Y_n})$\MEMS{}{, $n\ge 0$,} be geodesic
marked measured metric spaces that converge in the marked
GHP topology to
$(\X,d_\X,\bA,\bmu_{\X})$, $(\Y,d_\Y,\bB,\bmu_{\Y})$.
Assume that the first pairs of marks $(A_{n}^1,A_{n}^2)=\gamma_n$
and $(B_{n}^1,B_{n}^2)=\xi_n$ of~$\X_n$ and of~$\Y_n$ are
geodesic marks for every $n\geq
0$. Then the first two marks of~$\bA$ and of~$\bB$ are
geodesic marks~$\gamma$, $\xi$, and 
\[G(\X_n,\Y_n;\gamma_{n},\xi_{n})\ton G(\X,\Y;\gamma,\xi)\]
in the GHP topology. 

Similarly, if we now assume that the first four marks are such that $(A_{n}^1,A_{n}^2)=\gamma_n$ and
$(A_{n}^3,A_{n}^4)=\xi_n$ are geodesic marks, then the same holds for the first marks~$\gamma$, $\xi$ of~$\bA$, and 
\[G(\X_n;\gamma_{n},\xi_{n})\ton G(\X;\gamma,\xi)\]
in the GHP topology. 
\end{prp}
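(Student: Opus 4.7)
The approach is to combine the correspondence-and-coupling characterization of GHP convergence from Lemmas~\ref{lemcorresGH} and~\ref{lemcorresP} with the geodesic mark limit result of Proposition~\ref{limgeod}. The first step is to apply the latter to deduce that~$\gamma$, $\xi$ are indeed geodesic marks of~$\X$, $\Y$, with $\lgth(\gamma_n)\to\lgth(\gamma)$ and $\lgth(\xi_n)\to\lgth(\xi)$. Consequently, $\pzl_n := \lgth(\gamma_n)\wedge\lgth(\xi_n)$ converges to $\pzl := \lgth(\gamma)\wedge\lgth(\xi)$, so that both gluings $G(\X_n,\Y_n;\gamma_n,\xi_n)$ and $G(\X,\Y;\gamma,\xi)$ are well defined, and are compact geodesic spaces as recalled in the text.

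For each~$n$ large enough, one picks a correspondence~$\RR_n$ between~$\X_n$ and~$\X$ compatible with the markings and of distortion at most~$\eps_n\to 0$, together with couplings of the measures satisfying the hypotheses of Lemma~\ref{lemcorresP}, and similarly for a correspondence~$\RR'_n$ between~$\Y_n$ and~$\Y$. A key technical step is to refine~$\RR_n$ so as to contain the canonical alignment $\gamma_n(t)\binRRn\gamma(t)$ for $t\in[0,\lgth(\gamma_n)\wedge\lgth(\gamma)]$, and similarly for~$\RR'_n$ on~$\xi_n$, $\xi$. This is legitimate because, by the argument used in the proof of Proposition~\ref{limgeod}, the singleton origin marks $A_n^1=\{\gamma_n(0)\}$, $A^1=\{\gamma(0)\}$ together with distance preservation along the geodesics~$A_n^2=\gamma_n$, $A^2=\gamma$ force any pair $(\gamma_n(s),\bar{g})\in\RR_n\cap (A_n^2\times A^2)$ to satisfy $\bar{g}=\gamma(s')$ with $|s-s'|\leq\eps_n$, so that replacing such pairs by the canonical ones increases the distortion only by a factor that still vanishes with~$n$.

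The core estimate is then the bound on the distortion of the correspondence~$\hat\RR_n$ induced on the glued spaces by the canonical projection~$\bp$: for matched pairs $(x_n,x)$ and $(y_n,y)$ with $x_n\in\X_n,\, x\in\X$ and $y_n\in\Y_n,\, y\in\Y$, the explicit formula~\eqref{dG}, combined with the aligned geodesics, yields $|d_{G_n}(x_n,y_n)-d_G(x,y)|=O(\eps_n+|\pzl_n-\pzl|)$, by transporting a value of~$t$ nearly realizing the infimum defining $d_G(x,y)$ to $t\wedge\pzl_n$ on the other side, and symmetrically. For matched pairs both in~$\X_n$ and~$\X$ (resp.\ both in~$\Y_n$ and~$\Y$), the original distortion bounds on~$\RR_n$ (resp.~$\RR'_n$) continue to apply, since the canonical projection restricts to an isometric embedding of each of~$\X$, $\Y$ into the glued space. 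The coupling part of Lemma~\ref{lemcorresP} is immediate by pushforward of the original couplings by~$\bp$, using that the latter is $1$-Lipschitz.

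For the second assertion, the same strategy applies within a single space~$\X_n$, using formula~\eqref{eq:22} in place of~\eqref{dG}: a single correspondence~$\RR_n$ is now refined so as to be aligned simultaneously on the two pairs of geodesics, and the three terms in the infimum~\eqref{eq:22} are handled separately with the same technique. I expect the main obstacle to be precisely the alignment refinement step, which requires some bookkeeping to simultaneously preserve markings compatibility, distortion control, and coupling bounds; once this is done, the distortion estimate on the quotient pseudometric reduces to a routine triangle inequality through the explicit formulas~\eqref{dG} and~\eqref{eq:22}.
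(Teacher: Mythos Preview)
Your approach is correct and follows the same overall framework as the paper's proof: correspondences, the explicit formula~\eqref{dG}, and the geodesic property. Two technical differences are worth noting. First, the paper invokes Lemma~\ref{lemananasplit} at the outset to reduce to the case $\lgth(\gamma_n)=\lgth(\xi_n)$, which eliminates the $|\pzl_n-\pzl|$ bookkeeping you carry along; and rather than refining~$\RR_n$ to contain the canonical alignments $(\gamma_n(t),\gamma(t))$, it simply uses marking compatibility to find \emph{some} $t_n,t'_n$ with $\gamma_n(t_n)\binRR\gamma(t)$ and $\xi_n(t'_n)\binRR'\xi(t)$, and then bounds $d_n(\gamma_n(t_n),\xi_n(t'_n))=|t_n-t'_n|\le\dis(\RR)+\dis(\RR')$ directly via the geodesic identity. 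Second, and more substantively, for the measures the paper does not push forward couplings as you do: instead it invokes the equivalence of Lemma~\ref{lemrandmark} to reduce the GHP statement to the already-established GH statement, by sampling i.i.d.\ points from the normalized measures, appending them as extra marks, applying the unmeasured gluing result, and then using the converse implication of Lemma~\ref{lemrandmark}. Your coupling-pushforward argument is more direct and avoids this extra machinery, at the modest cost of the Prokhorov verifications you sketch.
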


In order to prove this proposition, we are first going to state and prove a
useful lemma that allows one to deal only with the situations where
the geodesics along which the spaces of interest are glued have the
same lengths. While Lemma~\ref{lem1lip} showed that the
operation of merging two marks is continuous on
$(\M^{(\ell,m)},\dGHP^{(\ell,m)})$, this lemma states that
in the case of geodesic marks, the natural splitting operation is
continuous. 
If~$\gamma$
is a geodesic mark and $r\in [0,\lgth(\gamma)]$, the \emph{splitting}
of~$\gamma$ at level~$r$ is the two geodesic marks $(\gamma(0),\{\gamma(t):0\leq t\leq
r\})$, $(\gamma(r),\{\gamma(t),r\leq t\leq \lgth(\gamma)\})$. 

\begin{lmm}\label{lemananasplit}
Let $(\X_n,d_{\X_{n}},\bA_n,\bmu_{\X_n})$, $n\ge 0$, be a sequence of geodesic marked measured metric spaces
converging in the GHP topology toward a geodesic marked measured metric
space $(\X,d_\X,\bA,\bmu_\X)$, and assume that, say, the first
pairs of marks, are geodesic marks~$\gamma_n$ and~$\gamma$. Denote by $\pzl_n=\lgth(\gamma_n)$ and $\pzl=\lgth(\gamma)$ their lengths. 
Let $r_n\in(0,\pzl_n)$ be real numbers such that $r_n\to r\in(0,\pzl)$. 
Then the convergence $\X_n\to \X$ still holds in the GHP topology after replacing the marks~$\gamma_n$ and~$\gamma$
in~$\bA_n$ and~$\bA$, with their splittings $\gamma_{n}'$, $\gamma_{n}''$ and
$\gamma'$, $\gamma''$, respectively at levels~$r_n$ and~$r$. 
\end{lmm}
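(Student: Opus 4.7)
The plan is to verify the convergence via the correspondence–and–coupling characterization of the GHP topology given by Lemma~\ref{lemcorresP}, exploiting that the split only affects the marking: the metrics and measures are unchanged. First note that by Proposition~\ref{limgeod}, the limiting pair of marks $\gamma$ is genuinely a geodesic mark and $\pzl_n \to \pzl$, so the splittings at levels $r_n \in (0,\pzl_n)$ and $r \in (0,\pzl)$ produce well-defined pairs of geodesic submarks. Fix $\eps > 0$. By the GHP convergence of the original spaces, for $n$ large one can choose a correspondence $\RR_n$ between $\X_n$ and $\X$ compatible with $\bA_n,\bA$, of distortion less than~$\eps$, along with couplings $\nu_n^j$ such that $\nu_n^j(\RR_n^c) < \eps$ and whose marginals are within Prokhorov distance $\eps$ of $\mu_{\X_n}^j$ and $\mu_\X^j$.

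The key construction is a piecewise affine reparametrization $\phi_n \colon [0,\pzl_n] \to [0,\pzl]$ with $\phi_n(0)=0$, $\phi_n(r_n)=r$, $\phi_n(\pzl_n)=\pzl$. Since $r_n\to r \in (0,\pzl)$ and $\pzl_n\to\pzl$, the slopes of $\phi_n$ on $[0,r_n]$ and $[r_n,\pzl_n]$ tend to~$1$; in particular $\sup_{s}|\phi_n(s)-s|\to 0$ and $\sup_{s\neq s'}\bigl||\phi_n(s)-\phi_n(s')|-|s-s'|\bigr|\to 0$. I then augment the correspondence by
\[
\RR_n' \;=\; \RR_n \,\cup\, \bigl\{(\gamma_n(s),\gamma(\phi_n(s))) : s\in[0,\pzl_n]\bigr\}.
\]
Since $\phi_n$ maps $[0,r_n]$ onto $[0,r]$, $[r_n,\pzl_n]$ onto $[r,\pzl]$, and sends origins to origins, $\RR_n'$ is compatible with the four new split marks, and remains compatible with the unchanged marks $A_n^i$, $i\geq 3$.

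The main step is the distortion bound for $\RR_n'$, and I expect the subtle case to be a ``mixed'' pair with $(x,y)\in\RR_n$ and $(\gamma_n(s'),\gamma(\phi_n(s')))$ from the added set. Pairs drawn entirely from $\RR_n$ contribute at most $\eps$; pairs drawn entirely from the added set contribute $\bigl||s-s'|-|\phi_n(s)-\phi_n(s')|\bigr|=o(1)$ by near-isometry of $\phi_n$. For the mixed case, compatibility of $\RR_n$ with the original geodesic mark provides $y''=\gamma(s'')$ with $(\gamma_n(s'),y'')\in\RR_n$; combining the $\eps$-distortion bound for the pairs $(\gamma_n(0),\gamma(0))$ and $(\gamma_n(s'),y'')$ forces $|s'-s''|\le\eps$, whence
\[
d_\X(\gamma(s''),\gamma(\phi_n(s')))\;=\;|s''-\phi_n(s')|\;\le\;\eps+\sup_u|\phi_n(u)-u|,
\]
and the triangle inequality yields
\[
|d_{\X_n}(x,\gamma_n(s'))-d_\X(y,\gamma(\phi_n(s')))|\;\le\;2\eps+\sup_u|\phi_n(u)-u|.
\]
Taking suprema gives $\mathrm{dis}(\RR_n')=O(\eps)+o(1)$.

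Finally, the measures require no work: the couplings $\nu_n^j$ are unchanged, their marginals are as before, and since $\RR_n'\supseteq\RR_n$ one has $\nu_n^j((\RR_n')^c)\le\nu_n^j(\RR_n^c)<\eps$. Lemma~\ref{lemcorresP} then yields that the split spaces are at GHP distance $O(\eps)+o(1)$, proving the claimed convergence. The only real obstacle is the mixed-case distortion estimate; it is there that the geodesic nature of the marks and the strict interiority $r\in(0,\pzl)$ (which ensures that $\phi_n$ is a near-isometry, not merely bi-Lipschitz with degenerate constants) are crucially used.
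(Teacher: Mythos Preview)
Your proof is correct. The core idea—enlarging the original correspondence so that it also pairs $\gamma_n(s)$ with $\gamma(t)$ for nearby parameters, and then checking compatibility with the split marks—is the same as the paper's. The implementations differ slightly: the paper first observes that the measures are untouched and reduces to the purely marked (nonmeasured) GH setting, then uses a uniform $\eps$-thickening $\RR^\eps=\{(x,y):\exists (x',y')\in\RR,\ d_{\X_n}(x,x')\vee d_\X(y,y')<\eps\}$, whose distortion is at most $\dis(\RR)+4\eps$ by a one-line triangle inequality, and shows directly that $(\gamma_n(s),\gamma(t))\in\RR^\eps$ whenever $|s-t|<\eps-\dis(\RR)$. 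Your piecewise-affine reparametrization $\phi_n$ and the explicit augmentation $\RR_n'=\RR_n\cup\{(\gamma_n(s),\gamma(\phi_n(s)))\}$ achieve the same thing, at the price of a three-case distortion analysis; in return, you handle the measures in place via the unchanged couplings and the inclusion $\RR_n'\supseteq\RR_n$, which is neat. Either route works, and your mixed-case estimate is exactly the computation that also underlies the paper's claim.
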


\begin{proof}
Since the desired property does not involve the measures, it suffices to establish it in the space of marked, nonmeasured spaces. 
Let $\RR$ be a correspondence between~$\X_n$ and~$\X$ compatible with
the markings. We fix $\eps>\dis(\RR)$ and consider the enlarged
correspondence
\[\RR^\eps=\big\{(x,y)\in \X_n\times \X:\exists (x',y')\in \RR,\,
d_{\X_n}(x,x')\vee \d_\X(y,y')<\eps\big\}.\]
By the triangle inequality, the distortion of~$\RR^\eps$ is at most $\dis(\RR)+4\eps$. Moreover,
we claim that for $s\in [0,\pzl_n]$ and $t\in [0,\pzl]$ such that
$|t-s|< \eps-\dis(\RR)$, it holds that
$\gamma_n(s)\binRR^\eps\gamma(t)$. Indeed, since $\RR$ is compatible with
the markings, for every~$s$, $t$ as above, there exists~$u$ such
that $\gamma_n(s)\binRR\gamma(u)$, so
\[|s-u|=\big|d_{\X_n}(\gamma_n(s),\gamma_n(0))-d_\X(\gamma(u),\gamma(0))\big|\leq
\dis \RR,\]
and therefore $|d_\X(\gamma(t),\gamma(u))|=|t-u|\leq
|t-s|+|s-u|<\eps$, as wanted. From this, we conclude that $\RR^\eps$
is compatible with the geodesic marks~$\gamma_n'$, $\gamma'$ and~$\gamma_n''$, $\gamma''$, as soon as $\eps>|r_n-r|\vee |\pzl_n-\pzl|$.
Choosing a sequence of correspondences $\RR_n$ with vanishing
distortion and taking $\eps_n=(\dis(\RR_n)\vee|r_n-r|\vee |\pzl_n-\pzl|)+1/n$ yields the result.
\end{proof}

\begin{proof}[Proof of Proposition~\ref{compgluing}]
With the help of Lemma~\ref{lemananasplit}, we may and will assume that
$\gamma_n$, $\xi_n$ have same length for every $n$. 
The fact that the limiting marks are geodesics marks with same length comes from Proposition~\ref{limgeod}. We will first
establish the result for marked, nonmeasured spaces, from which we will deduce the full result thanks to Lemma~\ref{lemrandmark}. 

Let~$\RR$ and~$\RR'$ be two correspondences\MEMS{}{ respectively} between~$\X_n$ and~$\X$ and between~$\Y_n$ and~$\Y$, compatible with the considered markings. Identifying~$\X_n$ and~$\Y_n$ (resp.~$\X$ and~$\Y$) with their canonical embeddings into $G(\X_n,\Y_n;\gamma_n,\xi_n)$ (resp.\ into $G(\X,\Y;\gamma,\xi)$), we see $\RR''=\RR\cup\RR'$ as a correspondence between $G(\X_n,\Y_n;\gamma_n,\xi_n)$ and $G(\X,\Y;\gamma,\xi)$, obviously compatible with the other marks. In order to bound its distortion, let us take $x_n\binRR x$ and $y_n\binRR' y$.

We let $\pzl_n=\lgth(\gamma_n)=\lgth(\xi_n)$ and $\pzl=\lgth(\gamma)=\lgth(\xi)$ be the lengths of the considered geodesics and we denote by~$d_n$ and~$d$ the metrics in the previous gluings. From~\eqref{dG} and by compactness, there exists $t\in [0,\pzl]$ such that
\[d(x,y)=d_\X(x,\gamma(t))+d_\Y(\xi(t),y).\]
Then there exist $t_n$, $t'_n\in[0,\pzl_n]$ such that $\gamma_n(t_n)\binRR\gamma(t)$ and $\xi_n(t'_n)\binRR'\xi(t)$. As a result,
\begin{align*}
d_n(x_n,y_n)&\le d_{\X_n}\big(x_n,\gamma_n(t_n)\big)	+d_n\big(\gamma_n(t_n),\xi_n(t'_n)\big)
              +d_{\Y_n}\big(\xi_n(t'_n),y_n\big)\\
  &\le d_\X(x,\gamma(t))+\dis(\RR) + d_n\big(\gamma_n(t_n),\xi_n(t'_n)\big) + d_\Y(\xi(t),y)+\dis(\RR')\\
						&\le d(x,y)+\dis(\RR)+\dis(\RR')+d_n\big(\gamma_n(t_n),\xi_n(t'_n)\big).
\end{align*}
Using the facts that~$\gamma_n$, $\xi_n$, $\gamma$, $\xi$ are geodesics and $\gamma_{n}(0)\binRR\gamma(0)$, $\xi_{n}(0)\binRR'\xi(0)$, we easily obtain
\begin{align*}
d_n\big(\gamma_n(t_n),\xi_n(t'_n)\big)&=\big|d_{\X_n}\big(\gamma_{n}(0),\gamma_n(t_n)\big)-d_{\Y_n}\big(\xi_{n}(0),\xi_n(t'_n)\big)\big|\\
                                          &\leq |d_\X(\gamma(0),\gamma(t))-d_\Y(\xi(0),\xi(t))|+ \dis(\RR)+\dis(\RR')\\
                                           &= \dis(\RR)+\dis(\RR').
\end{align*}
Using a symmetric argument, we \MEMS{obtain}{see that} $|d_n(x_n,y_n)- d(x,y)|\le
2\,(\dis(\RR)+\dis(\RR'))$. Adding to this the simpler cases where the pairs of points we compare belong both to~$\RR$ or both to~$\RR'$, we obtain
\[\dis(\RR'')\le 2\,(\dis(\RR)+\dis(\RR'))\]
and the first statement easily follows for the GH topology (without the measures).

Let us show that the result still holds when considering the
measures. We assume for simplicity that the terms of~$\bmu_\X$, $\bmu_\Y$ are
all nonzero, since the case of a vanishing measure, say~$\mu^j_\X$, is
equivalent to the fact that $\mu^j_{\X_n}(\X_n)\to0$. 
Denote by~$m$, $m'$ the numbers of coordinates of~$\bmu_\X$, $\bmu_\Y$ and sample $r(m+m')$ independent points $\bx=(x^j_i,1\leq i\leq
r,1\leq j\leq m)$ and $\by=(y_i^j,1\leq i\leq r,1\leq j\leq m')$ where
$x_i^j$ has law $\br{\mu}^j_\X$ and $y_i^j$ has law
$\br{\mu}^j_\Y$. 
We identify these points with their images in the glued
space by the canonical projection $\X\sqcup \Y\to
G(\X,\Y;\gamma,\xi)$. 
We assume that $\mu_{\X_n}(\X_n)>0$ and $\mu_{\Y_n}(\Y_n)>0$, which
hold for~$n$ sufficiently large since, as $n\to\infty$,
$\mu_{\X_n}(\X_n)\to \mu_{\X}(\X)>0$ and $\mu_{\Y_n}(\Y_n)\to
\mu_{\Y}(\Y)>0$. We proceed similarly to sample~$r(m+m')$ random
points $\bx_n=(x_{n,i}^j)$, $\by_n=(y_{n,i}^j)$ in 
$G(\X_n,\Y_n;\gamma_n,\xi_n)$ with laws
$\br{\mu}^j_{\X_n}$ and $\br{\mu}^j_{\Y_n}$ as appropriate. Lemma~\ref{lemrandmark}
guarantees that the marked spaces
$(\X_n,d_{\X_n},\bA_n\bx_n)$ and
$(\Y_n,d_{\Y_n},\bB_n\by_n)$
converge to $(\X,d_{\X},\bA\bx)$ and
$(\Y,d_{\Y},\bB\by)$ in
distribution in the GH topology. Applying the
result of Proposition~\ref{compgluing} proved above in 
the case without measures, we obtain the convergence in distribution of the glued
space $G(\X_n,\Y_n;\gamma_{n},\xi_{n})$ with markings
$\bA_n\bB_n\bx_n\by_n$ to $G(\X,\Y;\gamma,\xi)$ with the marking
$\bA\bB\bx\by$. Since~$\bx_n$, $\by_n$ and~$\bx$, $\by$ are also independent
samples from the renormalized measures~$\br\bmu_{\X_n}$, $\br\bmu_{\Y_n}$
and~$\br\bmu_\X$, $\br\bmu_\Y$ viewed as measures on the glued spaces, an
application of the converse implication of Lemma~\ref{lemrandmark}
implies the result.  

\medskip
The second part of the statement dealing with metric spaces that are glued along two marked
geodesics is shown in a similar fashion. We leave the details to the
reader.
\end{proof}

\subsection[Proof of Theorem 1.1]{Proof of Theorem~\ref{mainthm}}\label{sec:pfthm1}

Let $g$, $\kk\in\Zp$ be fixed; as in Section~\ref{secCMS}, we exclude the cases $(g,\kk)\in \{(0,0),(0,1)\}$ of the sphere, the pointed sphere, or the disk. Recall that the case $(g,\kk)=(0,0)$ of the sphere is already known. The case $(g,\bb,\kk)=(0,1,1)$ of the disk is partially known but has not been treated in the complete setting of Theorem~\ref{mainthm}. In fact, it may actually enter the following framework: in this case, the decomposition of Section~\ref{secCMS} yields only one well-labeled forest (and thus one unique slice) indexed by a degenerate scheme with one external face having one unique vertex with a self-loop edge. This creates a small ambiguity coming from the choice of the first tree in the forest, which can be overcome by randomization when considering random maps. The case $(g,\bb,\kk)=(0,0,1)$ of the pointed sphere would yield an even more degenerate decomposition with a one-vertex map as a scheme and a unique composite slice of width~$0$, object that is not introduced in this work.

Instead of considering these extensions and objects, we rather obtain these cases by using Proposition~\ref{propnull} at the end of Section~\ref{sec:main-conv-result}. More precisely, provided Theorem~\ref{mainthm} holds for $(g,\kk)=(0,2)$, we infer the case $(g,\kk)=(0,1)$ as follows. Let $l_n^1\in \Zp$ be such that $l_n^1/{\sqrt{2n}} \to L^1$ as $n\to\infty$ and $Q_n$ be uniform in~$\smash{{\RbQ}_{n,(l_n^1)}^{[0]}}$. Then let us choose a vertex uniformly at random among the internal vertices of~$Q_n$ and denote by~$Q_n^\bullet$ the map obtained from~$Q_n$ by declaring the chosen vertex as a second hole. Since~$Q_n^\bullet$ is clearly uniform in $\smash{{\RbQ}_{n,(l_n^1,0)}^{[0]}}$, the case $(g,\kk)=(0,2)$ of Theorem~\ref{mainthm} implies the convergence of the corresponding metric measure space toward~$\smash{\BS{0}{(L^1,0)}}$, and finally the convergence of the metric measure space corresponding to~$Q_n$ toward the space~$\BS{0}{(L^1)}$, defined as~$\BS{0}{(L^1,0)}$ with its second mark and second boundary measure forgotten.

\subsection{Gluing quadrangulations from elementary
  pieces}\label{sec:gluing-quadr-from} 

We start by interpreting the observations of Section~\ref{secCMS} in
the light of the previous section for a deterministic map. Let
$n\in\Zp$ and $\bl \in \N^\kk$ be fixed, let $\bq\in \bQ^{[g]}_{n,\bl}$ and $v_*\in V(\bq)$. The CVS construction
being one-to-one, there is a unique labeled map
$(\bm,\lambda)\in\bM^{[g]}_{n,\bl}$ that corresponds
to~$(\bq,v_*)$. We denote by~$\bs$ the scheme of~$\bm$ and by
$(\EP^e,e\in \vec E(\bs))$ the collection of elementary pieces
of~$(\bq,v_*)$. We emphasize that the decomposition strongly depends
on the distinguished point~$v_*$ and not only on~$\bq$.

If $e\in \vec B(\bs)$, we let $\gamma^e$, $\xi^e$ and $\beta^e$ be the maximal
geodesic, shuttle, and base of the slice $\EP^e$,
and we let~$\mu^e$, $\nu^e$ be the associated area and base measures defined at~\eqref{mmsslice}. If
$e\in \vec I(\bs)$, we let~$\gamma^e$, $\xi^e$, $\bar{\gamma}^e$, $\bar{\xi}^e$ be the
maximal geodesics and shuttles of~$\EP^e$, where the first two correspond to~$I_e$ and the latter two to~$I_{\br e}$, in the
notation of Section~\ref{secCMS}; we also let~$\mu^e$ be the
associated area measure defined at~\eqref{mmsquad}. 
Note that~$\EP^{\br e}$ yield the same map as~$\EP^e$ with the same measure; only the marks are ordered differently, namely $\bar{\gamma}^e$, $\bar{\xi}^e$, $\gamma^e$, $\xi^e$. 

The construction of~$\bq$ from $(\bm,\lambda)$ consists in connecting
every corner of~$\bm$ to its successor, and the paths following
consecutive successors are geodesic paths all aiming toward~$v_*$.
On the other hand, the
construction of the elementary pieces from $(\bm,\lambda)$ consists in
performing the interval CVS bijection on every interval $I_e$, in the
notation of Section~\ref{secCMS}. The only difference between these
constructions lies on the shuttles of these elementary pieces: if $c$
is a corner in some interval~$I_e$ whose successor in the interval bijection belongs to the
shuttle~$\xi^e$, then in order to obtain~$\bq$ we should rather connect $c$ to
its successor $s(c)$ in the contour order around~$f_*$. This successor
will belong to some interval $I_{e'}$ arriving later in
contour order around $f_*$ -- note that this interval can be~$I_e$
itself. Note also that this successor $s(c)$ belongs to the maximal geodesic~$\gamma^{e'}$. 
Moreover, interpreting~$\bq$ and its
elementary pieces as compact geodesic 
marked metric spaces, it is straightforward to see that the identifications
correspond to metric gluings along geodesics. 

\paragraph{Iterative gluing procedure.}
In order to reconstruct $\bq$ from $\EP^e$, $e\in
\vec{E}(\bs)$, 
rather than connecting the shuttle
vertices to their actual successors all at once, we will proceed
progressively by first connecting only those whose successors belong to the
maximal geodesic of the elementary piece that arrives immediately
after in contour order around~$f_*$. 

We now formalise this idea. Let~$\kappa$ denote the
cardinality of~$\vec E(\bs)$. We arrange the half-edges~$e_1$, \dots, $e_\kappa$ incident to the internal face of~$\bs$ according to the contour
order, starting at an arbitrarily chosen half-edge. 
While following the contour of the internal
face~$f_*$ of~$\bm$, we successively visit the 
elementary pieces~$\EP^{e_i}$, $1\leq i\leq
\kappa$, which are themselves viewed as marked measured geodesic metric spaces. The reconstruction of~$\bq$ will be done recursively in~$\kappa$ steps, resulting in a
sequence of marked $\kk+1$-measured metric spaces~$\bq_0$, \dots, $\bq_\kappa$. At the $i$-th step, $\bq_{i+1}$ will be obtained from~$\bq_i$ by gluing~$\EP^{e_i}$ along (part of) its marked maximal geodesic~$\gamma^{e_i}$. At the same time, we will do some operations on the markings and measures, namely reorderings, unions of marks and sums of measures, which are all continuous by Lemma~\ref{lem1lip}.

We need to keep track of the boundary marks\MEMS{and}{, as well as} the geodesics yet to be glued as marks. More precisely, the marking of~$\bq_i$ is
$(\gamma_i^0,\xi_i^0,\gamma_i^1,\xi_i^1,\ldots,\gamma_i^{u_i},\xi_i^{u_i},\beta_i^1,\ldots,\beta_i^\kk)$, where
\begin{itemize}
\item $\gamma_i^j$, $\xi_i^j$, $0\leq j\leq u_i$, are
  geodesic marks,
  \item $\beta_i^1$, \ldots, $\beta_i^\kk$ are called the \emph{boundary
    marks}. By convention, certain of these marks may be empty, in
    which case they are simply discarded from the marks. 
\end{itemize}
The mark~$\xi_i^0$ is the mark along which the subsequent gluing
producing~$\bq_{i+1}$ will occur, and~$u_i$ represents the number of
quadrilaterals that have been involved only once in the gluing
procedures up to the $i$-th step. Each of these quadrilateral yields
two marks~$\gamma_i^j$, $\xi_i^j$ for some
$j\in\{1,\dots,u_i\}$, corresponding to the unvisited half of the
quadrilateral, which will have to be glued at a further step. 
Finally, each~$\bq_i$ will come with measures~$\mu_i$, $\bnu_i$ where~$\mu_i$ is called the \emph{area measure} and~$\bnu_i=(\nu_i^1,\ldots,\nu_i^\kk)$ is the $\kk$-tuple of \emph{boundary measures}. 

\medskip
We initiate the construction by letting $u_0=0$, $\bq_0\in\M^{(2,\kk+1)}$ be the point space with the two marks $\gamma_0^0$, $\xi_0^0$ being the unique point, and measures $\mu_0=0$, $\bnu_0=\bzero^\kk$. We also let all the boundary marks be empty.

Next, provided that~$\bq_{i-1}$ has been constructed for some $i\in \{1,\ldots,\kappa\}$, we define~$\bq_i$ by considering the following cases, depicted on Figures~\ref{fig:glueboundary} to~\ref{fig:glueself}. 
\begin{itemize}
\item 
If $e_i\in \vec B_r(\bs)$ for some $r\in \{1,\ldots,\kk\}$, meaning in particular that~$\EP^{e_1}$ is a
slice, we set
\[\bq_i=G\big(\bq_{i-1},\EP^{e_i};\xi_{i-1}^0,\gamma^{e_i}\big),\]
and mark it as follows. We update the boundary marks by setting
\[\beta_i^r=\beta_{i-1}^r\cup \beta^{e_i}\, 
,\qquad \beta_i^{r'}=\beta_{i-1}^{r'}\ \text{ for } r'\in \{1,\ldots,\kk\}\setminus
\{r\}.\]
We update the geodesic marks by letting\footnote{In~\eqref{eq:20}, we use the convention set in Section~\ref{secgluing}
for marks in a glued space: in particular, after gluing, one of the marks
$\gamma^{e_i}$, $\xi_{i-1}^0$ is contained in the other,
depending on which is longest.}
\begin{equation}
  \label{eq:20}
    \gamma_i^0=\gamma_{i-1}^0\cup \big(\gamma^{e_i}\setminus
  \xi_{i-1}^0\big),\qquad \xi_i^0=\xi^{e_i}\cup \big(\xi_{i-1}^0
  \setminus \gamma^{e_i}\big),
\end{equation}
and, setting $u_i=u_{i-1}$, we let $\gamma_i^j=\gamma_{i-1}^j$ and
$\xi_i^j=\xi_{i-1}^j$ for $1\leq j\leq u_i$.
Finally, we update the measures by
\[\mu_i=\mu_{i-1}+\mu^{e_i},\qquad
\nu_i^r=\nu_{i-1}^r+\nu^{e_i},\qquad
\nu_i^{r'}=\nu_{i-1}^{r'}\ \text{ for } r'\in \{1,\ldots,\kk\}\setminus
\{r\}.\]
This case is illustrated in Figure~\ref{fig:glueboundary}.
\item
If $e_i\in \vec I(\bs)$, meaning in particular that $\EP^{e_1}$ is a
quadrilateral, we keep the boundary marks unchanged by setting
$\beta_i^r=\beta_{i-1}^r$ for $1\leq r\leq \kk$, we set
$\bnu_i=\bnu_{i-1}$, and 
consider the following two possible situations. 
\begin{itemize}
\item
If $e_i\notin \{\bar{e}_j,\, 1\le j < i\}$, that is, if the
unoriented edge corresponding to $e_i$ is visited for the \textgras{first time}, 
we let again
\[\bq_i=G\big(\bq_{i-1},\EP^{e_i};\xi_{i-1}^0,\gamma^{e_i}\big),\]
and update its geodesic marks as follows. We update the first two geodesic marks by~\eqref{eq:20}. We set $u_i=u_{i-1}+1$ and let $\gamma_i^j=\gamma_{i-1}^j$ and $\xi_i^j=\xi_{i-1}^j$ for $1\leq j\leq u_i-1$. Finally, we set
$\gamma_i^{u_i}=\bar{\gamma}^{e_i}$,
$\xi_i^{u_i}=\bar{\xi}^{e_i}$, and $\mu_i=\mu_{i-1}+\mu^{e_i}$. This case is illustrated in
Figure~\ref{fig:gluequad}.
\item
If $e_i\in \{\bar{e}_j,\, 1\le j < i\}$, say $e_i=\bar e_\ell$, that is, if the
unoriented edge corresponding to~$e_i$ is visited for the \textgras{second time}, then $\gamma^{e_i}=\br\gamma^{e_\ell}$ is a mark of~$\bq_{i-1}$: it is the mark $\gamma^{e_i}=\gamma_\ell^{u_\ell}$ of~$\bq_\ell$ and stays a mark of the subsequent spaces $\bq_{\ell+1}$, \dots, $\bq_{i-1}$. Similarly, $\xi^{e_i}=\br\xi^{e_\ell}$ is a mark of~$\bq_{i-1}$. We let 
\[\bq_i=G\big(\bq_{i-1};\xi_{i-1}^0,\gamma^{e_i}\big),\]
we update the first two geodesic marks by~\eqref{eq:20},
and, setting $u_i=u_{i-1}-1$, we let $(\gamma_i^j,\xi_i^j,1\leq j\leq u_i)$ be the
the sequence $(\gamma_{i-1}^j,\xi_{i-1}^j,1\leq j\leq u_{i-1})$ from 
which the terms~$\gamma^{e_i}$ and~$\xi^{e_i}$ have been
removed. Finally, we set $\mu_{i}=\mu_{i-1}$. This case is illustrated in
Figure~\ref{fig:glueself}.
\end{itemize}
\end{itemize}

\begin{figure}
\centering
\includegraphics{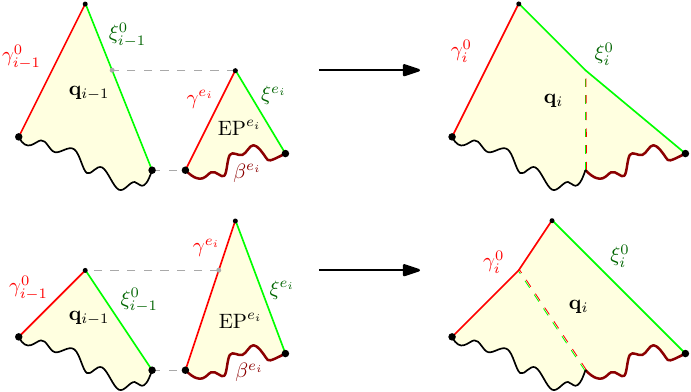}
\caption{The gluing procedure in the case where $\EP^{e_i}$ is a slice. In this picture and the following ones, the black wiggly
curve depicts all the marks different from
$\gamma_{i-1}^0$, $\xi_{i-1}^0$. The reader should bear in mind that, in
general, $\bq_{i-1}$ has no reason to present a planar topology as
in these pictures.  
The boundary $\xi_{i-1}^0$ of $\bq_{i-1}$ is
glued to the maximal geodesic $\gamma^{e_i}$, and the base of $\EP^{e_i}$ is added to the $r$-th boundary mark of~$\bq_{i-1}$
whenever~$\bar{e}_i$ is incident to~$\ch_r$. The first two geodesic marks are
updated according to~\eqref{eq:20}, which leads to the two
alternative situations described in this figure, depending on which
of~$\xi_{i-1}^0$ and~$\gamma^{e_i}$ is the longest: the
unglued part of these geodesics becomes part of~$\xi_i^0$ or of~$\gamma_i^0$.}
\label{fig:glueboundary}
\end{figure}

\begin{figure}
\centering
\includegraphics{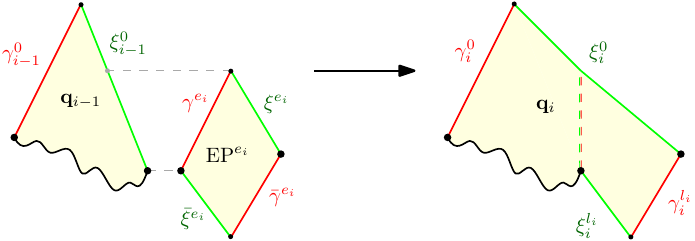}
\caption{The gluing procedure in the case where $\EP^{e_i}$ is a
quadrilateral that was not involved previously in the construction.
The boundary $\xi_{i-1}^0$ of $\bq_{i-1}$ is
glued to the maximal geodesic~$\gamma^{e_i}$. The first two geodesic marks are
again updated according to~\eqref{eq:20}, leading to two possible
situations depending on which
of~$\xi_{i-1}^0$ and~$\gamma^{e_i}$ is the longest. Only one
of these situations is represented on this figure. In this
case, the two geodesic boundary marks $\bar{\gamma}^{e_i}$
and~$\bar{\xi}^{e_i}$ are added to the marking; they will be involved in a
later construction step.}
\label{fig:gluequad}
\end{figure}

\begin{figure}
\centering
\includegraphics{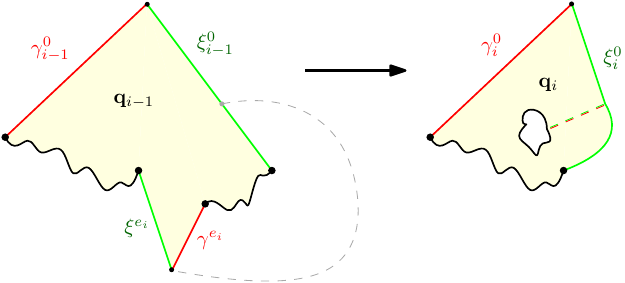}
\caption{The gluing procedure in the case where $\EP^{e_i}$ is a
quadrilateral, one side of which was already involved in a previous
construction step.
The boundary $\xi_{i-1}^0$ of $\bq_{i-1}$ is
glued to the maximal geodesic~$\gamma^{e_i}$, which had been
introduced as a geodesic mark in this previous construction
step. The first two geodesic marks are
again updated according to~\eqref{eq:20}, and the geodesic
marks~$\gamma^{e_i}$, $\xi^{e_i}$ are removed from the remaining marks.}
\label{fig:glueself}
\end{figure}

It is important to notice that, in~$\bq_i$, all the marks~$\gamma_i^j$, $\xi_i^j$, $0\leq j\leq u_i$, are geodesic marks. Indeed, each of these paths always take the form of a
chain of consecutive successors, which therefore must be a geodesic;
more precisely, these are the maximal geodesics and shuttles of the
interval CVS bijection on the intervals $I_{e_1}\cup\ldots\cup I_{e_i}$ and $I_{\bar e_j}$ for each $j\le i$ such that $e_j\in\vec I(\bs)\setminus\{\bar e_1,\dots,\bar e_i\}$.

At the end of this inductive procedure, we have connected all shuttle corners
of some interval $I_{e_i}$, to their actual successors in $\bm$
whenever these lie on some $I_{e_{i'}}$ with $1\leq i<i'\leq \kappa$. It
remains to connect the shuttle corners in some $I_{e_i}$
whose actual successor in $\bm$ lies in some $I_{e_{i'}}$ with $1\leq
i'\leq i\leq \kappa$. But one can observe that $u_\kappa=0$, 
so that $\bq_\kappa$ carries  exactly two geodesic marks~$\gamma_\kappa^0$, $\xi_\kappa^0$. The shuttle corners yet to be connected are exactly
those of $\xi_\kappa^0$, and should be matched to the successive corners of~$\gamma_\kappa^0$. 
Therefore, as marked metric spaces, we have $\bq=G(\bq_\kappa;\gamma_\kappa^0,\xi_\kappa^0)$, with
marks $\beta_\kappa^r$, $1\leq r\leq \kk$, which are precisely the connected components of the boundary~$\partial \bq$, ordered as they should. 

It is also possible to view all these gluing operations at once, as shown on Figure~\ref{gluings}.

\begin{figure}[htb!]
	\centering\includegraphics[width=.6\linewidth]{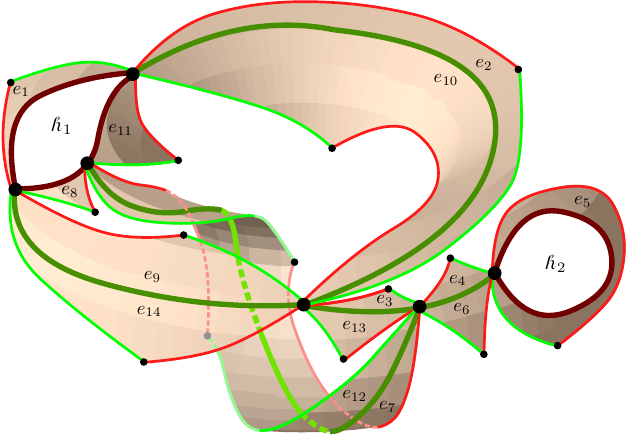}\vspace{9mm}
	
	\includegraphics[width=.95\linewidth]{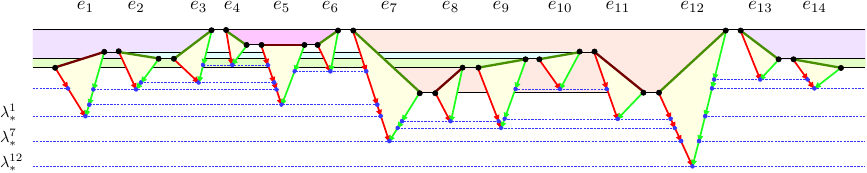}
	\caption{Reconstructing~$\bq$ by gluing its elementary pieces along geodesics. On this example, we have $\kappa=14$. Although we used the same scheme as in Figure~\ref{scheme} without~$\ch_3$ (for lower complexity), beware that the labels here do not match those from the left of Figure~\ref{scheme}. On the top, the half-edges of $\vec E(\bs)$ are arranged according to the contour order. With each one of them corresponds a ``triangle,'' which is either a slice (here, with~$e_1$, $e_5$, $e_8$, $e_{11}$) or ``half'' a quadrilateral, depending whether the half-edge belongs to~$\vec B(\bs)$ or~$\vec I(\bs)$. The five matchings of the corresponding ``halves'' of quadrilaterals, that is, the matchings of half-edges in~$\vec I(\bs)$, are represented with light colors. The vertices of these triangles are represented at a height corresponding to their label, where $\lambda_*^i=\min_{I_{e_i}} \lambda-1$. The geodesic marks of the pieces may be involved in multiple gluings. For instance, the shuttle of the leftmost triangle is involved in three gluings; it is split in three parts, corresponding to the triangles that can be ``seen'' to its right (those corresponding to~$e_2$, $e_5$, and~$e_7$).}
	\label{gluings}
\end{figure}

\paragraph{Measures.}
We claim that the previous equality $\bq=G\big(\bq_\kappa;\gamma_\kappa^0,\xi_\kappa^0\big)$ only holds as $\kk$-marked $\kk+1$-measured metric spaces up to a difference in the supports consisting of a bounded number of vertices. When considering rescaled measures through the operator~$\Omega_n$, this small difference 
will be of no importance in our limiting arguments. 

First of all, the boundary measures of the external faces match. This is because the boundary of a given external face of~$\bq$ is made of the bases, say $\beta^{e_{i_1}}$, \dots, $\beta^{e_{i_j}}$, of several slices that satisfy $\beta^{e_{i_\ell}}\cap\beta^{e_{i_{\ell+1}}}=\{\br\rho^{e_{i_\ell}}\}$ for $1\le \ell \le j$ with $i_{j+1}=i_1$ and where we denoted by~$\br\rho^{e}$ the final point of the base of~$\EP^e$. Since the base measure of the slice~$\EP^e$ is the counting measure on $\beta^e\setminus\{\br\rho^e\}$, the resulting measure in the gluing is the counting measure on the boundary of the considered external face of~$\bq$, as desired.

For the boundary measures of the external vertices, the measure in~$\bq$ is the counting measure on the singleton consisting of the external vertex, while the corresponding boundary measure in the gluing is the zero measure.

For the area measure, by convention, we decided that in the elementary 
pieces, the measures were taken on vertices \emph{outside of 
  the shuttles}. In doing so, after each gluing operation where a 
piece of a shuttle is glued to a piece of a maximal geodesic, the 
corresponding vertices are counted only once, as they should, \emph{except possibly for the vertices~$\rho$, $\bar{\rho}$ of the 
  quadrilaterals}, since they lie both on a maximal geodesic and on a 
shuttle, and are therefore not part of the counting measures by
convention. Therefore, the final gluing is naturally equipped with an area measure that is the counting measure on 
all but at most $2(2g+\kk-1)$ vertices, which is an upper bound on the 
number of vertices of the scheme by Lemma~\ref{lemnbschemes}.

\paragraph{Conclusion.}
We finally observe that the number~$\kappa$ of gluing operations necessary
to obtain~$\bq$ is uniformly bounded in~$n$. Indeed, the number of
edges of~$\bs$ is, by Lemma~\ref{lemnbschemes}, smaller than
$3(2g+\kk-1)$.

As a result, Theorem~\ref{mainthm} will directly follow from
subsequent applications of Proposition~\ref{compgluing} once we will
have shown that, after a proper scaling, the elementary pieces of a
uniform quadrangulation jointly converge in distribution in the GHP topology.

\subsection{Scaling limit of the collection of elementary pieces}\label{secslEP}

We now fix $\bL=(L^1,\dots,L^\kk)\in [0,\infty)^\kk$. We let $\bb$ and $\pp$ be the numbers of
indices~$i$ such that $L^i>0$ and $L^i=0$ respectively. In order to ease
notation, we assume that $L^1$, \dots, $L^{\bb}>0$ while
$L^{\bb+1}$, \dots, $L^{\kk}=0$.

\paragraph{Limiting measure for size parameters.}
We denote by $\RbS^\star$ the set of rooted genus~$g$ schemes with~$\kk$ holes, $\ch_1$, \dots, $\ch_\bb$ being faces, $\ch_{\bb+1}$, \dots, $\ch_\kk$ being vertices of degree~$1$, and whose internal vertices are all of degree exactly~$3$. These are called \emph{dominant} schemes. Let~$\bs\in \RbS^\star$ be fixed and denote its root vertex by~$v_0$. We let~$\cT_\bs$ be the set of tuples
\[\Big((\sfa^e)_{e\in \vec E(\bs)}, (\sfh^e)_{e\in \vec I(\bs)},(\sfl^e)_{e\in \vec B(\bs)}, (\lambda^v)_{v\in V(\bs)} \Big)
	\MEMS{}{\in (\Rp)^{\vec E(\bs)} \times(\Rp)^{\vec I(\bs)}\times \R^{\vec B(\bs)} \times \R^{V(\bs)}}\] 
\MEMS{in $(\Rp)^{\vec E(\bs)} \times(\Rp)^{\vec I(\bs)}\times \R^{\vec B(\bs)} \times \R^{V(\bs)}$ }{}such that
\begin{multicols}{2}\begin{itemize}
	\item $\sum_{e\in\vec E(\bs)} \sfa^e=1$,
	\item $\sfh^{\bar e}=\sfh^e$, for all $e \in \vec I(\bs)$, 
	\item $\sum_{e\in\vec B_i(\bs)} \sfl^{e}=L^i$, for $1\le i \le \bb$,
	\item $\lambda^{v_0}=0$.
\end{itemize}\end{multicols}

There is a natural Lebesgue measure $\cL_{\bs}$ on $\cT_\bs$
defined as follows. First, if $J$ is a finite set, and $L>0$ a positive real number, we let
$\Delta_J^L$ be the Lebesgue measure on the simplex $\{(\sfx^j,j\in J)\in(\Rp)^J:\sum_{j\in J}\sfx^j=L\}$. The latter measure can be defined as the image
of the measure 
 $\bigotimes_{j\in J'}\d \sfx^j\ind_{\{\sum_{j\in J'}\sfx^j<L\}}$, where $J'$ is obtained from $J$ by removing one arbitrary element $j'$, by the mapping $(\sfx^j,j\in
J')\mapsto (\sfx^j,j\in J)$ where $\sfx^{j'}=1-\sum_{j\in J'}\sfx^j$. 

Next, let $I(\bs)$ be an orientation of $\vec I(\bs)$, that is, a set
containing exactly one element from $\{e,\br e\}$ for every  $e\in \vec
I(\bs)$. We let $\cL^+_{I(\bs)}$ be the measure
$\bigotimes_{e\in I(\bs)}\d \sfh^e\ind_{\{\sfh^e\geq 0\}}$. Similarly, we let
$\cL_{V(\bs)}$ be the measure $\bigotimes_{v\in V'(\bs)}\d
\lambda^v$, where $V'(\bs)=V(\bs)\setminus \{v_0\}$.

Finally, the measure
$\cL_\bs$ is the image measure of
\[\Delta_{\vec E(\bs)}^1\otimes 
\cL^+_{I(\bs)}\otimes \bigotimes_{i=1}^{\bb}\Delta_{\vec
  B_i(\bs)}^{L_i}\otimes \cL_{V(\bs)}\] by the mapping
that associates with 
\[\Big((\sfa^e)_{e\in \vec E(\bs)}, (\sfh^e)_{e\in I(\bs)},\big((\sfl^e)_{e\in \vec B_i(\bs)},1\leq i\leq \bb\big),(\lambda^v)_{v\in V'(\bs)} \Big)\] 
the unique compatible element 
of~$\cT_\bs$, that is, such that $\sfh^e=\sfh^{\br e}$ for every $e\in \vec
I(\bs)$, and such that $\lambda^{v_0}=0$.

We let $\mathrm{Param}_\bL$ be the probability measure on $\bigcup_{\bs\in
  \RbS^\star}\{\bs\}\times \cT_\bs$ whose density with respect to the
measure $\sum_{\bs\in \RbS^\star}\delta_\bs\otimes \cL_\bs$ is 
\begin{equation}\label{eqlimvect}
\frac{1}{\ZZ_\bL}
	\prod_{e\in \vec I(\bs)} q_{\sfh^e}(\sfa^e)
	\prod_{e\in \vec B(\bs)} q_{\sfl^e}(\sfa^e)
	\prod_{e\in I(\bs)} p_{\sfh^e}(\smash{\delta\!\lambda^e})
	\prod_{e\in \vec B(\bs)} p_{3\sfl^e}(\delta\!\lambda^e),
\end{equation}
where
$p_t(x)=e^{-x^2/2t}/\sqrt{2\pi t}$ is the Gaussian density, 
$q_x(t)=(x/t)\,p_t(x)\ind_{\{t>0\}}$ is the (stable~$1/2$) density for the hitting time
of level~$-x$ by standard Brownian motion, $\delta\!\lambda^e=\lambda^{e^+}-\lambda^{e^-}$ for $e\in\vec E(\bs)$, and $\ZZ_\bL$
is a normalizing constant, equal to the integral of the remaining display. Beware that the third product is over~$I(\bs)$, not~$\vec I(\bs)$.

\paragraph{Scaling limits for size parameters.}
Next, let $(\bl_n)=(l_n^1,\ldots,l^\kk_n)$ and $Q_n$ be as in the statement of
Theorem~\ref{mainthm}. 
We let~$v^*_n$ be uniformly distributed over the set of internal vertices of~$Q_n$, whose cardinality given by~\eqref{nbintvert} only depends on the parameters. Consequently, $(Q_n,v^*_n)$ is uniformly distributed over the set of quadrangulations from~$\RbQ^{[g]}_{n,\bl_n0}$, seeing~$v_*$ as a $\kk+1$-th hole. The rooted labeled map $(M_n,\lambda_n)$ corresponding via the CVS correspondence is thus uniformly distributed
over~$\RbM^{[g]}_{n,\bl_n}$. We denote by~$S_n$ the scheme of the nonrooted map corresponding to~$M_n$, and we root~$S_n$ uniformly at random among its half-edges, incident to internal or external faces, but such that the corresponding edge does not belong to the boundary of~$\ch_{\bb+1}$, \dots, $\ch_\kk$. Note that we could have rooted~$S_n$ from the root of~$M_n$ by asking that the root of~$M_n$ belongs to the forest indexed by the root of~$S_n$ but this would have introduced an undesirable bias. Here instead, from the unrooted map corresponding to~$M_n$, the map~$M_n$ is rooted at a uniform corner incident to its internal face. Furthermore, the boundaries of the holes~$\ch_{\bb+1}$, \dots, $\ch_\kk$ are excluded from the possible rootings of~$S_n$ since they should be thought of as having null length in the limit.

For $i\in \{\bb+1,\ldots,\kk\}$, the hole~$\ch_i$ of~$M_n$ is called a \emph{vanishing face} if it is a face, that is, if $l_n^i>0$. The corresponding hole~$\ch_i$ of the scheme~$S_n$ is called a \emph{tadpole} if it is made of a single
self-loop edge incident to a single vertex of degree~$3$. We let
$S_n^\tp$ be the map~$S_n$ in which every tadpole corresponding to a vanishing face~$\ch_i$ has been shrunk into a single vertex, still denoted by~$\ch_i$, in the sense that the corresponding self-loop has been removed. Note that the root of~$S_n$ is never removed in this operation, so that~$S_n^\tp$ is always rooted.

Forgetting the root of~$Q_n$, we let $(\EP^e_n,e\in \vec E(S_n))$ be 
the collection of elementary pieces of~$(Q_n,v^*_n)$. For the half-edges $e\in\vec B(S_n)$, we let~$A_n^e$, $L_n^e$ be the area and width of the slice~$\EP^e_n$. For $e\in\vec I(S_n)$, we let~$A_n^e$, $H_n^e$ be the first half-area and width of the quadrilateral~$\EP^e_n$; note that~$A_n^{\bar e}$ is the second half-area of~$\EP^e_n$. Recall that the vertices of~$S_n$ are in one-to-one correspondence
with the nodes of~$M_n$: for every $v\in V(S_n)$, we denote
by~$\Lambda_n^v$ the label of the corresponding node, where we choose
for the labeling function~$\lambda_n$ the representative giving
label~$0$ to the root vertex of~$S_n$. 

\begin{prp}\label{sldata}
As $n\to \infty$, with probability tending to one, every vanishing face of~$M_n$ induces a tadpole in~$S_n$, and, on this likely event, for every $e\in\bigsqcup_{i=\bb+1}^\kk \vec B_i(S_n)$, it holds that $A^e_n+L_n^e=\Theta((L_n^e)^2)$ in probability.

Moreover, the following convergence in distribution holds: 
\begin{multline}\label{vectorscaling}
\left(S_n^\tp, \left(\frac{A_{n}^e}{n}\right)_{e\in \vec E(S^\tp_n)},
  \left(\frac{H_{n}^e}{\sqrt{2n}}\right)_{e\in \vec I(S^\tp_n)}, \left(\frac{L^e_{n}}{\sqrt{2n}}\right)_{e\in \vec B(S^\tp_n)},
  \left(\Big(\frac{9}{8n}\Big)^{1/4} \Lambda_n^v\right)_{v\in V(S^\tp_n)}
\right)\\
\tod \big(S, \left({A^e}\right)_{e\in \vec E(S)}, \left({H^e}\right)_{e\in
  \vec I(S)}, \left(L^e \right)_{e\in \vec B(S)},\left( \Lambda^v\right)_{v\in V(S)} \big),
\end{multline}
where the limiting random variable has the law $\mathrm{Param}_\bL$ described in the previous paragraph.
\end{prp}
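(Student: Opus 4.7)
The plan is to express the joint law of $(S_n,(A^e_n),(L^e_n),(H^e_n),(\Lambda^v_n))$ explicitly via the generalized CVS bijection, then apply local limit theorems for the functions $Q_l$, $P_l$, $M_h$. By the bijective decomposition of Section~\ref{secCMS}, a uniform random element $(M_n,\lambda_n)\in\RbM^{[g]}_{n,\bl_n}$ together with the uniformly re-rooted scheme is equivalent to the data of a rooted scheme $\bs$, parameters, and node labels, modulo the internal forest structure of each elementary piece. For each fixed $\bs$ and fixed admissible parameters, the number of compatible labeled maps in $\bM^{[g]}_{n,\bl_n}$ factorizes, by Propositions~\ref{countslice} and~\ref{countquad}, into a product over $e\in\vec B(\bs)$ of slice counts $12^{a^e}8^{l^e}2^{\delta\lambda^e}Q_{l^e}(2a^e+l^e)P_{l^e}(\delta\lambda^e)$ and over unordered pairs $\{e,\bar e\}\subset\vec I(\bs)$ of quadrilateral counts $12^{a^e+a^{\bar e}+h^e}Q_{h^e}(2a^e+h^e)Q_{h^e}(2a^{\bar e}+h^e)M_{h^e}(\delta\lambda^e)$. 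Normalizing by $|\RbM^{[g]}_{n,\bl_n}|$, whose asymptotics are provided by Proposition~\ref{enumQnln} (equivalently by Proposition~\ref{cardRbQ}), yields an explicit formula for the joint probability.

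Under the scalings of the statement, $Q_l$, $P_l$, $M_h$ satisfy classical local limit theorems, uniformly on compact sets: $\sqrt{n}\,Q_{l_n}(u_n)$ converges to a Brownian first-passage density that one identifies with $q_L(A)$, while $P_{l_n}(\delta_n)$ and $M_{h_n}(\delta_n)$ converge after the appropriate $n^{1/4}$-scaling to the Gaussian densities encoded in the functions $p_t$ appearing in~\eqref{eqlimvect}. Plugging these convergences into the explicit formula and accounting for the Jacobians of the scaling changes of variables yields the density~\eqref{eqlimvect} with respect to the reference measure $\cL_{\bs}$, the factor $12^n 8^{\|\bl_n\|}$ being cancelled exactly by the leading term in the asymptotics of $|\RbM^{[g]}_{n,\bl_n}|$ from Proposition~\ref{enumQnln}. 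To conclude the convergence~\eqref{vectorscaling}, one then has to show that non-dominant schemes contribute negligibly. By Lemma~\ref{lemnbschemes} there are only finitely many schemes to consider, and a direct count using the $Q$-asymptotics shows that each internal vertex of degree $\geq 4$, each extra scheme edge on the boundary of a vanishing face, and more generally each non-tadpole boundary structure at a vanishing face, introduces a factor of $O(n^{-\alpha})$ with $\alpha>0$. Hence only dominant schemes $\bs\in\RbS^\star$ survive, which simultaneously establishes the claim that every vanishing face induces a tadpole in $S_n$ with probability tending to one.

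On this likely tadpole event, for $i>\bb$ the single half-edge $e\in\vec B_i(S_n)$ carries a slice of width $L^e_n=l^i_n=o(\sqrt n)$, whose area $A^e_n$ has conditional distribution essentially proportional to $12^{A^e_n}Q_{l^i_n}(2A^e_n+l^i_n)$ once the label factors are summed out. The concentration of $Q_l(u)$ around $u\asymp l^2$ then immediately yields the tightness statement $A^e_n+L^e_n=\Theta((L^e_n)^2)$ in probability. The main difficulty I anticipate lies in the regime where $l^i_n\to\infty$ more slowly than $\sqrt n$, and especially in the subcase where $l^i_n$ stays bounded, since sharp uniform estimates on $Q_l(u)$ away from the standard CLT regime are required both to justify the dominance of tadpoles and to control the non-dominant contributions at vanishing faces; the bounded case must be treated by a separate direct combinatorial argument that is not immediately subsumed by the local limit framework.
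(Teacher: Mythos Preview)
Your approach is essentially the paper's: explicit enumeration via the scheme decomposition, local limit theorems for $Q_l$, $P_l$, $M_h$, and a dominance argument over the finitely many schemes. The one organizational difference worth noting is that the paper first \emph{marginalizes out the area parameters}: your product $\prod_{e}Q_{l^e}(2a^e+l^e)\prod_{\{e,\bar e\}}Q_{h^e}(2a^e+h^e)Q_{h^e}(2a^{\bar e}+h^e)$, summed over the area simplex, collapses by the convolution identity for first-passage times to a single factor $Q_{\|\bh\|+\|\bl\|}(2n+\|\bl\|)$. The paper then establishes the convergence of $(S_n^\tp,\bH_n,\bL_n,\bLambda_n)$ alone, and only afterwards recovers the areas from their conditional law as increments of hitting times of a simple random walk bridge. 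This separation buys two things: the domination step (Petrov bounds plus a spanning-tree trick to integrate the label variables one by one) is cleaner with a single $Q$ factor than with a coupled product on a simplex; and the tadpole contribution at a vanishing face factors out as the single scalar $P_{l^i_n}(0)$, which appears identically in numerator and denominator and cancels, so no separate treatment of the bounded-$l^i_n$ regime is needed. Your anticipated difficulty there thus disappears once the areas are integrated out first, and the $A^e_n+L^e_n=\Theta((L^e_n)^2)$ claim becomes the standard concentration of a random-walk hitting time.
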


This proposition is a generalization of \cite[Proposition~15]{Bet16geo}. Given its technical nature,
we postpone its proof to Appendix~\ref{appdata}.

\paragraph{Scaling limits of the elementary pieces.}
As $(M_n,\lambda_n)$ is uniformly distributed
over the set $\RbM^{[g]}_{n,\bl_n}$, conditionally given~\eqref{vectorscaling},
the random variables~$\EP^e_n$, $e\in \vec E(S_n)$, are only dependent
through the relations linking~$\EP^{\bar e}_n$ with~$\EP^e_n$ for
$e\in \vec I(S_n)$. Moreover,
\begin{itemize}
	\item if $e\in\vec B(S_n)$, then~$\EP^e_n$ is uniformly
distributed among slices with area~$A_n^e$,
width~$L_n^e$ and tilt $\Lambda_n^{e^+} - \Lambda_n^{e^-}$;
	\item if $e\in\vec I(S_n)$, then~$\EP^e_n$ is uniformly distributed among quadrilaterals with half-areas~$A_n^e$ and~$A_n^{\bar e}$, width~$H_n^e$ and tilt $\Lambda_n^{e^+} - \Lambda_n^{e^-}$.
\end{itemize}
Applying the Skorokhod representation theorem, we may and will assume
that the convergence of~\eqref{vectorscaling} holds almost surely. Since, by
Lemma~\ref{lemnbschemes}, there are finitely many possible schemes, this furthermore implies that $S_n^\tp=S$
for~$n$ sufficiently large. Together with Theorems~\ref{thmslslice}
and~\ref{thmslquad}, the above observations entail that the collection
of resclaed random metric spaces $(\Omega_n(\EP^e_n), e\in \vec
E(S^\tp_n))$, converge in distribution in the GHP topology toward a family
$\big(\cEP^e,e\in\vec E(S)\big)$ of continuum elementary pieces with
the following law conditionally given the right-hand side of~\eqref{vectorscaling}:
\begin{itemize}
	\item if $e\in\vec B(S)$, then~$\cEP^e$ is a continuum slice with area~$A^e$, width~$H^e$ and tilt $\Lambda^{e^+}-\Lambda^{e^-}$;
	\item if $e\in\vec I(S)$, then~$\cEP^e$ is a continuum quadrilateral with half-areas~$A^e$ and~$A^{\bar e}$, width~$H^e$ and
          tilt $\Lambda^{e^+}-\Lambda^{e^-}$.
\end{itemize}
We write $\gamma(\cEP^e)$, $\xi(\cEP^e)$, $\mu(\cEP^e)$, and either $\beta(\cEP^e)$, $\nu(\cEP^e)$ or $\br\gamma(\cEP^e)$, $\br\xi(\cEP^e)$ the marks and measures of~$\cEP^e$, with an obvious choice of notation. As continuum elementary pieces have only been defined as limits in the GHP topology of discrete elementary pieces so far, these marks and measures are the limits of the corresponding marks and measures of the discrete pieces.

\bigskip
We treat the elementary pieces corresponding to the vanishing faces
thanks to Corollary~\ref{slslice0}. 
We define, for every hole~$\ch_i$ of~$S_n$ with $\bb+1\le i \le \kk$, the
elementary piece~$\EP^{\ch_i}_n$ as~$\EP^{e}_n$ if $\vec
B_i(S_n)=\{e\}$ or the vertex map otherwise, marked five times at its
unique vertex and endowed twice with the zero measure (thinking of it as an ``empty slice''). By
Proposition~\ref{sldata} and Corollary~\ref{slslice0}, with probability tending
to~$1$, each one of the rescaled random metric spaces
$\Omega_n(\EP^{\ch_i}_n)$, $\bb+1\le i \le \kk$, converges in
distribution in the GHP topology toward the point space (note that the
tilt of $\EP^{\ch_i}_n$ is always equal to~$0$).

\subsection{Gluing pieces together}\label{sec:gluing-piec-togeth}

We can now complete the proof of Theorem~\ref{mainthm} by applying
Proposition~\ref{compgluing} at every step of the inductive
construction of Section~\ref{sec:gluing-quadr-from}.

We work on the event of asymptotic full probability of Proposition~\ref{sldata} and assume that~$n$ is large enough so that $S^\tp_n=S$. We denote by $\kappa=|\vec{E}(S)|+\pp$ and let~$e_1$, \dots, $e_\kappa$ be the sequence made of the half-edges of~$\vec{E}(S)$, as well as the external vertices of~$S$, listed in contour order. Since~$S$ is dominant, these external vertices are~$\ch_{\bb+1}$, \dots, $\ch_\kk$. Applying the construction of Section~\ref{sec:gluing-quadr-from} to the random
quadrangulation~$Q_n$, up to adding the gluing of the point space for each external vertex (not changing the markings and measures), we obtain a sequence $Q_{n,1}$, \dots, $Q_{n,\kappa}$ of marked measured metric spaces.

The limiting marked measured metric space~$\BS{g}{\bL}$
is obtained from $(S,(\cEP^e,e\in\vec{E}(S))$ by recursively defining a sequence of marked measured metric
spaces~$\sfS_0$, \dots, $\sfS_\kappa$, in the following way. For $0\le i \le \kappa$, the space~$\sfS_i$ will carry geodesic marks~$\gamma_i^j$, $\xi_i^j$, $0\leq j\leq u_i$, boundary marks~$\beta_i^1$, \dots, $\beta_i^\kk$, an area measure~$\mu_{i}$, and boundary measures~$\nu_i^1$, \dots, $\nu_i^\kk$.

\medskip
We initiate the construction by letting $u_0=0$, $\sfS_0\in\M^{(2,\kk+1)}$ be the point space with the two marks $\gamma_0^0$, $\xi_0^0$ being the unique point, and measures $\mu_0=0$, $\bnu_0=\bzero^\kk$. We also let all the boundary marks be empty.

Next, given $\sfS_{i-1}$ for some $i\in \{1,\ldots,\kappa\}$, consider the following cases. 
\begin{itemize}
\item If $e_i\in \vec B_r(S)$ for some $r\in \{1,\ldots,\kk\}$, set
\begin{align}
\sfS_i&=G\big(\sfS_{i-1},\cEP^{e_i};\xi_{i-1}^0,\gamma(\cEP^{e_i})\big),\notag\\
\beta_i^r&=\beta_{i-1}^r\cup \beta(\cEP^{e_i}),
	&\beta_i^{r'}&=\beta_{i-1}^{r'}\ \text{ for } r'\in \{1,\ldots,\kk\}\setminus\{r\},\notag\\
\gamma_i^0&=\gamma_{i-1}^0\cup \big(\gamma(\cEP^{e_i})\setminus\xi_{i-1}^0\big),
	&\xi_i^0&=\xi(\cEP^{e_i})\cup \big(\xi_{i-1}^0 \setminus \gamma(\cEP^{e_i})\big),\label{updategammaxi}
\end{align}
and, setting $u_i=u_{i-1}$, 
let $\gamma_i^j=\gamma_{i-1}^j$ and
$\xi_i^j=\xi_{i-1}^j$ for $1\leq j\leq u_i$. Finally, we let
$\mu_i=\mu_{i-1}+\mu(\cEP^{e_i})$,
$\nu_i^r=\nu_{i-1}^r+\nu(\cEP^{e_i})$ and
$\nu_i^{r'}=\nu_{i-1}^{r'}$ for $r'\neq r$.

\item
If $e_i\in \vec I(S)$, set
$\beta_i^r=\beta_{i-1}^r$, $\nu_i^r=\nu_{i-1}^r$ for
$1\leq r\leq \kk$, and 
consider the following two possible situations. 
\begin{itemize}
\item 
If $e_i\notin \{\bar{e}_j,\, 1\le j < i\}$, let 
\[\sfS_i=G\big(\sfS_{i-1},\cEP^{e_i};\xi_{i-1}^0,\gamma(\cEP^{e_i})\big),\]
update the  first two geodesic marks by~\eqref{updategammaxi}, 
and, setting $u_i=u_{i-1}+1$, let $\gamma_i^j=\gamma_{i-1}^j$ and
 $\xi_i^j=\xi_{i-1}^j$ for $1\leq j\leq u_i-1$, and
$\gamma_i^{u_i}=\bar{\gamma}(\cEP^{e_i})$,
$\xi_i^{u_i}=\bar{\xi}(\cEP^{e_i})$. Finally, set
$\mu_i=\mu_{i-1}+\mu(\cEP^{e_i})$. 
\item
if $e_i\in \{\bar{e}_j,\, 1\le j < i\}$ let 
\[\sfS_i=G\big(\sfS_{i-1};\xi_{i-1}^0,\gamma(\cEP^{e_i})\big),\]
update the first two geodesic marks by~\eqref{updategammaxi}, 
and, setting $u_i=u_{i-1}-1$, let $(\gamma_i^j,\xi_i^j,1\leq j\leq u_i)$ be the
the sequence $(\gamma_{i-1}^j,\xi_{i-1}^j,1\leq j\leq u_{i-1})$ from 
which the terms $\gamma(\cEP^{e_i})$ and
$\xi(\cEP^{e_i})$ have been removed. Finally, set $\mu_i=\mu_{i-1}$. 
\end{itemize}

\item If $e_i$ is an external vertex of~$S$, set $\sfS_i=\sfS_{i-1}$.
\end{itemize}
Finally, we let
$\BS{g}{\bL}=G(\sfS_\kappa;\xi_\kappa^0,\gamma_\kappa^0)$, seen as an element of~$\M^{(\kk,\kk+1)}$, equipped with the marking
$(\beta_\kappa^1,\dots,\beta_\kappa^\kk)$ and
measures~$\mu_\kappa$, $\bnu_\kappa$. 
An application of Proposition~\ref{compgluing} and of Lemma~\ref{lem1lip} at every step of the
construction shows that, for every $i\in \{1,\ldots,\kappa\}$, the rescaled marked measured metric space $\Omega_n(Q_{n,i})$ 
converges to~$\sfS_i$ in the marked
GHP topology, and finally $\Omega_n(Q_n)$ converges to~$\BS{g}{\bL}$ by a final application of Proposition~\ref{compgluing} and the observation regarding the measure supports in the final gluing mentioned at the end of Section~\ref{sec:gluing-quadr-from}. 
This completes the proof of Theorem~\ref{mainthm}.

\subsection{Topology and Hausdorff dimension}\label{sectopo}

In this section, we\MEMS{}{ essentially} derive from Proposition~\ref{proptopo} in the case $(g,\kk)\in\{(0,0),(0,1)\}$ an alternate proof of Proposition~\ref{proptopo} in the other cases. In the spherical case, Proposition~\ref{proptopo} was obtained by Le~Gall and Paulin~\cite{lgp} thanks to a theorem of Moore by seeing the Brownian sphere as a rather wild quotient of the sphere by some equivalence relation. The same result was later obtained in~\cite{miermontsph} through the theory of regularity of sequences developed by Begle and studied by Whyburn. The latter approach was generalized in~\cite{bettinelli11,bettinelli11b,Bet16geo} in order to obtain the general cases.

Our approach of decomposition into elementary pieces gives a rather direct and transparent proof of Proposition~\ref{proptopo} in the case $(g,\kk)\notin\{(0,0),(0,1)\}$ provided the following lemma, which will be obtained in Sections~\ref{seccvcs} and~\ref{seccvquad}, and which amounts to Proposition~\ref{proptopo} for the noncompact analogs of the cases $(g,\kk)\in\{(0,0),(0,1)\}$.

\begin{lmm}\label{topdimep}
Almost surely, a slice or a quadrilateral is homeomorphic to a disk and is locally of Hausdorff dimension~$4$. Its boundary as a topological manifold consists in the union of its marks, the intersection of any two marks being empty or a singleton. Furthermore, in the case of a slice, the base is locally of Hausdorff dimension~$2$.
\end{lmm}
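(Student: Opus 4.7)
The plan is to exploit the description of continuum slices and quadrilaterals that will be established in Sections~\ref{seccvcs} and~\ref{seccvquad}, where they appear as ``wedge-like'' subdomains of the Brownian half-plane $\BHP$ and the Brownian plane $\BP$ respectively, cut out by a finite number of geodesic rays emanating from the root (plus, in the case of the slice, the base, which is a portion of the boundary of $\BHP$). In this picture, $\gamma$ and $\xi$ are the two geodesic rays bounding a wedge in $\BHP$, the base $\beta$ is the subarc of $\partial\BHP$ joining their origins, and similarly for the quadrilateral in $\BP$.

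First, I would invoke the known topological properties of $\BHP$ and $\BP$: almost surely, $\BHP$ is homeomorphic to the closed half-plane $\R\times\R_+$, with its interior locally of Hausdorff dimension~$4$ and its boundary locally of Hausdorff dimension~$2$; and $\BP$ is a.s.\ homeomorphic to $\R^2$ and locally of Hausdorff dimension~$4$. These facts are established in~\cite{CuLG12Bplane,GwMi17,BaMiRa,LGRi21}. I would then verify that the distinguished geodesic rays ($\gamma,\xi$ for the slice; $\gamma,\xi,\br\gamma,\br\xi$ for the quadrilateral) are almost surely simple arcs in $\BHP$/$\BP$, and that any two of them meet exactly along the prescribed singleton or not at all. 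This reduces to confluence-type properties for geodesic rays in $\BHP$ and $\BP$, which can be derived from the encoding of these spaces by Brownian label functions, analogously to the classical results for geodesics in the Brownian sphere and the Brownian disk (see e.g.\ \cite{legall08,miertess}), noting that in the quadrilateral case the two apices and the two roots are generic points of $\BP$ so $\gamma\cap\br\gamma$ and $\xi\cap\br\xi$ are empty almost surely.

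Second, having identified the boundary of the elementary piece in $\BHP$/$\BP$ as a simple closed curve (a concatenation of simple geodesic arcs whose consecutive pieces meet only at their endpoints, together with the base in the slice case), the Jordan--Schoenflies theorem on $2$-manifolds shows that the closed region bounded by this curve is homeomorphic to a closed topological disk, whose boundary circle decomposes precisely as the union of the marks, with consecutive marks intersecting at the prescribed singleton and non-consecutive marks being disjoint. Local Hausdorff dimensions are then inherited from the ambient space: dimension~$4$ everywhere in the interior of the piece (hence at any interior ball of the piece), and dimension~$2$ along the base of the slice, which is a subarc of $\partial\BHP$.

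The main technical obstacle will be to set up rigorously the embedding of the continuum elementary pieces into $\BHP$/$\BP$ with the properties above, and in particular to show that the marks $\gamma,\xi$ (and $\br\gamma,\br\xi$), which are defined as limits of geodesic marks in the discrete pieces, really do coincide with the corresponding geodesic rays in the ambient noncompact Brownian surface. This is exactly the content that will be developed in Sections~\ref{seccvcs}--\ref{seccvquad}, so the proof of the lemma can be deferred to there and will follow directly once those constructions and the attendant confluence statements are in place.
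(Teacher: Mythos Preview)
Your broad strategy---embed the elementary pieces in $\BHP$/$\BP$, use the known topology and Hausdorff dimensions of those ambient spaces, and invoke Jordan--Schoenflies---matches the paper's approach and is sound for the \emph{free} versions of the pieces (those under $\FSlice_L$ and $\FQuad_H$). But there is a genuine gap: the lemma concerns the \emph{conditioned} pieces $\Sl_{A,L,\Delta}$ and $\Qd_{A,\bar A,H,\Delta}$ with fixed area, width, and tilt, and these do \emph{not} embed into $\BHP$ or $\BP$. Only the free pieces $\Sl^{(0,L)}$ and $\Qd^{(0,H)}$ arise as sub-slices or sub-quadrilaterals of the ambient noncompact surface; the conditioning on area and tilt destroys the coupling.

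The paper bridges this gap by a two-step argument you are missing. First, split the conditioned piece in half: $\Sl^{(0,L)}=G(\Sl^{(0,L/2)},\Sl^{(L/2,L)};\xi^{(0,L/2)},\gamma^{(L/2,L)})$ under $\Slice_{A,L,\Delta}$, and analogously for quadrilaterals. Second, show that each half is absolutely continuous with respect to the law of a free piece of half the width (Lemmas~\ref{lemaccontslice} and~\ref{sec:conditioned-version-4}), so each half inherits almost-sure topological and dimensional properties from the free case. Finally, glue the two disks along boundary arcs to recover a disk. A separate symmetry observation is needed to handle the second half, since absolute continuity is stated only for the initial portion of the trajectory.

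A secondary point: for quadrilaterals, even the free $\Qd^{(H,H')}$ is \emph{not} isometrically embedded in $\BP$ (this is explicitly noted in the paper), so your claim that Hausdorff dimensions are ``inherited from the ambient space'' needs justification. The paper handles this by showing the canonical map $\Qd^{(H_0,H_0+H)}\to\BP$ is a $1$-Lipschitz bijection onto its image (hence a homeomorphism by compactness), which suffices for topology; for the Hausdorff dimension one uses that the metrics locally agree away from the glued geodesic.
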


Now, a quadrangulation from~$\bQnln$ is ``not far''
from being homeomorphic to~$\Sof{g}{\bb_n}$, where~$\bb_n$ is the number of external faces, in the sense that we can ``fill
in'' the internal faces with small topological disks and ``fill in'' the external faces by
thin topological annuli without altering the metric and in such a way
that the resulting object is homeomorphic to~$\Sof{g}{\bb_n}$ and at bounded
GH distance from the quadrangulation (see
\cite[Section~4.3.3]{Bet16geo} for more details about this
procedure). The decomposition of the quadrangulation into elementary
pieces gives a decomposition of this surface into pieces that are non
other than the elementary pieces of the quadrangulation with faces
filled in and a thin rectangle added on the bases of the slices; see Figure~\ref{topo}.

\begin{figure}[ht]
	\centering\MEMS{
	\includegraphics[height=4.3cm]{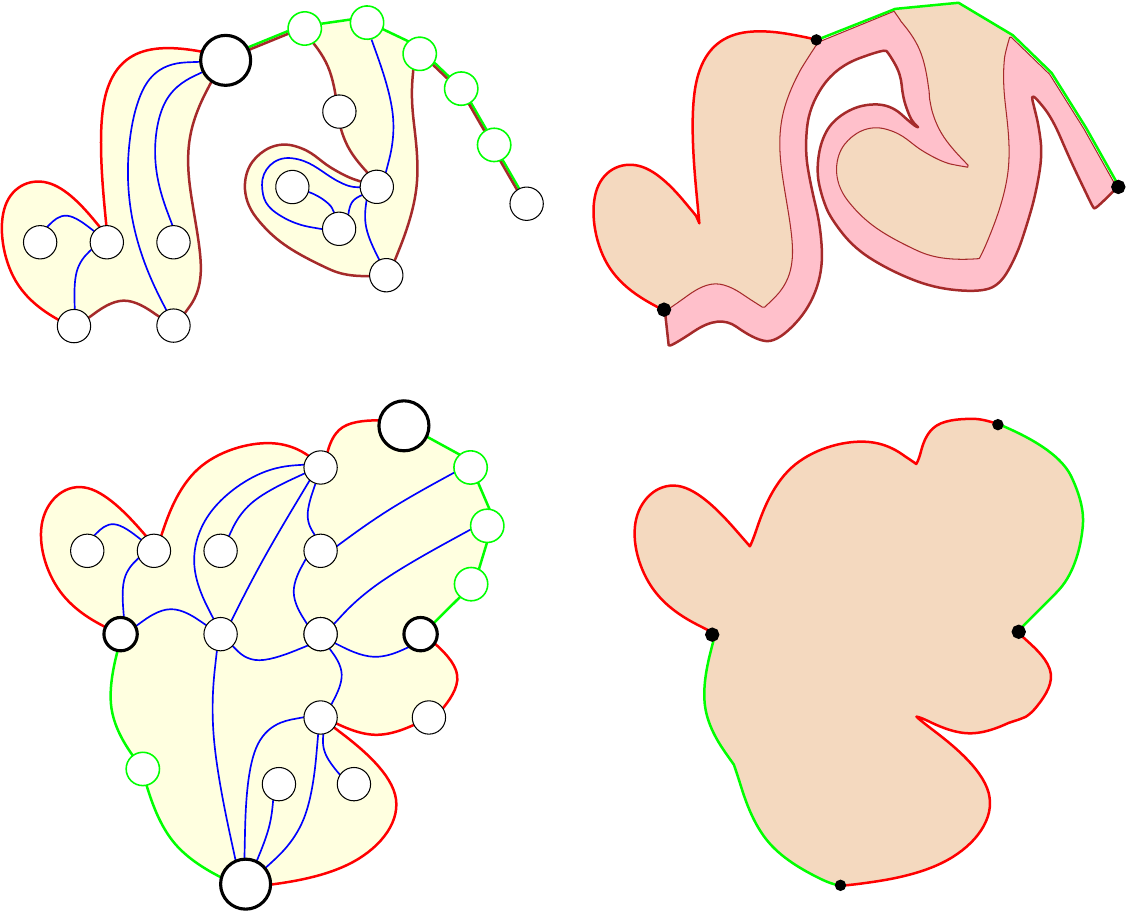}\hfill\includegraphics[height=4.3cm]{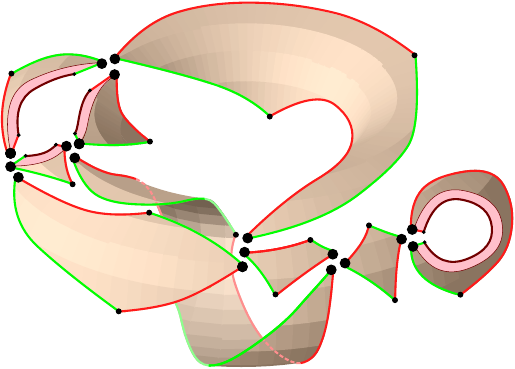}
	}{
	\includegraphics[height=5.3cm]{topo1}\hfill\includegraphics[height=5.3cm]{topo2}
	}
	\caption{\textup{\textgras{Left.}} Topological disk corresponding to an elementary piece of a quadrangulation. \textup{\textgras{Right.}}  Decomposition of the surface associated with a quadrangulation into surfaces homeomorphic to disks. Here also, we used the same scheme as in Figure~\ref{scheme} without~$\ch_3$.}
	\label{topo}
\end{figure}

With the notation of the previous section, $Q_n$ yields a surface homeomorphic to~$\Sof{g}{\bb_n}$ and its elementary pieces~$\EP^e_n$, $e\in \vec E(S_n)$, yield surfaces homeomorphic to disks.
\begin{itemize}
	\item If $e\in\vec I(S_n)$, then the boundary of the surface associated with~$\EP^e_n$ consists in the two maximal geodesics and  the two shuttles of~$\EP^e_n$.
	\item If $e\in\vec B(S_n)$, then the boundary of the surface associated with~$\EP^e_n$ consists in the maximal geodesic, the shuttle, as well as three sides of the added thin rectangle.
\end{itemize}
Then gluing back these disks along their boundaries in the same way as they were cut gives back the surface~$\Sof{g}{\bb_n}$. Forgetting the slice corresponding to a tadpole topologically amounts to fill in the corresponding vanishing face. Assuming that~$n$ is sufficiently large, all the vanishing faces correspond to tadpoles in the scheme. So the gluing of the elementary pieces without the slices corresponding to tadpoles yields~$\Sgb$. In the limit, we glue topological disk exactly in the same way (using markings that are topologically equivalent), so we obtain the same surface. The result about the topology follows.

The statement about the Hausdorff dimension is even more straightforward as we glue along geodesics a finite number of objects that are locally of dimension~$4$ and the boundary is the union of the bases of the slices, which are all locally of Hausdorff dimension~$2$.

\section{Convergence of composite slices}\label{seccvcs}

The goal of this section is to prove Theorem~\ref{thmslslice} on the convergence of slices to
their limiting slices. To this end, we are first going to derive a
``free'' version of this result by finding slices with a
free area and tilt within the uniform infinite half-planar
quadrangulation. The latter is known to converge to the Brownian
half-plane, which itself contains a ``flow'' of continuum slices with
free areas and tilts; these are shown to be the scaling limits of the
discrete slices. We conclude by a conditioning argument to pass
from free to fixed area and tilt. First, let us start with deterministic considerations.

\subsection{Metric spaces coded by real
  functions}\label{sec:metric-spaces-coded}

Here we borrow some material from \cite[Section~2.1]{BaMiRa}, with however several slight differences, in order to describe in a unified fashion the various random metric spaces we will use. 
Let~$\CC$ (resp.\ $\CC^{(2)}$) be the set of continuous functions of
one variable (resp.\ of two variables) defined on some nonempty closed interval: 
\[\CC=\bigsqcup_{\substack{I\text{ closed interval}\\I\neq
    \varnothing}}
\CC(I,\R)\qquad\text{ and }\qquad
\CC^{(2)}=\bigsqcup_{\substack{I\text{ closed interval}\\I\neq \varnothing}}\CC(I^2,\R).\]
For a function $f\in \CC(I,\R)$ we denote by $I(f)=I$ its interval of
definition and by $\br\tau(f)=\inf I$ and $\tau(f)=\sup I$ its
extremities.
The set~$\CC$ is naturally equipped with the topology of uniform
convergence over compact subsets of $\R$; more precisely, the topology
induced by the following metric: 
\begin{multline*}
  \dist_\CC(f,g)=\big|\arctan(\br\tau(f))-\arctan(\br\tau(g))\big|+\big|\arctan(\tau(f))-\arctan(\tau(g))\big|\\
  +\sum_{n\geq 1}\frac{1}{2^n}\sup_{t\in[-n,n]}\big|f\big(\br\tau(f)\vee
  t\wedge\tau(f)\big)-g\big(\br\tau(g)\vee t\wedge\tau(g)\big)\big|. 
\end{multline*}
We also equip~$\CC^{(2)}$ with a straightforward adaptation
$\dist_{\CC^{(2)}}$.

\subsubsection{$\R$-trees coded by functions}\label{Rtrees}

\paragraph{$\R$-trees.}
For $f\in \CC$ and $s$, $t\in I=I(f)$ with
$s\le t$, set
\begin{equation}\label{defunderline}
\un{f}(s,t)=\inf_{[s,t]}f
\end{equation}
and, for~$s$, $t\in I$, set
\begin{align}\label{df}
d_f(s,t)&=f(s)+f(t)-2\un{f}(s\wedge t, s\vee t).
\end{align}
This formula defines a pseudometric on~$I$, which is continuous as
a function from $I^2$ to $\Rp$, since $d_f(s,t)\leq
2\omega(f;[s\wedge t,s\vee t])$, where $\omega(f;J)=\sup_J f
-\inf_Jf$. 
We let
$\cT_f=(I/\{d_f=0\},d_f)$ be the associated quotient
space, and $\bp_f:I\to \cT_f$ be the canonical projection, which is
continuous since~$d_f$ is. 
The space~$\cT_f$ is a so-called \emph{$\R$-tree}, that is, satisfies the following.
\begin{itemize}
\item 
For every two points $a$, $b\in\cT_f$, there exists a geodesic from
$a$ to $b$, that is, an isometric
mapping $\chi_{a,b}:[0,d_f(a,b)]\to \cT_f$ with $\chi_{a,b}(0)=a$ and
$\chi_{a,b}(d_f(a,b))=b$.
\item The image of the path $\chi_{a,b}$, which we denote by
  $\llbracket a,b\rrbracket_f$, is the image of any injective path from~$a$ to~$b$. 
\end{itemize}
If~$I$ is compact, we let $a_*(f)=\bp_f(t_*)$, where $t_*$ is any
point at which $f$ attains its overall minimum. In this case, for
$t\in I$ and 
$a=\bp_f(t)$, the geodesic 
segment $\llbracket a,a_*(f)\rrbracket _f$ is given by
\[\llbracket a,a_*(f)\rrbracket_f=\bp_f\big(\{s\in [t\wedge t_*,t\vee t_*]:f(s)\leq f(u),\
  \forall u\in[s\wedge t,s\vee t]\}\big).\]
In the case where~$I$ is unbounded, we will systematically make the extra assumption that
\begin{equation}\label{aslimit}
\begin{cases}
	\text{when }\br\tau(f)=-\infty, 		&\displaystyle\inf_{t\leq 0}f(t)=-\infty\, \quad \text{ or }\quad  \lim_{t\to-\infty}f(t)=\infty\,;\\[4mm]
	\text{when }\tau(f)=\infty,\qquad	&\displaystyle\inf_{t\geq 0}f(t)=-\infty\, \quad \text{ or }\quad  \lim_{\rlap{\scriptsize$t\to\infty$}\hphantom{t\to-\infty}}f(t)=\infty.
\end{cases}
\end{equation}
In particular, it holds that
\begin{equation}
  \label{eq:26}
  \forall\, s\in I,\quad \lim_{|t|\to\infty,\,t\in I}d_f(s,t)=\infty,
\end{equation}
which
implies that $\cT_f$ is locally compact, as the reader may easily
check.

\paragraph{Gluing two $\R$-trees.}
Next, given two functions~$f$, $g\in \CC$ with common interval of
definition $I=I(f)=I(g)$ both satisfying~\eqref{aslimit}, we define another pseudometric on~$I$ as the quotient
pseudometric (defined by~\eqref{eq:quotient}) 
\begin{equation}\label{eqDfg}
  D_{f,g}(s,t)=d_g/\{d_f=0\},
\end{equation}
and equip the quotient set ${M}_{f,g}=I/\{{D}_{f,g}=0\}$ with the
metric~${D}_{f,g}$. 
Note that $D_{f,g}:I^2\to \Rp$ is
continuous since
\begin{align*}
\big|D_{f,g}(s,t)-D_{f,g}(s',t')\big|&\leq D_{f,g}(s,s')+D_{f,g}(t,t')\\
	&\leq 2\omega(g;[s\wedge s',s\vee s'])+2\omega(g;[t\wedge t',t\vee t']).
\end{align*}
For this reason, the canonical projection $\bp_{f,g}:I\to M_{f,g}$ is
continuous. 
We may view $(M_{f,g},D_{f,g})$ as gluing the $\R$-tree $\cT_g$ along
the equivalence relation defining the $\R$-tree $\cT_f$. In fact,
since either of $d_f(s,t)=0$ or $d_g(s,t)=0$ implies $D_{f,g}=0$,
the canonical projection $\bp_{f,g}$ factorizes as
\[\bp_{f,g}=\pi_f\circ \bp_f=\pi_g\circ \bp_g,\]
where $\pi_f:\cT_f\to M_{f,g}$ and $\pi_g:\cT_g\to M_{f,g}$ are two
surjective maps. Note that these functions are continuous: if
$a_n=\bp_f(t_n)$ converges to some point $a$, then, up to taking
extractions (and using~\eqref{eq:26} if $I$ is unbounded), we may assume
that $t_n$ converges to some limit $t$, and then $\bp_f(t)=a$ by
continuity of $\bp_f$, while $\pi_f(a_n)=\bp_{f,g}(t_n)$ converges to
$\bp_{f,g}(t)=\pi_f(a)$.  
As a consequence, every geodesic segment $\llbracket a,b\rrbracket _f$ in~$\cT_f$, and every
geodesic $\llbracket c,d\rrbracket _g$ in $\cT_g$ is ``immersed'' into $M_{f,g}$ via the
mappings~$\pi_f$, $\pi_g$.

\subsubsection{Composite slices coded by two functions}\label{secslicefg}

\paragraph{Slice trajectory.}
We say that $(f,g)$ is a \emph{slice trajectory} if~$f$, $g\in \CC$ have
common interval of definition~$I$,
\begin{align}
&\forall s, t\in I, \qquad d_f(s,t)=0 \ \implies\ g(s)=g(t),\label{eqslicetraj}\\
\shortintertext{if $\inf I=-\infty$, then}
&\lim_{t\to-\infty}f(t)=+\infty \qquad\text{ and }\qquad\inf_{t\leq 0}g(t)=-\infty,\notag\\
\shortintertext{and, if $\sup I=\infty$, then}
&\inf_{t\geq 0}f(t)=-\infty \qquad\text{ and }\qquad\inf_{t\geq 0}g(t)=-\infty.\notag
\end{align}
In particular, $f$ and~$g$ both satisfy~\eqref{aslimit} in the case where~$I$ is noncompact, and the quantity $f(\inf I)\in \R\cup \{+\infty\}$ is always
well defined. 

\medskip
In the remainder of this section, we fix a slice trajectory $(f,g)$, and call the metric space
\[\Sl_{f,g}=(M_{f,g},D_{f,g})\]
the \emph{slice coded by $(f,g)$}. For the moment, we focus on deterministic considerations; the functions~$f$, $g$ will be randomized in the following section.

\paragraph{Marks and measures.}
The slice $\Sl_{f,g}$ naturally comes with the following distinguished elements.

\bigskip\underline{Geodesics sides.}\quad
For every $t\in I$, we set
\begin{align*}
\Gamma_t(r)&=\inf\{s\geq t:g(s)=g(t)-r\}&\text{ for $r\in \Rp$ such that } \inf_{\substack{s\geq t\\s\in I}}g(s)\leq g(t)-r\,; \\
\Xi_t(r)&=\sup\{s\leq t:g(s)=g(t)-r\}	&\text{ for $r\in \Rp$ such that } \inf_{\substack{s\leq t\\s\in I}}g(s)\leq g(t)-r.
\end{align*}
In particular, we have $d_g(\Gamma_t(r),\Xi_t(r))=0$ for every $t\in
I$ and every~$r$ satisfying both inequalities above.

We extend the definition given in Section~\ref{sec:geod-metr-spac} of geodesics to paths $\chi:[0,\infty)\to\X$ that satisfy~\eqref{eqgeod} for every $s$, $t\in\Rp$. In this case, the point~$\chi(0)$ is called the \emph{origin} of~$\chi$, its length is set to $\lgth(\chi)=\infty$ by convention, and the range of~$\chi$ is called a \emph{geodesic ray}. The geodesic ray uniquely determines the geodesic~$\chi$ by the same argument as for finite length, since the origin of a geodesic ray is the unique point~$a$ such that, for any~$s>0$, the number of points in the ray at distance~$s$ from~$a$ is one.

We observe that~$\Gamma_t$ and~$\Xi_t$ are
geodesics (possibly of infinite length) from~$t$ for the pseudometrics~$d_g$ and 
$D_{f,g}$, in the sense that, for every $r$, $r'$ such that $\Gamma_t(r)$
and $\Gamma_t(r')$ are defined,
\begin{equation}\label{eq:28}
  d_g(\Gamma_t(r),\Gamma_t(r'))=D_{f,g}(\Gamma_t(r),\Gamma_t(r'))=|r'-r|,
\end{equation}
and the same holds with $\Xi_t$ in place of $\Gamma_t$. This fact is
immediate for $d_g$ by definition. In fact, when $I$ is compact, one checks that the images of $\bp_g\circ \Gamma_t$ and $\bp_g\circ\Xi_t$ are the geodesic segments $\llbracket \bp_g(t),a_*(g\rst_{[t,\sup I]})\rrbracket_g$ and $\llbracket \bp_g(t),a_*(g\rst_{[\inf I,t]})\rrbracket_g$ in~$\cT_g$. 

For $D_{f,g}$, this fact follows from the
bound
\[|g(s)-g(s')|\leq D_{f,g}(s,s')\leq d_g(s,s'),\qquad s,s'\in I,\]
where the first inequality is an easy consequence of the fact that $(f,g)$
is a slice trajectory. 

Therefore, for $t\in I$, the paths defined by
\begin{align*}
  \gamma_t(r)&=\bp_{f,g}(\Gamma_t(r)),\qquad  0\leq
r\leq g(t)-\un{g}(t,\sup I),\ r\in\R,\\
\xi_t(r)&=\bp_{f,g}(\Xi_t(r)),
\qquad 0\leq
r\leq g(t)-\un{g}(\inf I,t),\ r\in\R,
\end{align*}
are two geodesics (possibly of infinite length) from $\bp_{f,g}(t)$, sharing a common
initial part. As mentioned in Section~\ref{sec:geod-metr-spac}, we will often identify these paths with
the pairs formed by their origins and image sets, the latter being\MEMS{}{ the projections} $\pi_g(\llbracket \bp_g(t),a_*(g\rst_{[t,\sup I]})\rrbracket_g)$  and $\pi_g(\llbracket \bp_g(t),a_*(g\rst_{[\inf I,t]})\rrbracket_g)$ when $I$ is compact. 

The slice $M_{f,g}$ comes with zero, one, or two geodesic sides. If $\inf I>-\infty$, then the geodesic $\gamma=\gamma_{\inf I}$ 
is called the \emph{maximal geodesic} of $M_{f,g}$, and, if $\sup I<\infty$,
the geodesic $\xi=\xi_{\sup I}$ is called the \emph{shuttle} of
$M_{f,g}$. 
If $\inf I=-\infty$ (resp.\ $\sup I=\infty$), we let
$\gamma_{-\infty}$ (resp.\ $\xi_\infty$) be the empty set. 
If $I$ is a bounded interval, then the paths~$\gamma_{\inf I}$ and~$\xi_{\sup I}$ have a common endpoint at the  \emph{apex} $x_*=\bp_{f,g}(s_*)=\pi_g(a_*(g))$,
where $s_*$ denotes any point~$s$ in $I$ such
that $g(s)=\inf_I g$.

\bigskip\underline{Base.}\quad
For $x\in \R$, we define 
\[T_x=\inf\{t\in I: f(t)=-x\}\in \R\cup \{\infty\},\] 
the
hitting time of level $-x$ by the function $f$, with the convention
that $\inf \varnothing=\infty$. Note that, for $x\in\R$, $T_x\ne -\infty$ because of the fact that $(f,g)$ is admissible. 
By convention, we also set
$T_\infty=-T_{-\infty}=\infty$. 
The \emph{base} of $\Sl_{f,g}$ is the set
\[\beta=\bp_{f,g}\left(\left\{T_x:- f(\inf I)\leq x\leq -\inf_I f\right\}\cap \R\right).\]
Note that the set inside brackets projects via $\bp_f$ to a geodesic
in~$\cT_f$. When~$I$ is compact, the base is the path $\pi_f(\llbracket\bp_f(T_{-f(\inf I)}),\bp_f(T_{-\inf_I f})\rrbracket)$, and in general, it is the increasing union of the paths
\[\pi_f(\llbracket \bp_f(T_x),\bp_f(T_y)\rrbracket _f),\qquad
-f(\inf I)\leq x<y\leq
-\inf_I f,\ x,y\in \R.\]

\bigskip\underline{Measures.}\quad
Finally, denoting by $\Leb_J$ the Lebesgue measure on the interval~$J$, the slice $\Sl_{f,g}$ is endowed with the following measures:
\begin{itemize}
	\item the \emph{area measure} $\mu=(\bp_{f,g})_*\Leb_I$\,;
	\item the \emph{base measure}~$\nu$, defined as the pushforward of $\Leb_{[- f(\inf I), -\inf_I f]\cap\R}$ by the mapping $x\mapsto\bp_{f,g}(T_x)$.
\end{itemize}

\paragraph{Gluing slices.}
In what follows, we will make a slight abuse of notation and identify
intervals of the form $[a,\infty]$, $[-\infty,a]$ for $a\in\R$ and
$[-\infty,\infty]$ with the intervals $[a,\infty)$, $(-\infty,a]$
and~$\R$, respectively. For $L$, $L'$ in the extended line 
$\R\cup\{\pm\infty\}$ such that $-f(\inf I)\leq L\leq L'\leq -\inf_I
f$, we define the restrictions $f^{(L,L')}$ and $g^{(L,L')}$ of~$f$
and~$g$ to the interval $[T_L,T_{L'}]\cap I$, yielding also a slice
trajectory. 
We may therefore define the slice coded by $(f^{(L,L')},g^{(L,L')})$
and denote it by 
\[\Sl^{(L,L')}=\big(M^{(L,L')},D^{(L,L')}\big)=\big(M_{f^{(L,L')},g^{(L,L')}},D_{f^{(L,L')},g^{(L,L')}}\big).\]
We let $\bp^{(L,L')}:[T_L,T_{L'}]\to M^{(L,L')}$ be the canonical
projection, $\gamma^{(L,L')}$, $\xi^{(L,L')}$, $\beta^{(L,L')}$ be the maximal
geodesic, shuttle, and base, and $\mu^{(L,L')}$, $\nu^{(L,L')}$ be the area and base measures of~$\Sl^{(L,L')}$. 

This family of metric spaces is compatible with the gluing operation
in the following sense, illustrated in Figure~\ref{figgluslices}. 

\begin{prp}\label{sgslice}
Let $-f(\inf I)\leq L< L'< L''\leq -\inf_I f$ be in the extended real line. Then 
\[\Sl^{(L,L'')}=G\big(\Sl^{(L,L')},\Sl^{(L',L'')};\xi^{(L,L')},\gamma^{(L',L'')}\big).\]
Moreover, the marks and measures satisfy
\begin{align*}
\gamma^{(L,L'')}&=\gamma^{(L,L')}\cup 
	\big(\gamma^{(L',L'')}\setminus\xi^{(L,L')}\big),\\
\xi^{(L,L'')}&=\xi^{(L',L'')}\cup
	\big(\xi^{(L,L')}\setminus\gamma^{(L',L'')}\big),\\
\beta^{(L,L'')}&=\beta^{(L,L')}\cup \beta^{(L',L'')},\\[2mm]
\mu^{(L,L'')}&=\mu^{(L,L')}+\mu^{(L',L'')},\\
\nu^{(L,L'')}&=\nu^{(L,L')}+\nu^{(L',L'')},
\end{align*}
with the convention that, in the right hand-side, sets and measures are identified with their images and pushforwards by the canonical projections in $\Sl^{(L,L'')}$  . 
\end{prp}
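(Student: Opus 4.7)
The plan is to identify $\Sl^{(L,L'')}$ with the asserted gluing via the natural map induced by the canonical projections, and then read off the formulas for the marks and measures. Write $I_1=[T_L,T_{L'}]\cap I$, $I_2=[T_{L'},T_{L''}]\cap I$, and $I_{12}=I_1\cup I_2$, so that $I_1\cap I_2=\{T_{L'}\}$ and the restrictions of $(f,g)$ to these intervals code $\Sl^{(L,L')}$, $\Sl^{(L',L'')}$, and $\Sl^{(L,L'')}$ respectively. For $j\in\{1,2\}$, I would define natural maps $\iota_j$ sending the projection of $s$ through $\bp^{(L,L')}$ or $\bp^{(L',L'')}$ to $\bp^{(L,L'')}(s)$. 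These are well-defined because $d_{f|_{I_j}}$- and $d_{g|_{I_j}}$-identifications remain valid on $I_{12}$, and they are $1$-Lipschitz since any chain used in the quotient formula~\eqref{eq:quotient} for $D^{(L,L')}$ or $D^{(L',L'')}$ is also a valid chain in $I_{12}$ for $D^{(L,L'')}$.

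Next, I would verify the identification $\iota_1(\xi^{(L,L')}(r))=\iota_2(\gamma^{(L',L'')}(r))$ for $r\in[0,\pzl]$, with $\pzl=\lgth(\xi^{(L,L')})\wedge\lgth(\gamma^{(L',L'')})$. Both $\Xi_{T_{L'}}(r)$ and $\Gamma_{T_{L'}}(r)$ have $g$-value $g(T_{L'})-r$, and the maximality (resp.\ minimality) in the definitions of $\Xi$ and $\Gamma$ forces $g>g(T_{L'})-r$ strictly on $(\Xi_{T_{L'}}(r),T_{L'})$ and on $(T_{L'},\Gamma_{T_{L'}}(r))$. Hence the infimum of $g$ on $[\Xi_{T_{L'}}(r),\Gamma_{T_{L'}}(r)]$ equals $g(T_{L'})-r$, giving $d_g(\Xi_{T_{L'}}(r),\Gamma_{T_{L'}}(r))=0$, and a fortiori $D^{(L,L'')}$-distance zero. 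The universal property of the metric quotient then yields a $1$-Lipschitz surjection $\phi:G\to\Sl^{(L,L'')}$, where $G$ denotes the gluing, surjectivity being clear from $I_{12}=I_1\cup I_2$.

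The main obstacle is to show that $\phi$ is an isometry, i.e., $D^{(L,L'')}\geq D_G\circ\phi^{-1}$. Given any chain $(x_i,y_i)_{1\leq i\leq n}$ in $I_{12}$ realizing $D^{(L,L'')}$ up to $\varepsilon$, the key observation is that no $d_f$-identification can join a point of $I_1\setminus\{T_{L'}\}$ to a point of $I_2\setminus\{T_{L'}\}$: since $T_{L'}$ is the first hitting time of $-L'$ by $f$, one has $f>-L'$ on the interior of $I_1$, while any interval crossing $T_{L'}$ has $\underline f\leq -L'$, ruling out $f(y_i)=f(x_{i+1})=\underline f$ in such a straddling situation. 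Hence the chain crosses between sides either via $T_{L'}$ itself, which projects to the glued boundary, or via a $d_g$-segment $(x,y)$ with $x\in I_1$, $y\in I_2$. For the latter, setting $m=\underline g(x,y)$ and $r^*=(g(T_{L'})-m)\wedge\pzl$, a case analysis according to the sign of $m-\max(\inf_{I_1}g,\inf_{I_2}g)$, combined with monotonicity of the infimum on nested intervals, yields
\[
d_g(x,y)\;\geq\;d_{g^{(L,L')}}(x,\Xi_{T_{L'}}(r^*))+d_{g^{(L',L'')}}(\Gamma_{T_{L'}}(r^*),y)\,.
\]
Since each right-hand term dominates the corresponding quotient distance in its piece, and since $\xi^{(L,L')}(r^*)$ is glued to $\gamma^{(L',L'')}(r^*)$, the chain can be reorganized into one controlling $D_G$, which completes the proof of the isometry.

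Finally, the formulas for the marks and measures follow from their constructions in terms of $(f^{(L,L'')},g^{(L,L'')})$ once $\Sl^{(L,L'')}$ is identified with $G$. The base and the area and base measures split additively along $I_1$ and $I_2$, the shared point $T_{L'}$ being of measure zero. The maximal geodesic $\gamma^{(L,L'')}=\Gamma_{T_L}$ coincides with $\gamma^{(L,L')}$ up to level $\inf_{I_1}g$, then extends into $I_2$ by following the first crossings of $g$ at lower levels, which form precisely the portion of $\gamma^{(L',L'')}$ lying below that level, that is, $\gamma^{(L',L'')}\setminus\xi^{(L,L')}$; the shuttle formula is obtained by the symmetric argument.
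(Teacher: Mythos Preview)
Your argument is correct and reaches the same conclusion, but the route differs from the paper's. The paper avoids the chain manipulation entirely: it first observes that the tree pseudometric $d_{g^{(L,L'')}}$ on $I_{12}$ is itself the quotient $d/R_1$, where $d$ is the disjoint-union pseudometric $d_{g\rsts_{I_1}}\sqcup d_{g\rsts_{I_2}}$ and $R_1$ identifies $\Xi_{T_{L'}}(r)$ with $\Gamma_{T_{L'}}(r)$. Combined with the factorization of $\{d_f=0\}$ over $I_1$ and $I_2$ (the same hitting-time argument you use), the abstract commutativity of quotients~\eqref{dR1R2} then gives
\[
D^{(L,L'')}=(d/R_1)/R_2=(d/R_2)/R_1=\big(D^{(L,L')}\sqcup D^{(L',L'')}\big)/R_1
\]
in one line, which is exactly the gluing metric. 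Your case analysis on crossing $d_g$-segments is, in effect, a direct verification of the observation $d_{g^{(L,L'')}}=d/R_1$ in the cross-side case (your displayed inequality is actually an equality once $r^*$ is chosen as you do). What you gain is self-containment: you never appeal to~\eqref{dR1R2} and you justify the ``observation'' explicitly. What the paper gains is brevity and a cleaner structural picture---once one sees that gluing two $\R$-trees along the geodesic rays from $T_{L'}$ rebuilds $\cT_{g^{(L,L'')}}$, the whole statement reduces to commuting two quotient operations.
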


\begin{figure}[ht]
\centering\includegraphics[width=11cm]{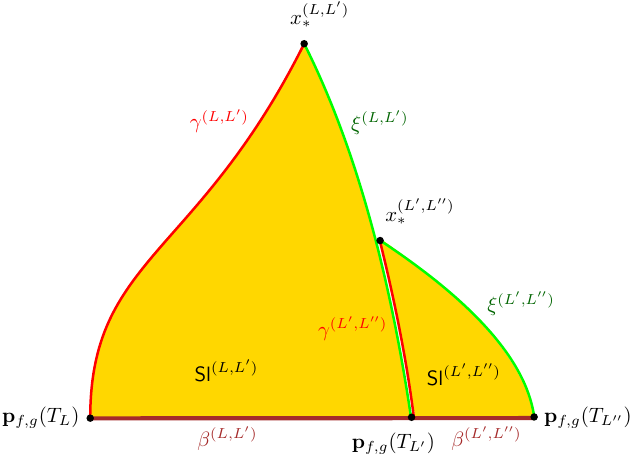}
\caption{Gluing slices encoded by a slice trajectory: the
gluing of $\Sl^{(L,L')}$ with $\Sl^{(L',L'')}$ results in
$\Sl^{(L,L'')}$. Here, $T_L>-\infty$ and
$T_{L''}<\infty$. We denoted by~$x_\sas^{(L',L'')}$ the apex
of~$\Sl^{(L',L'')}$ and~$x_\sas^{(L,L')}$ the apex
of~$\Sl^{(L,L')}$, which, on this example, is also the apex
of~$\Sl^{(L,L'')}$. Consequently, the shuttle
$\xi^{(L,L'')}$ is obtained by the union of $\xi^{(L',L'')}$
and the part of $\xi^{(L,L')}$ that is not glued to
$\gamma^{(L',L'')}$, whereas the maximal geodesic $\gamma^{(L,L'')}=\gamma^{(L,L')}$, as stated at the end of
Proposition~\ref{sgslice}. The bases and measures simply add up. The fact that the slices depicted
here are topological disks does not hold true in general; it
will, however, be the case for the random processes we will
consider in the upcoming sections.}
\label{figgluslices}
\end{figure}

\begin{proof}
In the disjoint union $[T_L,T_{L'}]\sqcup [T_{L'},T_{L''}]$, in order to avoid ambiguities due to the fact that the point
$T_{L'}$ belongs to both intervals (thus should be duplicated), we use a superscript~$0$ for points in the first interval and a superscript~$1$ for points in the second interval. We observe that $d_{g^{(L,L'')}}$ can be seen as a quotient pseudometric $d/R_1$
where~$d$ is the disjoint union pseudometric on $[T_L,T_{L'}]\sqcup [T_{L'},T_{L''}]$
given by $d(s,t)=d_g(s,t)$ if~$s$, $t$ belong to the same of the two intervals above and $d(s,t)=\infty$ otherwise, 
and $R_1$ is the coarsest equivalence
relation containing
\[\big\{\big(\Xi_{T_{L'}}(r)^0,\Gamma_{T_{L'}}(r)^1\big),\ 0\leq
r\leq g(T_{L'})-\un{g}(T_L,T_{L'})\vee \un{g}(T_{L'},T_{L''})\big\}.\]

Note also that, as~$T_{L'}$ is a hitting time, the equivalence relation $\{d_{f^{(L,L'')}}=0\}$ factorizes over these
two intervals, in the sense that if $d_{f^{(L,L'')}}(s,t)=0$ with $s\neq t$, then $s$, $t$ must
belong to the same interval $[T_L,T_{L'}]$ or $[T_{L'},T_{L''}]$. So
if~$R_2$ is the equivalence relation on the above disjoint union given by
$(s^i,t^j)\in R_2$ if and only if $d_f(s,t)=0$ and $i=j\in \{0,1\}$, using~\eqref{dR1R2},
we have
\[D^{(L,L'')}=(d/R_1)/R_2 = (d/R_2)/R_1 = \big(D^{(L,L')}\sqcup
D^{(L',L'')}\big)/R_1,\]
which is precisely the quotient metric of \MEMS{}{the gluing }
$G(\Sl^{(L,L')},\Sl^{(L',L'')};\xi^{(L,L')},\gamma^{(L',L'')})$. 

Checking the claimed formulas for the marks and measures of~$\Sl^{(L,L'')}$ is
straightforward.  
\end{proof}

We finish this paragraph with a very strong identity, saying that the distances in a slice $\Sl^{(L,L')}$ encoded by a restriction of the slice trajectory $(f,g)$ are in fact the restrictions of the distances in the ``whole'' slice $\Sl_{f,g}$.

\begin{crl}
  \label{sec:metric-spaces-coded-2}
  Let $(f,g)$ be a slice trajectory on the interval $I$,
  and $-f(\inf I)\leq L\leq  L'\leq -\inf_I f$. Then $D^{(L,L')}$ is
  the restriction of the function $D_{f,g}$ to $[T_L,T_{L'}]$. 
\end{crl}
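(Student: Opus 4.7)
The inequality $D_{f,g}|_{[T_L,T_{L'}]}\le D^{(L,L')}$ is immediate from formula~\eqref{eq:quotient}: any chain in $[T_L,T_{L'}]$ that witnesses $D^{(L,L')}(s,t)$ is \emph{a fortiori} a chain in $I$ with the same total cost, hence also witnesses $D_{f,g}(s,t)$.

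For the reverse inequality $D^{(L,L')}\le D_{f,g}|_{[T_L,T_{L'}]}$, I would apply Proposition~\ref{sgslice} twice. In the generic case $-f(\inf I)<L<L'<-\inf_I f$, the successive applications to the triples $(-f(\inf I),L,L')$ and $(-f(\inf I),L',-\inf_I f)$ realize $\Sl^{(-f(\inf I),-\inf_I f)}$ as a double metric gluing containing $\Sl^{(L,L')}$ as an isometrically embedded summand; this embedding property is immediate from the gluing distance formula~\eqref{dG}. We thereby obtain
\[
D^{(L,L')}(s,t)=D^{(-f(\inf I),-\inf_I f)}(s,t)\qquad\text{for all } s,t\in[T_L,T_{L'}].
\]

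It then remains to identify the right-hand side with $D_{f,g}(s,t)$. This identification is tautological when $T_{-\inf_I f}=\sup I$. In the general case, the trailing region $(T_{-\inf_I f},\sup I]\cap I$ adds points to $\Sl_{f,g}$ but cannot create shortcuts between points of $[T_L,T_{L'}]$: any $\{d_f=0\}$-equivalence linking a point before $T_{-\inf_I f}$ and a point strictly after must, by the same first-hitting-time factorization argument used in the proof of Proposition~\ref{sgslice}, be anchored through $T_{-\inf_I f}$ itself. The boundary cases $L=-f(\inf I)$, $L'=-\inf_I f$, and $L=L'$ are handled either by direct inspection (the last yielding a trivial point space) or by approximating with interior values, using the conventions $T_{-\infty}=-\infty$ and $T_\infty=\infty$.

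The main obstacle is this final identification $D^{(-f(\inf I),-\inf_I f)}=D_{f,g}$ in the case $T_{-\inf_I f}<\sup I$: one must explicitly verify that a chain in $I$ joining two points of $[T_L,T_{L'}]$ cannot reduce its cost by making excursions into the trailing region. The verification relies on the hitting-time structure, which forces $\sim_f$-equivalences to respect the partition of $I$ induced by $T_{-\inf_I f}$, together with the slice trajectory compatibility~\eqref{eqslicetraj} ensuring that $g$-values are preserved across such equivalences.
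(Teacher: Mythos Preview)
Your approach is the same as the paper's: realize $\Sl_{f,g}$ as an iterated gluing in which $\Sl^{(L,L')}$ sits as an isometrically embedded piece, using Proposition~\ref{sgslice}. The paper splits $I$ directly at $T_L$ and $T_{L'}$, writing $\Sl_{f,g}$ as the successive gluing of the slices on $[\inf I,T_L]$, $[T_L,T_{L'}]$, $[T_{L'},\sup I]$ (denoted $\Sl^{(\inf I,L)}$, $\Sl^{(L,L')}$, $\Sl^{(L',\sup I)}$ with a mild abuse of notation). Although the \emph{statement} of Proposition~\ref{sgslice} is formulated for intervals with hitting-time endpoints, its \emph{proof} uses nothing about the outer endpoints: the only requirement is that the interior split point be a hitting time, so that $\{d_f=0\}$ factorizes there. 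The argument therefore applies verbatim to the splits of $I$ at $T_L$ and at $T_{L'}$, and the isometric embedding follows from~\eqref{dG}.

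By insisting on the literal hypotheses of Proposition~\ref{sgslice}, you route through $[\inf I,T_{-\inf_I f}]$ and are then forced to confront the trailing region $(T_{-\inf_I f},\sup I]$. Your treatment of that last step is incomplete. The factorization of $\{d_f=0\}$ at the hitting time $T_{-\inf_I f}$ does prevent $\sim_f$-jumps from crossing, but it does \emph{not} prevent the $d_g$-steps in a chain~\eqref{eq:quotient} from entering the trailing region and returning; the compatibility~\eqref{eqslicetraj} only says that $g$-values agree at $\sim_f$-identified points and does not by itself bound the cost savings of such an excursion. The clean fix is not a bespoke chain modification but one more application of the \emph{proof} of Proposition~\ref{sgslice}, now splitting $I$ at the hitting time $T_{-\inf_I f}$: this exhibits $\Sl_{f,g}$ as a gluing of your intermediate space $\Sl^{(-f(\inf I),-\inf_I f)}$ with the trailing slice, and~\eqref{dG} gives the isometric embedding. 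Of course, once you allow that argument to be applied to splits of $I$ at arbitrary hitting times, you may as well split at $T_L$ and $T_{L'}$ from the outset and bypass the detour entirely---which is what the paper does.
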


\begin{proof}
This is a direct consequence of the preceding
proposition, which entails that $D_{f,g}$ is the pseudometric 
obtained by gluing $\Sl^{(L,L')}$ with $\Sl^{(L',\sup I)}$
along $\xi^{(L,L')}$ and $\gamma^{(L',\sup I)}$, and then by gluing
the resulting space $\Sl^{(L,\sup I)}$ with $\Sl^{(\inf I,L)}$
along $\gamma^{(L,\sup I)}$ and $\xi^{(\inf I,L)}$. Since at each
stage, the spaces that are glued together are isometrically embedded
in the resulting gluing, we obtain that $\Sl^{(L,L')}$ is
isometrically embedded in $\Sl^{(\inf I,\sup I)}=\Sl_{f,g}$. 
\end{proof}

\subsection{Random continuum composite slices}\label{sec:boundary-triangles}

We now randomize the functions $f$, $g$ considered in the preceding
section in various ways to construct random spaces of interest. For a fixed 
continuous function $f\in \CC$ with $0\in I(f)$, the \emph{snake}\footnote{Literally, 
  this is rather called the ``head of the snake driven by $f$''; see~\cite{legall99}.} driven by~$f$ is a random centered Gaussian process 
$(Z^f_t,t\in I(f))$ with $Z^f_0=0$ and with covariance function 
specified by
\begin{equation}\label{covZ}
\E\big[(Z^f_t-Z^f_s)^2\big]={d}_f(s,t),\qquad s,t\in I(f).
\end{equation}
As soon as~$f$ is Hölder continuous, 
which will always be the case in this paper, 
this process admits a continuous modification; we systematically 
consider this continuous modification of~$Z^f$. If 
now~$Y$ is a (a.s.\ Hölder continuous) random function, then the random snake 
driven by~$Y$ is defined as the Gaussian process $Z^Y$ conditionally given~$Y$. 

By~\eqref{covZ}, it holds that $Z_s^f=Z_t^f$ whenever $d_f(s,t)=0$, so that, provided~$f$ satisfies the required limit conditions if~$I(f)$ is noncompact, the pair $(f,Z^f)$ is a slice trajectory. In what follows, we will let $(X,W):(f,g)\mapsto (f,g)$ be the canonical 
process on~$\CC^2$. 

Below and throughout this work, we use, for any process~$Y$ defined on an interval~$I$, the piece of notation $\un{Y}_t=\inf_{s\le t,\, s\in I} Y_s$.

\medskip
Let us proceed to the definition of continuum slices, which arise in Theorem~\ref{thmslslice}. Fix~$A$, $L \in (0,\infty)$ and $\Delta\in \R$. We let
$\Slice_{A,L,\Delta}$ be the probability distribution under which 
\begin{itemize}
\item 
the process~$X$ is a first-passage bridge\footnote{A \emph{first-passage bridge of Brownian motion} can be defined from Brownian motion stopped
when first hitting $-L$ by absolute continuity; see \cite{BeMi17}.} of standard
Brownian motion from~$0$ to~$-L$ with duration~$A$;
\item
conditionally given~$X$, the process~$W$ has the same law as $(Z_t+\zeta_{-\un{X}_t},0\leq t\leq A)$, where~$Z$ is the
random snake driven by $X-\un{X}$, and $\zeta/\sqrt{3}$ is a standard Brownian bridge of duration~$L$ and terminal value $\Delta/\sqrt{3}$, independent of~$X$ and~$Z$. 
\end{itemize}

To be more precise, the process $\zeta=(\zeta_t,0\le t\le L)$ is
Gaussian, with $\E[\zeta_t]=t\Delta/L$ for $t\in [0,L]$ and 
\[\cov(\zeta_s,\zeta_t)=3\,\frac{s(L-t)}{L},\qquad 0\le s\leq
t\le L.\]
With this definition, it is simple to see that $\Slice_{A,L,\Delta}$
is indeed carried by slice trajectories defined on the
interval $I=[0,A]$.

\begin{defn}
  \label{sec:boundary-triangles-1}
The \emph{(composite) slice with area~$A$, width~$L$ and tilt~$\Delta$}, generically denoted by $\Sl_{A,L,\Delta}$,
is the $5$-marked\footnote{Recall from Section~\ref{sec:geod-metr-spac} that each geodesic mark counts for 2 marks, the first one being its marked extremity.} $2$-measured metric space $\Sl_{X,W}$ under\MEMS{ the law}{} $\Slice_{A,L,\Delta}$,
endowed with the marking
\[\partial\Sl_{A,L,\Delta}=(\beta,\gamma_0,\xi_A)\]
comprising its base and two geodesic marks, namely its maximal geodesic and its shuttle, as well as its area and base measures~$\mu$, $\nu$. 
\end{defn}

The piece of notation $\partial\Sl_{A,L,\Delta}$ for the marking comes from the fact that the union of the three marks gives the topological boundary of~$\Sl_{A,L,\Delta}$, as stated in Lemma~\ref{topdimep}.

\subsection{The Brownian half-plane, and its embedded slices}\label{secBHP}

There is a natural relation between slices and the
Brownian half-plane \cite{GwMi17,BaMiRa}, which we now introduce. Let $(B_t,t\ge 0)$, $(B'_t,t\ge 0)$ be
two independent standard Brownian motions, and let
$(\Pi_t=B'_t-2\inf_{\{0\leq s\leq t\}}B_s',t\ge 0)$ be the so-called \emph{Pitman
transform} of~$B'$, which is a three-dimensional Bessel process. Recall the piece of notation $\un{X}_t=\inf_{s\le t} X_s$.
We let $\Half$ be the probability distribution on~$\CC^2$ under which
\begin{itemize}
\item the process $X$ has same distribution as $(B_t\ind_{\{t\geq 0\}}+\Pi_{-t}\ind_{\{t<0\}},t\in \R)$, and
\item conditionally given $X$, the process $W$ has same distribution
as $(Z_t+\zeta_{-\un{X}_t},t\in \R)$, where~$Z$ is the
random snake driven by $X-\un{X}$, and $\zeta/\sqrt{3}$ is a two-sided
standard Brownian motion\footnote{This means that $(\zeta_x/\sqrt{3},x\geq 0)$ and $(\zeta_{-x}/\sqrt{3},x\geq 0)$ are independent
(one-dimensional) standard Brownian motions issued from~$0$.}, independent of~$X$ and~$Z$.
\end{itemize}

The measure $\Half$ is carried by slice trajectories defined on the interval~$\R$. 
We note that we can make this definition more symmetric using standard
excursion theory, in a way similar to the encoding triples of~\cite{LGRi21}. For this, we let $T_{L-}=\lim_{L' \uparrow L} T_{L'}$ and denote by 
\begin{align*}
X^{(L)}&=(L+X_{T_{L-}+t},0\leq t\leq T_L-T_{L-}),\\
W^{(L)}&=(W_{T_{L-}+t},0\leq t\leq T_L-T_{L-}),
\end{align*}
the excursion of~$X$ above its past infimum at level~$-L$, and the corresponding piece of~$W$. Note first that the process $\zeta_L=W_{T_L}$, $L\in \R$, is under $\Half$ a standard two-sided Brownian motion multiplied by~$\sqrt{3}$. Then, conditionally given~$\zeta$, 
the point measure on $\R\times \CC\times \CC$ given by 
\begin{equation}
  \label{eq:27}
  \mathcal{M}(\d L\, \d X\, \d W)=\sum_{L\in \R:\,T_L\neq
    T_{L-}}\delta_{(L,X^{(L)},W^{(L)})}
\end{equation}
is a
Poisson measure with intensity $2\d L\,  \N_{\zeta_L}(\d (X,W))$, where
$\N_x$ is the $\sigma$-finite ``law'' of the lifetime process and head of the
Brownian snake (started at~$x$) driven by the Itô measure of positive excursions of
Brownian motion. The process $(X,W)$ is then a measurable function of~$\zeta$ and~$\mathcal{M}$ by Itô's reconstruction theory of Brownian
motion from its excursions.

\begin{defn}\label{sec:bound-triangl-brown-3}
The \emph{Brownian half-plane}, generically denoted by~$\BHP$, is the $1$-marked $2$-measured metric space $\Sl_{X,W}$
considered under~$\Half$, endowed with the \MEMS{one }{}mark $\partial\BHP=\beta$, its area measure~$\mu$ and its base measure~$\nu$. 
\end{defn}

There is only one mark here, the base; there is no maximal geodesic nor shuttle since the interval of definition is~$\R$. The name comes from the fact that~$\BHP$ is homeomorphic to the
half-plane $\R\times \Rp$, its boundary as a topological manifold being equal to
the base; see~\cite[Corollary~3.8]{BaMiRa}. 

In the light of Proposition~\ref{sgslice}, the Brownian half-plane can
be seen to have a natural Markov property. First, let $\theta_t:f\mapsto f(t+\cdot)-f(t)$ be the translation
operator on~$\CC$.
We claim 
that
$\Half$ is invariant under $\theta_{T_L}$, since its action simply consists
in translating by~$L$  the time in process $\zeta$, and the first
coordinate of $\mathcal{M}$, which leaves their laws invariant. For
similar reasons, for every $L\in \R$, the processes
 $\big(X^{(0,L)},W^{(0,L)}\big)$, 
$(X^{(-\infty,0)},W^{(-\infty,0)})$ and  
$(\theta_{T_L}X^{(L,+\infty)},\theta_{T_L}W^{(L,+\infty)})$ are
independent under $\Half$, since they are respectively functionals of the independent random elements 
\begin{multline*}
(\zeta_{x},0\leq x\leq L), \mathcal{M}((0,L]\times \CC\times \CC),
\qquad (\zeta_{x},x\leq 0),
	\mathcal{M}((-\infty,0]\times\CC\times \CC),\\
\text{and }\quad (\zeta_{L+x}-\zeta_L,x\geq 0),
	\mathcal{M}((L,\infty)\times\CC\times \CC).
\end{multline*}

\paragraph{Free slices.}
Note that, under $\Half$, the process
$X^{(0,L)}$ is simply a standard Brownian motion killed
at its first hitting time of~$-L$, while the process
$(W^{(0,L)}_{T_x}/\sqrt{3},0\leq x\leq L)$ is a standard
Brownian motion killed at time~$L$. For this reason, the law of
$(X^{(0,L)},W^{(0,L)})$ under $\Half$ is the mixture
\begin{equation}
\label{eqFSlice}
\FSlice_L=\int_0^\infty q_L(A)\d A\int_\R p_{3L}(\Delta)\d
\Delta \, \Slice_{A,L,\Delta},
\end{equation}
where~$p_t$, $q_x$ are defined after~\eqref{eqlimvect}. In what follows, a random metric
space with same law as $\Sl^{(0,L)}$ under $\FSlice_L$ will be referred to as a \emph{free (composite) slice} of width~$L$. 
This, together with Proposition~\ref{sgslice}, yields the
following result. 

\begin{prp}\label{sec:brownian-half-plane}
Fix $L<L'<L''$ in the extended line. Then, under $\Half$, it holds that 
$\Sl^{(L,L'')}=G(\Sl^{(L,L')},\Sl^{(L',L'')};\xi^{(L,L')},\gamma^{(L',L'')})$,
where the spaces $\Sl^{(L,L')}$, $\Sl^{(L',L'')}$ are
independent. Moreover, if~$L$ and~$L'$ are finite, then $\Sl^{(L,L')}$
is a free slice of width $L'-L$. 
\end{prp}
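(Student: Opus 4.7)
The first equality is a deterministic consequence of Proposition~\ref{sgslice} applied pathwise to $(f,g)=(X,W)$, on the event of full $\Half$-probability where $(X,W)$ is indeed a slice trajectory defined on $\R$ with $-f(\inf \R)=-\infty$ and $-\inf_\R X=+\infty$ (both being consequences of $X\rst_{[0,\infty)}$ being a standard Brownian motion and $X\rst_{(-\infty,0]}$ being obtained from a Bessel process tending to $+\infty$ at $-\infty$). Hence the actual probabilistic content is the independence of $\Sl^{(L,L')}$ and $\Sl^{(L',L'')}$, together with the identification of $\Sl^{(L,L')}$ as a free slice of width $L'-L$ when both levels are finite. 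Using the translation invariance of $\Half$ under $(X,W)\mapsto(\theta_{T_L}X,\theta_{T_L}W)$ recalled just before the statement, it is enough to consider the case $L=0$; the free-slice statement then appears as a special case.

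For independence, my plan is to express each of $\Sl^{(0,L')}$ and $\Sl^{(L',L'')}$ as a measurable function of disjoint parts of the independent data $(\zeta,\mathcal{M})$ appearing in the Poisson reconstruction~\eqref{eq:27}. The slice $\Sl^{(0,L')}$ depends only on the restriction of $(X,W)$ to $[0,T_{L'}]$; by Itô's reconstruction applied to the pair $(X,W)$ of lifetime and snake-head driven by excursions of $X-\un X$, this restriction is in turn a deterministic functional of $(\zeta_x,0\le x\le L')$ together with $\mathcal{M}\rst_{(0,L']\times\CC\times\CC}$. The analogous statement holds for $\Sl^{(L',L'')}$ on the interval $(L',L'']$. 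The independent increments of the two-sided Brownian motion $\zeta$ and the Poisson independence of $\mathcal{M}$ on disjoint Borel sets (conditionally on $\zeta$, and then unconditionally) yield the claimed independence.

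For the identification with $\FSlice_{L'}$, under $\Half$ the process $X\rst_{[0,\infty)}$ is standard Brownian motion, so $X^{(0,L')}$ is Brownian motion stopped when it first hits $-L'$, whose lifetime $T_{L'}$ has density $q_{L'}$ on $(0,\infty)$; conditionally on $T_{L'}=A$, $X^{(0,L')}$ is by definition a first-passage bridge of duration $A$ from $0$ to $-L'$. Similarly, $(\zeta_x,0\le x\le L')$ is $\sqrt{3}$ times a standard Brownian motion on $[0,L']$, so $\zeta_{L'}$ has density $p_{3L'}$, and conditionally on $\zeta_{L'}=\Delta$ the process $(\zeta_x,0\le x\le L')$ is $\sqrt{3}$ times a Brownian bridge from $0$ to $\Delta/\sqrt{3}$ of duration $L'$. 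Conditionally on $(X,\zeta)$, $W_t=Z_t+\zeta_{-\un{X}_t}$ for $t\in[0,T_{L'}]$ is built exactly as in the definition of the law $\Slice_{A,L',\Delta}$. Disintegrating over $(T_{L'},\zeta_{L'})=(A,\Delta)$ produces precisely the mixing density $q_{L'}(A)\,p_{3L'}(\Delta)$ from~\eqref{eqFSlice}. The main technical point to clean up will be the measurability step—namely, that the snake $Z$ on $[0,T_{L'}]$ can be recovered from the excursions of $X-\un X$ indexed by levels in $[0,L']$ and from $\zeta\rst_{[0,L']}$—but this is standard excursion-theory bookkeeping; the remaining steps amount to unpacking definitions.
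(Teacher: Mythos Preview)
Your proposal is correct and follows essentially the same approach as the paper: the gluing identity is the deterministic Proposition~\ref{sgslice}, the independence comes from the Poisson reconstruction of $(X,W)$ via $(\zeta,\mathcal{M})$ over disjoint level intervals (exactly as in the paragraph preceding~\eqref{eqFSlice}), and the free-slice identification is the disintegration producing~\eqref{eqFSlice}. The paper in fact states the proposition as an immediate consequence of these ingredients without writing out a separate proof, so your more detailed unpacking is a faithful expansion of what the paper leaves implicit.
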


Recall that this result is illustrated in Figure~\ref{figgluslices}, which can be completed by extending the brown segment into a line, letting the half-plane above be~$\BHP$, the line being its base $\beta=\beta^{(-\infty,\infty)}$. This also
suggests that $\Sl^{(L,L')}$ is the bounded connected component of the
complement of $\gamma^{(L,L')}\cup \xi^{(L,L')}$ in $\BHP$. More
precisely, the following holds.

\begin{prp}\label{sec:brownian-half-plane-1}
For every $L<L'$ in $\R$, almost surely under $\Half$, the geodesics
$\gamma^{(L,L')}$ and $\xi^{(L,L')}$ meet only at the apex
$x_\sas^{(L,L')}$, and meet the base $\beta$ only at their
respective origins $\bp_{X,W}(T_L)$ and $\bp_{X,W}(T_{L'})$. Moreover, $\Sl^{(L,L')}$
is the closure of the bounded connected component of the complement
of the union of these two paths in~$\BHP$. It is therefore
homeomorphic to the closed unit disk, with boundary given by the union of the 
three sets $\beta^{(L,L')}$, $\gamma^{(L,L')}$ and $\xi^{(L,L')}$,
which meet only at $\bp_{X,W}(T_L)$, $\bp_{X,W}(T_{L'})$ and $x_\sas^{(L,L')}$. 
\end{prp}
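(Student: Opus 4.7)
Plan:

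The plan is to combine explicit properties of the paths $\gamma^{(L,L')}$ and $\xi^{(L,L')}$ with the known topology of $\BHP$ established in~\cite{BaMiRa}, namely that a.s.\ $\BHP$ is homeomorphic to $\R\times\R_+$ with topological boundary given by the base $\beta$. A central ingredient will be the a.s.\ identification theorem for $\BHP$, analogous to Le~Gall's identification for the Brownian sphere: under $\Half$, $D_{X,W}(u,v)=0$ if and only if $d_X(u,v)=0$ or $d_W(u,v)=0$. I will use throughout that any two points identified in $\BHP$ have the same $W$-label, since $|W_s-W_t|\le D_{X,W}(s,t)$.

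First, I will show that $\gamma^{(L,L')}$ and $\xi^{(L,L')}$ meet only at the apex. The isometry property~\eqref{eq:28} implies that both paths are injective. Set $m=\un{W}_{[T_L,T_{L'}]}$ and let $s_*\in[T_L,T_{L'}]$ be the a.s.\ unique minimizer of $W$ on this interval. Suppose $\bp(\Gamma_{T_L}(r))=\bp(\Xi_{T_{L'}}(r'))$ with $r<W_{T_L}-m$ and $r'<W_{T_{L'}}-m$. The label equality forces $W_{T_L}-r=W_{T_{L'}}-r'$. By the identification above, either $d_W(\Gamma_{T_L}(r),\Xi_{T_{L'}}(r'))=0$ or $d_X(\Gamma_{T_L}(r),\Xi_{T_{L'}}(r'))=0$. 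The first case is ruled out by noting that, in $\cT_W$, the projections $\bp_W\circ\Gamma_{T_L}$ and $\bp_W\circ\Xi_{T_{L'}}$ trace two geodesic segments ending at $a_*(W\rst_{[T_L,T_{L'}]})$ whose interiors lie in disjoint subtrees (one rooted at $\bp_W(T_L)$, the other at $\bp_W(T_{L'})$) by the unique-minimum property of $W$. The second case implies $X$ achieves a common minimum on $[\Gamma_{T_L}(r),\Xi_{T_{L'}}(r')]$ at both endpoints, so $\bp(\Gamma_{T_L}(r))$ lies on the base $\beta$; this case is therefore reduced to the next step.

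Second, I will show that $\gamma^{(L,L')}$ meets $\beta$ only at $\bp(T_L)$ (the statement for $\xi^{(L,L')}$ being symmetric). A point $\bp(\Gamma_{T_L}(r))$ lies on $\beta$ iff there exists $x\in\R$ with $D_{X,W}(\Gamma_{T_L}(r),T_x)=0$. By the identification, this reduces either to $d_X(\Gamma_{T_L}(r),T_x)=0$, which forces $\Gamma_{T_L}(r)$ to be a left-minimum time of $X$ (since $T_x$ is), equal to some $T_y$, contradicting that $\Gamma_{T_L}(r)$ lies in the interior $(T_L,T_{L'})$ and is generically not a left-minimum of $X$ for $r>0$ (which can be seen from the Brownian snake structure and the independence of the processes $\zeta$ and the excursion $(X^{(L,L')},W^{(L,L')})$); or to $d_W(\Gamma_{T_L}(r),T_x)=0$, which I will rule out using the Poisson excursion decomposition~\eqref{eq:27}: the label $W_{\Gamma_{T_L}(r)}=W_{T_L}-r$ is a.s.\ strictly different from all labels $\zeta_x$ at boundary points distinct from $T_L$, using continuity of $\zeta$ and the strict-minimum properties of the snake inside each excursion.

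Third and finally, I will conclude via a topological argument. Once the three arcs $\beta^{(L,L')}$, $\gamma^{(L,L')}$, $\xi^{(L,L')}$ are shown to form a Jordan path whose only pairwise intersections are the three corners $\bp(T_L)$, $\bp(T_{L'})$ and $x_\sas^{(L,L')}$, the closed curve $\Sigma=\beta^{(L,L')}\cup\gamma^{(L,L')}\cup\xi^{(L,L')}$ separates $\BHP\cong\R\times\R_+$ into a bounded and an unbounded component, by a Jordan-type argument applied after one-point-compactifying the base $\beta$. The closure of the bounded component is the image $\bp([T_L,T_{L'}])$, which by Corollary~\ref{sec:metric-spaces-coded-2} is isometric to $\Sl^{(L,L')}$, and which is therefore homeomorphic to the closed unit disk with boundary $\Sigma$. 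The main obstacle is the fine analysis of identifications in the second step, in particular ruling out $d_W$-identifications between $\gamma^{(L,L')}$ and the base, for which I expect to use the excursion decomposition of $W$ above its past infimum together with the independence of~$\zeta$.
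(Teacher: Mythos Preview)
Your overall strategy---prove the three pairwise-intersection statements via the identification theorem, then apply a Jordan-type argument---is reasonable, but there is a genuine error in your Step~1 (what you call ``the second case''), and your Step~2 is too vague to stand as written.

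\medskip

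\textbf{The error.} In Step~1 you claim that if $d_X(\Gamma_{T_L}(r),\Xi_{T_{L'}}(r'))=0$ then $\bp(\Gamma_{T_L}(r))$ lies on the base $\beta$, and that this therefore reduces to Step~2. This implication is false. The relation $d_X(u,v)=0$ only says that $X_u=X_v=\un{X}(u\wedge v,u\vee v)$, a purely \emph{local} minimum condition on $[u\wedge v,u\vee v]$; it does not force $u$ (or $v$) to be a global left-minimum of $X$, which is what membership in $\beta$ requires. So this case is not reduced to the base-intersection problem at all. To rule it out you need a separate argument: observe that $u=\Gamma_{T_L}(r)$ is a (one-sided) record minimum time for $W$, while $d_X(u,v)=0$ forces $u$ to be a (one-sided) local minimum time for $X$. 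The incompatibility of these two properties is exactly Le~Gall's Lemma~2.2 from~\cite{legall06} (see also the proof of Lemma~\ref{sec:brownian-plane-its-2} in this paper, and its half-plane analog in~\cite{BaMiRa}). Your Step~2 suffers from the same gap: the phrase ``generically not a left-minimum of $X$ for $r>0$'' and the argument ruling out $d_W(\Gamma_{T_L}(r),T_x)=0$ both ultimately rest on this same mutual-exclusion fact, which you never invoke.

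\medskip

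\textbf{How the paper proceeds differently.} Rather than separately establishing each pairwise intersection and then appealing to a Jordan argument, the paper packages the mutual-exclusion fact once and for all into the statement that the paths $\sigma_t=\pi_X(\llbracket\bp_X(t),\bp_X(T_{-\un{X}_t})\rrbracket_X)$ (geodesics in the $X$-tree direction, ending on the base) intersect the paths $\gamma_s$ (geodesics in the $W$-tree direction) only at their starting points. This single fact immediately shows that every point $\bp(t)$ with $t\in[T_L,T_{L'}]$ not on $\gamma^{(L,L')}\cup\xi^{(L,L')}$ is connected by $\sigma_t$ to $\beta^{(L,L')}$ without crossing those geodesics, and every point outside is connected to $\beta\setminus\beta^{(L,L')}$. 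The connected-component statement then follows directly, and the remaining intersection assertions come for free. This avoids both your case analysis and the Jordan-curve subtlety (your curve $\Sigma$ contains a boundary arc of the half-plane, so the usual Jordan theorem does not apply without the compactification step you only hint at).
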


\begin{proof}
This proposition is proved in the same way as Lemma~6.15
in~\cite{BaMiRa}. Let us recall briefly the ideas. For any point
$t\in \R$, we let
\[\Sigma_t(r)=\inf\{s\geq t:X_s=X_t-r\}\qquad\text{ for }\qquad 0\leq
r\leq X_t-\un{X}_t,\]
so that the range of $\bp_X\circ\Sigma_t$ is the geodesic path
$\llbracket \bp_X(t),\bp_X(T_{-\un{X}_t}) \rrbracket_X$ in $\cT_X$. 
Its image by~$\pi_X$
defines a path~$\sigma_t$ starting at $\bp_{X,W}(t)$ and ending at the point
$\bp_{X,W}(T_{-\un{X}_t})$ of the base. Moreover, almost surely, any
path~$\sigma_t$, $t\in \R$, do not intersect a geodesic~$\gamma_s$,
$s\in \R$, except possibly at the starting point of either~$\bp_{X,W}(s)$ or~$\bp_{X,W}(t)$. This implies that any point $\bp_{X,W}(t)$ of $\Sl^{(L,L')}$ that is not
in the union $\gamma^{(L,L')}\cup \xi^{(L,L')}$ can be linked to the
bounded segment $\beta^{(L,L')}$ of the base of~$\BHP$
by the path~$\sigma_t$ without intersecting
$\gamma^{(L,L')}\cup \xi^{(L,L')}$ except perhaps at its 
endpoint. This latest possibility can be discarded by noting that, with
probability $1$, we have $T_{-\un{X}_t}\notin \{T_L,T_{L'}\}$.  
Similarly, a point $\bp_{X,W}(t)$ of $\BHP$ outside of $\Sl^{(L,L')}$ is linked to the unbounded set $\beta\setminus \beta^{(L,L')}$ of the
base of $\BHP$ by the path~$\sigma_t$, which does not intersect
$\gamma^{(L,L')}\cup \xi^{(L,L')}$. This means that $\Sl^{(L,L')}$ is
the closure of the bounded connected component of $\BHP$ minus
$\gamma^{(L,L')}\cup \xi^{(L,L')}$. 
\end{proof}

The above discussion shows that the Brownian half-plane contains a
natural ``flow'' of free slices. We can also
link directly the slices of Section~\ref{sec:boundary-triangles} with the Brownian half-plane via an
absolute continuity argument. Recall the definitions of~$p_t$ and~$q_x$ after~\eqref{eqFSlice}.

\begin{lmm}\label{lemaccontslice}
Fix $0< K<L$, as well as $A>0$ and $\Delta\in \R$. For every nonnegative function~$G$ that is measurable with respect to the $\sigma$-algebra generated by $(X^{(0,K)},W^{(0,K)})$, we have 
\begin{equation*}
	\Slice_{A,L,\Delta}[G]=\Half\big[\varphi_{A,L,\Delta}\big(T_{K},K,W_{T_{K}}\big)\cdot
	G\big],
\end{equation*}
where
\begin{equation}\label{eq:21}
	\varphi_{A,L,\Delta}(A',L',\Delta')=\frac{q_{L-L'}(A-A')}{q_L(A)}\frac{p_{3(L-L')}(\Delta-\Delta')}{p_{3L}(\Delta)}.
\end{equation}
\end{lmm}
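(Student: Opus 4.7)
The plan is to identify a triple of independent random ingredients that together generate $(X^{(0,K)},W^{(0,K)})$ under both $\Half$ and $\Slice_{A,L,\Delta}$, and to apply a separate absolute continuity argument to two of them (the third having the same conditional law).

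First I would decompose $W$ on $[0,T_K]$. For $t\in[0,T_K]$, we have $-\un{X}_t\in[0,K]$, so $W_t=Z_t+\zeta_{-\un{X}_t}$ only involves the restricted snake $\wt Z=(Z_t,0\le t\le T_K)$ and the restriction $\zeta\rst_{[0,K]}$. Hence $(X^{(0,K)},W^{(0,K)})$ is a measurable functional of the triple $(X^{(0,K)},\wt Z,\zeta\rst_{[0,K]})$. Moreover, under both $\Half$ and $\Slice_{A,L,\Delta}$, the three components are independent in the sense that, conditionally on $X^{(0,K)}$, the snake $\wt Z$ and the boundary process $\zeta\rst_{[0,K]}$ are independent, and the conditional law of $\wt Z$ given $X^{(0,K)}$ coincides under the two measures (it is only determined by $(X-\un{X})\rst_{[0,T_K]}$). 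I would also record the identity $W_{T_K}=\zeta_K$, which follows from $\un{X}_{T_K}=-K$ together with $Z_{T_K}=0$ (since $d_{X-\un X}(0,T_K)=0$, so $Z_{T_K}=Z_0=0$ almost surely).

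Next I would establish the two marginal absolute continuity formulas. Under $\Half$, $X\rst_{[0,\infty)}$ is a standard Brownian motion, so $X^{(0,K)}$ is a Brownian motion killed at $T_K$. Under $\Slice_{A,L,\Delta}$, the process $X$ is the first-passage bridge from $0$ to $-L$ of duration $A$, and the strong Markov property at $T_K$ (together with Bayes' rule, using $\P(T_L\in \d A\mid \mathcal F_{T_K})=q_{L-K}(A-T_K)\,\d A$) gives, for any bounded measurable functional $F$ of $X^{(0,K)}$,
\[
\E_{\Slice_{A,L,\Delta}}[F(X^{(0,K)})]=\E_{\Half}\!\left[\frac{q_{L-K}(A-T_K)}{q_L(A)}\,F(X^{(0,K)})\right].
\]
Similarly, a direct computation using the density of a Brownian bridge and the scaling identities $p_{L-K}(x/\sqrt 3)=\sqrt 3\,p_{3(L-K)}(x)$ and $p_L(\Delta/\sqrt 3)=\sqrt 3\,p_{3L}(\Delta)$ yields, for any bounded measurable functional $F'$ of $\zeta\rst_{[0,K]}$,
\[
\E_{\Slice_{A,L,\Delta}}[F'(\zeta\rst_{[0,K]})]=\E_{\Half}\!\left[\frac{p_{3(L-K)}(\Delta-\zeta_K)}{p_{3L}(\Delta)}\,F'(\zeta\rst_{[0,K]})\right].
\]

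Finally, I would combine the three facts. Writing $G=H(X^{(0,K)},\wt Z,\zeta\rst_{[0,K]})$, integrating first over $\wt Z$ conditionally on $X^{(0,K)}$ (which yields the same function under either law), then using independence of $X$ and $\zeta$ and multiplying the two Radon--Nikodym derivatives, I obtain
\[
\Slice_{A,L,\Delta}[G]=\Half\!\left[\frac{q_{L-K}(A-T_K)}{q_L(A)}\cdot\frac{p_{3(L-K)}(\Delta-\zeta_K)}{p_{3L}(\Delta)}\cdot G\right],
\]
and replacing $\zeta_K$ by $W_{T_K}$ in the integrand (thanks to $W_{T_K}=\zeta_K$) yields exactly $\varphi_{A,L,\Delta}(T_K,K,W_{T_K})$. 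The only mildly delicate points are the clean identification of $\wt Z$ as the common conditional ingredient and the scaling computation to recover the factor $p_{3(L-K)}(\Delta-W_{T_K})/p_{3L}(\Delta)$ rather than $p_{L-K}\big((\Delta-W_{T_K})/\sqrt 3\big)/p_L(\Delta/\sqrt 3)$; both are routine.
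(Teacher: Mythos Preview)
Your proof is correct and follows essentially the same approach as the paper: decompose $(X^{(0,K)},W^{(0,K)})$ into the ingredients $X^{(0,K)}$, the snake piece driven by $(X-\un X)\rst_{[0,T_K]}$, and $\zeta\rst_{[0,K]}$; use that the snake piece has the same conditional law under both measures; and multiply the two Radon--Nikodym factors coming from the first-passage bridge and the Brownian bridge. The only minor difference is that the paper first writes the absolute continuity for $X$ at a deterministic time $A'$ (citing the standard formulas from \cite{bettinelli10}) and then passes to the stopping time $T_K$ by dyadic approximation and dominated convergence, whereas you invoke the strong Markov property at $T_K$ directly; both routes are standard and lead to the same factor $q_{L-K}(A-T_K)/q_L(A)$.
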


\begin{proof}
This comes from similar statements for Brownian bridges and first-passage bridges; see for instance \cite[Equations~(18) and~(19)]{bettinelli10}. For bounded measurable functions~$f$, $g$ on~$\CC$, for $0<A'<A$ and $0<K<L$,
\begin{multline}\label{SliceHalf}
\Slice_{A,L,\Delta}\Big[f\big(X\rst_{[0,A']}\big)\cdot g\big(\zeta\rst_{[0,K]}\big)\Big]\\
	=\Half\Bigg[f\big(X\rst_{[0,A']}\big)\frac{q_{L-X_{A'}}(A-A')}{q_L(A)}\ind_{\{\un X_{A'}>-L\}}\cdot g\big(\zeta\rst_{[0,K]}\big)\frac{p_{3(L-K)}(\Delta-\zeta_K)}{p_{3L}(\Delta)}\Bigg].
\end{multline}
Here, the factor~$3$ in the index of the Gaussian density
function comes from the fact that $\zeta/\sqrt{3}$ is a bridge
of standard Brownian motion.  
We replace $A'$ with~$T_K$ by a standard argument, writing
\[
f\big(X^{(0,K)}\big) = \lim_{n\to\infty} \sum_{i\ge 0} \ind_{\{(i-1)2^{-n}<T_K\le i\, 2^{-n}\}}~f\big(X\rst_{[0,i\, 2^{-n}]}\big),
\]
using dominated convergence and applying the above equality~\eqref{SliceHalf} to $A'=i\, 2^{-n}$, noting that $\ind_{\{(i-1)2^{-n}<T_K\le i\, 2^{-n}\}}~f\big(X\rst_{[0,i\, 2^{-n}]}\big)$ is a function of~$X\rst_{[0,i\, 2^{-n}]}$.

The result follows by noting that $W^{(0,K)}$ is built in the same way from $X^{(0,K)}$ and~$\zeta\rst_{[0,K]}$ under $\Slice_{A,L,\Delta}$ as from $X^{(0,K)}$ and~$\zeta\rst_{[0,K]}$ under $\Half$.
\end{proof}

We may now prove the statement about the topology and Hausdorff dimension of a slice.

\begin{proof}[Proof of Lemma~\ref{topdimep} for slices]
First, almost surely, the Brownian half-plane is homeomorphic to the half-plane~\cite[Corollary~3.8]{BaMiRa}, is locally of Hausdorff dimension~$4$ and its boundary is locally of Hausdorff dimension~$2$. The latter facts are obtained from similar statements for Brownian disks~\cite{bettinelli11b} thanks to~\cite[Theorem~3.7]{BaMiRa} allowing to couple arbitrary balls of~$\BHP$ centered at the root~$\bp_{X,W}(0)$ with balls of large enough Brownian disks, centered at a point on the boundary.

\MEMS{Hence}{Consequently}, under the probability distribution $\Half$, for any
$L< L'$, by Lemma~\ref{sec:metric-spaces-coded-2}, the metric space
$\Sl^{(L,L')}$ is a.s.\ locally
of Hausdorff dimension~$4$ and its base~$\beta^{(L,L')}$ is
locally of Hausdorff dimension~$2$. Furthermore, it is homeomorphic to
the disk by Proposition~\ref{sec:brownian-half-plane-1} and its boundary is the union of its three marks~$\beta^{(L,L')}$, $\gamma^{(L,L')}$ and $\xi^{(L,L')}$, whose pairwise intersections are identified singletons. 

Now, arguing under $\Slice_{A,L,\Delta}$, we use the fact from
Proposition~\ref{sgslice} that
$\Sl^{(0,L)}=G(\Sl^{(0,L/2)},\Sl^{(L/2,L)};\xi^{(0,L/2)},\gamma^{(L/2,L)})$. 
Lemma~\ref{lemaccontslice} entails that, almost surely, under this
probability distribution, the law of $\Sl^{(0,L/2)}$ is absolutely
continuous with respect to that of the same random variable under $\Half$, and so
is homeomorphic to a disk. Now, we observe that, under
$\Slice_{A,L,\Delta}$, the process $\theta_{T_{L/2}}(X^{(L/2,L)},W^{(L/2,L)})$ has
same distribution as $(X^{(0,L/2)},W^{(0,L/2)})$, which we leave as an
exercise to the reader. Therefore, under this law, $\Sl^{(L/2,L)}$ has
same distribution as $\Sl^{(0,L/2)}$ and both are homeomorphic to a
disk. 
We conclude that the same is true for
$\Sl^{(0,L)}$ since it is obtained by gluing two topological disks
along two segments of
their boundaries. The identification of the marks given in
Proposition~\ref{sgslice} easily yields the desired property on the
marks of~$\Sl^{(0,L)}$. The facts on the local Hausdorff dimension are
obtained similarly.  
\end{proof}

\subsection{The uniform infinite half-planar quadrangulation}\label{secUIHPQ}

\paragraph{The UIHPQ.}
We now define a slight variant of the classical UIHPQ \cite{CuMiUIHPQ,CaCuUIHPQ,BaMiRa,BaRiUIHPQs}, the
half-planar version of the uniform infinite random planar
quadrangulation, in the following way. Let $F_\infty=(\bT^k,k\in \Z)$ be a two-sided sequence
of independent Bienaym\'e--Galton--Watson trees with a geometric offspring
distribution of parameter~$1/2$. Conditionally on~$F_\infty$, we let
$\lambda^0_\infty$ be a uniformly chosen well labeling function, meaning that every tree~$\bT^k$ is assigned a well labeling function giving label~$0$ to its root vertex, independently, uniformly at
random. Lastly, and independently of~$F_\infty$
and~$\lambda^0_\infty$, we let $(b_k,k\in \Z)$ be a doubly-infinite
walk with
shifted geometric steps, meaning that $b_0=0$ a.s., and that
$b_k-b_{k-1}$, $k\in \Z$, are independent and identically distributed
random variables with 
$\P(b_1=r)=2^{-r-2}$ for every $r\in \{-1,0,1,2,\ldots\}$. For
a vertex $v\in \bT^k$ we let $\lambda_\infty(v)=b_k+\lambda^0_\infty(v)$,
and call $(F_\infty,\lambda_\infty)$ the \emph{infinite random well-labeled forest}. We then embed~$F_\infty$ in the plane in such a way that all
trees are contained in the upper half-plane, and the root~$\rho^k$ of~$\bT^k$ is located at the point $(k,0)\in \R^2$. We also link
consecutive roots~$\rho^k$, $\rho^{k+1}$ by a line segment. 
We then let
$(c_i,i\in \Z)$ be the sequence of corners of the upper half-plane part of the resulting map, in
contour order from left to right, with origin the
first corner~$c_0$ incident to~$\rho^0$. The \emph{uniform infinite half-planar quadrangulation} (\emph{UIHPQ} for short) is then the infinite map~$Q_\infty$ obtained by applying the CVS construction to $(F_\infty,\lambda_\infty)$, that is, by linking every corner to its successor as defined in Section~\ref{sec:basic-construction}, and removing all edges of the forest afterward. The root of~$Q_\infty$ is defined as the corner preceding the arc from~$c_0$ to its successor. Note that, in this case, there is no need to add an extra vertex with a corner~$c_\infty$.

\begin{rem}
The difference between this definition of the UIHPQ and the one appearing in the mentioned references is a slight rooting bias. Indeed, the simplest way to obtain the usual definition is to consider a two-sided simple random walk $(z_i,i\in \Z)$ and construct the sequence $(b_k,k\in \Z)$ from it as follows. Let $S^\downarrow=\{i\in\Z\,:\, z_{i+1}-z_i=-1\}$ be the set of descending steps of $(z_i,i\in \Z)$ and $i_0=\sup (S^\downarrow\cap\Zm)$ the index of the descending step preceding~$0$. Then we define the sequence $(b_k,k\in \Z)$ by reindexing $(z_i-z_{i_0},i\in S^\downarrow)$ with~$\Z$ in such a way that~$i_0$ corresponds to the index~$0$. The UIHPQ is then constructed as above with this bridge but rooted at the corner preceding the arc linking $s^{-i_0}(c_0)$ to its successor $s^{-i_0+1}(c_0)$ instead of the convention we presented. Apart from this slight root shift, the resulting law of $(b_k,k\in \Z)$ is not exactly that of a doubly-infinite bridge with shifted geometric steps. The first step gets a size-biased distribution $\P(b_1=r)=(r+2)2^{-r-3}$, $r\ge -1$, whereas all other steps get the desired shifted geometric distribution. See the discussion in~\cite[Section~4.5.2]{BaMiRa} for more information.

The construction we use here has the advantage of making the law of the slices invariant by translation. 
\end{rem}

\paragraph{Convergence toward the Brownian half-plane.}
Denoting by~$v_i$ the vertex of~$F_\infty$ incident to~$c_i$ and by~$\Upsilon(i)\in\Z$ the index of the tree to which~$v_i$ belongs, we define the \emph{contour} and \emph{label processes} on~$\R$ by
\[C(i) = d_{\bT^{\Upsilon(i)}}\big(v_i,\rho^{\Upsilon(i)}\big)-\Upsilon(i)\qquad\text{ and }\quad\Lambda(i)=\lambda_\infty(v_i),\qquad i \in\Z,\]
and by linear interpolation between integer values; see Figure~\ref{contour}. As is well known, the part of the contour process
corresponding to~$\bT^k$ (counting the edge linking~$\rho^k$ to~$\rho^{k+1}$) has the same distribution as a simple random walk started at~$-k$ and killed when
first hitting~$-k-1$. Finally, for $k\ge 1$, we denote by~$\tau_k$ the hitting time of~$-k$ by~$C$; its value is thus also equal to~$k$ plus twice the number of edges in the first~$k$ trees~$\bT^0$, \dots, $\bT^{k-1}$.

\begin{figure}[htb!]
	\centering\includegraphics[width=.95\linewidth]{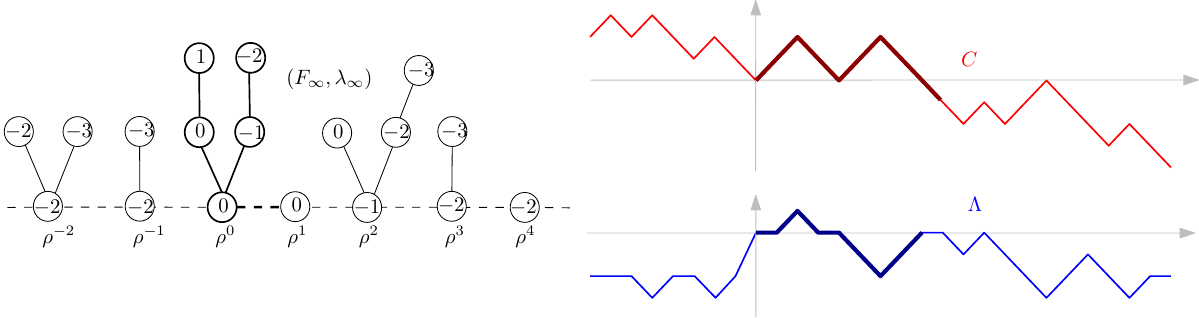}
	\caption{Contour and label processes associated with $(F_\infty,\lambda_\infty)$. The edges of the floor are represented with dashed lines. The tree~$\bT^0$ and the corresponding part of the encoding processes are highlighted (the corresponding floor edge and the final descending step of the contour function are also highlighted). For instance, $\tau_3=17$ on this example. The contour process can be thought of as recording the height of a particle moving at speed one around the forest. In this point of view, the root~$\rho^k$ should be at height~$-k$ for each $k\in\Z$; this can be achieved for instance by vertically translating each tree in such a way that~$\rho^k$ is mapped to location $(k,-k)$ instead of $(k,0)$.}
	\label{contour}
\end{figure}

As the vertices of the encoding objects are preserved through the
CVS bijection, the vertex~$v_i$ can also be seen as a vertex of~$Q_\infty$. Let us define
\begin{align*}
 D_\infty(i,j)&=d_{Q_\infty}(v_i,v_j),\qquad i,j\in \Z.
\end{align*}
We extend~${D}_\infty$ to a continuous function on~$\R^2$ by ``bilinear interpolation,'' writing
$\{s\}=s-\lfloor s\rfloor$ for the fractional part of $s$ and then setting
\begin{multline}\label{bilininterpo}
    D_\infty(s,t)=(1-\{s\}) (1-\{t\}) D_\infty(\lfloor s\rfloor,\lfloor
    t\rfloor)+\{s\}(1-\{t\}) D_\infty(\lfloor s\rfloor+1,\lfloor t\rfloor) \\
     + (1-\{s\})\{t\}D_\infty(\lfloor s\rfloor,\lfloor
    t\rfloor+1)+\{s\}\{t\} D_\infty(\lfloor s\rfloor+1,\lfloor t\rfloor+1).
\end{multline}

We then define the renormalized versions of $C$, $\Lambda$, and~$D_\infty$\,: for every $s$, $t\in\R$, we set
\begin{equation}\label{rescale}
C_{(n)}(s)=\frac{C(2ns)}{\sqrt{2n}},\qquad
\Lambda_{(n)}(s)=\frac {\Lambda(2ns)} {(8n/9)^{1/4}},\qquad
D_{(n)}(s,t)=\frac{D_\infty(2ns, 2nt)}{(8n/9)^{1/4}}.
\end{equation}

The next result can be seen as a reformulation of~\cite[Theorem~1.11]{GwMi17} or~\cite[Theorem~3.6]{BaMiRa}, proving the
convergence of the UIHPQ to the Brownian half-plane defined in
Section~\ref{secBHP}.

\begin{prp}\label{cvencbhp}
On $\CC\times\CC\times\CC^{(2)}$, it holds that
\begin{equation}
  \label{eqencbhp}
  \big(C_{(n)},\Lambda_{(n)},D_{(n)}\big)\tod\big(X,W,D_{X,W}\big),
\end{equation}
where the limiting triple is understood under~$\Half$.
\end{prp}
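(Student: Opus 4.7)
This proposition is essentially a reformulation, at the level of encoding triples, of the known UIHPQ-to-BHP convergence of~\cite{GwMi17,BaMiRa}. The plan is to reduce to that result, handling the slight rooting difference noted in the paper and upgrading the GHP-type convergence to convergence of encoding functions in $\CC\times\CC\times\CC^{(2)}$.

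First, I would establish the joint convergence of the coding processes $(C_{(n)},\Lambda_{(n)}) \to (X,W)$ in $\CC\times\CC$. Since $F_\infty$ is composed of i.i.d.\ critical Galton--Watson trees with geometric offspring distribution, independent of the shifted-geometric bridge $(b_k)$, Donsker's invariance principle yields $C_{(n)}\to X$: on $[0,\infty)$ one obtains a standard Brownian motion from the forward contour of the trees with nonnegative index, while on $(-\infty,0]$ a standard excursion-theoretic argument, identical to the one used in the construction of the standard UIHPQ in~\cite{BaMiRa}, gives the independent time-reversed three-dimensional Bessel~$\Pi$. For the labels, conditional on $C_{(n)}$, the in-tree label increments produce in the limit the Brownian snake driven by $X-\un X$, while Donsker applied to $(b_k)$ (whose step distribution has variance~$2$, compatible with the $\sqrt{3}$-factor for~$\zeta$ after our choice of renormalizations) yields the floor-label component. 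This gives $\Lambda_{(n)}\to W$ jointly with $C_{(n)}\to X$.

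Second, I would transfer the convergence of the rescaled distance function. Tightness of $(D_{(n)})$ in $\CC^{(2)}$ follows from the Schaeffer-type upper bound $D_\infty(i,j)\le\Lambda(i)+\Lambda(j)-2\un\Lambda(i\wedge j,i\vee j)+2$ combined with the equicontinuity of $(\Lambda_{(n)})$ from the first step, plus the trivial lower bound $|\Lambda(i)-\Lambda(j)|\le D_\infty(i,j)$. Any subsequential limit~$\tilde D$ is therefore sandwiched between $|W_s-W_t|$ and $d_W(s,t)$, and it vanishes on $\{d_X=0\}$, since $d_X$-equivalent indices are identified through chains of consecutive successors in the CVS construction up to a uniformly bounded additive discrepancy. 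To identify $\tilde D$ with $D_{X,W}$, I would invoke the GHP-level convergence from~\cite[Theorem~3.6]{BaMiRa} (or~\cite[Theorem~1.11]{GwMi17}), noting that the variant UIHPQ used here differs from the standard one only by a bounded random root shift of $|i_0|$ corners, with $i_0$ almost surely finite, which is negligible at scale~$(8n/9)^{1/4}$.

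The main obstacle is this last identification step: a priori, the bounds above are compatible with many pseudometrics between $|W_s-W_t|$ and $d_W$. I would complete it by working in a Skorokhod coupling where $(C_{(n)},\Lambda_{(n)})\to(X,W)$ almost surely and by reading off, from the correspondences between UIHPQ vertices and BHP points implicit in~\cite{BaMiRa}, an explicit approximate identification via the rescaled coding sequences. The uniform convergence of $D_{(n)}$ to $D_{X,W}$ on compact subsets of $\R^2$ then follows from the uniform control of the distortions of these correspondences, and the bounded rerooting produces only a vanishing perturbation on the correspondence of the base points used to align the two constructions.
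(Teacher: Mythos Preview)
Your first two steps are sound: the convergence of $(C_{(n)},\Lambda_{(n)})$ to $(X,W)$ under $\Half$ follows from standard invariance principles, and tightness of $(D_{(n)})$ in $\CC^{(2)}$ from the Schaeffer bound is routine. The gap is in the identification of the subsequential limit~$\tilde D$. The GHP-level results of \cite{GwMi17,BaMiRa} establish convergence of the UIHPQ to the BHP as abstract pointed measured metric spaces, not convergence of the \emph{parametrized} pseudometric $D_{(n)}(s,t)$ indexed by coding times. Passing from the former to the latter requires showing that the correspondences witnessing the GHP convergence can be taken compatible with the contour parametrization, and this is not automatic: the abstract correspondences need not send $v_{\lfloor 2ns\rfloor}$ anywhere near $\bp_{X,W}(s)$. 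Your phrase ``reading off correspondences implicit in~\cite{BaMiRa}'' is precisely where the real work hides; absent an explicit argument, all you know is $\tilde D\le D_{X,W}$, and closing the gap $\tilde D\ge D_{X,W}$ is tantamount to a uniqueness statement of the type that, in the spherical case, required the full Le~Gall--Miermont machinery.

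The paper circumvents this by a coupling-with-large-disks argument. For fixed $K>0$ and $\eps>0$, one chooses $L$ large and couples the BHP encoding $(X,W,D_{X,W})$ with the encoding $(X',W',D')$ of a Brownian disk of area~$L$ and perimeter~$\sqrt L$, so that they agree on $[-K,K]$ with probability at least $1-\eps$ (via \cite[Proposition~6.6]{BaMiRa}); simultaneously one couples the UIHPQ with a large uniform disk quadrangulation so that their encoding triples agree on $[-2nK,2nK]$ with probability at least $1-\eps$ (via the proof of \cite[Theorem~3.6]{BaMiRa}). The crucial point is that for disks the encoding-level convergence $(C'_{(n)},\Lambda'_{(n)},D'_{(n)})\to(X',W',D')$ is already established in \cite{BeMi15I}, so the parametrized statement is inherited rather than reproved. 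Your approach could in principle be completed, but doing so would require either reconstructing this disk coupling or supplying a genuinely new argument for the lower bound on~$\tilde D$.
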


This statement does not appear in this exact form in the aforementioned
references, which do not explicitly focus on the processes
$C_{(n)}$, $\Lambda_{(n)}$, $X$, $W$. In \cite[Remark~6.16]{BaMiRa}, it was
however mentioned how to extend the results therein in order to take into
account these processes, so we will follow the line of reasoning
sketched in that work. 

\begin{proof}
The proof proceeds via  established convergence results
for random quadrangulations with one external face to Brownian disks. Fix some number $K>0$. We will
sample a quadrangulation with one external face, whose areas and perimeters
are so large that, in a neighborhood of~$0$ of amplitude~$K$, this
rescaled large quadrangulation and its limit, a Brownian disk of large area and
perimeter, are indistinguishable from the rescaled
UIHPQ and the Brownian half-plane, in a sense to be made precise. In the following, we will use for all the objects related to the quadrangulation with one external face or the limiting Brownian disk a similar notation as for those related to the UIHPQ or the Brownian half-plane, only with a superscript prime symbol~$\prime$.

Fix $L>0$, which should be thought of as being large. For $n\geq 1$, we sample the aforementioned quadrangulation~$Q'_n$ with one external face as follows. First, consider a uniform random element $(M'_n,\lambda'_n)$
of $\RbM^{[0]}_{a_n,(l_n)}$, where $a_n=\lfloor n L\rfloor$ and  
$l_n=\lfloor L \sqrt {2n}\rfloor$.
We can view this as a labeled forest $(F'_n,\lambda'_n)$
with $l_n$ trees arranged in a circle, and rooted at
$\rho^0$, \ldots, $\rho^{l_n-1}$ where~$\rho^0$ is the root of the tree
containing the root corner of $f_*$. We let $C'_n$, $\Lambda'_n$ be the
contour and label process of this forest, defined as above, starting from the tree
rooted at~$\rho^0$. We let $Q'_n=\CVS(M'_n,\lambda'_n;f_*)$ be
the rooted quadrangulation encoded by $(M'_n,\lambda'_n)$, and we let
$D'_n(i,j)=d_{Q_n'}(v'_i,v'_j)$ for $0\leq i,j\leq 2a_n+l_n$, where $v'_i$
is the $i$-th visited vertex of~$F_n'$ in contour order, viewed as a
vertex of $Q'_n$. As usual, we extend $D'_n$ into a continuous
function on $[0,2a_n+l_n]^2$. Finally, we extend the definition of
these processes to the interval $[-2a_n-l_n,2a_n+l_n]$ by the simple
translation formulas
\MEMS{
\begin{multline}\label{eq:2}
C'_n(t)=C'_n(t+2a_n+l_n)+l_n,\quad 
\Lambda'_n(t)=\Lambda'_n(t+2a_n+l_n),\\ t\in [-2a_n-l_n,0],
\end{multline}
and
\begin{multline}\label{eq:3}
D'_n(s,t)=D'_n(s+(2a_n+l_n)\ind_{\{s<0\}},
t+(2a_n+l_n)\ind_{\{t<0\}}),\\ s,t\in [-2a_n-l_n,2a_n+l_n].
\end{multline}
}{
\begin{equation}\label{eq:2}
C'_n(t)=C'_n(t+2a_n+l_n)+l_n,\quad
\Lambda'_n(t)=\Lambda'_n(t+2a_n+l_n),\quad t\in [-2a_n-l_n,0],
\end{equation}
and
\begin{equation}\label{eq:3}
D'_n(s,t)=D'_n(s+(2a_n+l_n)\ind_{\{s<0\}},
t+(2a_n+l_n)\ind_{\{t<0\}}),\quad s,t\in [-2a_n-l_n,2a_n+l_n].
\end{equation}
}
The idea behind this extension is that we are going to consider these
processes in neighborhoods of $0$, so that
we are really interested in the behavior of these processes when the
argument is close from $0$ or from $2a_n+l_n$. 

Define their rescaled versions: for $s$, $t\in [-2a_n-l_n,2a_n+l_n]$,
\begin{equation}
  \label{rescaleprime}
  C'_{(n)}(s)=\frac{C'_n(2ns)}{\sqrt{2n}},\qquad
\Lambda'_{(n)}(s)=\frac {\Lambda'_n(2ns)} {(8n/9)^{1/4}},\qquad
D'_{(n)}(s,t)=\frac{D'_n(2ns, 2nt)}{(8n/9)^{1/4}}.
\end{equation}
Then by \cite[Equation~(26) and Theorem~20]{BeMi17}, one has
the joint convergence 
\begin{equation}
  \label{eq:5}
  \big(C'_{(n)},\Lambda'_{(n)},D'_{(n)}\big)\tod(X',W',D')
\end{equation}
in distribution in $\CC([0,L])\times \CC([0,L])\times
\CC([0,L]^2)$, where $(X',W',D')$ is an explicit limiting
process, which is the encoding process of the Brownian disk of area
$L$ and width~$\sqrt{L}$.
In particular, the process $D'$ is
a measurable function of the pair $(X',W')$. Due to the formulas in~\eqref{eq:2} and~\eqref{eq:3}, this\MEMS{}{ easily} implies the convergence of these processes
on $\CC([-L,L])\times \CC([-L,L])\times 
\CC([-L,L]^2)$, where $(X',W',D')$ are extended to functions on
$[-L,L]$ or $[-L,L]^2$ in a similar way as above. Note that we choose to omit the dependence of
$(X',W',D')$ on $L$ for lighter notation, but we will need later to
choose~$L$ appropriately. 

Now recall that $K>0$ is a fixed number. 
The first crucial observation is that we may choose
$L$ large enough, so that with high probability, the laws of the restrictions of
$(X',W',D')$ and $(X,W,D_{X,W})$ to the interval $[-K,K]$ are very close. More
precisely, given $\eps\in (0,1)$, 
fix $r>0$ and $A>0$ such that
\[\P\left(\max_{-K\leq t\leq
  K}D_{X,W}(0,t)>r\right)<\eps/3,\qquad \P(T_{-A}<-K<K<T_A)\geq 1-\eps/3.\]
Then \cite[Proposition~6.6]{BaMiRa} and its proof (Lemmas~6.7 and~6.8) show that there exists
$L_0>0$ such that, for $L>L_0$, 
the two processes $(X,W)$ and $(X',W')$ can be coupled in such a
way that on some event $\mathcal{F}$ of probability
$\P(\mathcal{F})\geq 1-\eps/3$, we have
\begin{equation}
  \label{eq:4}
  X_t=X'_t,\quad W_t=W'_t,\quad D_{X,W}(s,t)=D'(s,t),
\end{equation}
for every $s$, $t\in [T_{-A},T_A]$ such that $\max( D_{X,W}(0,t), D_{X,W}(0,s))\leq
r$. Given our choice of~$r$, $A$, we see that with probability at least
$1-\eps$, \eqref{eq:4} holds for every $s,t\in [-K,K]$. 

Our second important observation is that, still with $K$
and $\eps$ fixed, and possibly
up to choosing $L$ even larger than the above, albeit in a way that does not depend on
$n$,
the laws of $(C_{(n)},\Lambda_{(n)},D_{(n)})$ and
$(C'_{(n)},\Lambda'_{(n)},D'_{(n)})$ in restriction to the
interval $[-K,K]$ are also close, in the sense that they can be
coupled in such a way that these restrictions coincide with
probability at least $1-\eps$.
This follows
from the proof of \cite[Theorem 3.6]{BaMiRa}, a minor difference
being that this proposition establishes that the balls of radius $(8n/9)^{1/4}r$
centered at the root in $Q_\infty$ and $Q'_n$ are isometric, rather
than giving a statement on $D_{(n)}$ and $D'_{(n)}$. Therefore, in
order to show that the latter coincide on $[-K,K]$, one 
again has to choose in the first place a radius $r>0$ so that
uniformly over $n$, with
probability at least $1-\eps/3$, the vertices $v'_i$ for integers $i$ lying
in $[-2Kn,2Kn]$ (where we naturally let $v'_i=v'_{i+2a_n+l_n}$ for
$i\leq 0$), all
belong to this ball. The existence of such an~$r$ is guaranteed by the
convergence~\eqref{eq:5} and the continuity of~$D'$.
Finally, we see that both sides of~\eqref{eq:5} can be coupled in such
a way that with probability at least $1-\eps$, they coincide with 
both sides of~\eqref{eqencbhp}. Since $\eps$ was arbitrary, we
conclude that~\eqref{eqencbhp} holds in restriction to $[-K,K]$. Since
$K$ was arbitrary, this concludes the proof. 
\end{proof}

\paragraph{Seeing a slice as part of the UIHPQ.}
We consider a fixed $L>0$ and a sequence $(l_n)\in\N^\N$ such that
\[
\frac{l_n}{\sqrt{2n}}\ton L
\]
and, for each~$n$, we let $(F_n,\lambda_n)$ be the random well-labeled forest obtained by keeping only the labeled trees~$\bT^0$, \ldots, $\bT^{l_n-1}$ of the infinite random well-labeled forest $(F_\infty,\lambda_\infty)$, as well as the root~$\rho^{l_n}$ of the tree~$\bT^{l_n}$. 
In particular, the forest~$F_n$ has~$l_n$ independent  
Bienaym\'e--Galton--Watson trees with Geometric($1/2$) offspring
distribution, and the labels of the root vertices of the trees (including~$\rho^{l_n}$) follow a
random walk of length~$l_n$ whose step distribution is a shifted
Geometric($1/2$) given by $\P(\,\cdot=r)=2^{-r-2}$ for $r\geq -1$. 

Recall that $(c_i,i\in \Z)$ denotes the sequence of corners of the
infinite random well-labeled forest $(F_\infty,\lambda_\infty)$ and
that~$v_i$ is the vertex of~$F_\infty$ incident to~$c_i$. According to
the construction of Section~\ref{sec:comp-sli}, $(F_n,\lambda_n)$ encodes a slice~$Q_n$, which is part of the UIHPQ~$Q_\infty$ constructed from the whole infinite forest
$(F_\infty,\lambda_\infty)$. More precisely, the maximal geodesic (resp.\ shuttle) can be read inside the UIHPQ as the chain of arcs linking~$c_0$ (resp.\ $c_{\tau_{l_n}}$) to its subsequent successors\footnote{Recall Section~\ref{sec:basic-construction}.} and the edges of the slice are given by the arcs from~$c_i$ to~$s(c_i)$, for $0\leq i< \tau_{l_n}$. As a consequence, the vertex~$v_i$ can be seen both as a vertex of~$Q_\infty$ and as a vertex of~$Q_n$, for $0\le i\le \tau_{l_n}$.

Furthermore, we can check that~$Q_n$ is in fact isometrically embedded in~$Q_\infty$ in the sense that, whenever $0\le i,j\le \tau_{l_n}$, it holds that $d_{Q_n}(v_i,v_j)=D_\infty(i,j)$. Indeed, similarly to Proposition~\ref{sgslice}, $Q_\infty$ can be obtained as the gluing of the infinite quadrangulation corresponding to the trees~$\bT^k$, $k<0$, of $(F_\infty,\lambda_\infty)$, with~$Q_n$ and then with the infinite quadrangulation corresponding to the trees~$\bT^k$, $k\ge l_n$, of $(F_\infty,\lambda_\infty)$ along the proper shuttles and maximal geodesics. Alternatively, one may also argue that there are no shortcuts outside~$Q_n$\,: for $0\le i,j\le \tau_{l_n}$, any path linking~$v_i$ to~$v_j$ in~$Q_\infty$ may be shorten to a path that stays within~$Q_n$ since the maximal geodesic and shuttle are geodesics and since the path $c_0\to s(c_0)\to s^2(c_0)\to\dots$ is a geodesic ray that disconnects~$Q_\infty$. 

The contour function, label function and pseudometric corresponding to~$Q_n$ are thus obtained by restricting to $[0,\tau_{l_n}]$ the analog functions corresponding to~$Q_\infty$. After rescaling, their joint limit is a direct consequence of Proposition~\ref{cvencbhp}.

\begin{crl}\label{cvCLD0L}
On $\CC\times\CC\times\CC^{(2)}$, it holds that
\begin{equation}\label{eqcvCLD0L}
  \big(C_{(n)}\rst_{[0,\tau_{l_n}/2n]},\Lambda_{(n)}\rst_{[0,\tau_{l_n}/2n]},D_{(n)}\rst_{[0,\tau_{l_n}/2n]^2}\big)\tod\big(X^{(0,L)},W^{(0,L)},D^{(0,L)}\big),
\end{equation}
where we used the notation of Section~\ref{sec:metric-spaces-coded}, that is,
\begin{enumerate}[label=(\textit{\alph*})]
	\item the pair $(X^{(0,L)},W^{(0,L)})$ is the restriction to the interval $[0,T_{L}]$ of $(X,W)$ distributed under~$\Half$,
	\item $D^{(0,L)}=D_{X^{(0,L)},W^{(0,L)}}$ is the random pseudometric on~$\R$ defined by~\eqref{eqDfg}.
\end{enumerate}
\end{crl}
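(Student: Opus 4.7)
The plan is to deduce this corollary from Proposition~\ref{cvencbhp} via two ingredients: the convergence of the renormalized hitting times $\tau_{l_n}/(2n)$ to $T_L$, jointly with the triple on the left-hand side of~\eqref{eqencbhp}, and the continuity of the restriction map at the limiting point.

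First, I would note that by construction of the contour process (in particular the fact that $C$ decreases by exactly one unit each time a floor edge is crossed, and that $C$ steps are of magnitude~$1$), the rescaled time $\tau_{l_n}/(2n)$ is precisely the first hitting time of the level $-l_n/\sqrt{2n}$ by $C_{(n)}$, up to an $O(1/n)$ error. Since $l_n/\sqrt{2n}\to L$ and $C_{(n)}\to X$ in the topology of $\CC$, which entails uniform convergence on compact sets of $\R_+$, it is classical that the hitting time functional $f \mapsto \inf\{t\geq 0 : f(t)\leq -L\}$ is continuous at any continuous function~$f$ that crosses the level~$-L$ transversally, in the sense that for every $\eta>0$, there exist $s_1, s_2 \in (T_L-\eta, T_L+\eta)$ with $f(s_1) > -L$ and $f(s_2) < -L$. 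Under~$\Half$, the restriction of~$X$ to $[0,\infty)$ is a standard Brownian motion, so this transversality condition a.s.\ holds at level~$-L$. Combining this with Proposition~\ref{cvencbhp} yields
\[
\big(C_{(n)}, \Lambda_{(n)}, D_{(n)}, \tau_{l_n}/(2n)\big) \tod \big(X, W, D_{X,W}, T_L\big)
\]
jointly in $\CC \times \CC \times \CC^{(2)} \times \R_+$.

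Second, I would apply the Skorokhod representation theorem to assume this joint convergence holds almost surely, and then invoke the continuity of the restriction maps
\[
\CC \times \R_+ \to \CC,\ (f,t)\mapsto f\rst_{[0,t]}\, , \qquad \CC^{(2)} \times \R_+ \to \CC^{(2)},\ (F,t)\mapsto F\rst_{[0,t]^2}\, ,
\]
at pairs $(f,t)$ (resp.\ $(F,t)$) for which $f$ is continuous at $t$ (resp.\ $F$ is continuous at points of $\{t\}\times [0,t] \cup [0,t]\times\{t\}$). Unfolding the definition of $\dist_\CC$, this continuity follows from uniform convergence on compacts plus convergence of the endpoints, together with the continuity of the limit at the limiting endpoint. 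Since $X$ and $W$ are a.s.\ continuous at $T_L$, and $D_{X,W}$ is a.s.\ continuous on $\R^2$, these continuity conditions hold almost surely at the limit, and the continuous mapping theorem yields~\eqref{eqcvCLD0L}.

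The main (modest) obstacle is the transversality argument for the hitting time, but the strong Markov property of the standard Brownian part of~$X$ makes this routine: conditionally on $X_{T_L} = -L$, the process immediately dips strictly below $-L$ with probability one, so $T_L$ is a point of strict increase of $-\un{X}$, which is exactly the required transversality. Everything else in the argument is soft and relies on the Skorokhod device combined with the structure of $\dist_\CC$.
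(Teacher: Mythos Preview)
Your argument follows the same route as the paper's: Skorokhod representation, a.s.\ convergence of $\tau_{l_n}/(2n)$ to $T_L$ via the transversality of Brownian motion at level $-L$, and then restriction. There is, however, one genuine gap. Your continuous mapping argument delivers convergence of $D_{(n)}\rst_{[0,\tau_{l_n}/2n]^2}$ to $D_{X,W}\rst_{[0,T_L]^2}$, i.e.\ to the \emph{restriction of the whole-line pseudometric}. But the object $D^{(0,L)}$ appearing in~\eqref{eqcvCLD0L} is by definition $D_{X^{(0,L)},W^{(0,L)}}$, the quotient pseudometric built \emph{from the restricted processes} via~\eqref{eqDfg}. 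A priori these two functions could differ: the formula~\eqref{eq:quotient} for $D_{X,W}$ allows chains that exit $[0,T_L]$, so one only has $D_{X,W}\rst_{[0,T_L]^2}\leq D^{(0,L)}$ for free. The equality is precisely the content of Corollary~\ref{sec:metric-spaces-coded-2} (itself a consequence of the gluing Proposition~\ref{sgslice}), and the paper invokes it explicitly at this point. You should do the same to complete the identification.
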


\begin{proof}
By the Skorokhod representation theorem, we may and will
assume that the convergence~\eqref{eqencbhp} holds almost surely.
Classically, the a.s.\ path properties of~$X$ at time~$T_L$, namely,
the fact that~$X$ immediately visits the interval $(-L-\eps,-L)$ after time~$T_L$, yield that
$\tau_{l_n}/2n$ a.s.\ converges to~$T_L$. Proposition~\ref{cvCLD0L}
and Corollary~\ref{sec:metric-spaces-coded-2} then yield the result.
\end{proof}

\subsection{Scaling limit of conditioned slices}\label{secslcond}

We now derive Theorem~\ref{thmslslice} from the results of the
previous section by standard conditioning arguments. 

\paragraph{Convergence of the encoding processes.}
First, without loss of generality, we may assume that 
the contour and label processes $(C,\Lambda)$ of the infinite random
well-labeled forest defined in Section~\ref{secUIHPQ} are the
canonical processes, considered under the probability distribution~$\P_\infty$ on the canonical space. Next, for~$a$, $l\in \N$ and
$\delta\in \Z$, we denote by $\P_{a,l,\delta}$ the distribution of
$\big(C\rst_{[0,2a+l]},\Lambda\rst_{[0,2a+l]}\big)$ where
$(C,\Lambda)$ is distributed under
$\P_\infty[\;\cdot\mid\tau_l=2a+l,\,\Lambda(\tau_l)=\delta]$. The
corresponding forest encoded by this random process is thus composed of~$l$ Bienaym\'e--Galton--Watson trees with Geometric($1/2$) offspring distribution and uniform admissible labels, conditioned on the fact that the total number of edges in the trees is~$a$ and the label of the root of the last vertex-tree is~$\delta$. Similarly to the slice it encodes, we will say that the forest has \emph{tilt}~$\delta$.

For every measurable nonnegative functional~$G$, it thus holds that
\[\E_{a,l,\delta}[G]=\E_\infty\big[G\big((C(k),\Lambda(k)),0\le k\le \tau_l\big)\bigm\vert \tau_l=2a+l,\,\Lambda(\tau_l)=\delta\big].\]
Let $(\FF_k,k\ge 0)$ be the natural filtration associated with the canonical process $(C,\Lambda)$. 
Note that $((C(k),\Lambda(k)),0\le k\le \tau_l)$ is the pair of contour and label processes of the first~$l$ trees in the forest, and 
that $\FF_{\tau_l}$ is the $\sigma$-algebra generated by these
first~$l$ trees (with their labels and that of the root~$\rho^l$). Recall from Proposition~\ref{countslice} the definitions of~$Q_\ell$, $P_\ell$.

\begin{lmm}\label{lemacslice}
Fix $0< k<l$, as well as $a\in\N$ and $\delta\in \Z$. For every nonnegative functional~$G$ that is $\FF_{\tau_k}$-measurable, we have 
\[\E_{a,l,\delta}[G]=\E_\infty\big[\Phi_{a,l,\delta}(\tau_k,k,\Lambda(\tau_k))\cdot G\big],\]
where
\[\Phi_{a,l,\delta}(t,l',j)=\frac{Q_{l-l'}(2a+l-t)}{Q_l(2a+l)}\frac{P_{l-l'}(\delta-j)}{P_l(\delta)}
.\]
\end{lmm}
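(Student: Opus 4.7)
The plan is to establish this absolute continuity identity in a purely combinatorial/probabilistic manner, mirroring the structure of the continuum statement from Lemma~\ref{lemaccontslice}, by exploiting the strong Markov property of the contour/label process at the stopping time~$\tau_k$ together with the independence, under~$\P_\infty$, between the tree structure and the shifted-geometric walk driving the root labels.

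First, I would check that, under $\P_\infty$, the events $\{\tau_l=2a+l\}$ and $\{\Lambda(\tau_l)=\delta\}$ are independent. Indeed, $\tau_l$ is entirely determined by the contour process, hence by the underlying tree structure $(\bT^k,k\geq 0)$, which is independent of the root-label walk $(b_k,k\geq 0)$. Since $\Lambda(\tau_k)$ coincides with $\lambda_\infty(\rho^k)=b_k$ for every $k\geq 0$, we get
\[\P_\infty\big(\tau_l=2a+l,\,\Lambda(\tau_l)=\delta\big)
	= \P_\infty(\tau_l=2a+l)\cdot\P_\infty(b_l=\delta)
	=Q_l(2a+l)\,P_l(\delta),\]
using the identification of the contour as a simple random walk for the first factor, and the very definition of~$P_l$ for the second.

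Next, I would invoke the strong Markov property at the stopping time~$\tau_k$: conditionally given $\FF_{\tau_k}$, the shifted process $(C(\tau_k+\cdot)+k,\, \Lambda(\tau_k+\cdot)-\Lambda(\tau_k))$ has the same law as $(C,\Lambda)$ under~$\P_\infty$. Applying this together with the same independence as above on the post-$\tau_k$ part of the process yields
\[\P_\infty\big(\tau_l=2a+l,\,\Lambda(\tau_l)=\delta \,\big|\, \FF_{\tau_k}\big)
	=Q_{l-k}\big(2a+l-\tau_k\big)\cdot P_{l-k}\big(\delta-\Lambda(\tau_k)\big),\]
valid on the event $\{\tau_k\leq 2a+l\}$ (outside of which both sides vanish). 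The conclusion then follows by writing, for any $\FF_{\tau_k}$-measurable nonnegative functional~$G$,
\[\E_{a,l,\delta}[G]=\frac{\E_\infty\big[G\cdot \ind_{\tau_l=2a+l,\,\Lambda(\tau_l)=\delta}\big]}{Q_l(2a+l)\,P_l(\delta)}
	=\E_\infty\Big[G\cdot \frac{Q_{l-k}(2a+l-\tau_k)\,P_{l-k}(\delta-\Lambda(\tau_k))}{Q_l(2a+l)\,P_l(\delta)}\Big],\]
where the first equality is the definition of conditional expectation and the second uses the tower property with the formula above.

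There is no serious obstacle here: the argument is a straightforward discrete analog of the absolute continuity formula for Brownian first-passage bridges combined with a Brownian bridge (Lemma~\ref{lemaccontslice}), and the only care needed is to make sure that the two kinds of conditioning (on the hitting time of the contour walk, and on the endpoint of the label walk) factorize correctly. The independence of the tree-structure and root-walk components of the model, which is built into the definition of the infinite random well-labeled forest $(F_\infty,\lambda_\infty)$, is exactly what delivers this factorization.
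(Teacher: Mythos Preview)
Your proof is correct. The paper's own argument is phrased combinatorially rather than probabilistically: it reduces to the case where $G$ is the indicator of a fixed well-labeled forest with $k$ trees, $(t-k)/2$ edges and tilt $j$, and computes $\E_{a,l,\delta}[G]$ as the number of well-labeled forests with $l-k$ trees, $a-(t-k)/2$ edges and tilt $\delta-j$ (the possible completions), divided by the total count with parameters $(a,l,\delta)$, both given by Proposition~\ref{countslice}. Your strong Markov/factorization argument is the probabilistic translation of exactly this: the conditional independence of the post-$\tau_k$ process from $\FF_{\tau_k}$ is what makes the ``count completions'' step work, and the identities $\P_\infty(\tau_l-\tau_k=u)=Q_{l-k}(u)$ and $\P_\infty(b_l-b_k=j)=P_{l-k}(j)$ are just the probabilistic form of the enumeration in Proposition~\ref{countslice}. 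The two routes are equally short; yours has the small advantage of not needing the reduction to indicator functionals.
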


\begin{proof}
It suffices to prove the result when~$G$ is the indicator of the
contour and label processes of a given well-labeled forest with~$l'$
trees, $(t-l')/2$ edges, and tilt~$j$. In this case, 
$\E_{a,l,\delta }[G]$ is equal to
the number of ways in
which one can complete this labeled forest into a well-labeled forest
with~$l$ trees, $a$ edges and tilt~$\delta$, which is the number of
forests with $l-l'$ trees, $a+(l'-t)/2$ edges and tilt
$\delta-j$, divided by the number of well-labeled forests with~$l$
trees, $a$ edges and tilt~$\delta$. We conclude by Proposition~\ref{countslice}. 
\end{proof}

In addition to the already fixed sequence~$(l_n)$, we consider two\MEMS{}{ more} sequences~$(a_n)$, $(\delta_n)$ satisfying~\eqref{anlndn}. We will need the following direct consequence of the local limit
theorem~\cite[Theorem~8.4.1]{BGT}. Recall the definition of $\varphi_{A,L,\Delta}$ given
in~\eqref{eq:21}.

\begin{lmm}\label{sec:conditioned-version-5}
If the integer-valued sequence $(l'_n)$
satisfies $l'_n/\sqrt{2n}\to L'\in (0,L)$, it holds that
\begin{equation}
  \label{eq:32}
  \sup_{0\leq t\leq
  a_n,\,j\in \Z}\left|\Phi_{a_n,l_n,\delta_n}(t,l'_n,j)-
  \varphi_{A,L,\Delta}\Big(\frac tn,L',\Big(\frac 9{8n}\Big)^{\frac14}j\Big)\right|\ton 0.
\end{equation}
\end{lmm}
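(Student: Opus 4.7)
My plan is to treat the lemma as a uniform local central limit theorem, applied separately to the two factors defining~$\Phi$ and then combined via the triangle inequality.

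For the $Q$-factor, I would use the cycle-lemma identity $Q_\ell(u)=(\ell/u)\,\P(S_u=-\ell)$ for simple random walk, together with Gnedenko's LCLT, to obtain the uniform convergence $n\,Q_{\ell_n}(u_n)\to q_\Lambda(s)$ whenever $\ell_n/\sqrt{2n}\to\Lambda>0$ and $u_n/(2n)\to s>0$ with $u_n+\ell_n$ of the correct parity, the uniformity being over~$s$ in compact subsets of~$(0,\infty)$. Applying this with $(\ell_n,u_n)=(l_n-l'_n,2a_n+l_n-t)$ and with $(l_n,2a_n+l_n)$ yields the uniform convergence, in $t\in[0,a_n]$, of the discrete $Q$-ratio to the corresponding $q$-ratio.

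For the $P$-factor, the i.i.d.\ increments~$G_i$ have mean~$0$ and variance~$2$, and Gnedenko's LCLT gives that $\sqrt{2\ell_n}\,P_{\ell_n}(k_n)$ converges to the standard Gaussian density evaluated at $k_n/\sqrt{2\ell_n}$, uniformly in this ratio over compact sets. A short algebraic check, relying on the scalings $l_n\sim L\sqrt{2n}$ and $\delta_n\sim\Delta(8n/9)^{1/4}$ and yielding $\delta_n^2/(4l_n)\to\Delta^2/(6L)$, translates this into $(8n/9)^{1/4}\,P_{l_n-l'_n}(\delta_n-j)\to p_{3(L-L')}(\Delta-\Delta')$ uniformly for $\Delta'=(9/(8n))^{1/4}j$ in a compact subset of~$\R$. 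Combining the two estimates, the normalizing prefactors cancel in each ratio, yielding the uniform convergence of~$\Phi$ to~$\varphi$ on the ``central regime'' $\{t\in[0,a_n],\ |j|\le K(8n/9)^{1/4}\}$ for any fixed $K>0$.

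The main technical obstacle will be extending the convergence to the full supremum over $j\in\Z$. Here I would invoke a sub-Gaussian Cramér-type tail bound for the shifted geometric sum, of the form $P_\ell(j)\le C\ell^{-1/2}\exp(-c\,j^2/\ell)$ uniformly for~$\ell$ large, which, combined with the Gaussian tail of~$p_{3(L-L')}$, shows that both~$\Phi$ and the corresponding value of~$\varphi$ are bounded by~$e^{-cK^2}$, uniformly in~$n$, whenever $|j|\ge K(8n/9)^{1/4}$. Choosing~$K$ sufficiently large makes this tail contribution arbitrarily small, which together with the central-regime estimate completes the proof.
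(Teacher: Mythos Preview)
Your approach is the same as the paper's (which simply cites the local limit theorem of \cite[Theorem~8.4.1]{BGT}), and the central-regime argument is fine. There is, however, one genuine slip in your tail step: the sub-Gaussian bound $P_\ell(j)\le C\ell^{-1/2}\exp(-c\,j^2/\ell)$ is \emph{false} for shifted-geometric sums. The increments $G_i$ have $\E[e^{\theta G}]<\infty$ only for $\theta<\ln 2$, so the upper tail of $G_1+\cdots+G_\ell$ is merely sub-exponential, and no Cram\'er argument can produce a Gaussian exponent $j^2/\ell$ uniformly over all~$j$.

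The good news is that this entire tail step is unnecessary. You understate the local limit theorem: Gnedenko's LCLT (and BGT~8.4.1) gives
\[
\sup_{k\in\Z}\Big|\sqrt{2\ell}\,P_\ell(k)-p_1\big(k/\sqrt{2\ell}\big)\Big|\build\longrightarrow_{\ell\to\infty}^{}0,
\]
the supremum being over \emph{all} integers, not over compact ranges of $k/\sqrt{2\ell}$. Since the denominator $P_{l_n}(\delta_n)$ does not depend on~$j$ and converges to a positive constant times $(8n/9)^{-1/4}$, this already yields the uniform convergence of the $P$-ratio over $j\in\Z$. The same applies to the $Q$-ratio (uniformly in $t\in[0,a_n]$, since the time argument stays in a window of order~$n$), and the product structure of~$\Phi$ combined with the boundedness of each ratio gives joint uniformity. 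So you can drop the central/tail split entirely and invoke the full LCLT directly.
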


We start with the following conditioned version of Corollary~\ref{cvCLD0L}.

\begin{prp}\label{cvencslice}
On $\CC\times\CC\times\CC^{(2)}$, the triple $\big(C_{(n)},\Lambda_{(n)},D_{(n)}\rst_{[0,\tau_{l_n}/2n]^2}\big)$ considered under $\P_{a_n,l_n,\delta_n}$ converges in distribution to $\big(X,W,D_{X,W}\big)$, considered under $\Slice_{A,L,\Delta}$.
\end{prp}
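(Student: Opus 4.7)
The plan is to deduce Proposition~\ref{cvencslice} from its unconditioned counterpart Corollary~\ref{cvCLD0L} by an absolute continuity argument, combining the discrete Radon--Nikodym derivative of Lemma~\ref{lemacslice} with the continuous one of Lemma~\ref{lemaccontslice}. Concretely, we first establish the convergence after restricting the triple to an initial segment of the interval of definition, parametrised by some $K<L$, and then we let $K\to L$ by a tightness argument.

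Fix $K\in (0,L)$ and set $l_n^K=\lfloor K\sqrt{2n}\rfloor$, so that $l_n^K/\sqrt{2n}\to K$. For any bounded continuous functional $G$ of the triple $(C_{(n)}\rst_{[0,\tau_{l_n^K}/2n]},\Lambda_{(n)}\rst_{[0,\tau_{l_n^K}/2n]},D_{(n)}\rst_{[0,\tau_{l_n^K}/2n]^2})$, which is in particular $\FF_{\tau_{l_n^K}}$-measurable, Lemma~\ref{lemacslice} yields
\[
\E_{a_n,l_n,\delta_n}[G]=\E_\infty\big[\Phi_{a_n,l_n,\delta_n}\big(\tau_{l_n^K},l_n^K,\Lambda(\tau_{l_n^K})\big)\cdot G\big].
\]
Applying Corollary~\ref{cvCLD0L} with $K$ in place of $L$, the restricted triple converges in distribution under $\P_\infty$ to $(X^{(0,K)},W^{(0,K)},D^{(0,K)})$ taken under $\Half$. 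By Lemma~\ref{sec:conditioned-version-5}, the density $\Phi_{a_n,l_n,\delta_n}$ converges uniformly to the continuous density $\varphi_{A,L,\Delta}$ defined in~\eqref{eq:21}, which, for $L'=K<L$, is bounded as a function of $(A',\Delta')$: indeed, writing $q_{L-K}(A-A')=\frac{L-K}{\sqrt{2\pi(A-A')^3}}\exp(-(L-K)^2/(2(A-A')))$, one checks that this quantity remains bounded as $A'\in[0,A]$, and similarly for the Gaussian factor in~$\Delta'$. We therefore have a uniform bound on the discrete densities for $n$ large, and we may pass to the limit by dominated convergence, combined with the continuous mapping theorem for the joint convergence of $G$ and the density. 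By Lemma~\ref{lemaccontslice}, the resulting limit is exactly $\E[G]$ computed under $\Slice_{A,L,\Delta}$ restricted to $[0,T_K]$.

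To upgrade this restricted-segment convergence to the full convergence on $[0,\tau_{l_n}/2n]$, which converges to $[0,A]$, one proceeds by tightness. The key point is that, under $\Slice_{A,L,\Delta}$, the limit triple $(X,W,D_{X,W})$ is determined by its restrictions to $[0,T_K]$ as $K\uparrow L$: the remaining segment $[T_K,A]$ vanishes in length in the Hausdorff sense as $K\to L$, and the oscillation of $X$, $W$ and $D_{X,W}$ on this vanishing segment goes to zero by path continuity. A corresponding tightness statement under $\P_{a_n,l_n,\delta_n}$ can be obtained by a symmetric argument: the involution on labelled forests that reverses the order of the trees and negates the labels transforms $\P_{a_n,l_n,\delta_n}$ into $\P_{a_n,l_n,-\delta_n}$, and its continuous counterpart sends $\Slice_{A,L,\Delta}$ to $\Slice_{A,L,-\Delta}$, so that the restricted-segment convergence of the previous paragraph applied to the reversed forest controls the terminal segment $[\tau_{l_n^{L-K}}'/2n,\tau_{l_n}/2n]$ as well. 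Combining the two sides for $K$ sufficiently close to $L$ covers the entire interval, and the overlap guarantees that the various subsequential limits agree with the ones identified on the restricted segments.

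The main obstacle in this plan is the tightness/gluing step at the end, particularly for the distance process $D_{(n)}$, since distances may in principle create long-range dependencies between the initial and terminal segments. This is handled via the a priori bound~\eqref{dd0bound} on distances in terms of the label process and standard modulus-of-continuity estimates inherited from the Brownian half-plane via the bounded Radon--Nikodym derivative on each restricted segment; the triangle inequality then allows to control $D_{(n)}(s,t)$ whenever one of $s$, $t$ lies in the shrinking terminal region.
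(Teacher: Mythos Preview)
Your absolute continuity argument on the restricted segment $[0,\tau_{l_n^K}/2n]$ is exactly the paper's core step, combining Lemma~\ref{lemacslice}, Lemma~\ref{sec:conditioned-version-5}, Corollary~\ref{cvCLD0L}, and Lemma~\ref{lemaccontslice} in the same order. On this part the two proofs coincide.

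The difference lies in how you close the gap between the restricted segment and the full interval. You propose a time-reversal symmetry sending $\P_{a_n,l_n,\delta_n}$ to $\P_{a_n,l_n,-\delta_n}$ to control the terminal segment independently, then patch the two ends together. This works, but the paper takes a shorter path: it first notes (citing \cite[Corollary~16]{bettinelli10}) that the convergence of the pair $(C_{(n)},\Lambda_{(n)})$ under $\P_{a_n,l_n,\delta_n}$ to $(X,W)$ under $\Slice_{A,L,\Delta}$ is already standard. With this in hand, the oscillation $\omega(\Lambda_{(n)};(\tau_{l_n}-\tau_{l_n^\eps})/2n)$ on the terminal segment is controlled directly by the limit process, and the bound~\eqref{dd0bound} combined with the triangle inequality gives
\[
\dist_{\CC^{(2)}}\big(D_{(n)}^\eps,D_{(n)}^0\big)\leq \frac{\tau_{l_n}-\tau_{l_n^\eps}}{2n}+4\,\omega\big(\Lambda_{(n)};(\tau_{l_n}-\tau_{l_n^\eps})/2n\big)+\bO(n^{-1/4}),
\]
whose $\limsup$ is bounded by a quantity that vanishes as $\eps\to0$. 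The paper then invokes the principle of accompanying laws to pass from the restricted limit $D^{(0,L-\eps)}$ to $D^{(0,L)}$. This avoids your reversal argument entirely: once the convergence of $\Lambda_{(n)}$ is known on the whole interval, no separate control of the terminal segment is needed. Your route is sound but does more work than necessary; the paper's route buys simplicity by front-loading the (easier) convergence of the encoding processes.
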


\begin{proof}
The joint convergence of the first two coordinates is standard; see e.g.~\cite[Corollary~16]{bettinelli10}. Let us fix $\eps\in (0,L)$, define $l_n^\eps=l_n-\lfloor
\eps\sqrt{2n}\rfloor$, so that $l_n^\eps/\sqrt{2n}\to L-\eps$, and set
\[D_{(n)}^\eps=D_{(n)}\rst_{[0,\tau_{l_n^\eps}/2n]^2}\qquad\text{ and }\qquad D_{(n)}^0=D_{(n)}\rst_{[0,\tau_{l_n}/2n]^2}.\]
By the usual bound~\eqref{dd0bound}, for every~$i$, $j\in [0,\tau_{l_n}]$, 
\begin{align*}
  |D_{\infty}(i\wedge\tau_{l_n^\eps},j\wedge\tau_{l_n^\eps})-D_{\infty}(i,j)|&\leq D_{\infty}(i,i\wedge
\tau_{l_n^\eps})+D_{\infty}(j,j\wedge \tau_{l_n^\eps})\\
&\leq 4\big(\omega(\Lambda_n;\tau_{l_n}-\tau_{l_n^\eps})+1\big), 
\end{align*}
where $\omega(f;\cdot)$ denotes the modulus of continuity of~$f$. This implies that
\[\dist_{\CC^{(2)}}\big(D_{(n)}^\eps,D_{(n)}^0\big) \leq \frac{\tau_{l_n}-\tau_{l_n^\eps}}{2n} + 4\,\omega\big(\Lambda_{(n)},(\tau_{l_n}-\tau_{l_n^\eps})/2n\big)+\bO\big(n^{-1/4}\big).\]
From the joint convergence of the first two coordinates, we have, for every $\eta>0$, 
\begin{multline*}
  \limsup_{n\to\infty}\P_{a_n,l_n,\delta_n}\Big(\dist_{\CC^{(2)}}\big(D_{(n)}^\eps,D_{(n)}^0\big)\geq \eta\Big)\\
	\leq \Slice_{A,L,\Delta}\big(A-T_{L-\eps}+4\,\omega(W;A-T_{L-\eps})\geq \eta\big), 
\end{multline*}
which tends to~$0$ as $\eps\to 0$ since $T_{L-\eps}\to T_L=A$ a.s.\
under $\Slice_{A,L,\Delta}$. We next show that $D^\eps_{(n)}$ under
$\P_{a_n,l_n,\delta_n}$ converges in distribution to $D^{(0,L-\eps)}$
under $\Slice_{A,L,\Delta}$, and use the principle of accompanying laws \cite[Theorem 9.1.13]{Stroock11} to
conclude that, jointly with the convergence of
$(C_{(n)},\Lambda_{(n)})$ to $(X,W)$, the process $D_{(n)}^0$ converges to the
distributional limit of $D^{(0,L-\eps)}$  as
$\eps\to 0$, which is none other than
$D^{(0,L)}$, due to Corollary~\ref{sec:metric-spaces-coded-2}.

To prove the claimed convergence of $D^{\eps}_{(n)}$ to
$D^{(0,L-\eps)}$, we denote by $C_{(n)}^{\eps}$ and~$\Lambda_{(n)}^{\eps}$ the restrictions of~$C_{(n)}$ and~$\Lambda_{(n)}$
to $[0,\tau_{l_n^\eps}/2n]$ and let~$F$ be a nonnegative bounded
continuous function.
Using Lemma~\ref{lemacslice}, then Corollary~\ref{cvCLD0L} (for the choice of $L-\eps$ instead of~$L$) and Lemma~\ref{sec:conditioned-version-5} gives
\begin{multline*}
  \E_{a_n,l_n,\delta_n}\big[F\big(C_{(n)}^{\eps},\Lambda_{(n)}^{\eps},D^\eps_{(n)}\big)\big]
	=\E_\infty\left[\Phi_{a_n,l_n,\delta_n}\big(\tau_{l_n^\eps},l_n^\eps,\Lambda(\tau_{l_n^\eps})\big)
		F\big(C_{(n)}^{\eps},\Lambda_{(n)}^{\eps},D^\eps_{(n)}\big)\right]\\
	\ton\Half\big[\varphi_{A,L,\Delta}\big(T_{L-\eps},L-\eps,W_{T_{L-\eps}}\big)
		F\big(X^{(0,L-\eps)},W^{(0,L-\eps)},D^{(0,L-\eps)}\big)\big],
\end{multline*}
the latter being equal to
$\Slice_{A,L,\Delta}\big[F\big(X^{(0,L-\eps)},W^{(0,L-\eps)},D^{(0,L-\eps)}\big)\big]$
by Lemma~\ref{lemaccontslice}.
\end{proof}

\paragraph{GHP convergence.}
We infer from Proposition~\ref{cvencslice} the GHP convergence of Theorem~\ref{thmslslice} by a standard method. First, by Skorokhod's
representation theorem, we may assume that we are working on a
probability space on which the convergence of Proposition~\ref{cvencslice}
is almost sure. We let $\Sl_{A,L,\Delta}$ be the continuum
slice coded by the limiting process, and~$\Sln$ be the slice encoded
by the forest whose rescaled contour and label processes make up the pair 
$(C_{(n)},\Lambda_{(n)})$. As mentioned before
Corollary~\ref{cvCLD0L}, $\Sln$ is isometrically embedded
in~$Q_\infty$, so that the process $D_{(n)}\rst_{[0,\tau_{l_n}/2n]^2}$
under $\P_{a_n,l_n,\delta_n}$ projects into the metric
of~$\Omega_n(\Sln)$.

Then, from this almost sure convergence, we easily deduce
that the distortion of the correspondence~$\RR_n$ given by 
\begin{equation}\label{correspSln}
\RR_n=\big\{\big(v_{\lfloor (2a_n+l_n)s \rfloor },\bp_{X,W}(A s)\big)\,:\, s\in[0,1]\big\}
\end{equation}
between $\Omega_n(\Sln)$ minus its shuttle and $\Sl_{A,L,\Delta}$
tends to~$0$ as $n\to\infty$. Forgetting the marks and measures, this\MEMS{}{ already} gives the desired convergence in the $0$-marked Gromov--Hausdorff topology.

\medskip
In order to include the marking and measures,
we use the technique of enlargement of correspondences already used in the
proof of Lemma~\ref{lemananasplit}. 
Namely, we fix $\eps>0$ and let $\RR_n^\eps$ be the set of points of the
form $(v,x)$ in $\Sln\times \Sl_{A,L,\Delta}$
such that there exists $(w,y)\in \RR_n$ satisfying
$d_{\Sln}(v,w)< (8n/9)^{1/4}\eps$ and $D(x,y)< \eps$. As before, the distortion of
$\RR_n^\eps$ is at most $\dis(\RR_n)+4\eps$. Let us start with the marks.

\paragraph{Marks.}
For a function $f\in \CC$ defined over the interval~$I$, we say that $s\in I$ is a \emph{left-minimum} of~$f$ if
$f(t)\geq f(s)$ for every $t\leq s$ in~$I$, and we call it strict if
$f(t)>f(s)$ for $t<s$ in~$I$. 
Note that the points of
the form~$v_i$ and~$\bp_{X,W}(s)$ where $i$ and $s$ are left-minimums of~$\Lambda_n$
and~$W$ respectively belong to the
maximal geodesics of~$\Sln$ and~$\Sl_{A,L,\Delta}$, and that
all points in these sets are in fact of this form, where we can even
require the stronger property that~$i$ and~$s$ are strict
left-minimums. 

By the uniform convergence of $\Lambda_{(n)}$ toward~$W$, for every $\eta>0$, the following holds provided $n\ge n_0$ for some~$n_0$\,: every strict left-minimum of~$\Lambda_{(n)}$
is at distance at most~$\eta/2$ from some (not necessarily strict)
left-minimum of~$W$, and vice-versa, exchanging the roles of~$\Lambda_{(n)}$ and~$W$. Up to increasing~$n_0$, we furthermore assume that $|({2a_n+l_n})/{2n}-A|<\eta/2$ as soon as $n\ge n_0$. Choosing $\eta$ small enough so
that $|D_{(n)}(s,t)-D_{(n)}(s',t')|\leq \eps$ for every $n$ and
$|s-s'|\leq \eta$, $|t-t'|\leq \eta$, we deduce that the extended correspondence
$\RR_n^\eps$ is compatible with the maximal geodesics for $n\ge n_0$.

The argument is similar for the shuttles. This time, we note that
elements of the shuttle of $\Sl_{A,L,\Delta}$ are of the form $\bp_{X,W}(s)$
where~$s$ is a right-minimum of the
function~$W$ (with an obvious definition), while elements of the
shuttle of $\Sln$ are at distance~$1$ away from points of the
form~$v_i$  where~$i$ is a right-minimum
of the function~$\Lambda_n$. 

The mark corresponding to the base is also treated similarly. Recall from
Section~\ref{sec:comp-sli} that vertices
of the base are at distance at most $B_n=\max_{1\leq i\leq
  l_n}|\Lambda_n(\rho^i)-\Lambda_n(\rho^{i-1})|+1$ from some element of
the floor $\{\rho^0,\ldots,\rho^{l_n}\}$ of the forest coding the
slice. The process of labels $(\Lambda_n(\rho^i),0\leq i\leq l_n)$ forms a
random walk with shifted geometric(1/2) increments conditioned to be
equal to~$\delta_n$ at time~$l_n$, so, under our assumptions, it converges,
after rescaling by~$\sqrt{2n}$ in time and $(8n/9)^{1/4}$ in space, to a continuous
process (which is easily checked to be the Brownian bridge
$\zeta=(W_{T_x},0\leq x\leq L)$), so that $B_n=\sO(n^{1/4})$
a.s. Therefore, the base of $\Sln$ is at Hausdorff
distance $\sO(n^{1/4})$ from the floor $\{\rho^i,0\leq i\leq
l_n\}$. In turn, these vertices are exactly those of the form~$v_i$ where~$i$ is a left-minimum of the contour process~$C_n$. Moreover, by definition, the base of $\Sl_{A,L,\Delta}$
consists of the points $\bp_{X,W}(s)$ where~$s$ is a left-minimum of the
process~$X$. 
Therefore, the same argument as for the maximal geodesic -- replacing
the processes~$\Lambda_n$ and~$W$ by~$C_n$ and~$X$ -- shows that, a.s., for every~$n$
large enough, the correspondence~$\RR_n^\eps$ is also compatible with the bases of~$\Sln$ and of~$\Sl_{A,L,\Delta}$.

\paragraph{Measures.}
Finally, let us deal with the convergence of the measures, starting
with the area measure. To this end,
note that, for~$t$ in $[0,2a_n+l_n]$, the contour process~$C_n$ at
time~$t$ has either a left derivative equal to~$+1$ or to~$-1$. Letting
$i^n_t=\lceil t\rceil$ in the former case and $i^n_t=\lfloor
t\rfloor$ in the latter case, the image of $\Leb_{[0,(2a_n+l_n)/2n]}$ by $t\mapsto v_{i^n_{2nt}}$ is the counting measure on the set
of all non-floor vertices of the encoding forest, divided by~$n$. 
Since the number of floor vertices is $\bO(\sqrt{n})$, the
counting measure on all vertices of~$\Sln$ (except on the
shuttle) divided by~$n$ is at vanishing Prokhorov distance from the
counting measure on non-floor vertices of the forest, divided by~$n$. Let~$\omega_n$
be the image of the Lebesgue measure on $[0,A\wedge ((2a_n+l_n)/{2n})]$
by the mapping $t\mapsto (v_{i^n_{2n t}},\bp_{X,W}(t))$. Then~$\omega_n$ is 
carried by the correspondence~$\RR_n^\eps$ for every $n$ large
enough, and its image measures on $\Sln$ and $\Sl_{A,L,\Delta}$ by
the coordinate projections are at vanishing Prokhorov distances from
$\mu_{\Sln}/n$ and $\mu^{(0,L)}$ respectively.

For the base measure, we let $\omega'_n$ be the image of the
Lebesgue measure on $[0,L\wedge l_n/\sqrt{2n}]$ by the mapping
$t\mapsto (v_{\tau_{\lfloor \sqrt{2n}\,t\rfloor}},\bp_{X,W}(T_t))$. Then
$\omega'_n$ is carried by $\RR_n^\eps$, by the above discussion on the
mark corresponding to the base. Moreover, the coordinate
projections of $\omega'_n$ are at vanishing Prokhorov distance,
respectively, from the counting measure on
$\{\rho^0,\ldots,\rho^{l_n}\}$ divided by $\sqrt{2n}$, and
$\nu^{(0,L)}$. We now observe that, in turn, the counting measure on  
$\{\rho^0,\ldots,\rho^{l_n}\}$ divided by $\sqrt{2n}$, is at
vanishing Prokhorov distance from the renormalized counting measure
(with multiplicities)
$\nu_{\beta_n}/\sqrt{8n}$ of the base. 
To justify this, observe from Section~\ref{sec:comp-sli} and the
definition of the interval CVS bijection that the sequence
$\Lambda_n(w_0)$, \dots, $\Lambda_n(w_{2l_n+\delta_n})$ of labels of the vertices~$w_0$, 
\dots, $w_{2l_n+\delta_n}$ of the base, taken in contour order,
forms a simple random walk 
starting with a~$-1$ step, and 
conditioned on hitting~$\delta_n$ at time $2l_n+\delta_n$. Moreover,
if we write the set $\{j\in \{0,\ldots,
2l_n+\delta_n-1\}:\Lambda_n(w_{j+1})-\Lambda_n(w_{j})=-1\}$ of down steps of this walk as
$\{j_0,j_1,\ldots,j_{l_n-1}\}$ with $0=j_0<j_1<j_2<\dots<j_{l_n-1}$, then the
$i$-th root~$\rho^i$ is equal to~$w_{j_i}$ for $0\leq i<l_n$.
Now consider a uniform random variable~$U$ in $[0,1)$. Then
$w_{j_{\lfloor l_n U\rfloor}}$ is a uniformly chosen forest root, while $w_{\lfloor (2l_n+\delta_n)U\rfloor}$
is a vertex of the base chosen with probability proportional to its
multiplicity (and excluding~$\rho^{l_n}$ in
both cases). Moreover, a standard large deviation estimate entails
that $\max_{0\leq k<l_n}|j_k-2k|=\bO(\log n)$ in probability. In turn, this easily implies that
$d_{\Sln}(w_{j_{\lfloor l_n U\rfloor}}, w_{\lfloor
  (2l_n+\delta_n)U\rfloor})=\bO(\log n)$ in probability, showing that the uniform measure on the
$l_n\sim L\sqrt{2n}$ elements of $\{\rho^0,\ldots,\rho^{l_n-1}\}$ is at vanishing Prokhorov distance
from the law of the vertex incident to a corner uniformly chosen among
the $2l_n+\delta_n\sim L\sqrt{8n}$ corners incident to the base.

\paragraph{Conclusion.}
By Lemma~\ref{lemcorresP}, we finally obtain that
\[\limsup_{n\to\infty}\dGHP^{(5,2)}\big(\Omega_n(\Sln),\Sl_{A,L,\Delta}\big)\leq\eps.\]
Since $\eps>0$ was arbitrary, this concludes the proof of Theorem~\ref{thmslslice}.

\section{Convergence of quadrilaterals with geodesic sides}\label{seccvquad}

The general method to prove Theorem~\ref{thmslquad} is the same as for slices. We start by seeing a discrete quadrilateral as part of a discrete map that is known to converge to a
Brownian surface, which in this case is the Brownian plane rather than
the Brownian half-plane. However, the lack
of an analog of Corollary~\ref{sec:metric-spaces-coded-2}, namely that
quadrilaterals are only \emph{locally} isometrically embedded in the Brownian
plane, makes matters considerably
more delicate. For this reason, we adapt the strategy we used
in~\cite[Section~4]{BeMi17} when treating the case of noncomposite
slices. Beware that, in this section, part of the notation we will be
using is slightly conflicting with that of Section~\ref{seccvcs}:
in particular, the 
random times~$T_x$ will be re-defined.

\subsection{Quadrilaterals coded by two functions}\label{seccontquad}

In contrast with slices, which were coded by a pair of functions defined on a common interval~$I$, a quadrilateral will be coded by a pair of functions defined on a common union of two intervals $I_-\cup I_+$, each interval accounting for one ``half'' of the quadrilateral. This leads to similar but slightly more intricate definitions. We start with the most convenient setting, asking that $\sup I_-=\inf I_+=0$.

Recall the notation of Section~\ref{Rtrees}. We adapt Section~\ref{secslicefg} to quadrilaterals\MEMS{}{ instead of slices} as follows. We now
say that a pair $(f,g)\in \CC^2$ of functions with common closed interval of definition~$I$ is a \emph{quadrilateral trajectory} if they satisfy~\eqref{eqslicetraj} and the following:
\begin{itemize}
\item the interval~$I$ contains $0$ in
  its interior and is either bounded or equal to the whole real line
  $\R$, and, letting $I_+=I\cap \Rp$ and $I_-=I\cap \Rm$, 
  \item we have $\inf_{I_-}f=\inf_{I_+}f$, and,
\item if $I=\R$, then 
  $\inf_{t\geq 0}f(t)=\inf_{t\leq 0}f(t)=\inf_{t\geq 0}g(t)=\inf_{t\leq
      0}g(t)=-\infty$. 
\end{itemize}
We may observe that $(f\rst_{I_+},g\rst_{I_+})$ is a slice trajectory, a fact that will not be used here. For a quadrilateral trajectory $(f,g)$, we set
\begin{equation}
  \label{eq:23}
  \dov{d}_g(s,t)
=\begin{cases}
	d_g(s,t) & \text{ for }\  s,t\in I_+ \mbox { or } s,t\in  I_- \\
	\infty & \text{ for }\ st<0
\end{cases},
\end{equation}
and
\begin{equation}
  \label{eq:30}
  {\dov{{D}}}_{f,g}=\dov{d}_g/\{d_f=0\}.
\end{equation}
Note that $\dov{d}_g$ is the disjoint union pseudometric of the two
$\R$-tree pseudometrics~$d_{g\rsts_{I_+}}$ and~$d_{g\rsts_{I_-}}$.   
Let $\big(\dov{M}_{f,g},{\dov{D}}_{f,g}\big)$ be the quotient space
$I/\{\dov{D}_{f,g}=0\}$ equipped with the metric induced by~${\dov{D}}_{f,g}$, still denoted by the same symbol.
We call the metric space
\[\Qd_{f,g}=(\dov{M}_{f,g},\dov{D}_{f,g})\]
the \emph{quadrilateral} coded by $(f,g)$.

\medskip
We extend the above constructions to unions of two closed intervals $I=I_- \cup I_+$ where $I_-\subseteq \Rm$ and $I_+\subseteq \Rp$, as follows. First, a pair of functions $(f,g)$ defined on~$I$ is a \emph{quadrilateral trajectory} if the pair $(f',g')$ defined by
\[f'(t)=\begin{cases}
	f(t+\inf I_+)& \text{ for } t\in I_+-\inf I_+\\
	f(t+\sup I_-)& \text{ for } t\in I_--\sup I_-
\end{cases},\]
and similarly for~$g'$, is a quadrilateral trajectory as defined above. Note that the continuity hypothesis on~$f'$ implies in particular that $f(\sup I_-)=f(\inf I_+)$, and similarly for~$g$. We then define the quadrilateral coded by $(f,g)$ using the exact same definitions as above. Note that the mapping $t\mapsto(t-\inf I_+)\ind_{t\in I_+}+(t-\sup I_-)\ind_{t\in I_-}$ induces an isometry 
from $(\dov{M}_{f,g},\dov{D}_{f,g})$ onto
$(\dov{M}_{f',g'},\dov{D}_{f',g'})$. 

From now on, we work in
this extended framework and consider a fixed quadrilateral trajectory $(f,g)$.

\paragraph{Geodesic sides and area measure. }
For every $t\in I\setminus\{0\}$, we let $I_t=I_-$ if $t<0$ or $I_t=I_+$ if $t> 0$, and set
\begin{align*}
\Gamma_t(r)&=\inf\{s\geq t:g(s)=g(t)-r\}&\text{ for $r\in \Rp$ such that } \inf_{\substack{s\geq t\\s\in I_t}}g(s)\leq g(t)-r\,; \\
\Xi_t(r)&=\sup\{s\leq t:g(s)=g(t)-r\}	&\text{ for $r\in \Rp$ such that } \inf_{\substack{s\leq t\\s\in I_t}}g(s)\leq g(t)-r.
\end{align*}
If $0\in I$, we also define~$\Gamma_0$ and~$\Xi_0$ with the same definition, using $I_0=I_+$ in the definition of~$\Gamma_0$, while using $I_0=I_-$ in the definition of~$\Xi_0$. Observe that, in contrast with the definition for slices, the infimum of~$g$ is now taken on a subset of~$I_t$. In particular, this implies that the ranges of~$\Gamma_t$, $\Xi_t$ are included in~$I_t$. From the same discussion as the one around~\eqref{eq:28}, we see that~$\Gamma_t$, $\Xi_t$ are geodesics for the pseudometrics $\dov{d}_g$ and $\dov{D}_{f,g}$. In the case where
$\sup I_+=\infty$, then, for every $t\in I_+$, the range of the path~$\Gamma_t$ is a geodesic ray, and, in the case where $\inf I_-=-\infty$, then the same goes for~$\Xi_t$ for every $t\in I_-$. This allows to define
geodesic paths in~$\Qd_{f,g}$ by the formulas
\begin{align*}
  \gamma_t(r)& =\dov\bp_{f,g}(\Gamma_t(r)),\qquad 0\leq r\leq
  g(t)-\un{g}(t,\sup I_t),\ r\in\R,\\
   \xi_t(r)&=\dov\bp_{f,g}(\Xi_t(r)),\qquad 0\leq r\leq 
  g(t)-\un{g}(\inf I_t,t),\ r\in\R,
\end{align*}
where $\dov\bp_{f,g}:I\to \dov{M}_{f,g}$ is the canonical projection and, as above, if $0\in I$, $I_0=I_+$ in the definition of~$\gamma_0$ and $I_0=I_-$ in that of~$\xi_0$. Note
that the geodesics $\gamma_t$, $\xi_t$ share a common initial part.

The quadrilateral~$\Qd_{f,g}$ comes with four or two geodesic sides, defined as follows. If~$I$ is bounded, the particular geodesics $\gamma=\gamma_{\inf I_+}$ and $\br{\gamma}=\gamma_{\inf I_-}$
are called the \emph{maximal geodesics} of~$\Qd_{f,g}$, while
$\xi=\xi_{\sup I_+}$ and $\br\xi=\xi_{\sup I_-}$ are called the
\emph{shuttles} of~$\Qd_{f,g}$. In this case, $\gamma$, $\xi$ (resp.\ $\br{\gamma}$, $\br\xi$) have a common
endpoint $x_*=\dov\bp_{f,g}(s_*)$ (resp.\ $\br{x}_*=\dov\bp_{f,g}(\br s_*)$) where $s_*\in
I_+$ is such that $g(s)=\inf_{I_+} g$ (resp.\ $\br s_*\in I_-$ is such
that $g(\br s_*)=\inf_{I_-}g$). The points~$x_*$, $\br x_*$ are called
the \emph{apexes} of~$\Qd_{f,g}$. If~$I$ is unbounded, then~$\Qd_{f,g}$ has one \emph{maximal geodesic} $\gamma=\gamma_{\inf I_+}$ and one \emph{shuttle} $\br\xi=\xi_{\sup I_-}$; we set $\xi_\infty=\gamma_{-\infty}=\varnothing$.

Finally, the \emph{area measure} is defined
as $\mu=(\dov\bp_{f,g})_*\Leb_I$.

\paragraph{Gluing quadrilaterals.}
For $x\in \R$, we let
\begin{align*}
T_x&=\inf\{t\in I_+ :f(t)=-x\}\in\Rp\cup\{+\infty\},\\
\br T_{x}&=\sup\{t\in I_-:f(t)=-x\}\in\Rm\cup\{-\infty\},
\end{align*}
as well as $T_\infty=-\br T_\infty=\infty$. Note that, here again, there is a slight difference with the definition of Section~\ref{seccvcs} since, now, $\Rm$ and~$\Rp$ play different roles. Recall that $\inf_{I_-} f=\inf_{I_+} f=\inf_I f$, and let~$H$, $H'\in
\Rp\cup \{\infty\}$ be such that $0\leq H\leq H'\leq -\inf_I f$. We
may define the restrictions $f^{(H,H')}$, $g^{(H,H')}$ of~$f$ and~$g$ to
the union of intervals $I^{(H,H')}=[\br T_{H'},\br T_H]\cup [T_H,T_{H'}]$, which
is a subset of~$I$. The pair $(f^{(H,H')},g^{(H,H')})$ is another quadrilateral trajectory. The associated quadrilateral is defined as
\[\Qd^{(H,H')}=\big(\dov{M}^{(H,H')},\dov{D}^{(H,H')}\big)=\Qd_{f^{(H,H')},g^{(H,H')}}.\]
We let $\dov\bp^{(H,H')}:I^{(H,H')}\to \dov{M}^{(H,H')}$ be the
canonical projection, $\mu^{(H,H')}$ be the area measure of~$\Qd^{(H,H')}$, and, whenever they exist, 
$\gamma^{(H,H')}$, $\br \gamma^{(H,H')}$ be the maximal geodesics, $\xi^{(H,H')}$, $\br \xi^{(H,H')}$ be the shuttles.

We refer to Figure~\ref{figsgquad} for an illustration of the following proposition in the upcoming context of random quadrilaterals in the Brownian plane.

\begin{prp}\label{gludetquad}
Let $0\leq H< H'<H''\leq -\inf_I f$ be in the extended positive real
line. Then
\begin{equation}
\label{eq:29}
 \Qd^{(H,H'')}=G\big(G\big(\Qd^{(H,H')},\Qd^{(H',H'')};\xi^{(H,H')},\gamma^{(H',H'')}\big);\br\gamma^{(H,H')},\br
\xi^{(H',H'')}\big),
\end{equation}
and it holds that
\begin{align*}
\gamma^{(H,H'')}&=\gamma^{(H,H')}\cup \big(\gamma^{(H',H'')}\setminus
\xi^{(H,H')}\big),\\
\xi^{(H,H'')}&=\xi^{(H',H'')}\cup \big(\xi^{(H,H')}\setminus
		\gamma^{(H',H'')}\big),\\
\br\gamma^{(H,H'')}&=\br\gamma^{(H',H'')}\cup \big(\br\gamma^{(H,H')}\setminus
\br\xi^{(H',H'')}\big),\\
\br\xi^{(H,H'')}&=\br\xi^{(H,H')}\cup \big(\br\xi^{(H',H'')}\setminus
		\br\gamma^{(H,H')}\big).
\end{align*}
\end{prp}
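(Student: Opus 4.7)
The plan is to mimic the proof of Proposition~\ref{sgslice}, adapting it to the quadrilateral setting where the pseudometric $\dov{d}_g$ splits the domain into two independent ``sides'' (positive and negative) and where two simultaneous identifications are needed -- one on each side -- corresponding respectively to the two nested gluings in the right-hand side of~\eqref{eq:29}.

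More precisely, first I would decompose $I^{(H,H'')}$ as the disjoint union of the four closed intervals
\[[T_H,T_{H'}]\,,\quad [T_{H'},T_{H''}]\,,\quad [\br T_{H'},\br T_H]\,,\quad [\br T_{H''},\br T_{H'}]\,,\]
where the shared endpoints~$T_{H'}$ and~$\br T_{H'}$ are duplicated (marked with superscripts $0,1$ as in the proof of Proposition~\ref{sgslice}), and consider on this disjoint union the disjoint-union pseudometric~$d$ whose restriction to each interval is~$d_g$ and which is infinite between distinct intervals. By the argument used in Proposition~\ref{sgslice} (applied once on the positive side and once on the negative side, using that~$\dov{d}_{g^{(H,H'')}}$ is the disjoint union of the $\R$-tree pseudometrics on $[\br T_{H''},\br T_H]$ and $[T_H,T_{H''}]$), one sees that $\dov{d}_{g^{(H,H'')}}=d/R_1$, where~$R_1$ is the coarsest equivalence relation containing
\[\big\{\big(\Xi_{T_{H'}}(r)^0,\Gamma_{T_{H'}}(r)^1\big):\,0\le r\le g(T_{H'})-\un{g}(T_H,T_{H'})\vee\un{g}(T_{H'},T_{H''})\big\}\]
and the analogous set built from $\Xi_{\br T_{H'}}$, $\Gamma_{\br T_{H'}}$ on the negative side.

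Next, since~$T_{H'}$ and~$\br T_{H'}$ are hitting times of~$f$ (not just arbitrary times), the relation $\{d_{f^{(H,H'')}}=0\}$ factorizes over the four intervals: any coincidence $d_{f^{(H,H'')}}(s,t)=0$ with $s\ne t$ forces~$s$ and~$t$ to lie in the same of the four pieces. Calling $R_2$ the corresponding factorized equivalence relation on the disjoint union, the identity~\eqref{dR1R2} then gives
\[\dov{D}^{(H,H'')} = (d/R_1)/R_2 = (d/R_2)/R_1 = \big(\dov{D}^{(H,H')}\sqcup\dov{D}^{(H',H'')}\big)/R_1\,,\]
and since~$R_1$ itself is generated by two ``independent'' pieces (one on the positive side identifying $\xi^{(H,H')}$ with $\gamma^{(H',H'')}$, one on the negative side identifying $\br\gamma^{(H,H')}$ with $\br\xi^{(H',H'')}$), a further application of~\eqref{dR1R2} lets us perform them successively in either order, yielding precisely the right-hand side of~\eqref{eq:29}.

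The main subtlety -- and the step I expect to be most delicate -- is the factorization of $\{d_{f^{(H,H'')}}=0\}$ over the four intervals. This relies on the specific fact that $T_{H'}$ and $\br T_{H'}$ are hitting times of level $-H'$ by~$f$ on the positive and negative sides respectively, so that any $d_f$-identification must stay ``below level~$-H'$'' on the positive side (hence within either $[T_H,T_{H'}]$ or $[T_{H'},T_{H''}]$, but not across), and symmetrically on the negative side. Once this is secured, the verification of the formulas for $\gamma^{(H,H'')}$, $\xi^{(H,H'')}$, $\br\gamma^{(H,H'')}$, $\br\xi^{(H,H'')}$ is a direct inspection: the maximal geodesic and shuttle on each side are read off the successor-type recursion for~$g$ on the corresponding interval, and one simply checks which portion of a geodesic in $\Qd^{(H,H')}$ or $\Qd^{(H',H'')}$ survives or is absorbed by the gluing, exactly as in the slice case. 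The claim on the area measure is even simpler, as $\Leb_{I^{(H,H'')}}$ is the sum of the Lebesgue measures on the four intervals, and pushing forward through the canonical projections commutes with the gluing.
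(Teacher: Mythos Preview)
Your overall strategy is right and matches the paper's, but there is a genuine error in the factorization step. The claim that $\{d_{f^{(H,H'')}}=0\}$ factorizes over the \emph{four} intervals is false: for any $h\in(H,H')$, the points $T_h\in(T_H,T_{H'})$ and $\br T_h\in(\br T_{H'},\br T_H)$ satisfy $d_{f^{(H,H'')}}(T_h,\br T_h)=0$ (both have $f$-value $-h$, and on the path $[\br T_h,\br T_H]\cup[T_H,T_h]$ one checks $f\ge -h$ using that $T_h,\br T_h$ are first hitting times from~$0$). These are interior points of two distinct pieces in your decomposition, so the relation does cross. Consequently your $R_2$ is strictly smaller than the true $\{d_f=0\}$ relation, and $(d/R_2)$ is \emph{not} $\dov{D}^{(H,H')}\sqcup\dov{D}^{(H',H'')}$: it is a disjoint union of four objects, none of which is a quadrilateral pseudometric.

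The fix --- and what the paper does --- is to factorize over the \emph{two} pieces $I^{(H,H')}$ and $I^{(H',H'')}$ rather than four. The key observation is that the pseudometric~$d$ restricted to $I^{(H,H')}$ (i.e.\ $d_g$ on each of its two sub-intervals, $\infty$ between) is \emph{already} $\dov d_{g^{(H,H')}}$ by the very definition~\eqref{eq:23} of~$\dov d_g$, so $(d|_{I^{(H,H')}})/\{d_{f^{(H,H')}}=0\}=\dov D^{(H,H')}$ directly. The two-piece factorization of $\{d_f=0\}$ does hold: any identification between $I^{(H,H')}$ and $I^{(H',H'')}$ forces one of the points to equal $T_{H'}$ or $\br T_{H'}$ (your own case analysis shows this once you stop one step earlier). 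With this correction, the rest of your argument --- applying~\eqref{dR1R2}, then recognizing~$R_1$ as the composite of the two geodesic gluings, and finally reading off the mark formulas --- goes through exactly as you describe and coincides with the paper's proof.
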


Observe that, after the first gluing operation is performed, the marks~$\br \gamma$ and~$\br \xi$ remain geodesic, as observed in Section~\ref{secgluing}. 
Note also that the order of the gluings in~\eqref{eq:29} is not important, due to~\eqref{dR1R2}. 

\begin{proof}
The proof is similar to that of Proposition~\ref{sgslice}, and we
only sketch the argument. Again, we view $I^{(H,H'')}$ as a disjoint
union $I^{(H,H')}\sqcup I^{(H',H'')}$ (denoting elements of these
sets with superscripts~$0$, $1$ respectively) where the extremities
$T_{H'}^0$, $T_{H'}^1$ and $\br T_{H'}^0$, $\br T_{H'}^1$ are
identified. We then observe that the pseudometric $\dov{d}_g$ can be
viewed as a quotient $d/R_1$, where $d$ is the disjoint union metric on 
$I^{(H,H')}\sqcup I^{(H',H'')}$ whose restriction to each interval
composing this set 
equals the restriction of $d_g$ to that interval, and $R_1$ is the
coarsest equivalence relation containing
\begin{multline*}
\big\{\big(\Xi_{T_{H'}}(r)^0,\Gamma_{T_{H'}}(r)^1\big),\ 0\leq r\leq
g(T_{H'})-\un{g}(T_H,T_{H'})\vee \un{g}(T_{H'},T_{H''})\big\}\\
	\text{ and }\big\{\big(\Gamma_{\br T_{H'}}(r)^0, \Xi_{\br T_{H'}}(r)^1\big),\ 0\leq r\leq
	g(\br T_{H'})-\un{g}(\br T_H,\br T_{H'})\vee \un{g}(\br T_{H'},\br
	T_{H''})\big\}.
\end{multline*}
Moreover, the equivalence relation $\{d_f=0\}$ factorizes in the sense
that, if $d_f(s,t)=0$ with $s$, $t\in I^{(H,H'')}$, then it must hold that
$s$, $t$ belong either both to $I^{(H,H')}$ or both to
$I^{(H',H'')}$. Therefore, setting $R_2$ as the equivalence
relation on $I^{(H,H')}\sqcup I^{(H',H'')}$ defined by $s^i\binR_2 t^j$ if
and only if $d_f(s,t)=0$ and $i=j\in \{0,1\}$, we obtain
\[\dov{D}^{(H,H'')}=(d/R_1)/R_2=(d/R_2)/R_1.\]
We\MEMS{}{ now} recognize that $d/R_2$ is the pseudometric of the disjoint
union of $(I^{(H,H')},\dov{D}^{(H,H')})$ and
$(I^{(H',H'')},\dov{D}^{(H',H'')})$, while $R_1$ can be seen as the
coarsest equivalence relation obtained by first gluing $\xi^{(H,H')}$
with $\gamma^{(H',H'')}$, and then $\br \gamma^{(H,H')}$ with
$\br\xi^{(H',H'')}$. 
\end{proof}

Due to the fact that the second gluing operation in Proposition~\ref{gludetquad} involves two geodesics belonging to the same
space, there is no direct analog of Corollary~\ref{sec:metric-spaces-coded-2}. However, we have the following
alternative, which is an immediate consequence of Lemma~\ref{lemdistglue} and a crude estimate of the length of the
path $\br\xi^{(H',H'')}$. 

\begin{prp}\label{boundgluquad}
Under the same assumptions as in Proposition~\ref{gludetquad},
it holds that for every $s$, $t\in I^{(H,H')}$, 
\[\dov{D}^{(H,H'')}(s,t)\leq \dov{D}^{(H,H')}(s,t)\leq
\dov{D}^{(H,H'')}(s,t)+\omega(g;I^{(H',H'')}).\]
\end{prp}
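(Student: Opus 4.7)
The plan is to invoke Proposition~\ref{gludetquad} and Lemma~\ref{lemdistglue}(i) in succession. Write $\X := G(\Qd^{(H,H')},\Qd^{(H',H'')};\xi^{(H,H')},\gamma^{(H',H'')})$ for the intermediate space obtained from the first gluing. Because this gluing involves two \emph{distinct} metric spaces, both $\Qd^{(H,H')}$ and $\Qd^{(H',H'')}$ embed isometrically in $\X$; in particular, $d_\X(s,t) = \dov{D}^{(H,H')}(s,t)$ for every $s,t \in I^{(H,H')}$. The second gluing $\Qd^{(H,H'')} = G(\X;\br\gamma^{(H,H')},\br\xi^{(H',H'')})$ happens within $\X$, so Lemma~\ref{lemdistglue}(i) applies and delivers
\[\dov{D}^{(H,H'')}(s,t) \le d_\X(s,t) = \dov{D}^{(H,H')}(s,t) \le \dov{D}^{(H,H'')}(s,t) + R,\]
where $R$ bounds the relevant distance in $\X$ between the two initial segments of common length $\pzl := \min(\lgth(\br\gamma^{(H,H')}),\lgth(\br\xi^{(H',H'')}))$ of the geodesics being glued. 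The lower inequality gives the left half of the proposition, and the right half reduces to the estimate $R \le \omega(g;I^{(H',H'')})$.

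Next, I would establish this estimate in two steps, the second being the crux of the argument. First, a direct computation from the definition of the shuttle gives
\[\lgth(\br\xi^{(H',H'')}) = g(\br T_{H'}) - \un{g}(\br T_{H''},\br T_{H'}) \le \omega(g;I^{(H',H'')}),\]
whence $\pzl \le \omega(g;I^{(H',H'')})$; this is the ``crude estimate of the length of $\br\xi^{(H',H'')}$'' mentioned in the paragraph preceding the proposition. Second, I would argue that $\br\gamma^{(H,H')}$ and $\br\xi^{(H',H'')}$ have a common origin in $\X$. Indeed, on $I^{(H,H')}$ the minimum of $f$ equals $-H'$ and is attained precisely at both outer endpoints $\br T_{H'}$ and $T_{H'}$, so $d_f^{(H,H')}(\br T_{H'},T_{H'}) = 0$ and these two points project to the same element $\br\gamma^{(H,H')}(0) = \xi^{(H,H')}(0)$ of~$\Qd^{(H,H')}$. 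An analogous observation in $\Qd^{(H',H'')}$ -- now coming from the fact that the inner endpoints $\br T_{H'}$, $T_{H'}$ coincide as a single parameter value once one passes to the translated encoding of a two-interval quadrilateral trajectory -- gives $\br\xi^{(H',H'')}(0) = \gamma^{(H',H'')}(0)$ in~$\Qd^{(H',H'')}$. Chaining these with the first-gluing identification $\xi^{(H,H')}(0) \sim \gamma^{(H',H'')}(0)$ produces $\br\gamma^{(H,H')}(0) = \br\xi^{(H',H'')}(0)$ in $\X$. With a shared origin, each initial segment sits in the closed ball of radius $\pzl$ around it, so $R \le \pzl \le \omega(g;I^{(H',H'')})$.

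The main obstacle I anticipate is precisely the identification of this common origin, which hinges on two slightly subtle facts: the minimum-attainment property of $f$ at the outer endpoints of $I^{(H,H')}$ (so that the outer endpoints are identified in $\Qd^{(H,H')}$ via the $d_f = 0$ relation), and the translation convention used to define sub-quadrilateral trajectories (whereby the inner endpoints of $\Qd^{(H',H'')}$ are literally the same point in the translated parameter space). Once this identification is secured, the remainder is a routine combination of Lemma~\ref{lemdistglue}(i) with the shuttle-length estimate.
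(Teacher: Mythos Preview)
Your proposal is correct and follows essentially the same approach that the paper indicates: invoke Proposition~\ref{gludetquad} to express $\Qd^{(H,H'')}$ as a two-step gluing, then apply Lemma~\ref{lemdistglue}\ref{distgluei} to the second (self-)gluing, bounding the Hausdorff term via the length of $\br\xi^{(H',H'')}$. Your identification of the common origin $\br\gamma^{(H,H')}(0)=\xi^{(H,H')}(0)=\gamma^{(H',H'')}(0)=\br\xi^{(H',H'')}(0)$ in~$\X$ is exactly the point the paper takes for granted (it is recorded elsewhere, e.g.\ in Definition~\ref{sec:cont-quadr} and Proposition~\ref{sec:brownian-plane-its-1}), and your justification via the minimum-attainment of $f$ at the outer endpoints of $I^{(H,H')}$ and the translation convention for $I^{(H',H'')}$ is the right way to see it.
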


Finally, we observe that the metric space $(M_{f,g},D_{f,g})$
obtained by metric gluing of the pseudometric $d_g$ along the
relation $\{d_f=0\}$, rather than using $\dov d_g$ as in the
definition of~$\Qd_{f,g}$, is related to the latter by a final gluing
operation. The proof is analog to that of Proposition~\ref{gludetquad}, noting
that~$d_g$ is the gluing of~$\dov d_g$ along the coarsest equivalence
relation containing $\{(\Gamma_0(r),\Xi_0(r)),r\geq 0\}$.

\begin{lmm}\label{sec:gluing-quadr-}
One has $\big(M_{f,g},D_{f,g}\big)=G\big(\Qd_{f,g};\gamma,\br\xi\big)$.
\end{lmm}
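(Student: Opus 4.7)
The proof will closely mirror that of Proposition~\ref{gludetquad}, exploiting the commutativity~\eqref{dR1R2} of successive pseudometric quotients. The key intermediate claim, already hinted at in the lemma's statement, is that
\[
d_g \;=\; \dov d_g / R'\, ,
\]
where $R'$ is the coarsest equivalence relation on~$I$ containing the pairs $(\Gamma_0(r),\Xi_0(r))$ for $r\geq 0$. Once this is granted, a direct application of~\eqref{dR1R2} yields
\[
D_{f,g} \;=\; d_g/\{d_f=0\} \;=\; (\dov d_g / R')/\{d_f=0\} \;=\; (\dov d_g/\{d_f=0\})/R' \;=\; \dov D_{f,g}/R'\, ,
\]
and this last pseudometric is, by the construction recalled in Section~\ref{secgluing}, precisely that of $G(\Qd_{f,g};\gamma,\br\xi)$: the relation $R'$ descends via the canonical projection $\dov\bp_{f,g}$ to the coarsest relation identifying $\gamma(r)=\dov\bp_{f,g}(\Gamma_0(r))$ with $\br\xi(r)=\dov\bp_{f,g}(\Xi_0(r))$ for $r$ in the common range $[0,\lgth(\gamma)\wedge\lgth(\br\xi)]$, which is all that matters since for larger~$r$ at least one of $\Gamma_0(r)$, $\Xi_0(r)$ is undefined.

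To prove the intermediate claim $d_g=\dov d_g/R'$, I will establish the two inequalities separately. For the bound $d_g\leq\dov d_g/R'$, it suffices to check that $d_g$ is a pseudometric dominated by $\dov d_g$ that vanishes on~$R'$; the domination is immediate since $\dov d_g$ equals $d_g$ on $I_+^2\cup I_-^2$ and is $+\infty$ elsewhere, while for every generating pair the extremal-hitting-time definitions of $\Gamma_0(r)$ and $\Xi_0(r)$ force $g\geq g(0)-r$ on the whole interval $[\Xi_0(r),\Gamma_0(r)]$, giving $d_g(\Gamma_0(r),\Xi_0(r))=2(g(0)-r)-2\inf_{[\Xi_0(r),\Gamma_0(r)]}g=0$.

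For the reverse inequality $\dov d_g/R'\leq d_g$, same-side pairs are trivial, so only the case $s\in I_+$, $t\in I_-$ needs attention. Via the general formula~\eqref{eq:quotient} for quotient pseudometrics, it is enough to exhibit an $r\in[0,\pzl]$ with $\pzl=\lgth(\gamma)\wedge\lgth(\br\xi)$ such that $d_g(s,\Gamma_0(r))+d_g(\Xi_0(r),t)=d_g(s,t)$. Writing $\un g(t,s)=\inf_{[0,s]}g\wedge\inf_{[t,0]}g$ and assuming without loss of generality that $\inf_{I_+}g\geq\inf_{I_-}g$ so that $\pzl=g(0)-\inf_{I_+}g$, the obvious choice $r=g(0)-\un g(t,s)$ works whenever it lies in $[0,\pzl]$, by a direct computation using again that the relevant infima of~$g$ on the sub-intervals are achieved at $\Gamma_0(r)$ and~$\Xi_0(r)$. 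The main (though modest) technical point is the remaining case $\inf_{[t,0]}g<\inf_{I_+}g$, for which I will take $r=\pzl$: here the continuity of~$g$ together with the supremum characterization of $\Xi_0(\pzl)$ forces any minimum of~$g$ on $[t,0]$ to occur at a point $\leq\Xi_0(\pzl)$, since otherwise the intermediate value theorem would produce a hitting time of $\inf_{I_+}g$ in $(\Xi_0(\pzl),0]$, contradicting the supremum; a direct computation then shows that this choice of~$r$ saturates the bound, concluding the proof.
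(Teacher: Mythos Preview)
Your proof is correct and follows precisely the approach sketched in the paper: establish $d_g=\dov d_g/R'$ where $R'$ is generated by the pairs $(\Gamma_0(r),\Xi_0(r))$, then apply~\eqref{dR1R2}. The paper's own proof is a one-line pointer to Proposition~\ref{gludetquad} together with this same intermediate claim; you have carefully supplied the verification of that claim (including the case analysis on whether $g(0)-\un g(t,s)$ exceeds~$\pzl$), which the paper leaves implicit.
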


\subsection{Random continuum quadrilaterals}

Let us now describe the limiting continuum quadrilaterals that appear in Theorem~\ref{thmslquad}, by suitably
randomizing the quadrilateral trajectory $(f,g)$. We let $(X,W)$ be the canonical
process defined on quadrilateral trajectories. We introduce, for any process~$Y$ defined on an interval containing~$0$, the piece of notation $\dun{Y}_t=\un{Y}(0\wedge t,0\vee t)$.

Let us fix~$A$, $\br A$, $H\in (0,\infty)$ and
$\Delta\in \R$. We let $\Quad_{A,\br A,H,\Delta}$ be the probability
distribution under which
\begin{itemize}
\item $(X_t,0\leq t\leq A)$ and $(X_{-t},0\leq t\leq \br A)$ are
independent first-passage bridges of standard Brownian motion from~$0$ to~$-H$, with durations~$A$ and~$\br A$\,;
\item conditionally given~$X$, the process~$W$ has same law as $(Z_t+\zeta_{-\dun X_t},-\br A \le t \le A)$, where~$Z$ is the
random snake driven by $X-\dun{X}$, and~$\zeta$ is a standard Brownian bridge of duration~$H$ and terminal value~$\Delta$, independent of~$X$ and~$Z$. 
\end{itemize}

In this way, the probability distribution $\Quad_{A,\br A,H,\Delta}$ is carried by quadrilateral
trajectories on the interval $[-\br A,A]$. We remark that, in fact, we
can view~$W$ more directly as the random snake driven by~$X$,
conditioned on the event $\{W_A=\Delta\}$, a fact that we leave to the
interested reader. 

\begin{defn}\label{sec:cont-quadr}
The \emph{quadrilateral with half-areas $A$, $\br A$,
width~$H$ and tilt~$\Delta$}, generically denote by $\Qd_{A,\br A,H,\Delta}$, is the $6$-marked $1$-measured metric space
$\Qd_{X,W}$ under the law $\Quad_{A,\br A,H,\Delta}$, endowed with its area measure~$\mu$, as well as the marking
\[\partial \Qd_{A,\br A,H,\Delta}=\big(\gamma,\xi,\br\gamma,\br\xi\big),\]
where~$\gamma$, $\br\gamma$ are geodesic marks as usual, while~$\xi$, $\br\xi$ are seen as (nonoriented) geodesic segments, that is, given without their origins.
\end{defn}

As for slices, the piece of notation $\partial \Qd_{A,\br A,H,\Delta}$ comes from\MEMS{}{ the result of} Lemma~\ref{topdimep}, which we will prove at the end of the upcoming section. The boundary of the topological disk $\Qd_{A,\br A,H,\Delta}$ is the union of~$\gamma$, $\xi$, $\br{\gamma}$, $\br{\xi}$, which intersect only at
the points~$\gamma(0)=\br\xi(0)=\dov\bp_{f,g}(0)$, $x_*$,
$\br\gamma(0)=\xi(0)=\dov\bp_{f,g}(A)=\dov\bp_{f,g}(-\br A)$, and~$\br{x}_*$.

\subsection{The Brownian plane, and its embedded quadrilaterals}\label{secBP}

Similarly to the fact that (free) slices can be found in
the Brownian half-plane, one can obtain quadrilaterals from the Brownian plane, as we now explain. We let $\Plane$ be the probability distribution on~$\CC^2$ under which
\begin{itemize}
\item the process $X$ is a two-sided standard Brownian motion\footnote{This means that $(X_t,t\geq 0)$ and $(X_{-t},t\geq 0)$ are independent standard
Brownian motions.}, and
\item the process~$W$ is the random snake driven by~$X$.
\end{itemize}
The measure $\Plane$ is carried by quadrilateral trajectories defined over~$\R$. 

\begin{defn}\label{defBP}
The \emph{Brownian plane}, generically denoted by~$\BP$, is the metric space $(M_{X,W},D_{X,W})$ defined by~\eqref{eqDfg}, considered under~$\Plane$. Letting $\bp:\R\to \BP$ be the canonical projection,
it is 
endowed with the area
measure $\mu=\bp_*\Leb_\R$. 
\end{defn}

In this definition, beware that the metric is indeed defined by~\eqref{eqDfg} rather than~\eqref{eq:30}, which would produce the
metric space $\Qd_{X,W}=\Qd^{(0,\infty)}=(\dov{M}_{X,W},\dov{D}_{X,W})$. Observe
that, by Lemma~\ref{sec:gluing-quadr-}, 
\begin{equation}
  \label{eq:31}
  \BP = G\big(\Qd^{(0,\infty)};\gamma^{(0,\infty)},\br \xi^{(0,\infty)}\big)\, ;
\end{equation}
see Figure~\ref{figsgquad} below for an illustration. Alternatively, the space
$\Qd^{(0,\infty)}$ can be seen as cutting the Brownian plane along the
geodesic ray $\gamma_0=\xi_0$; we do not go into further details as we will not explicitly need this property. 

Note also that, despite the similarity between this definition and that of 
 the Brownian half-plane, there is no marking now because, as its name suggests, the
Brownian plane is homeomorphic to~$\R^2$ and therefore has an empty boundary
as a topological surface.

One should finally mind that this definition is different from the original one given in~\cite{CuLG12Bplane}, which will be recalled in Appendix~\ref{appBP}; in a nutshell, one goes from a definition to the other by changing~$X$ into the process obtained by taking its Pitman transform both on~$\Rp$ and on~$\Rm$.

\paragraph{Free quadrilaterals.}
Similarly to the discussion of Section~\ref{secBHP}, the Brownian
plane satisfies a Markov property which can be interpreted as a
``flow'' of continuum quadrilaterals. Fixing $0\leq H\leq H'\leq
\infty$, and 
denoting by 
\[\vartheta_H:t\in[\br T_{H'}-\br T_H,T_{H'}-T_H]\mapsto (t+\br
T_H)\ind_{t<0}+(t+T_H)\ind_{t\ge 0},\] we see that the process
$\big(X^{(H,H')}\circ\vartheta_{H}+H,W^{(H,H')}\circ\vartheta_{H} -W_{T_H}\big)$ is
independent of $(X^{(0,H)},W^{(0,H)})$,
$(X^{(H',+\infty)},W^{(H',+\infty)})$, and has same distribution as \MEMS{the pair }{}
$(X^{(H'-H)},W^{(H'-H)})$. This can be proved by excursion
theory of $(X,W)$ separately in positive and negative times; we
omit the details, which are similar to those presented in Section~\ref{secBHP}. 

Under $\Plane$, the process
$(X^{(0,H)})$ is a two-sided Brownian motion killed at its first
hitting times~$T_H$, $\br T_H$ of~$-H$ respectively in positive and negative
times, while the process $\big(W^{(0,H)}_{T_x},0\leq x\leq H\big)$ is a
standard Brownian motion killed at time $H$. This implies that the law
of $(X^{(0,H)},W^{(0,H)})$ under $\Plane$ equals
\[\FQuad_H=\int_{(0,\infty)^2}q_H(
A)q_H( \br A)\,\d A\,\d \br A\int_{\R}p_H(\Delta)\,\d \Delta\,
\Quad_{A,\br A,H,\Delta},\]
where the densities~$p_t$, $q_x$ are defined after~\eqref{eqFSlice}. A
random metric space with same law as~$\Qd^{(0,H)}$ under $\FQuad_H$
will be referred to as a \emph{free (continuum) quadrilateral} of width~$H$. From these considerations and Proposition~\ref{gludetquad}, we
obtain the following result. 

\begin{prp}
\label{sec:brownian-plane-its}
Let $0\leq H<H'<H''\leq \infty$. Then, under $\Plane$, it holds that
\[\Qd^{(H,H'')}=G\big(G\big(\Qd^{(H,H')},\Qd^{(H',H'')};\xi^{(H,H')},\gamma^{(H',H'')}\big);\br\gamma^{(H,H')},\br\xi^{(H',H'')}\big),\]
where the glued spaces $\Qd^{(H,H')}$ and $\Qd^{(H',H'')}$ are
independent. Moreover, $\Qd^{(H,H')}$ is a free continuum
quadrilateral of width $H'-H$. 
\end{prp}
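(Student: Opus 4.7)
My plan is to reduce the proposition to a deterministic gluing identity combined with an excursion-theoretic decomposition of $(X,W)$ analogous to the one given in~\eqref{eq:27} for the Brownian half-plane. The deterministic gluing identity is immediate: under $\Plane$, the process $(X,W)$ is a.s.\ a quadrilateral trajectory on $I=\R$ since a two-sided Brownian motion has past infima on $\R_\pm$ tending to $-\infty$; since $-\inf_\R X=+\infty$ a.s., any triple $0\le H<H'<H''\le\infty$ satisfies the hypotheses of Proposition~\ref{gludetquad}, which applied pathwise gives the stated gluing decomposition together with the marks identifications.

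The heart of the argument is the decomposition of $(X,W)$ via a two-sided analog of~\eqref{eq:27}. Set $\zeta_L=W_{T_L}$ for $L\ge 0$ and $\zeta_L=W_{\br T_{-L}}$ for $L\le 0$ (consistent at $L=0$). By the snake property, $\zeta/\sqrt{3}$ is a two-sided standard Brownian motion, and conditionally given $\zeta$, the two point measures
\[\mathcal{M}^\pm(\d L\,\d X'\,\d W')=\sum_{L\ge 0:\, T^\pm_L\neq T^\pm_{L-}}\delta_{(L,\, X^{\pm,(L)},\, W^{\pm,(L)})}\]
(where $T^+_L=T_L$, $T^-_L=\br T_L$ and $X^{\pm,(L)},W^{\pm,(L)}$ denote the corresponding excursions of $X$ above its past infimum at level $-L$ on $\R_\pm$, together with the matching piece of $W$) are independent Poisson point processes with intensity $2\,\d L\otimes \N_{\zeta_L}$, where $\N_x$ is the Itô--snake $\sigma$-finite measure. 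By Itô's excursion theory applied independently on $\R_+$ and $\R_-$, the pair $(X,W)$ is a measurable function of the triple $(\zeta,\mathcal{M}^+,\mathcal{M}^-)$.

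With this decomposition in hand, the pair $\big(X^{(H,H')},W^{(H,H')}\big)$ is a measurable function of $(\zeta_{H+x}-\zeta_H,\,0\le x\le H'-H)$ and of the restrictions of $\mathcal{M}^\pm$ to the level interval $(H,H']$, and similarly for $\big(X^{(H',H'')},W^{(H',H'')}\big)$ over the disjoint level interval $(H',H'']$. Since increments of Brownian motion over disjoint intervals are independent, and since the restrictions of a Poisson point process to disjoint Borel sets are independent, the pair $(X^{(H,H')},W^{(H,H')})$ is independent of $(X^{(H',H'')},W^{(H',H'')})$, which yields the independence of $\Qd^{(H,H')}$ and $\Qd^{(H',H'')}$. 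For the distributional identification, the stationarity of $\zeta$-increments and the invariance of the Poisson processes $\mathcal{M}^\pm$ under the level shift $L\mapsto L-H$ show that $\big(X^{(H,H')}\circ\vartheta_H+H,\, W^{(H,H')}\circ\vartheta_H-W_{T_H}\big)$ has the same law as $\big(X^{(0,H'-H)},W^{(0,H'-H)}\big)$. Since the quadrilateral $\Qd_{f,g}$ is manifestly invariant under joint translation of $(f,g)$ by a constant (because the pseudometrics $d_f,\dov d_g$ in~\eqref{df} and~\eqref{eq:23} only depend on differences of values), one deduces that $\Qd^{(H,H')}$ has the same distribution as $\Qd^{(0,H'-H)}$, which by the mixture formula recalled before the proposition is $\FQuad_{H'-H}$.

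The main obstacle is formalizing the excursion-theoretic decomposition for the \emph{two-sided} process $(X,W)$, with positive-time and negative-time excursions coupled through the single boundary-label process $\zeta$; this is the analog for the Brownian plane of the statements surrounding~\eqref{eq:27} for the Brownian half-plane, and the required adaptations are routine given the independence of $X\rst_{\R_+}$ and $X\rst_{\R_-}$ under $\Plane$, so I would only sketch it and refer the reader to Section~\ref{secBHP} for the analogous construction.
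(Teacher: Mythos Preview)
Your approach is exactly the one the paper takes: it does not write out a proof of this proposition, stating only that it follows ``from these considerations and Proposition~\ref{gludetquad}'', where ``these considerations'' are precisely the Markov and excursion-theoretic discussion that you have faithfully expanded.

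Two small inaccuracies should be fixed, though neither affects the argument. First, under $\Plane$ the process $(\zeta_L=W_{T_L},L\ge 0)$ is a \emph{standard} Brownian motion, not $\sqrt 3$ times one: the factor $\sqrt 3$ belongs to the half-plane setting (compare $p_{3L}$ in $\FSlice_L$ with $p_H$ in $\FQuad_H$, and note that the paper states explicitly that $(W^{(0,H)}_{T_x},0\le x\le H)$ is standard Brownian motion). Second, your ``two-sided'' $\zeta$ is not a two-sided Brownian motion: since $d_X(T_L,\br T_L)=0$ one has $W_{T_L}=W_{\br T_L}$ a.s., so your definition gives $\zeta_{-L}=\zeta_L$, an even function. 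This is harmless because the argument only uses $\zeta$ on $[0,\infty)$; the correct formulation is that, conditionally on the single one-sided process $(\zeta_L,L\ge 0)$, the measures $\mathcal{M}^+$ and $\mathcal{M}^-$ are independent Poisson point processes with common intensity $2\,\d L\otimes\N_{\zeta_L}$, which is all your independence and shift-invariance arguments need.
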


\begin{figure}[ht]
	\centering\includegraphics[width=.95\linewidth]{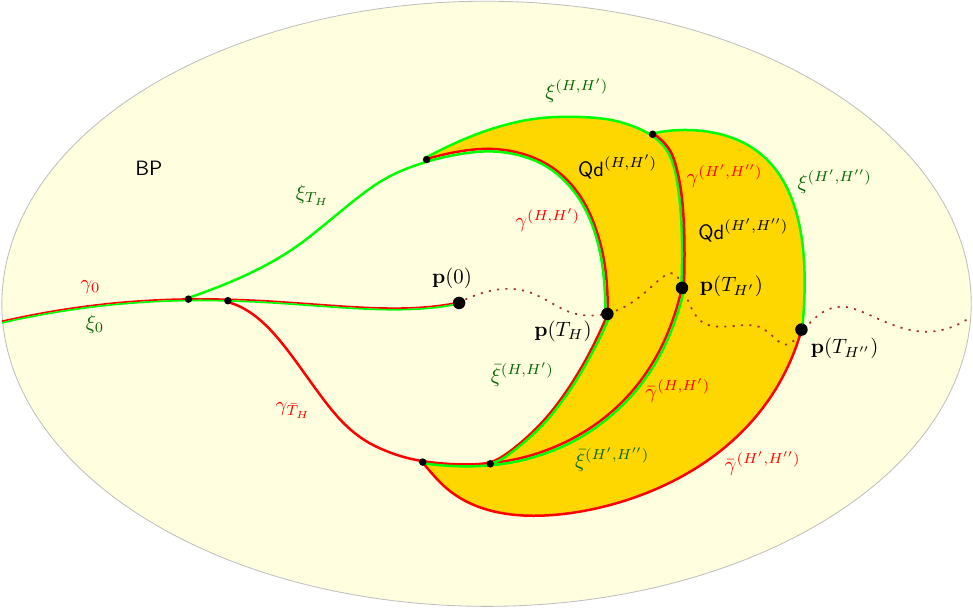}
	\caption{Seeing free quadrilaterals in the Brownian plane. The
union of the dark yellow regions forms $\Qd^{(H,H'')}$. The
dotted brown line is $\{\bp(T_h)\,:\, h\ge 0\}$. Note how~$\BP$ itself is obtained by gluing $\Qd^{(0,\infty)}$ along
the geodesics $\gamma^{(0,\infty)}$ and $\xi^{(0,\infty)}$, resulting
in the geodesic $\gamma_0=\xi_0$.}
	\label{figsgquad}
\end{figure}

We refer to Figure~\ref{figsgquad} for an illustration, which suggests, as is
proved in the following proposition, that
quadrilaterals are topological disks bounded by their geodesic sides. In contrast with our treatment of slices, a difficulty
arises from the fact that the quadrilaterals $\Qd^{(H,H')}$ are not isometrically
embedded in~$\BP$, and, in general, not even locally isometrically embedded (think
of a point of~$\BP$ lying on the geodesic~$\gamma_0$). 

\begin{prp}\label{sec:brownian-plane-its-1}
For every $H\in (0,\infty)$, almost surely under $\FQuad_H$, the
quadrilateral $\Qd^{(0,H)}$ is a topological disk with boundary
given by the geodesics $\gamma^{(0,H)}$, $\xi^{(0,H)}$,
$\br \gamma^{(0,H)}$ and $\br \xi^{(0,H)}$, which pairwise meet only
at the points $\gamma(0)=\br\xi(0)$, $\xi(0)=\br{\gamma}(0)$, and the
apexes~$x_\sas^{(0,H)}$ and~$\br x_\sas^{(0,H)}$.
\end{prp}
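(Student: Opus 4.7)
The plan is to adapt the strategy used to prove Proposition~\ref{sec:brownian-half-plane-1} for slices in the Brownian half-plane. The main tool will be a family of ``radial'' paths issued from the $X$-tree that connect any interior point of $\Qd^{(0,H)}$ to a distinguished corner of the boundary loop. Specifically, for $t\in(0,T_H)$, I would set
\[
\Sigma_t(r)=\inf\{s\in[t,T_H]:X_s=X_t-r\},\qquad 0\le r\le X_t+H,
\]
and consider its image $\sigma_t=\dov\bp^{(0,H)}\circ\Sigma_t$, which traces a continuous path in $\Qd^{(0,H)}$ from $\dov\bp^{(0,H)}(t)$ to $\dov\bp^{(0,H)}(T_H)=\dov\bp^{(0,H)}(\br T_H)$. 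A symmetric construction applies for $t\in(\br T_H,0)$. A core technical input, proved by the same Brownian snake arguments as those used for Lemma~6.15 in~\cite{BaMiRa}, is that almost surely, any such path meets a geodesic of the form $\gamma_s$ or $\xi_s$ (with $s$ in the same half $I_\pm$ as $t$) only at shared endpoints.

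The first step will then be to derive the ``separation claim'': that the four geodesic sides meet pairwise only at the four specified corners. Intersections between two geodesic sides lying in the same half $I_+$ or $I_-$ are handled exactly as in the proof of Proposition~\ref{sec:brownian-half-plane-1}. Intersections between opposite halves require extra care, since any identification $\dov\bp^{(0,H)}(u)=\dov\bp^{(0,H)}(v)$ with $u\in I_+$ and $v\in I_-$ must be realized by a chain using at least one link with $d_f=0$ jumping between halves; such links only occur at pairs $(u(\ell),v(\ell))$ where $u(\ell)=\inf\{s\ge 0:X_s=-\ell\}$ and $v(\ell)=\sup\{s\le 0:X_s=-\ell\}$, for $\ell\in[0,H]$. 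One then has to show that, almost surely, none of these pairs realizes an intersection between distinct geodesic sides away from the corners.

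The second step will be to deduce the full topological statement. I plan to work inside $\Qd^{(0,\infty)}$, into which $\Qd^{(0,H)}$ maps via the inclusion $I^{(0,H)}\subseteq\R$. Using the known homeomorphism $\BP\simeq\R^2$ from~\cite{CuLG12Bplane} together with Lemma~\ref{sec:gluing-quadr-}, one can show that $\Qd^{(0,\infty)}$ is topologically a closed half-plane with boundary $\gamma^{(0,\infty)}\cup\br\xi^{(0,\infty)}$. Inside this half-plane, the separation claim ensures that the loop formed by the four geodesic sides of $\Qd^{(0,H)}$ is a Jordan curve, bounding a unique relatively compact region by the Jordan curve theorem. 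The paths $\sigma_t$ will be used to show that the image of $\Qd^{(0,H)}$ in $\Qd^{(0,\infty)}$ is precisely the closure of this region, and that the natural map $\Qd^{(0,H)}\to\Qd^{(0,\infty)}$ is a homeomorphism onto its image, by combining compactness of $\Qd^{(0,H)}$ with injectivity on the complement of the loop.

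The main obstacle will be the second part of the separation claim, controlling opposite-half identifications. The family of pairs $(u(\ell),v(\ell))$ parametrized by a continuous level $\ell\in[0,H]$ gives the quadrilateral a richer gluing structure than the slice, where no such ``spine'' of identifications exists, and verifying that these neither spuriously connect distinct geodesic sides nor obstruct injectivity of the map into $\Qd^{(0,\infty)}$ on the interior will require a refined use of almost sure properties of the Brownian snake, exploiting the independence between the right- and left-halves of the encoding process under $\FQuad_H$.
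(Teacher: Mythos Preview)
Your plan is plausible but takes a substantially different route from the paper, and misses the key simplification that the paper exploits.

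The paper does not work in $\Qd^{(0,\infty)}$ at all. Its main idea is a \emph{translation trick}: since $\Qd^{(H_0,H_0+H)}$ under $\Plane$ has the same law as $\Qd^{(0,H)}$ for every $H_0>0$, it suffices to prove the statement for the former with $H_0$ large. For large $H_0$, an easy independence argument shows that, with probability arbitrarily close to $1$, the geodesic sides $\gamma^{(H_0,H_0+H)}$ and $\br\xi^{(H_0,H_0+H)}$ do not meet the seam $\gamma_0$ in~$\BP$. On that event the paper works \emph{directly in} $\BP$, where Lemma~\ref{sec:brownian-plane-its-2} gives a clean characterisation of $\{D_{X,W}=0\}$: the four sides then form a Jordan curve in the plane~$\BP$, and $\bp(I^{(H_0,H_0+H)})$ is the closure of its bounded complementary component, by exactly the $\sigma_t$-argument you sketch (with the ``base-like'' curve $\{\bp(T_h):h\ge0\}$ playing the role the base played for slices). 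The conclusion follows because the induced map $\Qd^{(H_0,H_0+H)}\to\bp(I^{(H_0,H_0+H)})$ is a $1$-Lipschitz bijection from a compact space, hence a homeomorphism.

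The point of the translation is precisely to make the ``spine'' difficulty you flag disappear: once the quadrilateral is pushed away from $\gamma_0$, all cross-half identifications relevant to its sides lie on the harmless segment $\{\bp(T_h):H_0\le h\le H_0+H\}$, and no separate analysis of opposite-half intersections is needed.

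Your direct approach in $\Qd^{(0,\infty)}$ could likely be made to work, but it requires two ingredients the paper never establishes: first, that $\Qd^{(0,\infty)}$ is a topological half-plane, which in turn needs a characterisation of $\{\dov D_{X,W}=0\}$ analogous to Lemma~\ref{sec:brownian-plane-its-2} (this is not given by Lemma~\ref{sec:gluing-quadr-} alone, since that lemma only exhibits $\BP$ as a \emph{quotient} of $\Qd^{(0,\infty)}$); second, injectivity of the natural map $\Qd^{(0,H)}\to\Qd^{(0,\infty)}$, which again hinges on controlling that same equivalence relation restricted to $I^{(0,H)}$. Both are credible but add real work that the translation trick circumvents in one stroke.
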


In order to prove this proposition and for later use, it will be
important to characterize the set $\{D_{X,W}=0\}$. 

\begin{lmm}\label{sec:brownian-plane-its-2}
The following holds almost surely under $\Plane$. For every~$s$, $t\in \R$ such that $s\neq t$, it holds that
$D_{X,W}(s,t)=0$ if and only if either $d_X(s,t)=0$ or $d_W(s,t)=0$, these
two cases being mutually exclusive.
\end{lmm}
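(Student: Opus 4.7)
The \emph{if} direction is immediate from the infimum formula~\eqref{eq:quotient} defining the quotient pseudometric. If $d_X(s,t)=0$, the admissible chain $(x_1,y_1,x_2,y_2)=(s,s,t,t)$ (for which $y_1=s$ is indeed $\{d_X=0\}$-related to $x_2=t$) yields $D_{X,W}(s,t)\leq d_W(s,s)+d_W(t,t)=0$; if $d_W(s,t)=0$, the trivial chain $(x_1,y_1)=(s,t)$ gives $D_{X,W}(s,t)\leq d_W(s,t)=0$. For the mutual exclusivity, fix distinct $s\neq t$ with $d_X(s,t)=0$. Since $W$ is the snake driven by $X$, this forces $W_s=W_t$, so that $d_W(s,t)=0$ is equivalent to $W_s=\min_{[s\wedge t,\,s\vee t]}W$. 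Conditionally on $X$, the restriction of $W$ to this interval is, up to the deterministic shift by $W_s$, a fresh Gaussian snake on the subtree of $\cT_X$ encoded by $[s\wedge t,s\vee t]$, and its minimum is almost surely attained at a unique interior point. A Fubini-type argument then upgrades this property from fixed $(s,t)$ to a statement holding simultaneously for all distinct pairs.

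For the \emph{only if} direction, assume $D_{X,W}(s,t)=0$ and $d_X(s,t)>0$; we aim to prove $d_W(s,t)=0$. Applied to a near-minimizing chain in~\eqref{eq:quotient}, a telescoping bound together with the fact that $d_X=0$ implies equality of $W$-values gives $|W_s-W_t|\leq \sum_i d_W(x_i^n,y_i^n)\to 0$, so $W_s=W_t$. It remains to show that this common value equals $\min_{[s\wedge t,\,s\vee t]}W$. This is the analog for $\BP$ of Le Gall's identification theorem for the Brownian sphere~\cite{legall11}, which characterizes the kernel of the analogous quotient pseudometric on the encoding interval. To transfer this spherical result to the plane, one exploits the local coupling of the Brownian plane with sufficiently large Brownian spheres, in the spirit of the coupling between the Brownian half-plane and large Brownian disks used in the proof of Proposition~\ref{sec:brownian-half-plane-1} and originating in~\cite{CuLG12Bplane,BaMiRa}. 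Specifically, for any $K>0$, the joint restriction of $(X,W,D_{X,W})$ to $[-K,K]^2$ under $\Plane$ coincides, on a high-probability event, with the analogous restriction extracted from the encoding process of a Brownian sphere of sufficiently large area. The spherical identification then transfers to $\BP$ on $[-K,K]$, and letting $K\to\infty$ concludes.

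The main obstacle is the localization underlying this transfer: a priori, the minimizing chains $(x_i^n,y_i^n)$ in~\eqref{eq:quotient} may exit any fixed compact interval $[-K,K]$, linking distant parts of $\R$ through the identifications $d_X(y_i^n,x_{i+1}^n)=0$, which couple arbitrarily far times via the running infimum of the two-sided Brownian motion $X$. To constrain the chain, one combines the Lipschitz estimate $|W_a-W_b|\leq D_{X,W}(a,b)$, which forces the snake values along any near-optimal chain to cluster near the common value $W_s=W_t$, with the strong Markov property of $(X,W)$ at hitting times of levels of $X$, which localizes the $d_X$-identifications. Together, these tools confine the chain, with high probability, to an enlarged but still bounded sub-interval, at which point the identification genuinely reduces to the Brownian sphere setting and Lemma~\ref{sec:brownian-plane-its-2} follows.
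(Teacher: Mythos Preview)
Your \emph{if} direction is fine, and your plan for the \emph{only if} direction---transfer Le~Gall's identification of $\{D=0\}$ from the Brownian sphere to $\BP$ via the local coupling of~\cite{CuLG12Bplane}---is exactly the content of \cite[Proposition~11]{CuLG12Bplane}, which the paper simply cites. The localization obstacle you raise is real but is already absorbed in that coupling result: it gives equality of the full pseudometrics $D_{X,W}$ and $D'$ on $[-K,K]^2$ on a high-probability event, so no separate control of the minimizing chains is needed.

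The genuine gap is in your mutual exclusivity argument. You propose to show, for a \emph{fixed} pair $(s,t)$ with $d_X(s,t)=0$, that the minimum of~$W$ over $[s\wedge t,s\vee t]$ is almost surely attained in the interior, and then to upgrade this to all pairs by a ``Fubini-type argument''. This does not work: for any fixed $s\neq t$ the event $\{d_X(s,t)=0\}$ has probability zero, so there is nothing to condition on, and the set of pairs with $d_X(s,t)=0$ is uncountable, so no Fubini or countable-union argument applies. The paper's route is different and unavoidable: it invokes \cite[Lemma~2.2]{legall06}, which is the pathwise, \emph{simultaneous} statement that almost surely, for \emph{every} one-sided increase point~$s$ of~$X$ (i.e.\ $X_u\geq X_s$ for $u\in[s,s+\eps]$), one has $\inf_{[s,s+\delta]}W<W_s$ for all $\delta\in(0,\eps)$. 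Applied at the endpoint of an excursion of~$X$ above its running minimum, this immediately rules out $d_X(s,t)=0$ and $d_W(s,t)=0$ holding together. That lemma is proved for the normalized excursion and snake; the paper transfers it to the two-sided Brownian setting by local absolute continuity, which is legitimate because the statement is local in~$(X,W)$.
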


\begin{proof}
By \cite[Proposition~11]{CuLG12Bplane}, it holds that $D_{X,W}(s,t)=0$
implies that $d_X(s,t)=0$ or $d_W(s,t)=0$. The fact that these two
properties are mutually exclusive is a consequence of the fact from
Lemma 2.2 in \cite{legall06} that
almost surely,
if $s$ is a point such that $X_u\geq X_s$ for every $u\in
[s,s+\eps]$ for some $\eps>0$, then it must hold that $\inf_{u\in
[s,s+\delta]}W_u<W_s$ for every $\delta\in (0,\eps)$. In fact,
\cite[Lemma~2.2]{legall06} is proved when the process~$X$ is distributed as a standard Brownian excursion, and~$W$ as a random snake~$Z$
driven by this excursion. However, being a local property of the
processes at hand, it extends easily to our setting by an absolute
continuity argument. Details are left to the reader. 
\end{proof}

To the terminology of Section~\ref{Rtrees}, we add\MEMS{}{ the piece of notation} $\llbracket a,b\cllbracket_f=\llbracket a,b\rrbracket_f\setminus\{b\}$ for~$a$, $b\in \cT_f$. The important consequence of this lemma for \MEMS{us}{our purposes} is the following. Almost surely, if
$a$, $b\in \cT_X$ and $c$, $d\in \cT_W$, then the paths $\pi_X(\llbracket a,b\rrbracket_X)$
and $\pi_W(\llbracket c,d\rrbracket_W)$ are simple paths. Furthermore, $\pi_X(\llbracket a,b\cllbracket_X)$ may intersect 
$\pi_W(\llbracket c,d\cllbracket_W)$ only if $\pi_X(a)=\pi_W(c)$, in which case these paths intersect
at this point only. In particular, if we denote the geodesic ray
$\bp_X(\{s\geq t:X_s= \un{X}(t,s)\})$ of~$\cT_X$ by $\llbracket \bp_X(t),\infty\cllbracket_X$, then
$\pi_X(\llbracket \bp_X(t),\infty\cllbracket_X )$ is a simple path in $\BP$. For instance, in Figure~\ref{figsgquad}, we represented the simple path $\pi_X(\llbracket \bp_X(0),\infty\cllbracket_X )$ with a dotted brown line.

\begin{proof}[Proof of Proposition~\ref{sec:brownian-plane-its-1}]
Let us depart slightly from the setting of the statement and fix for now
two numbers $0\leq H < H'<\infty$.

\begin{clm}
We assume that the geodesics
$\gamma^{(H,H')}$ and $\br{\xi}^{(H,H')}$ do not intersect
$\gamma_0$ in~$\BP$. Then the following holds.
\begin{enumerate}[label=(\textit{\roman*})]
	\item\label{Jordani} The geodesics $\gamma^{(H,H')}$, $\xi^{(H,H')}$, $\br \gamma^{(H,H')}$, $\br
\xi^{(H,H')}$ intersect only at the points
$x_\sas^{(H,H')}$, $\br\gamma^{(H,H')}(0)=\xi^{(H,H')}(0)$, $\br x_\sas^{(H,H')}$ and $\gamma^{(H,H')}(0)=\br\xi^{(H,H')}(0)$
in this cyclic order, and their union forms a Jordan curve~$C$.
	\item\label{Jordanii} The set
$\bp(I^{(H,H')})\subseteq \BP$  is the closure of the bounded connected
component of~$\BP\setminus C$.
\end{enumerate}
\end{clm}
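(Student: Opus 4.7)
For part (i), the plan is to first observe that each of the four geodesics is a.s.\ the projection $\pi_W(\llbracket a,b\rrbracket_W)$ of a geodesic segment of $\cT_W$, and hence a simple path in $\BP$ by the consequence of Lemma~\ref{sec:brownian-plane-its-2} recorded right after its proof. I will then verify the six pairwise intersections. For the two pairs sharing an apex (namely $\{\gamma^{(H,H')},\xi^{(H,H')}\}$ meeting at $x_\sas^{(H,H')}$ and $\{\br\gamma^{(H,H')},\br\xi^{(H,H')}\}$ meeting at $\br x_\sas^{(H,H')}$) the $\cT_W$-preimages intersect only at the apex vertex; any further $\BP$-intersection would require a $d_X$-identification of two distinct points both in $I_+$ (resp.\ both in $I_-$), which will be ruled out a.s.\ by a standard absolute continuity argument for the Brownian snake, using that the locations of $W$-descents along such a geodesic are a.s.\ distinct from generic $X$-minima. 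The four remaining pairs all involve one positive-side geodesic and one negative-side geodesic, so an extra intersection must come from a $d_X$-identification $d_X(s,t)=0$ with $s>0>t$; such an identification corresponds to a common $\cT_X$-ancestor of $\bp_X(s)$ and $\bp_X(t)$, which by the construction of $\BP$ from $\Qd^{(0,\infty)}$ via Lemma~\ref{sec:gluing-quadr-} projects onto the seam $\gamma_0$. The hypothesis that $\gamma^{(H,H')}$ and $\br\xi^{(H,H')}$ do not meet $\gamma_0$ will handle the pair $\{\gamma^{(H,H')},\br\xi^{(H,H')}\}$ directly, and the other three cross-half pairs will be reduced to this case by transporting any putative intersection along the $\cT_W$ successor structure back to a meeting of $\gamma^{(H,H')}$ or $\br\xi^{(H,H')}$ with $\gamma_0$. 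Combining these, $C$ will be a Jordan curve with the four designated corners.

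For part (ii), using that $\BP$ is homeomorphic to $\R^2$ (see~\cite{CuLG12Bplane}), the Jordan curve theorem will divide $\BP\setminus C$ into a bounded component $U$ and an unbounded component $V$; it will then remain to show $\bp(I^{(H,H')})=\overline U$. I plan to mimic the $\sigma_t$-path argument from the proof of Proposition~\ref{sec:brownian-half-plane-1}: for $t\in\R$, the continuous path $\sigma_t=\pi_X\circ\Sigma_t$ obtained by following $X$-successors from $\bp(t)$ along a $\cT_X$-geodesic ray going to infinity can, by Lemma~\ref{sec:brownian-plane-its-2}, meet the $\pi_W$-image $C$ only at $d_X$-induced crossings. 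A simple a.s.\ genericity argument on $X$-minima versus $W$-minima will then ensure that $\sigma_t$ meets $C$ only at one of the four designated corner points. For $t$ in the interior of $I^{(H,H')}$, the first such crossing will place $\bp(t)$ in $\overline U$, while for $t$ outside $I^{(H,H')}$, $\sigma_t$ will reach infinity in $\BP$ without crossing $C$, placing $\bp(t)$ in $\overline V$. Passing to closures then yields the required identification.

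\textbf{Main obstacle.} The central technical difficulty is the second step of part~(i): deducing from the single hypothesis concerning $\gamma^{(H,H')}$ and $\br\xi^{(H,H')}$ that no cross-half $d_X$-identification produces a spurious intersection among any of the four geodesics beyond the designated corners. Carrying this out will require a careful bookkeeping of the successor structure in $\cT_X$ and in $\cT_W$, and of how the seam $\gamma_0$ sits inside $\BP$ relative to the sub-quadrilateral $\Qd^{(H,H')}$. The remaining ingredients are, respectively, a routine use of the a.s.\ regularity properties of the Brownian snake and a direct adaptation of the connectivity arguments from the proof of Proposition~\ref{sec:brownian-half-plane-1}.
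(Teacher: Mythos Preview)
Your plan for part~(ii) matches the paper's: both use the paths $\sigma_t=\pi_X\circ\Sigma_t$ to connect $\bp(t)$ to the $X$-spine $\{\bp(T_x):x\geq 0\}$ without crossing the curve, thereby separating $I^{(H,H')}$ from its complement.

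There is, however, a genuine error in your treatment of the cross-half pairs in part~(i). Your claim that a $d_X$-identification $d_X(s,t)=0$ with $s>0>t$ ``projects onto the seam $\gamma_0$'' is false: such an identification forces $s=T_x$, $t=\br T_x$ for some $x\geq 0$, so $\bp(s)=\bp(T_x)$ lies on the $X$-spine $\pi_X(\llbracket \bp_X(0),\infty\cllbracket_X)$, which is \emph{not} the $W$-geodesic ray $\gamma_0=\pi_W(\llbracket \bp_W(0),\infty\cllbracket_W)$. The gluing of Lemma~\ref{sec:gluing-quadr-} is along $\gamma^{(0,\infty)}$ and $\br\xi^{(0,\infty)}$; the cross-half $d_X$-identifications are already present in $\Qd^{(0,\infty)}$ \emph{before} that gluing. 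So the hypothesis that $\gamma^{(H,H')}$ and $\br\xi^{(H,H')}$ miss $\gamma_0$ does not by itself exclude a cross-half $d_X$-identification. You are also skipping the cross-half $d_W$-case: an identification $D_{X,W}(s,t)=0$ with $st<0$ can come from $d_W(s,t)=0$, and it is precisely \emph{this} case that the hypothesis excludes, since $d_W(s,t)=0$ with $st<0$ forces $\bp(s)\in\gamma_0$.

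The paper avoids this confusion entirely. It translates the hypothesis into the inequalities $\un W(T_H,T_{H'})>\un W(0,T_H)$ and $\un W(\br T_{H'},\br T_H)>\un W(\br T_H,0)$; these show that the $\cT_W$-segments $\llbracket \bp_W(T_H),a_*\rrbracket_W$ and $\llbracket \bp_W(\br T_H),\br a_*\rrbracket_W$ each stop strictly above the branch point of $\cT_W$ separating $\bp_W(T_H)$ from $\bp_W(\br T_H)$, hence are disjoint in $\cT_W$. The simplicity of $\pi_W$-images of $\cT_W$-geodesics (the consequence of Lemma~\ref{sec:brownian-plane-its-2}) then confines their projections to meet only at the endpoint $\bp(T_H)=\bp(\br T_H)$. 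The residual $d_X$-identifications are excluded by the same snake-regularity argument you invoke for the same-half pairs---not by the hypothesis.
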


Indeed, note that $\bp_X(T_H)$ and $\bp_X(T_{H'})$ are two distinct points of $\llbracket \bp_X(0),\infty\cllbracket_X$, so
that their images by $\pi_X$ are distinct in~$\BP$. Then the paths $\gamma^{(H,H')}$ and
$\xi^{(H,H')}$ are the images by $\pi_W$ of the two geodesic paths
$\llbracket \bp_W(T_H),a_*(W^{(H,H')})\rrbracket _W$ and $\llbracket \bp_W(T_{H'}),\allowbreak a_*(W^{(H,H')})\rrbracket _W$ in $\cT_W$,
which by definition meet only at $a_*(W^{(H,H')})$, and their union is
the geodesic $\llbracket \bp_W(T_H),\bp_W(T_{H'})\rrbracket _W$ in~$\cT_W$, which is thus projected
via~$\pi_W$ to a simple path in~$\BP$. Therefore, $\gamma^{(H,H')}$ and $\xi^{(H,H')}$ meet only at
$x_\sas^{(H,H')}=\pi_W(a_*(W^{(H,H')}))$. 
The same reasoning shows that $\br\gamma^{(H,H')}$ and
$\br\xi^{(H,H')}$ intersect only at $\br x_\sas^{(H,H')}$, and gives that
the points $x_\sas^{(H,H')}$ and $\br x_\sas^{(H,H')}$ are distinct points
(because they are distinct points in $\cT_W$ lying inside two
geodesics). 

Next, if the path $\gamma^{(H,H')}$ does not intersect $\gamma_0=\pi_W(\llbracket \bp_W(0),\infty\cllbracket_W)$, then necessarily the path
$\llbracket \bp_W(T_H),a_*(W^{(H,H')})\rrbracket _W$ must be disjoint from
$\llbracket \bp_W(0),\infty\cllbracket_W$, which  means that
\[\un W(T_H,T_{H'}) > \un W(0,T_H)\qquad \text{ and }\qquad \un W(\br T_{H'},\br T_{H})> \un W(\br T_H, 0).\]
This implies that $\llbracket \bp_W(T_{H'}),a_*(W^{(H,H')})\cllbracket_W$ is also
disjoint from $\llbracket \bp_W(0),\infty\cllbracket_W$, and by projecting by $\pi_W$,
that $\xi^{(H,H')}$ is disjoint from~$\gamma_0$. A similar
argument applies to $\br \gamma^{(H,H')}$ and
$\br\xi^{(H,H')}$. Therefore, under the conditions of the claim, the
paths $\llbracket \bp_W(T_H),\allowbreak a_*(W^{(H,H')})\rrbracket _W$ and $\llbracket \bp_W(\br T_{H}),\br
a_*(W^{(H,H')})\rrbracket _W$ are disjoint paths in $\cT_W$, and their
projections $\gamma^{(H,H')}$ and $\br\xi^{(H,H')}$ via $\pi_W$
intersect, if at all, only at their 
extremities. It is indeed the case that $\bp(T_H)=\bp(\br
T_H)$, while, as we already saw, $x_\sas^{(H,H')}\neq \br
x_\sas^{(H,H')}$. This proves~\ref{Jordani}. 

The argument for~\ref{Jordanii} is similar to that in the proof of
Lemma~\ref{sec:brownian-half-plane-1}, where the role of the base is now played by the infinite path $\pi_X(\llbracket \bp_X(0),\infty\cllbracket_X )=\{\bp(T_h)\,:\, h\ge 0\}$. For any $t\in \R$, we let
\[\Sigma_t(r)=\inf\{s\geq t:X_s=X_t-r\}\qquad\text{ for }\qquad 0\leq
r\leq X_t-\dun{X}_t,\]
where we recall that $\dun{X}_t=\un{X}(0\wedge t,0\vee t)$. The range of $\bp_X\circ\Sigma_t$ is the geodesic path
$\llbracket \bp_X(t),\bp_X(T_{-\dun{X}_t}) \rrbracket_X$ in $\cT_X$ and its image by~$\pi_X$
defines a path $\sigma_t=\bp\circ\Sigma_t$ from~$\bp(t)$ to~$\bp(T_{-\dun{X}_t})$. Moreover, by Lemma~\ref{sec:brownian-plane-its-2}, the paths~$\sigma_t$, $t\in\R$, do not intersect any of the geodesics~$\gamma_s$, $s\in \R$, except
possibly at their starting points. There are now the following possibilities.
\begin{itemize}
	\item If $t\in I^{(H,H')}$, then~$\sigma_t$ ends on the path $(\bp(T_h),H\leq h\leq H')$. This means that, if~$\bp(t)$ does not belong to the four geodesics~$\gamma^{(H,H')}$, $\xi^{(H,H')}$, $\br \gamma^{(H,H')}$, $\br\xi^{(H,H')}$, then we may connect it to, say, the point $\bp(T_{(H+H')/2})$ of the bounded set $\Qd^{(H,H')}$, without crossing the four mentioned geodesics.
	\item If $t\notin I^{(H,H')}$, we distinguish two cases.
	\begin{itemize}
		\item If $t\notin[\br T_{H'},T_{H'}]$, then~$\sigma_t$ ends on the unbounded path $\{\bp(T_h):h> H\}$.
		\item If $t\in(\br T_H,T_H)$, then~$\sigma_t$ ends on $\{\bp(T_h):0\leq h<H\}$.
	\end{itemize}
If~$\bp(t)$ does not belong to the four geodesics of interest, then it may be joined without crossing the four geodesics either to the unbounded path $\{\bp(T_h):h> H\}$ or to the unbounded path $\{\bp(T_h):0\leq h<H\}\cup\gamma_0$, by the assumption that~$\gamma_0$ does not intersect the four geodesics.
\end{itemize}
This completes the proof of the claim.

\medskip
Now fix $H>0$, and consider another positive number $H_0$ to be
thought of as large. Since 
we know that $\Qd^{(H_0,H_0+H)}$ under $\Plane$ has same 
distribution as $\Qd^{(0,H)}$, we may work with the former space 
rather than with the latter. For every $\eps>0$ it holds that there
exists some $H_0$ large enough such that with probability at least
$1-\eps$, 
the geodesics $\gamma^{(H_0,H_0+H)}$ and
$\br{\xi}^{(H_0,H_0+H)}$ do not intersect $\gamma_0$. Indeed, this happens whenever
$\un W(T_{H_0}, T_{H_0+H}) > \un W(0,T_{H_0})$ or, equivalently,
\begin{equation}\label{WHHo}
W_{T_{H_0}}-\un W(0,T_{H_0})>W_{T_{H_0}}-\un W(T_{H_0}, T_{H_0+H}),
\end{equation}
and similarly in negative times. The two sides of~\eqref{WHHo} are independent by the
Markov property stated above; the right-hand side has a
distribution that depends only on~$H$, while the left-hand side,
which has same distribution as $-\un W(0,T_{H_0})$ by a simple time-reversal argument,                   
converges to $\infty$ in probability as $H_0\to\infty$.

By the claim, we obtain that on an event happening with probability at least $1-\eps$, the set
$\bp(I^{(H_0,H_0+H)})$ is the closure of the connected component of
the complement in~$\BP$ of the paths
\[\gamma^{(H_0,H_0+H)},\quad \xi^{(H_0,H_0+H)},\quad \br \gamma^{(H_0,H_0+H)},\quad \br\xi^{(H_0,H_0+H)},\]
which all together form a Jordan curve. On this event, the
identity\MEMS{}{ mapping} on $I^{(H_0,H_0+H)}$ induces, by precomposition
with the projection mappings~$\bp$ and $\bp^{(H_0,H_0+H)}$, a bijective
mapping~$\phi$ from the compact space $\Qd^{(H_0,H_0+H)}$ to
$\bp(I^{(H_0,H_0+H)})$, which is $1$-Lipschitz since 
$D_{X,W}\leq \dov{D}^{(H_0,H_0+H)}$ by Lemma~\ref{lemdistglue}, \eqref{eq:31} and Proposition~\ref{sec:brownian-plane-its}. This shows that~$\phi$ is a
homeomorphism, and therefore, with probability at least $1-\eps$, the
space $\Qd^{(H_0,H_0+H)}$ has the properties claimed in the
statement. Using the fact that $\Qd^{(H_0,H_0+H)}$ has same
distribution as $\Qd^{(0,H)}$ and that $\eps$ was arbitrary, we conclude. 
\end{proof}

The continuum quadrilaterals of the preceding section can be linked to
the free quadrilaterals embedded in the Brownian plane by an absolute
continuity argument, whose proof is similar to that of Lemma~\ref{lemaccontslice} and is omitted. 

\begin{lmm}\label{sec:conditioned-version-4}
Fix $0<K<H$, as well as $A>0$, $\bar{A}>0$, and $\Delta\in \R$. Then, for every 
nonnegative function~$G$ that is measurable with respect to the 
$\sigma$-algebra generated by $(X^{(0,K)},W^{(0,K)})$, one has 
\[
\Quad_{A,\bar{A},H,\Delta}[G]=\Plane\big[\psi_{A,\bar{A},H,\Delta}(T_K,-\br
T_K,K,W_{T_K})\cdot G\big],\]
where 
\[\psi_{A,\bar{A},H,\Delta}(A',\bar{A}',H',\Delta')=\frac{q_{H-H'}(A-A')}{q_H(A)}\frac{q_{H-H'}(\bar{A}-\bar{A}')}{q_H(\bar{A})}
\frac{p_{H-H'}(\Delta-\Delta')}{p_H(\Delta)}.\]
\end{lmm}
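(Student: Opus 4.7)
The plan is to mimic the proof of Lemma~\ref{lemaccontslice} almost word for word, with the main novelty being the need to handle two independent first-passage bridges (one on each side of $0$) instead of a single one.

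First I would reduce to the case where $G$ is a product of the form $f\bigl(X\rst_{[0,A']}\bigr)\,\bar f\bigl(X\rst_{[-\bar A',0]}\bigr)\,h\bigl(\zeta\rst_{[0,K]}\bigr)$, for fixed $0<A'<A$, $0<\bar A'<\bar A$, and bounded continuous $f,\bar f,h$ on~$\CC$, where under $\Quad_{A,\bar A,H,\Delta}$ the process~$\zeta$ is the Brownian bridge appearing in the definition of~$W$. The three factors concern independent pieces, so I can treat them separately. For the first-passage bridges, the standard Doob-type absolute continuity formula between a first-passage bridge from~$0$ to~$-H$ with duration~$A$ and standard Brownian motion killed at first hitting of~$-H$ gives
\[
\Quad_{A,\bar A,H,\Delta}\Bigl[f\bigl(X\rst_{[0,A']}\bigr)\Bigr]=\Plane\Bigl[f\bigl(X\rst_{[0,A']}\bigr)\,\frac{q_{H-(-\un X_{A'})}(A-A')}{q_H(A)}\,\ind_{\{\un X_{A'}>-H\}}\Bigr],
\]
and symmetrically for~$\bar f$. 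For the Brownian bridge of duration~$H$ ending at~$\Delta$ versus standard Brownian motion, the analogous formula yields the ratio $p_{H-K}(\Delta-\zeta_K)/p_H(\Delta)$. Combining these three densities and using independence of $X\rst_{\R_+}$, $X\rst_{\R_-}$ and~$\zeta$ gives the claimed identity, but with $A'$, $\bar A'$, $K$ in place of $T_K$, $-\bar T_K$, $K$ inside the density factors.

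Next I would upgrade from $A'$, $\bar A'$ to the stopping times $T_K$, $-\bar T_K$ by the dyadic approximation trick explained in the proof of Lemma~\ref{lemaccontslice}: write
\[
f\bigl(X^{(0,K)}\bigr)=\lim_{n\to\infty}\sum_{i\ge 0}\ind_{\{(i-1)2^{-n}<T_K\le i\,2^{-n}\}}\,f\bigl(X\rst_{[0,i\,2^{-n}]}\bigr),
\]
and similarly for $\bar f$, then apply the previously obtained identity on each dyadic cell and pass to the limit via dominated convergence, using that $q_{H-K}$ and $p_{H-K}$ are bounded continuous functions of their arguments and that on $\{T_K<\infty\}$ one has $\un X_{T_K}=-K$. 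The only subtle point here is that one must choose the dyadic approximation jointly for the positive and negative times, but since $T_K$ and $\bar T_K$ are independent under both laws, this causes no trouble.

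Finally I would observe that $W^{(0,K)}$ is recovered as a measurable function of $X^{(0,K)}$ and $\zeta\rst_{[0,K]}$ in the same way under both $\Quad_{A,\bar A,H,\Delta}$ and $\Plane$, since for $t\in[-\bar T_K,T_K]$ one has $-\dun X_t\le K$ and $W_t=Z_t+\zeta_{-\dun X_t}$ where the snake $Z$ driven by $X-\dun X$ restricted to this interval depends only on $X^{(0,K)}$. In particular, $W_{T_K}=\zeta_K$ since $(X-\dun X)_{T_K}=0$ forces $Z_{T_K}=Z_0=0$ almost surely. This identification lets us replace $\zeta_K$ by $W_{T_K}$ in the density, and extends the identity from product functions to arbitrary nonnegative measurable $G$ on $\sigma(X^{(0,K)},W^{(0,K)})$ by a standard monotone class argument. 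I do not expect any serious obstacle; the only point requiring a bit of care is the joint treatment of the two independent first-passage pieces in the replacement step, which is handled exactly as in the one-sided case.
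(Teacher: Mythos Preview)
Your proposal is correct and follows exactly the route the paper indicates (the paper simply says the proof is similar to that of Lemma~\ref{lemaccontslice} and omits it). One minor slip: in your displayed absolute-continuity formula for the first-passage bridge, the density should read $q_{H+X_{A'}}(A-A')$ (it involves the current position $X_{A'}$, not the running minimum $\un X_{A'}$); since $X_{T_K}=\un X_{T_K}=-K$, this does not affect the final result after the stopping-time substitution.
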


This allows to obtain, as stated in Lemma~\ref{topdimep}, the topology of quadrilaterals.

\begin{proof}[Proof of Lemma~\ref{topdimep} for quadrilaterals]
The proof is similar to that for slices. We use the fact
that the Brownian plane is topologically a plane
\cite[Proposition~13]{CuLG12Bplane}, as well
as~\cite[Proposition~4]{CuLG12Bplane} to obtain that it is locally of
Hausdorff dimension~$4$ from the analog result about the Brownian
sphere~\cite{legall05}. We deduce from there the desired
properties for a free quadrilateral.
To extend this result to quadrilaterals $\Qd_{A,\br{A},H,\Delta}$,
which we view as $\Qd^{(0,H)}$ under the law $\Quad_{A,\br
  A,H,\Delta}$, we use the fact from Proposition~\ref{gludetquad} that it
can be seen as the gluing of $\Qd^{(0,H/2)}$ and $\Qd^{(H/2,H)}$ along
the boundaries $\xi^{(0,H/2)}$ and $\gamma^{(H/2,H)}$ on the one hand,
and $\br \gamma^{(0,H/2)}$ and $\br\xi^{(H/2,H)}$ on the other
hand. By the absolute continuity relation stated in Lemma~\ref{sec:conditioned-version-4}, we see that the law of $\Qd^{(0,H/2)}$ is
absolutely continuous with respect to that of a free quadrilateral
with width $H/2$, and the same is true for $\Qd^{(H/2,H)}$. Using
Proposition~\ref{sec:brownian-plane-its-1}, we obtain that
$\Qd^{(0,H)}$ is obtained by gluing two topological disks, both
locally of Hausdorff dimension 4, along part
of their boundaries, which allows to conclude. 
\end{proof}

\subsection{The uniform infinite planar quadrangulation}\label{secUIPQ}

The UIPQ is the whole plane pendant of the UIHPQ defined in
Section~\ref{secUIHPQ}. It is simpler to describe and was introduced
earlier \cite{ChDu06,Krikun,CuMIMeUIPQ}. Let
$(\bT^k,k\in \Z)$ be a two-sided sequence
of independent Bienaym\'e--Galton--Watson trees with a geometric offspring
distribution of parameter~$1/2$. We construct an infinite tree~$\bT_\infty$ embedded in the plane by mapping the roots of~$\bT^k$ and of~$\bT^{-k}$ to the point $\rho^k=(k,0)$ for every $k\geq 0$, in such a way
that, except for these roots, the trees $\bT^k$, $k\geq 0$ are embedded in the
open upper half-plane and the trees $\bT^k$, $k<0$ are embedded in the
open lower half-plane, without intersection. Lastly, we link the roots~$\rho^k$, $\rho^{k+1}$ with a horizontal segment for every $k\geq 0$.

Conditionally on~$\bT_\infty$, we assign to the edges random numbers, independent and uniformly distributed in $\{-1,0,1\}$, and let $\lambda_\infty:V(\bT_\infty)\to\Z$ be the labeling function whose increments along the edges are given by these numbers. Note that this uniquely defines~$\lambda_\infty$, up to the usual addition of a constant. We call $(\bT_\infty,\lambda_\infty)$ the \emph{infinite random well-labeled tree}. We then let
$(c_i,i\in \Z)$ be the sequence of corners of~$\bT_\infty$ in contour
order, with origin the corner~$c_0$ corresponding to the root
of~$\bT^0$. The \emph{uniform infinite planar quadrangulation} (\emph{UIPQ} for short) is then the infinite map~$Q_\infty$ obtained by applying the CVS
construction to $(\bT_\infty,\lambda_\infty)$, that is, by linking
every corner to its successor as defined in
Section~\ref{sec:basic-construction}, and removing all edges of the
tree afterward. The root of~$Q_\infty$ is defined as the corner
preceding the arc from~$c_0$ to its successor. As with the UIHPQ,
there is no need to add an extra vertex with a corner~$c_\infty$.

Similarly as before, we denote by~$v_i$ the vertex of~$\bT_\infty$ incident to~$c_i$ and by~$\Upsilon(i)\in\Z$ the index of the tree to which~$v_i$ belongs. We then define the \emph{contour} and \emph{label processes} on~$\R$ by
\[C(i) = d_{\bT^{\Upsilon(i)}}\big(v_i,\rho^{|\Upsilon(i)|}\big)-|\Upsilon(i)|\qquad\text{ and }\qquad\Lambda(i)=\lambda_\infty(v_i)-\lambda_\infty(v_0),\qquad i \in\Z,\]
and by linear interpolation between integer values; see Figure~\ref{fig:contour2}. Observe that, in contrast with the definition of Section~\ref{secUIHPQ} for an infinite forest, there is an absolute value in the definition of~$C$. In fact, changing the~$-$ into a~$+$ amounts to taking the so-called \emph{Pitman transform}, which is a one-to-one mapping, so this is just a matter of convention. We will come back to this in Appendix~\ref{appBP}. We can easily check that~$C$ is distributed as a two-sided random walk conditioned\footnote{See Remark~\ref{remUIPQ} for the explanation of this conditioning.} on $C(-1)=-1$.

\begin{figure}[ht!]
  \centering
  \includegraphics[width=.95\textwidth]{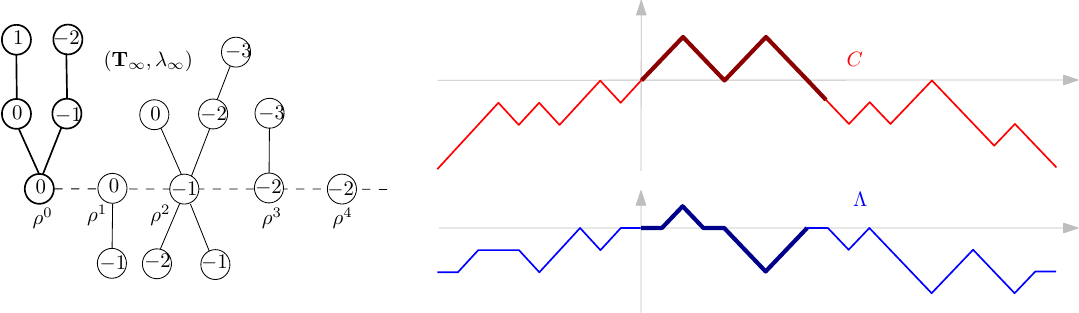}
  \caption{Contour and label processes associated with
    $(\bT_\infty,\lambda_\infty)$. The infinite dashed line is the so-called \emph{spine} of the tree. The tree~$\bT^0$ and the corresponding encoding processes are highlighted. Similarly as on Figure~\ref{contour}, one might see the contour process as recording the height of a particle moving at speed one around the infinite tree obtained by now letting~$\rho^k$ be located at $(0,-k)$ with~$\bT^k$ grafted on its right and~$\bT^{-k}$ on its left (both upright), for $k\ge 0$; see the left of Figure~\ref{fig:contourPit} for an illustration.}
  \label{fig:contour2}
\end{figure}

As before, we extend~$C$ and~$\Lambda$ to functions on~$\R$ by linear interpolation between integer values. For $k\ge 0$, we set
\[
\br\tau_k=\max\big\{i\leq 0\,:\,C(i)=-k\big\}\qquad\text{ and }\qquad \tau_k=\min\big\{i\geq 0\,:\,C(i)=-k\big\}.
\]
Note that, for a fixed $k\geq 0$, the process $(k+C(s+\tau_k), 0\leq s\leq \tau_{k+1}-\tau_k)$
is the contour process of~$\bT^k$, while, for $k\geq 1$, $(k+C(s+\br\tau_{k+1}+1),0\leq s\leq \br\tau_k-\br\tau_{k+1}-1)$ is the contour
process of $\bT^{-k}$ without the last descending step. Therefore, in
this notation, the forest composed of the~$k$ leftmost trees in the
upper half-plane is coded by the interval $[0,\tau_{k}]$,
while the forest composed of the~$k$ leftmost trees in the lower
half-plane is coded by the interval $[\br
\tau_{k+1}+1,0]$. This slightly annoying shift will appear later on,
in particular in the statement of Lemma~\ref{sec:conditioned-version}. 

\begin{rem}\label{remUIPQ}
As with the UIHPQ, the above definition gives a slight
variant of the usual UIPQ, which is similarly defined by
adding a further tree rooted at~$\rho^0$ embedded in the lower
half-plane, or equivalently, by removing 
the conditioning by $\{C(-1)=-1\}$. This bias is similar to the one we had for the UIHPQ. 
Here again, the reason for using this definition is that it will give the natural semigroup property for the discrete quadrilaterals.
\end{rem}

We set
\begin{equation}
  \label{eq:14}
  D_\infty(i,j)=d_{Q_\infty}(v_i,v_j),\qquad i,j\in \Z,
\end{equation}
and extend it to a function on~$\R^2$ by bilinear
interpolation between integer values, as in~\eqref{bilininterpo}. We
define the renormalized versions~$C_{(n)}$, $\Lambda_{(n)}$, $D_{(n)}$
of~$C$, $\Lambda$, $D_\infty$ by~\eqref{rescale}. The following proposition builds on the convergence obtained in~\cite{CuLG12Bplane} of the UIPQ to the Brownian plane. As was the case for the UIHPQ, it does not appear in this exact form in~\cite{CuLG12Bplane} and calls for a proof, which is postponed to Appendix~\ref{appBP}. Recall from Section~\ref{secBP} the definition of the distribution~$\Plane$.

\begin{prp}\label{cvencbp}
The following convergence in distribution holds on $\CC\times\CC\times\CC^{(2)}$:
\begin{equation*}
  \big(C_{(n)},\Lambda_{(n)},D_{(n)}\big)\tod\big(X,W,D_{X,W}\big),
\end{equation*}
where the limiting triple is understood under~$\Plane$.
\end{prp}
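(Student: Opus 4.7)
The plan is to mirror the strategy used for the UIHPQ in the proof of Proposition~\ref{cvencbhp}, using large pointed quadrangulations of the sphere and the Brownian sphere as the finite-volume intermediary between the UIPQ and the Brownian plane. For the joint convergence of the first two coordinates $(C_{(n)},\Lambda_{(n)})$ to $(X,W)$ under $\Plane$, a direct invariance-principle argument suffices: as noted in the text, $(C(i),i\in\Z)$ is a two-sided simple random walk conditioned on $\{C(-1)=-1\}$ (a conditioning that is irrelevant under diffusive scaling), and conditionally on $C$ the label increments along tree edges are i.i.d.\ uniform in $\{-1,0,1\}$, so Donsker's theorem applied separately in positive and negative times together with the classical snake convergence of Chassaing--Schaeffer yields the desired limit.

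The substantive part is the convergence of the distance process $D_{(n)}$. Fix $K>0$ and $\eps>0$; I will choose a large $L>0$ and compare the UIPQ with a uniform pointed quadrangulation $(Q_n',v_n^*)$ of the sphere with $a_n=\lfloor nL\rfloor$ faces. Pointing allows one to recover a two-sided contour order starting at a distinguished root corner, exactly mirroring the two-sided encoding of the UIPQ. Two couplings are then combined. First, at the discrete level, a classical ball-coupling between $Q_\infty$ and $(Q_n',v_n^*)$, as used in the proof of convergence of the UIPQ to the Brownian plane in~\cite{CuLG12Bplane} (see also~\cite{CuMIMeUIPQ,Krikun}), guarantees that provided $L$ is large enough depending on $r>0$, the balls of radius $\lfloor r n^{1/4}\rfloor$ around the distinguished vertex in $Q_\infty$ and in $Q_n'$ are isometric with probability at least $1-\eps/3$ uniformly in $n$, together with a matching of the underlying labeled trees on the set of vertices lying in these balls. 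Second, on the continuum side, a local coupling near a uniformly sampled point of the Brownian sphere $\mathcal{S}$ with $\BP$, also established in~\cite{CuLG12Bplane}, allows one to realize the encoding triple $(X',W',D')$ of $\mathcal{S}$ so as to coincide with $(X,W,D_{X,W})$ on $[-K,K]$ with probability at least $1-\eps/3$, provided $L$ is large enough.

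Now the analog of~\eqref{eq:5} for $(Q_n',v_n^*)$ provides the joint convergence of the rescaled encoding triple $(C'_{(n)},\Lambda'_{(n)},D'_{(n)})$ toward the encoding triple of the Brownian sphere, in the strong form contained in Le Gall's~\cite{legall11} and Miermont's~\cite{miermont11} proofs and explicitly used in \cite[Theorem~20]{BeMi15I}. Translating the discrete ball-isometry into a statement on encoding processes -- the key point being that $r$ can be chosen large enough so that every vertex $v_i$ with $|i|\leq 2Kn$ lies in a ball of radius $\lfloor r n^{1/4}\rfloor$ around the distinguished vertex, uniformly in $n$, with probability at least $1-\eps/3$; this is ensured by a tightness estimate for the volume growth of the UIPQ inherited from the Brownian plane via continuity of $D_{X,W}$ -- one deduces that $(C_{(n)},\Lambda_{(n)},D_{(n)})\rst_{[-K,K]}$ can be coupled to agree with $(C'_{(n)},\Lambda'_{(n)},D'_{(n)})\rst_{[-K,K]}$ on an event of probability at least $1-\eps$. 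Chaining all three couplings gives the convergence on $[-K,K]$ up to an error $\eps$; sending $\eps\to 0$ and then $K\to\infty$ yields the full convergence on $\CC\times\CC\times\CC^{(2)}$.

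The main obstacle will be promoting the classical discrete ball-coupling to a coincidence statement for the two-sided contour/label/distance processes on a fixed time interval. This requires a simultaneous control over the set of vertices explored by the contour in time $\bO(n)$ and over the position of these vertices within an isometric ball, and one must handle the minor mismatch between the infinite two-sided forest encoding of the UIPQ and the cyclic one-face traversal encoding of $Q_n'$, through a translation argument for the contour function analogous to~\eqref{eq:2}--\eqref{eq:3}. Beyond this bookkeeping, all the ingredients -- invariance principles, discrete ball couplings, and the Le Gall--Miermont convergence -- are essentially off the shelf.
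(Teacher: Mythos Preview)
Your overall strategy coincides with the paper's: approximate the UIPQ by a large pointed sphere quadrangulation, use the ball-coupling of~\cite{CuLG12Bplane} on the discrete side and the Brownian plane/Brownian sphere coupling on the continuum side, and sandwich the known Le~Gall--Miermont convergence in between. That skeleton is right.

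However, there is a genuine gap in how you treat the contour process. You describe the mismatch between the UIPQ encoding and the one-face encoding of $Q'_n$ as ``minor'' and propose to resolve it by ``a translation argument analogous to~\eqref{eq:2}--\eqref{eq:3}''. This works for the half-plane (where the disk contour and the UIHPQ contour differ by a time-shift), but it does \emph{not} work here. The natural contour of the finite tree $M'_n$ is the height process $\fC'_n(i)=d_{M'_n}(v'_i,\rho^0)$, which after rescaling converges to a Brownian excursion, while the UIPQ contour is $C(i)=d(v_i,\rho^{|\Upsilon(i)|})-|\Upsilon(i)|$, which converges to a two-sided Brownian motion. On the coupled event where the trees agree up to generation $\lfloor\delta a_n^{1/2}\rfloor$, one has $\fC(i)=\fC'_n(i)$, $\Lambda(i)=\Lambda'_n(i)$ and $D_\infty(i,j)=D'_n(i,j)$ on the relevant time range, but \emph{not} $C(i)=\fC'_n(i)$. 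The two are related by the Pitman transform $C(i)=\fC(i)-2\inf_{|j|\geq |i|,\,ij\geq 0}\fC(j)$, a non-local operation that cannot be recovered from $\fC'_n$ on a bounded window without extra input.

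The paper singles this out explicitly (``some care will also be needed when taking an inverse Pitman transform, since this operation a priori involves more than just a neighborhood of~$0$'') and supplies the missing ingredients. On the continuum side, it checks that on an event of high probability (where the last visits of~$\fX$ to suitable high levels occur before time~$\alpha$), the future infimum $\inf_{s\geq t}\fX_s$ for $t$ in the window of interest coincides with an infimum computable from $\fX'\rst_{[-\alpha,\alpha]}$, so $X\rst_{[\br T_{A^2},T_{A^2}]}$ is a measurable function of $\fX'\rst_{[-\alpha,\alpha]}$. On the discrete side, it observes that if the most recent common ancestor of the vertices at generation $\lfloor\delta a_n^{1/2}\rfloor$ in~$M'_n$ already has generation at least $\lfloor R n^{1/2}\rfloor$, then the spine of~$\bT_\infty$ (hence the sequence $|\Upsilon(i)|$ for $|i|\leq 2nK$) is determined by the coupled data, so $C\rst_{[-2nK,2nK]}$ is recovered from~$M'_n$. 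Both facts occur with high probability for large $L$, but they are not ``bookkeeping'': they are the substantive new step distinguishing the plane case from the half-plane case. Once you add this Pitman-transform argument, your proof aligns with the paper's.
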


\subsection{Discrete quadrilaterals in the UIPQ}

We proceed as in the last paragraph of Section~\ref{secUIHPQ}. But, here, the lack of an analog of Corollary~\ref{sec:metric-spaces-coded-2} makes matter substantially more intricate. We consider a sequence $(h_n)\in\N^\N$ such that
\[\frac{h_n}{\sqrt{2n}}\ton H>0.\]
For each~$n$, we let $F_n$ be the random forest consisting of the~$h_n$ trees~$\bT^0$, $\bT^1$, \dots, $\bT^{h_n-1}$, and~$\rho^{h_n}$, as well as~$\br F_n$ be the random forest consisting of the~$h_n$ trees~$\bT^{-h_n}$, $\bT^{-h_n+1}$, \dots, $\bT^{-1}$ and~$\rho^{0}$. The pair $(F_n,\br F_n)$ is a double forest in the terminology of Section~\ref{sec:quadr-with-geod} and the map $F_n\cup \br F_n$ is well labeled by the restriction of~$\lambda_\infty$. We denote by~$Q_n$ the corresponding quadrilateral and by~$v_*$, $\br v_*$ its apexes; similarly to the previous section, we see it as part of the UIPQ~$Q_\infty$.

For each $i\in \Z$, the vertex~$v_i$ of~$\bT_\infty$ incident to~$c_i$ can still be seen as a vertex of~$Q_n$ when $\br\tau_{h_n+1}+1\le i\le \tau_{h_n}$. We set 
\begin{equation*}
\dov{D}_n(i,j)=d_{Q_n}(v_i,v_j),\qquad \br\tau_{h_n+1}+1\le i,j\le \tau_{h_n},
\end{equation*}
extend it to a function  on~$[\br\tau_{h_n+1}+1,\tau_{h_n}]^2$ by bilinear interpolation between integer values as in~\eqref{bilininterpo}, and define its renormalized version
\begin{equation}\label{eqdovDn}
\dov{D}_{(n)}(s,t)=\frac{\dov{D}_n(2ns, 2nt)}{(8n/9)^{1/4}},\qquad \frac{\br\tau_{h_n+1}+1}{2n}\le s,t\le \frac{\tau_{h_n}}{2n}.
\end{equation}

This section is devoted to the proof of the following result, which essentially amounts to stating that, jointly with the convergence of $\Omega_n(Q_\infty)$ to the Brownian plane, the properly rescaled quadrilateral $\Omega_n(Q_n)$ converges to~$\Qd^{(0,H)}$. 

\begin{thm}\label{thmddstar}
The following convergence in distribution holds in $\CC\times 
\CC\times \CC^{(2)}\times \CC^{(2)}$: 
\begin{equation*}
  \big(C_{(n)},\Lambda_{(n)},D_{(n)},\dov{D}_{(n)}\big)\tod\big(X,W,D_{X,W},\dov{D}^{(0,H)}\big), 
\end{equation*}
where the limiting quadruple is understood under~$\Plane$.
\end{thm}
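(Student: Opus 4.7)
The plan is to adapt the strategy used for noncomposite slices in our earlier work on the disk case. Since Proposition \ref{cvencbp} already provides the joint convergence $(C_{(n)}, \Lambda_{(n)}, D_{(n)}) \to (X, W, D_{X,W})$, by the Skorokhod representation theorem I may work on a probability space on which this convergence holds almost surely; it then remains to prove $\dov D_{(n)} \to \dov D^{(0,H)}$ in probability in $\CC^{(2)}$. The argument proceeds by tightness followed by identification of subsequential limits via matching upper and lower bounds.

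Tightness of $(\dov D_{(n)})$ in $\CC^{(2)}$ is a consequence of the $1$-Lipschitz property of $\dov D_n$ in each argument with respect to the graph metric of $Q_n$, combined with the fact that consecutive contour corners $c_i, c_{i+1}$ correspond to vertices at bounded graph distance in $Q_n$. This yields equicontinuity with a modulus controlled by that of $\Lambda_{(n)}$, and combined with the almost sure convergence of the rescaled endpoints $(\br\tau_{h_n+1}+1)/(2n) \to \br T_H$ and $\tau_{h_n}/(2n) \to T_H$, Arzel\`a--Ascoli provides tightness. For the upper bound, the CVS successor structure in $Q_n$ yields explicit paths giving estimates of the form
\[
\dov D_n(i,j) \le \Lambda_n(i) + \Lambda_n(j) - 2 \min_{r \in J_n(i,j)} \Lambda_n(r) + 2,
\]
where $J_n(i,j)$ is the appropriate same-side interval within $[\br\tau_{h_n+1}+1, \tau_{h_n}]$ (or, when $i$ and $j$ lie on opposite sides of $0$, a concatenation through the apex region). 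Iterating this inequality bounds $\dov D_n$ from above by the discrete analog of the quotient pseudometric $\dov d_{\Lambda_n}/\{d_{C_n}=0\}$, which converges to $\dov D^{(0,H)}$ by the almost sure convergence of $(C_{(n)}, \Lambda_{(n)})$. Hence any subsequential limit $\dov D^*$ satisfies $\dov D^* \le \dov D^{(0,H)}$.

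The main obstacle is the matching lower bound $\dov D^* \ge \dov D^{(0,H)}$. Indeed, the trivial inequality $\dov D_n \ge D_\infty$ on the common domain (any path in $Q_n$ is also a path in $Q_\infty$) only yields $\dov D^* \ge D_{X,W}$, which is in general strictly smaller than $\dov D^{(0,H)}$, reflecting the identification $\gamma_0 \equiv \br\xi_0$ encoded in Lemma \ref{sec:gluing-quadr-}. To close this gap, the plan is to exploit the gluing structure by introducing, for each $H' > H$, the thicker quadrilateral $Q_n^{H'}$ encoded by the first $\lfloor H'\sqrt{2n}\rfloor$ trees on each side of $0$ in $Q_\infty$, so that $Q_n \subseteq Q_n^{H'} \subseteq Q_\infty$ and, by the discrete analog of Proposition \ref{boundgluquad},
\[
\dov D_n^{H'}(i,j) \le \dov D_n(i,j) \le \dov D_n^{H'}(i,j) + \omega(\Lambda_n; J_n^{H'})
\]
for a set $J_n^{H'}$ indexing the added trees, whose rescaled $\omega$ vanishes as $n\to\infty$ and then $H'\downarrow H$. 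Combining this sandwich with the absolute continuity relation of Lemma \ref{sec:conditioned-version-4}, used to transfer convergence from free quadrilaterals (whose scaling limits are accessible by the same route as in the slice case through a Brownian half-plane / disk comparison), and with the continuous counterpart of Proposition \ref{boundgluquad} showing that $\dov D^{(0,H')} \to \dov D^{(0,H)}$ as $H' \downarrow H$, should identify the lower bound. This is the most delicate step and mirrors the analogous argument carried out in Section 4 of our previous work on the disk case, the main new difficulty being the absence of an isometric embedding analog of Corollary \ref{sec:metric-spaces-coded-2}.
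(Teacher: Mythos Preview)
Your tightness argument and the upper bound $\dov D^* \le \dov D^{(0,H)}$ are correct and match the paper's treatment (Lemma~\ref{cvencquad} and the discussion around~\eqref{dd0boundquad}). The gap is in your lower-bound strategy. The sandwich with a thicker quadrilateral $Q_n^{H'}$, $H'>H$, is circular: to exploit the inequality $\dov D_n^{H'} \le \dov D_n$ you need the convergence $\dov D_{(n)}^{H'} \to \dov D^{(0,H')}$, which is precisely Theorem~\ref{thmddstar} with $H'$ in place of $H$. You suggest that free quadrilaterals are ``accessible by the same route as in the slice case'', but the slice argument hinges on Corollary~\ref{sec:metric-spaces-coded-2}: slices are \emph{isometrically} embedded in the UIHPQ, so their metric is simply a restriction of $D_\infty$. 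There is no such statement for quadrilaterals---$Q_n$ is not isometrically embedded in $Q_\infty$, because geodesics in $Q_\infty$ can shortcut through the identification $\gamma_0\equiv\br\xi_0$ on the other side of the cut. This is exactly the obstruction you correctly diagnose, and enlarging $H$ to $H'$ does not remove it. Note also that Lemma~\ref{sec:conditioned-version-4} goes the wrong way for your purposes: it expresses the conditioned law $\Quad_{A,\bar A,H,\Delta}$ in terms of $\Plane$, and is used downstream (Proposition~\ref{cvencquadcond}) to deduce the conditioned result \emph{from} Theorem~\ref{thmddstar}, not to prove it.

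The paper's route is the one you allude to in your last sentence but do not carry out. After tightness and the upper bound, one characterizes the equivalence relation $\{\wt D=0\}$ (Lemma~\ref{lemddovd}), showing it coincides with $\{\dov D^{(0,H)}=0\}$; this uses the description of $\{D_{X,W}=0\}$ in the Brownian plane (Lemma~\ref{sec:brownian-plane-its-2}) together with a discrete argument ruling out identifications across the spine. Hence $\wt\Qd$ and $\Qd^{(0,H)}$ are homeomorphic. One then proves a \emph{local} isometry (Lemma~\ref{lemloc}): on any $\eps$-ball whose center is at distance $>\eps$ from both maximal geodesics, or from both shuttles, one has $\wt D=\dov D^{(0,H)}=D_{X,W}$, by applying Lemma~\ref{lemdistglue}\ref{distglueii} to the two gluings that recover $Q_\infty$ from $Q_n$ and its complement. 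Since $\wt\Qd$ and $\Qd^{(0,H)}$ are compact geodesic spaces that are locally isometric away from the four corner points, a path-length argument upgrades this to a global isometry. This is indeed the mechanism of \cite[Section~4]{BeMi15I}; the ingredient your outline is missing is the local-isometry lemma, not a sandwich in the width parameter.
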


The first step in the proof is the following tightness statement. 

\begin{lmm}\label{cvencquad}
From every increasing sequence of integers, one may extract a
subsequence along which the following convergence holds in~$\CC^{(2)}$, jointly with the convergence of Proposition~\ref{cvencbp}:
\begin{equation}
  \label{eqextracquad}
  \dov{D}_{(n)}\tod \wt{D},
\end{equation}
where $\wt D$ is a random pseudometric on~$[\br T_H,T_H]$.
\end{lmm}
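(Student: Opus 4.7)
The plan is to establish tightness of the sequence $(\dov{D}_{(n)})_n$ as random elements of $\CC^{(2)}$, and then invoke Prokhorov's theorem together with Skorokhod's representation to obtain the joint subsequential convergence with the triple of Proposition~\ref{cvencbp}. The key input will be the classical label upper bound on distances in maps coded by the interval CVS bijection.

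First, I would derive a uniform modulus of continuity estimate for $\dov{D}_{(n)}$. Since the quadrilateral $Q_n$ is obtained by applying the interval CVS bijection to the double forest $(F_n,\br F_n)$ on the two intervals $I=[0,\tau_{h_n}]$ and $\br I=[\br \tau_{h_n+1}+1,0]$, the analog of~\eqref{dd0bound} within each interval gives, for corners $v_i$, $v_j$ with $i\le j$ lying in the same interval,
\begin{equation*}
\dov{D}_n(i,j)\le \Lambda(i)+\Lambda(j)-2\min_{i\le r\le j}\Lambda(r)+2\,,
\end{equation*}
while, for corners $i<0<j$ in opposite intervals, the shared vertex $v_0=\rho^0$ and the triangle inequality yield
\begin{equation*}
\dov{D}_n(i,j)\le \dov{D}_n(i,0)+\dov{D}_n(0,j)\,.
\end{equation*}

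Next, combining these bounds with the subadditivity $|\dov{D}_n(i,j)-\dov{D}_n(i',j')|\le \dov{D}_n(i,i')+\dov{D}_n(j,j')$ and rescaling, I would obtain, uniformly for $s,t,s',t'$ in the rescaled domain $[\br\tau_{h_n+1}/2n,\tau_{h_n}/2n]^2$,
\begin{equation*}
\big|\dov{D}_{(n)}(s,t)-\dov{D}_{(n)}(s',t')\big|\le 4\,\omega\big(\Lambda_{(n)};|s-s'|\vee|t-t'|\big)+\bO\big(n^{-1/4}\big)\,.
\end{equation*}
By Proposition~\ref{cvencbp}, the processes $\Lambda_{(n)}$ converge in distribution in $\CC$ to the a.s.\ uniformly continuous process $W$; consequently their moduli of continuity are tight. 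Together with $\dov{D}_{(n)}(0,0)=0$ and the uniform bound $\dov{D}_{(n)}(0,t)\le 2(\max\Lambda_{(n)}-\min\Lambda_{(n)})+\bO(n^{-1/4})$ on compact subsets, this gives tightness of $(\dov{D}_{(n)})_n$ in $\CC^{(2)}$ by a version of the Arzel\`a--Ascoli theorem adapted to the topology on $\CC^{(2)}$ that allows varying domains.

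Third, the rescaled endpoints $\br\tau_{h_n+1}/2n$ and $\tau_{h_n}/2n$ converge in distribution (and, up to Skorokhod embedding, almost surely) to $\br T_H$ and $T_H$ respectively: this follows from the joint convergence $C_{(n)}\to X$ combined with the assumption $h_n/\sqrt{2n}\to H$ and the standard continuity of first-passage times for Brownian motion at a point that is not a local minimum. Joint tightness of $(C_{(n)},\Lambda_{(n)},D_{(n)},\dov{D}_{(n)})$ together with Prokhorov's theorem therefore allows to extract a subsequence along which the last coordinate converges to some continuous limit $\wt D$ defined on $[\br T_H,T_H]^2$, jointly with the convergence of Proposition~\ref{cvencbp}. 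The symmetry, triangle inequality, and vanishing on the diagonal of $\wt D$ are inherited from those of the prelimit $\dov{D}_n$ by passage to the limit.

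The main obstacle will be the careful handling of the random varying domains $[\br\tau_{h_n+1}/2n,\tau_{h_n}/2n]^2$ within the topology on $\CC^{(2)}$, since both the function values and the boundary of the domain must be controlled simultaneously; this is best navigated by passing to an almost-sure coupling via Skorokhod's representation theorem and then working pathwise.
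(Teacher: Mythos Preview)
Your proposal is correct and follows essentially the same approach as the paper. The paper's proof is terser: it simply invokes ``the classical tightness argument from \cite[Proposition~3.2]{legall06}'' to obtain tightness of $\dov{D}_{(n)}$ in $\CC^{(2)}$, then combines this with Proposition~\ref{cvencbp} and Prokhorov's theorem to extract a jointly convergent subsequence, and finally uses the convergence $C_{(n)}\to X$ to identify the limiting domain as $[\br T_H,T_H]$. Your explicit modulus-of-continuity bound via the label estimate~\eqref{dd0boundquad} is precisely the content of that cited classical argument, so you have effectively unpacked what the paper leaves as a citation.
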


\begin{proof}
The classical tightness argument from \cite[Proposition~3.2]{legall06} implies
that the laws of $\dov{D}_{(n)}$, $n\geq 1$, are 
tight in $\CC^{(2)}$. Together with Proposition~\ref{cvencbp}, this yields the tightness
of the laws of the sequence of the quadruples
$(C_{(n)},\Lambda_{(n)},D_{(n)},\dov{D}_{(n)})$, and therefore, by
Prokhorov's theorem, their joint convergence in distribution, at least
along some subsequence, to a limiting process $(X,W,D_{X,W},\wt{D})$,
where the law of the first three components is determined by
Proposition~\ref{cvencbp}. 
Since $\dov{D}_{(n)}$ is a
pseudometric on $[(\br\tau_{h_n+1}+1)/2n,\tau_{h_n}/2n]$, and because
of the convergence of~$C_{(n)}$ to~$X$ implying the joint convergence
of the bounds of this interval to~$\br T_H$, $T_H$, 
it is straightforward to check that all
subsequential limits of these laws are carried by functions that are
pseudometrics on the interval $[\br T_H,T_H]$. 
\end{proof}

From now on, we fix
a subsequence along which~\eqref{eqextracquad} holds, and only consider
for the time being values of~$n$ that belong to this particular
subsequence. By the Skorokhod representation theorem, we may and will
assume that this convergence furthermore holds almost surely.

We define $\wt{\Qd}$ as the set $[\br T_H,T_H]/\{\wt{D}=0\}$, endowed
with the metric~$\wt{D}$. Beware that it is not clear
at all that $\wt{\Qd}=\Qd^{(0,H)}$, and
this is precisely what we want to prove. More precisely, we aim at
showing that, almost surely, for every $s$, $t\in [\br T_H,T_H]$, it
holds that 
$\wt{D}(s,t)=\dov{D}^{(0,H)}(s,t)$, which will entail Theorem~\ref{thmddstar}. 

Since the real number~$H$ is fixed once and for all, we will use in the remainder of this section 
the shorthand pieces of notation
\[\dov{D}= \dov{D}^{(0,H)}\qquad\text{ as well as }\qquad D=D_{X,W}.\]
We let $\bp:\R\to \BP$ and $\wt{\bp}:[\br T_H,T_H]\to \wt{\Qd}$ be the
canonical projections, which are continuous since~$D$ and $\wt{D}$ are
continuous functions. Note that, clearly, for every~$n$, it holds that
$ D_\infty\le \dov D_n$ on $[\br\tau_{h_n+1}+1, \tau_{h_n}]^2$, so
that $ D\le \wt D$ on~$[\br T_H,T_H]$. As a result, there exists a unique continuous (even $1$-Lipschitz) projection
$\pi:\wt{\Qd} \to \bp([\br T_H,T_H])$ such that $\bp=\pi\circ\wt{\bp}$
on~$[\br T_H,T_H]$.  

The inequality $\wt{D}\le\dov{D}$ follows from the usual
following arguments. First we come back to discrete maps and observe
that, for integers~$i$, $j\in [\br\tau_{h_n+1}+1, \tau_{h_n}]$, we have $d_{C}(i,j)=0$ if and only if~$v_i$
and~$v_j$ are the same vertex of $F_n\cup\br F_n$, which implies that
$\dov D_n(i,j)=0$. Next, by considering the so-called \emph{maximal
  wedge path} consisting of the concatenation of the two geodesics
from~$c_i$ and from~$c_j$ obtained by following subsequent successors
up to the point where they coalesce, we obtain the classical upper
bound similar to~\eqref{dd0bound}:
\begin{equation}\label{dd0boundquad}
\dov D_n(i,j)\le  d_{\Lambda}(i,j)+2,\qquad i,j\in [\br\tau_{h_n+1}+1, \tau_{h_n}]\ \text{ with }\ ij\ge 0.
\end{equation}
Passing to the limit yields that $\{d_{X}=0\}\subseteq\{\wt{D}=0\}$ and that $\wt{D}\le \dov{d}_{W}$, which imply the inequality $\wt{D}\le\dov{D}$. The converse inequality is harder and is the focus of what follows.

Let us start with some key properties of the pseudometrics~$D$, $\wt{D}$,
and~$\dov{D}$. The following lemma is proved in the exact same way as \cite[Lemma~14]{BeMi17}.

\begin{lmm}\label{lemgeodspaces}
The spaces $\wt{\Qd}$ and $\Qd^{(0,H)}$ are compact geodesic metric spaces. 
\end{lmm}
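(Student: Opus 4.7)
Compactness for both spaces is immediate: each is the continuous image of the compact set $[\br T_H, T_H]$ (seen as two closed intervals identified at $0$) under the canonical projections $\wt\bp$ and $\dov\bp^{(0,H)}$, which are continuous because the pseudometrics $\wt D$ and $\dov D^{(0,H)}$ are continuous functions on the square of this compact set.

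The plan for the geodesic property of $\wt\Qd$ is to realize it as a Gromov--Hausdorff limit of the rescaled discrete quadrilaterals $\Omega_n(Q_n)$ (restricted to the subsequence fixed after Lemma~\ref{cvencquad}), viewed as metric graphs with edges of unit length -- these are compact geodesic spaces. Concretely, I would exhibit the correspondence
\[\RR_n = \big\{(v_i, \wt\bp(i/(2n))) : \br\tau_{h_n+1}+1 \le i \le \tau_{h_n}\big\},\]
whose distortion equals $\sup_{i,j} |\dov D_{(n)}(i/(2n), j/(2n)) - \wt D(i/(2n), j/(2n))|$ after rescaling. By Lemma~\ref{cvencquad} and the almost-sure uniform convergence obtained from Skorokhod's representation, this supremum tends to zero on the extracted subsequence. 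One then extends $\RR_n$ into a correspondence on all of $\Omega_n(Q_n)$ at additional cost $\bO(n^{-1/4})$, since non-vertex points of the metric graph lie within graph distance $1/2$ from some vertex, and one checks $\eps$-density on the $\wt\Qd$ side using uniform continuity of $\wt D$ together with the fact that $\br\tau_{h_n+1}/(2n) \to \br T_H$ and $\tau_{h_n}/(2n) \to T_H$. The classical theorem \cite[Theorem~7.5.1]{burago01} then ensures that $\wt\Qd$ is geodesic.

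The geodesic property of $\Qd^{(0,H)}$ is established by a parallel argument. One approximates the encoding pair $(X^{(0,H)}, W^{(0,H)})$ by the rescaled contour and label processes of uniformly sampled well-labeled double forests with matching fixed parameters (half-areas, width, tilt), which produces a sequence of compact geodesic discrete quadrilaterals. The identification of the subsequential limiting metric with $\dov D^{(0,H)}$ can be carried out by the same standard tightness-plus-snake-limit technique used to prove Proposition~\ref{cvencslice}, combined if needed with the absolute-continuity formula of Lemma~\ref{sec:conditioned-version-4} to transfer information from the $\Plane$ setting. Another application of \cite[Theorem~7.5.1]{burago01} then yields the conclusion. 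The main (and essentially only) obstacle here is avoiding circularity with Theorem~\ref{thmslquad}, whose proof we are in the middle of: the key observation is that one only needs convergence at the level of the pseudometric coded by the labels (the first three components in a statement analogous to Proposition~\ref{cvencslice}), which is strictly weaker than the GHP convergence targeted by Theorem~\ref{thmslquad} and is obtained in isolation.
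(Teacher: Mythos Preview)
Your argument for $\wt\Qd$ is correct and is exactly the approach the paper (via \cite[Lemma~14]{BeMi15I}) has in mind: realize it as a Gromov--Hausdorff limit of the rescaled discrete quadrilaterals along the fixed subsequence, then apply \cite[Theorem~7.5.1]{burago01}.

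Your argument for $\Qd^{(0,H)}$, however, has a genuine circularity that your workaround does not resolve. Identifying the subsequential limiting metric of discrete quadrilaterals with $\dov D^{(0,H)}$ (rather than merely with some $\wt D\leq \dov D^{(0,H)}$) is precisely the content of Theorem~\ref{thmddstar}, and its conditioned version Proposition~\ref{cvencquadcond} is proved \emph{using} Theorem~\ref{thmddstar}; both therefore sit downstream of the very lemma you are proving. Your appeal to ``the same technique as Proposition~\ref{cvencslice}'' does not help, because the slice case works only thanks to Corollary~\ref{sec:metric-spaces-coded-2} (the slice metric is literally a restriction of the half-plane metric), and the paper explicitly notes that quadrilaterals have no such restriction property. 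So from the convergence of $(C_{(n)},\Lambda_{(n)})$ alone you can extract a geodesic limit, but you cannot identify it with $\dov D^{(0,H)}$ at this stage.

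The clean, non-circular route for $\Qd^{(0,H)}$ is a direct length-space argument. By definition $\dov D^{(0,H)}=\dov d_W/\{d_X=0\}$ via the quotient construction~\eqref{eq:quotient}. The space $(I^{(0,H)},\dov d_W)$, modulo zero distances, is a disjoint union of two compact $\R$-trees, hence a length space (with $\infty$ allowed between components). The quotient of a length space by~\eqref{eq:quotient} is again a length space: any chain $x=x_1,y_1\,R\,x_2,\ldots,y_m=y$ with $\sum \dov d_W(x_i,y_i)$ close to $\dov D^{(0,H)}(x,y)$ yields, after projecting tree geodesics from $x_i$ to $y_i$ and concatenating (continuously, since $y_i$ and $x_{i+1}$ project to the same point), a path in $\Qd^{(0,H)}$ of length at most that sum. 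Being compact and length, $\Qd^{(0,H)}$ is geodesic. This is the argument behind \cite[Lemma~14]{BeMi15I}.
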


We will need the
following identification of the set $\{\wt{D}=0\}$, analog to Lemma~\ref{sec:brownian-plane-its-2}. 

\begin{lmm}\label{lemddovd}
The following holds almost surely. For every~$s$, $t\in [\br T_H,T_H]$ with $s\neq t$, it
holds that $\wt{D}(s,t)=0$ if and only if either $d_X(s,t)=0$ or
$\dov{d}_W(s,t)=0$, these two cases being mutually
exclusive.
\end{lmm}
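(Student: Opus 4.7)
My plan is to split the proof into the easy implication together with mutual exclusivity, and then the hard implication, which contains the main difficulty.

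\emph{Easy direction and mutual exclusivity.} By the definition of $\dov D$ in~\eqref{eq:30} as the quotient pseudometric $\dov d_W/\{d_X=0\}$, the value $\dov D(s,t)$ vanishes whenever either $d_X(s,t)=0$ (the two points being identified in the quotient) or $\dov d_W(s,t)=0$ (since $\dov D\le\dov d_W$). Combined with the already-established inequality $\wt D\le\dov D$, this yields the ``if'' direction. For mutual exclusivity, note that $\dov d_W(s,t)=0$ with $s\ne t$ forces $s$ and $t$ to lie on the same side of~$0$, since $\dov d_W$ equals~$\infty$ across sides by~\eqref{eq:23}, and so $\dov d_W(s,t)=d_W(s,t)=0$. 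Together with $d_X(s,t)=0$, this would contradict the mutual exclusivity statement in Lemma~\ref{sec:brownian-plane-its-2}.

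\emph{Hard direction.} Assume $\wt D(s,t)=0$ with $s\ne t$. The discrete inequality $ D_\infty\le\dov D_n$ passes to the limit as $D\le\wt D$, so $D(s,t)=0$; applying Lemma~\ref{sec:brownian-plane-its-2}, we obtain either $d_X(s,t)=0$ (in which case we are done) or $d_W(s,t)=0$. In the second alternative, I must upgrade $d_W(s,t)=0$ to $\dov d_W(s,t)=0$, i.e., rule out the possibility that $s$ and $t$ lie on opposite sides of~$0$.

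\emph{Main obstacle.} This last step is the heart of the lemma, for it says that the identifications in~$\BP$ coming from gluing $\gamma^{(0,\infty)}$ to $\br\xi^{(0,\infty)}$ via~\eqref{eq:31} must \emph{not} appear in~$\wt D$. My plan for this is a discrete separation argument: in each~$Q_n$, the quadrilateral is a topological disk whose two ``halves'' (coming from the double forest $F_n\cup\br F_n$) are separated by the pair of geodesics $\gamma\cup\br\xi$ issuing from the common vertex $v_0$, so that any path in~$Q_n$ from $v_{i_n}$ with $i_n<0$ to $v_{j_n}$ with $j_n>0$ must cross a vertex lying on $\gamma\cup\br\xi$ or reach the apex. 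Combining this bottleneck with the cactus-type lower bound $\dov D_n(i,j)\ge\Lambda(i)+\Lambda(j)-2\min_{\text{path}}\Lambda$ on the renormalized labels, one obtains, after passing to the limit along the chosen extraction, the bound
\[
\wt D(s,t)\ge W_s+W_t-2\inf\{W_r:r\text{ on a continuum separator}\},
\]
which in particular is strictly positive unless $\bp(s)$ and $\bp(t)$ both lie on the separating geodesic ray $\gamma_0$ of~$\BP$. The remaining degenerate case where both project to~$\gamma_0$ corresponds to $W_s=W_t=\un W(s,t)=0$; here I would use the fact that the discrete separators split into two distinct paths $\gamma^{(0,H)}$ and $\br\xi^{(0,H)}$ in $\wt\Qd$, together with the continuity of $\wt D$ and a deterministic density argument on the countable set of branch points of~$W$, to conclude that $\wt D(s,t)>0$ for every such pair simultaneously, almost surely. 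I expect this topological-to-metric separation argument, specifically making rigorous the ``bottleneck crossing'' lower bound in the limit $n\to\infty$, to be the most technical step.
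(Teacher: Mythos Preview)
Your easy direction and mutual exclusivity are fine, and you correctly locate the crux: ruling out $d_W(s,t)=0$ with $t<0<s$. However, the argument you sketch for this step does not work.

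First, the separator is misidentified. In the discrete quadrilateral~$Q_n$, the geodesics $\gamma$ and $\br\xi$ are two \emph{boundary} arcs meeting at $\rho=\rho^0$; they do not separate the two halves. What separates the upper half (vertices $v_i$, $i\geq 0$) from the lower half is the set of floor vertices $\{\rho^0,\ldots,\rho^{h_n}\}$: every arc of $Q_n$ links two vertices on the same side of the floor, so any path from $v_{i_n}$ with $i_n>0$ to $v_{j_n}$ with $j_n<0$ must pass through some $\rho^{l_n}$. This is exactly the separator the paper uses.

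Second, even with the correct separator, the cactus-type bound you write down does not follow. From $|\Lambda(i)-\Lambda(k)|\le d_{Q_n}(v_i,v_k)$ one only gets $\wt D(s,u)=\wt D(t,u)=0$ in the limit, hence $W_s=W_t=W_u$; there is no inequality of the form $\wt D(s,t)\ge W_s+W_t-2\inf W$ over the separator. More importantly, your case split is illusory: whenever $d_W(s,t)=0$ with $t<0<s$, it is \emph{automatic} that $W_s=\un W(t,s)=W_t$, so $s\in\Gamma_0$ and $t\in\Xi_0$; your ``degenerate case'' is the only case, and your sketch for it (``density argument on branch points of $W$'') is not a proof.

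The paper's argument is different and cleaner. Having passed through a floor vertex $\rho^{l_n}=v_{k_n}$, one extracts $k_n/2n\to u$; being a record time of $C_n$, the limit $u$ satisfies $X_u=\dun X_u$, so $d_X(u,T_{H'})=d_X(u,\br T_{H'})=0$ with $H'=-X_u>0$ (positivity because $W_u=W_s<0$). Since the geodesic has length $o(n^{1/4})$, one also gets $\wt D(s,u)=\wt D(u,t)=0$, hence $D(s,u)=D(t,u)=0$. One has now produced four distinct points $s,t,T_{H'},\br T_{H'}$ all identified by $D$, with $d_W(s,t)=0$ and $d_X(T_{H'},\br T_{H'})=0$, which contradicts Lemma~\ref{sec:brownian-plane-its-2} (the equivalence classes of $\{D=0\}$ cannot carry both a nontrivial $d_W$-identification and a nontrivial $d_X$-identification).
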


\begin{proof}
It follows very similar lines to that of
Proposition~3.1 in~\cite{legall11}, and we will only sketch the main
arguments. The fact that $d_X(s,t)=0$ or $\dov d_W(s,t)=0$ implies
$\wt{D}(s,t)=0$ is immediate from the inequality $\wt{D}\leq
\dov{D}$. Conversely, assume that $\wt{D}(s,t)=0$ for some $s\neq t$
in $[\br T_H,T_H]$. Then, in particular,
since $D\leq \wt{D}$, it holds that $D(s,t)=0$, so that either
$d_W(s,t)=0$ or $d_X(s,t)=0$, and these two cases are exclusive. If
we are in the case that~$s$, $t$ are of the same sign and that $d_W(s,t)=0$, this trivially
implies $\dov d_W(s,t)=0$, as wanted. And since $\dov d_W\ge d_W$, it cannot hold that $\dov d_W(s,t)=d_X(s,t)=0$ at the same time. Hence, the proof will be complete if we can show that the situation where $d_W(s,t)=0$ necessarily implies that~$s$ and~$t$ are of the same 
sign.

For this, we argue by contradiction, assuming that $t<0<s$ and
$d_W(s,t)=0$.
Note that this implies in particular that~$s$, $t$ lie on some point of
the geodesics~$\Gamma_0$ and~$\Xi_0$, respectively, meaning that
$W_s=\inf_{u\in [0,s]}W_u$ and $W_t=\inf_{u\in [t,0]}W_u$. 
Then, by the convergence of $\Lambda_{(n)}$ to $W$, 
there exist $i_n\in [0,\tau_{h_n}]$ and $j_n\in [\br
\tau_{h_n+1}+1,0]$ such that $i_n/2n\to s$ and $j_n/2n\to t$, with
the property that $\Lambda_n(i_n)=\min_{k\in [0,i_n]}\Lambda_n(k)$
and $\Lambda_n(j_n)=\min_{k\in [j_n,0]}\Lambda_n(k)$. This means
that $v_{i_n}$ lies on the maximal geodesic~$\gamma_n$ of~$Q_n$, and~$v_{j_n}$ lies at distance~$1$ from the shuttle~$\br\xi_n$ of~$Q_n$.

Now any geodesic path in~$Q_n$ from~$v_{i_n}$ to~$v_{j_n}$ will necessarily
intersect the spine of the tree $\bT_\infty$ at some tree root
$\rho^{l_n}$ with $0\leq l_n\leq h_n$. 
Let $k_n\in [\br\tau_{h_n+1}+1,\tau_{h_n}]$ be an integer such
that $v_{k_n}=\rho^{l_n}$. In terms of the contour process $C_n$,
this means that $C_n(k_n)\leq C_n(l)$ for every $l\in [0\wedge
k_n,0\vee k_n]$. Up to extracting along a further subsequence, we
may assume that $k_n/2n\to u\in [\br T_H,T_H]$ as $n\to\infty$, and
we observe that $u$ must be such that $X_u\leq X_t$ for every $t\in
[0\wedge u,0\vee u]$, and in particular, we observe that
$d_X(u,T_{H'})=d_X(u,\br T_{H'})=0$ where $H'=-X_u$. We may exclude
the case where $H'=0$ by noting that, necessarily, $W_s=W_t=W_u<0$. 

On the other hand, since $v_{k_n}$ lies on a
geodesic path from $v_{i_n}$ to $v_{j_n}$, which has length
$\sO(n^{1/4})$ because of our assumption that $\wt{D}(s,t)=0$, it
holds that $\wt{D}(s,u)=\wt{D}(u,t)=0$. We arrive at the wanted
contradiction since we have found four points $s\neq t$, $T_{H'}\neq \br
T_{H'}$ that are all identified by $D$ but such that $d_W(s,t)=0$ and
$d_X(T_{H'},\br T_{H'})=0$. 
\end{proof}

As $\wt{D}\le \dov{D}\le\dov{d}_W$ and $\{d_X=0\}\subseteq
\{\dov{D}=0\}$, Lemma~\ref{lemddovd} implies that the
equivalence relations $\{\wt{D}=0\}$ and $\{\dov{D}=0\}$ coincide, and that $\wt\bp=\dov{\bp}^{(0,H)}$. For this reason we may, and will, systematically
identify points of~$\wt{\Qd}$ with points of~$\Qd^{(0,H)}$. 
Moreover, the identity mapping $\Qd^{(0,H)}\to \wt{\Qd}$ is
continuous, and by compactness of these spaces, 
we conclude that $\wt{\Qd}$ is homeomorphic to~$\Qd^{(0,H)}$.

Theorem~\ref{thmddstar} will be obtained by compactness and continuity arguments from the following local version, stating that, locally and away either from both maximal geodesics or from both shuttles, the three distances under consideration are equal. The proof of the following lemma can straightforwardly be adapted from~\cite[Lemma~15]{BeMi17}, so that we only sketch it and refer the reader to the latter reference for the details. In an arbitrary pseudometric space $(M,d)$, we denote by $d(x,A)=\inf\{d(x,y):y\in A\}$ the distance from a point $x\in M$ to a subset $A\subseteq M$.

\begin{lmm}\label{lemloc}
The following holds almost surely. Fix $\eps>0$, and let $s$, $t\in
[\br T_H,T_H]$ be such that $\wt{D}(s,t)<\eps$ and
\begin{itemize}
	\item either $\wt{D}\big(s,\Gamma_0\cup\Gamma_{\br
            T_H}\big)\wedge \wt{D}\big(t,\Gamma_0\cup\Gamma_{\br T_H}\big)> \eps$\,;
	\item or $\wt{D}\big(s,\Xi_0\cup\Xi_{T_H}\big)\wedge \wt{D}\big(t,\Xi_0\cup\Xi_{T_H}\big)> \eps$.
\end{itemize}
Then, it holds that $D(s,t)=\wt{D}(s,t)=\dov{D}(s,t)$. 
\end{lmm}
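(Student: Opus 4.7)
The proof will follow the strategy of \cite[Lemma~15]{BeMi15I}, adapted from composite slices to quadrilaterals with geodesic sides. The two cases of the hypothesis are symmetric under time reversal $t\mapsto -t$, which exchanges the roles of maximal geodesics and shuttles, so I focus on the first case: $\wt{D}(s,\Gamma_0\cup\Gamma_{\br T_H})\wedge \wt{D}(t,\Gamma_0\cup\Gamma_{\br T_H})>\eps$. Since $D\le \wt{D}\le \dov{D}$, it suffices to prove $\dov{D}(s,t)\le D(s,t)$, which will then yield the chain of equalities $D(s,t)=\wt{D}(s,t)=\dov{D}(s,t)$.

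The argument relies on comparing discrete geodesics in $Q_\infty$ (which encode $D$ in the limit) with paths in the discrete quadrilateral $Q_n$ (which encode $\dov{D}_n$, hence $\wt{D}$ in the limit along our extracted subsequence). I will choose $i_n$, $j_n$ with $i_n/2n\to s$ and $j_n/2n\to t$, and a discrete geodesic $\pi_n$ in $Q_\infty$ from $v_{i_n}$ to $v_{j_n}$ of length $\ell_n$ with $\ell_n/(8n/9)^{1/4}\to D(s,t)<\eps$. The central step is to show that, for $n$ large enough, $\pi_n$ can be modified into a path $\pi'_n$ entirely contained in $Q_n$ satisfying $\lgth(\pi'_n)\le \ell_n+o((8n/9)^{1/4})$. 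Passing to the limit then gives $\wt{D}(s,t)\le D(s,t)$, hence $\wt{D}(s,t)=D(s,t)$; the remaining equality $\dov{D}(s,t)=\wt{D}(s,t)$ will follow by a parallel continuum argument based on the gluing description $\BP=G(\Qd^{(0,\infty)};\gamma^{(0,\infty)},\br\xi^{(0,\infty)})$ of~\eqref{eq:31} together with the monotonicity $\dov{D}^{(0,H)}\ge \dov{D}^{(0,H')}$ for $H\le H'$ from Proposition~\ref{boundgluquad}.

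Whenever $\pi_n$ performs an excursion outside $Q_n$, it crosses the boundary $\gamma_n\cup\xi_n\cup\br\gamma_n\cup\br\xi_n$, which is a union of four chains of consecutive successors, each being a geodesic both in $Q_\infty$ and in $Q_n$. If an excursion enters and exits through the \emph{same} boundary geodesic, say at vertices $v_{k_1}$, $v_{k_2}\in\gamma_n$, then the corresponding sub-path of $\pi_n$ can be replaced by the sub-path of $\gamma_n$ joining $v_{k_1}$ and $v_{k_2}$; the latter stays in $Q_n$ and has length $|d_{Q_\infty}(v_0,v_{k_1})-d_{Q_\infty}(v_0,v_{k_2})|\le d_{Q_\infty}(v_{k_1},v_{k_2})$, so this rerouting does not increase the total length. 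The same argument works for excursions through any of the four boundary geodesics.

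The main obstacle, and the step where the hypothesis genuinely enters, is ruling out \emph{diagonal} excursions entering through one boundary geodesic and exiting through a different one. Topologically, the most delicate case is an excursion connecting $\gamma_n$ (or $\br\gamma_n$) to $\br\xi_n$ (or $\xi_n$), since such excursions discretely reflect the continuum identification $\gamma^{(0,\infty)}\sim\br\xi^{(0,\infty)}$ that produces the Brownian plane from $\Qd^{(0,\infty)}$. I plan to rule out these excursions by showing that any such detour would force $\pi_n$ to pass, in the rescaled metric $\dov{D}_{(n)}$, within distance $\eps+o(1)$ of $\gamma^{(0,H)}\cup\br\gamma^{(0,H)}$ of one of its endpoints, via the joint convergence of $\gamma_n$, $\br\gamma_n$ to $\gamma^{(0,H)}$, $\br\gamma^{(0,H)}$ established together with the convergence $\dov{D}_{(n)}\to \wt{D}$; passing to the limit would then contradict the hypothesis $\wt{D}(s,\Gamma_0\cup\Gamma_{\br T_H})\wedge \wt{D}(t,\Gamma_0\cup\Gamma_{\br T_H})>\eps$. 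I expect the combinatorial bookkeeping of the various diagonal configurations, and the quantification of the approximation errors involved in this last step, to be the main technical difficulty, making the proof more involved than its slice counterpart in \cite{BeMi15I} because of the additional boundary geodesic.
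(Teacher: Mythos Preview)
Your strategy is in the right family as the paper's, but the paper organizes the argument differently and thereby avoids the difficulty you flag as ``the main technical difficulty.''

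The paper does \emph{not} perform path surgery on a $Q_\infty$-geodesic. Instead, it first proves directly, by contradiction, that $v_{i_n}$ and $v_{j_n}$ are at $d_{Q_n}$-distance greater than $(8n/9)^{1/4}\eps$ from both $\gamma_n$ and $\br\gamma_n$: if some $v_{k_n}\in\gamma_n$ were $d_{Q_n}$-close to $v_{i_n}$, extracting a limit $u$ of $k_n/2n$ gives $u$ a left-minimum of $W$ on $[0,T_H]$ (hence $\wt D(u,\Gamma_0)=0$) with $\wt D(s,u)\le\eps$, contradicting the hypothesis. Second, it invokes the gluing description: $Q_\infty$ is obtained from $Q_n$ and the complementary infinite quadrangulation $Q_n^{\mathrm c}$ by gluing $\xi_n,\br\gamma_n$ to the sides of $Q_n^{\mathrm c}$, and then self-gluing along the prolongations of $\gamma_n$ and $\br\xi_n$. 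Two applications of Lemma~\ref{lemdistglue}\ref{distglueii} (one per gluing) yield $d_{Q_n}(v_{i_n},v_{j_n})=d_{Q_\infty}(v_{i_n},v_{j_n})$ at once, hence $\wt D(s,t)=D(s,t)$ in the limit; together with $D\le\wt D\le\dov D$ and the parallel continuum application of the same lemma to the gluing $\Qd^{(0,H)}\to\BP$, this gives the three-way equality.

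Your excursion bookkeeping runs into a structural issue that this bypasses. Unlike the slice case of \cite{BeMi15I}, $Q_n$ is \emph{not} isometrically embedded in $Q_\infty$, nor even a subgraph: in $Q_\infty$, initial segments of $\gamma_n$ are identified with $\br\xi_n$ and initial segments of $\xi_n$ with $\br\gamma_n$, and an arc of $Q_\infty$ from a negative-index corner to a positive-index corner has both endpoints in $V(Q_n)$ yet is not a $Q_n$-edge. Such a step of $\pi_n$ is not an ``excursion outside $Q_n$'' in your sense, but it still prevents $\pi_n$ from being a $Q_n$-path. Your argument to rule out diagonal excursions by exhibiting a short $\dov D_{(n)}$-path from an endpoint to $\gamma_n\cup\br\gamma_n$ likewise requires that the relevant sub-path of $\pi_n$ be a $Q_n$-path, which is not automatic. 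The gluing formulation via Lemma~\ref{lemdistglue}\ref{distglueii} packages all of this: once you know the endpoints are $d_{Q_n}$-far from the two geodesics along which the gluings occur, no excursion tracking is needed.
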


\begin{proof}
Let $i_n$, $j_n$ be integers in $[\br\tau_{h_n+1}+1,\tau_{h_n}]$ such that
$i_n/2n\to s$ and $j_n/2n\to t$ as $n\to\infty$. From the assumption
that $\wt{D}(s,t)<\eps$ and the convergence of $D_{(n)}$ toward
$\wt{D}$, we deduce that $d_{Q_n}(v_{i_n},v_{j_n})<\eps(8n/9)^{1/4}$
for every $n$ large enough. 

Next, keeping the same notation, assume that we are in the first
alternative of the statement. Then we claim that
for every $n$ large enough, $v_{i_n}$ and $v_{j_n}$ must be at
$d_{Q_n}$-distance at least $\eps(8n/9)^{1/4}$ from the maximal geodesics
$\gamma_n$ and $\br\gamma_n$ of $Q_n$. Indeed, if we assume otherwise,
then up to taking an extraction along a further subsequence, we would
find a point $k_n\in [\br\tau_{h_n+1}+1,\tau_{h_n}]$ such that for every
$n$, $v_{k_n}$
belongs to (the same) one of these maximal geodesics, and is at $d_{Q_n}$-distance at most
$(8n/9)^{1/4}$ from (the same) one of two points $v_{i_n}$ or $v_{j_n}$. To fix the ideas,
assume that $v_{k_n}$ is on $\gamma_n$ and is close to $v_{i_n}$ in
the latter sense, the discussion being similar in the other cases.
Up to taking yet another subsequence if necessary, we may assume that
$k_n/2n$ converges to some $u\in [\br T_H,T_H]$. Note that $k_n$,
being a time of visit of the maximal geodesic $\gamma_n$, must be a
left-minimum for the label process $\Lambda_n$ restricted to nonnegative times, and, by passing to the limit,
$u$ must be a left-minimum of~$W$ restricted to nonnegative times, entailing
that $\wt{D}(u,\Gamma_0)=0$. Therefore, by passing to the limit in the
inequality $D_{(n)}(i_n/2n,k_n/2n)\leq \eps$, we would obtain that
$\wt{D}(s,\Gamma_0)\leq \eps$, a contradiction with our assumption.

Now observe that 
$Q_\infty$ is obtained by the following two gluing operations, from~$Q_n$ and the infinite quadrangulation~$Q_n^{\mathrm c}$, encoded by the labeled double forest with trees grafted above~$\rho^{h_n+i}$, $i\geq 0$, and below~$\rho^{h_n+i}$, $i\geq 1$.
 
\begin{itemize}
\item First, by gluing the geodesic sides~$\xi_n$ and~$\br\gamma_n$ of~$Q_n$ to the (unique) maximal geodesic and shuttle of~$Q_n^{\mathrm c}$. Note that the resulting infinite quadrangulation is also obtained by performing the interval CVS construction on~$\bT_\infty$ with the intervals $\{c_i,\, i\le 0\}$ and $\{c_i,\, i\ge 0\}$.

\item Second, by gluing together the (unique) maximal geodesic and shuttle of the infinite quadrangulation obtained at the first step. Note that the geodesic sides of this infinite map are prolongations of~$\gamma_n$ and~$\br\xi_n$.
\end{itemize}
Therefore, Lemma~\ref{lemdistglue}.\ref{distglueii} applied twice (once for each gluing
operation) shows that if $v$, $w\in V(Q_n)$ are such that
$d_{Q_n}(v,w)<K$ and either
\begin{itemize}
\item $d_{Q_n}(v,\gamma_n)\wedge d_{Q_n}(w,\gamma_n)>K$,
\item or $d_{Q_n}(v,\br\gamma_n)\wedge d_{Q_n}(w,\br\gamma_n)>K$, 
\end{itemize}
then $d_{Q_n}(v,w)=d_{Q_\infty}(v,w)$. Applying this to $v=v_{i_n}$, $w=v_{j_n}$,
and $K=(8n/9)^{1/4}\eps$ yields, after passing to the limit, that
$\wt{D}(s,t)=D(s,t)$. Since $\wt{D}\leq \dov D\leq D$, this yields the
result in the first alternative of the statement.  The second case,
with shuttles instead of maximal geodesics, is similar.
\end{proof}

We may finally prove Theorem~\ref{thmddstar}.

\begin{proof}[Proof of Theorem~\ref{thmddstar}]
We follow the same lines as in the proof of \cite[Theorem~11]{BeMi17}. As we observed before, the metric spaces $\wt{\Qd}$
and $\Qd^{(0,H)}$ are homeomorphic. Therefore, since the geodesics~$\gamma$ and~$\br\gamma$ do not intersect in $\Qd^{(0,H)}$, the same
is true in~$\wt{\Qd}$, and similarly, the geodesics~$\xi$ and~$\br\xi$
do not intersect in these spaces. Moreover, as we know, these four
geodesics intersect only at~$\gamma(0)=\br\xi(0)$, $\xi(0)=\br{\gamma}(0)$, $x_\sas^{(0,H)}$ and~$\br x_\sas^{(0,H)}$. Therefore, for every
$x\in \wt{\Qd}\setminus \big\{\gamma(0),\br\gamma(0),x_\sas^{(0,H)},\br
x_\sas^{(0,H)}\big\}$, there exists $\eps>0$ such that the open ball
$B_{\wt{D}}(x,\eps)$ of
radius~$\eps$ around~$x$ for the metric~$\wt{D}$ intersects neither $\gamma\cup \br\gamma$ nor $\xi\cup\br\xi$. By Lemma~\ref{lemloc}, this implies that the balls $B_{\wt{D}}(x,\eps)$,
and $B_{\dov{D}}(x,\eps)$ are isometric. Hence, $\wt{\Qd}$ and
$\Qd^{(0,H)}$ are two compact geodesic metric spaces that are locally
isometric except possibly around four points. Therefore, the lengths
of paths that do not go through these four points must be the same in
both spaces. It is then easy to see that the same is true for all
paths that visit each of these four points at most once, by splitting into
subpaths, and by standard properties of lengths of paths.
One concludes by observing that, given a path in~$\wt{\Qd}$, one may
construct another path of length smaller than or equal to that of the
initial path, and that visits each of the four distinguished points at
most once. Since a geodesic space is a length space \cite{burago01},
the distance between two points is given by the infimum of length of
paths between these points. Therefore, $\wt{\Qd}$ and $\Qd^{(0,H)}$
are isometric. 
\end{proof}

\subsection{Scaling limit of conditioned quadrilaterals}\label{sec:conditioned-version-2}

In this section, we finally prove Theorem~\ref{thmslquad}. As a preliminary result, we will need a simple estimate on distances
in quadrilaterals. We invite the reader to recall the combinatorial setting of
Section~\ref{sec:quadr-with-geod} and to consult Figure~\ref{trunc}. Let $((\bff,\bar{\bff}),\lambda)$ be a well-labeled double forest and let~$\bq$ be the corresponding quadrilateral. For $k\in\{1,2,\ldots,h-1\}$, keeping only the \textgras{first}~$k$ trees in~$\bff$ and
the \textgras{last}~$k$ trees in~$\bar{\bff}$ yields a submap of~$\bff\cup\br\bff$, well labeled by the restriction of~$\lambda$. We let~$\bq_k$ be the corresponding quadrilateral, which we naturally see as a submap of~$\bq$. We will need the following coarse comparison between distances in~$\bq$ and~$\bq_k$.

\begin{figure}[ht!]
	\centering\includegraphics[width=.95\linewidth]{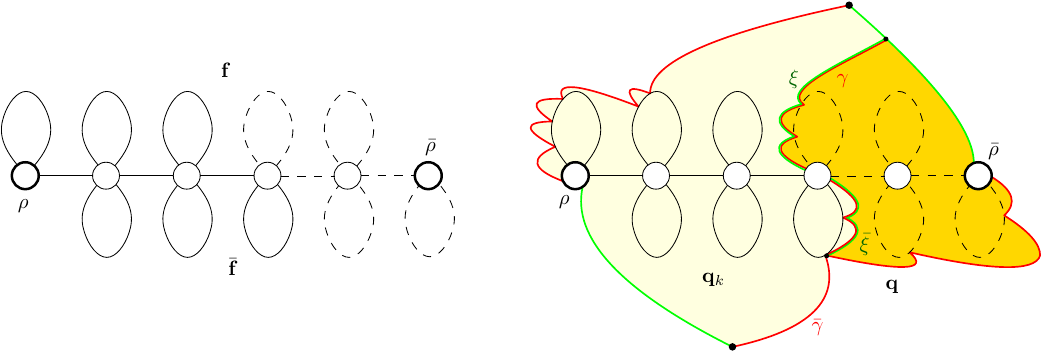}
	\caption{Here, $h=5$ and $k=3$. On the left, a schematic
          picture of a double forest $(\bff,\br\bff)$, assumed to be
          well labeled, and its ``truncation'' obtained after removing
          the dashed elements. On the right, a schematic picture of
          the corresponding quadrilaterals: the quadrilateral~$\bq$ is
          obtained by gluing~$\bq_k$ (in light yellow) along its
          sides~$\xi$ and~$\br\gamma$ with the quadrilateral (in dark yellow) coded by the dashed elements along its sides~$\gamma$ and~$\br\xi$ (only these four geodesic sides of interest are named in the picture).}
	\label{trunc}
\end{figure}

\begin{lmm}\label{sec:conditioned-version-1}
Let $\varpi=2+\max\{\lambda(u):u\in V(\bq)\setminus
V(\bq_k)\}-\min\{\lambda(u):u\in V(\bq)\setminus V(\bq_k)\}$. Then, for any~$v$, $w\in V(\bq_k)$, one has
\[d_\bq(v,w)\le d_{\bq_k}(v,w)\le d_\bq(v,w)+\varpi.\]
\end{lmm}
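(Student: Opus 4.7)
The plan is to express $\bq$ as a double gluing of $\bq_k$ with the complementary quadrilateral $\bq'$ (the one coded by the removed trees, with associated intervals $I^{(2)}$, $\br I^{(2)}$), entirely analogously to Proposition~\ref{gludetquad} in the continuum setting. By the interval CVS construction, one readily verifies the discrete counterpart
\[
\bq \;=\; G\big(\bq^*;\; \br\gamma_k,\, \br\xi'\big)\qquad\text{with}\qquad \bq^* \;=\; G\big(\bq_k,\, \bq';\; \xi_k,\, \gamma'\big).
\]
The first gluing is between disjoint metric spaces, so by the formula~\eqref{dG}, distances in $\bq^*$ restricted to $V(\bq_k)$ coincide with $d_{\bq_k}$. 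The second gluing is between two geodesics in the same space, so Lemma~\ref{lemdistglue}.\ref{distgluei} provides the two-sided estimate
\[
d_\bq(v,w) \;\le\; d_{\bq^*}(v,w) \;\le\; d_\bq(v,w) + R,
\]
where $R$ denotes the Hausdorff distance in $\bq^*$ between the initial segments, of length $\pzl := \min(\lgth(\br\gamma_k),\,\lgth(\br\xi'))$, of the geodesics $\br\gamma_k$ and $\br\xi'$ being glued. Combined with the identity $d_{\bq^*}(v,w) = d_{\bq_k}(v,w)$ for $v$, $w\in V(\bq_k)$, the left inequality immediately yields the first assertion of the lemma, while the right inequality yields the second assertion provided that $R \le \varpi$.

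For the bound on $R$, observe first that $R \le \pzl$. Indeed, the upper gluing $\xi_k \sim \gamma'$ identifies $\xi_k(0) = \br\rho_k$ with $\gamma'(0) = \rho'$, so the paths $\br\gamma_k$ and $\br\xi'$ share the same origin in $\bq^*$. Since $\bq_k$ and $\bq'$ are isometrically embedded in $\bq^*$, for each $r\in [0,\pzl]$ the points $p_r := \br\gamma_k(r)$ and $q_r := \br\xi'(r)$ satisfy $d_{\bq^*}(p_r,\br\rho_k) = r$ and $d_{\bq^*}(q_r,\br\rho_k) = r$; taking the common point $\br\rho_k = \rho'$ as witness for both directions of the Hausdorff distance gives $R \le \pzl$.

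The main obstacle is then to bound $\pzl$, and more specifically $\lgth(\br\xi')$, by $\varpi$. This is where the well-labeling condition on the double forest $(\bff,\br\bff)$ becomes crucial. Since $k \in \{1,\ldots,h-1\}$, the floor root $\rho^{k+1}$ lies at position $(k+1,0)$, which belongs neither to the kept upper trees ($\bt^0,\ldots,\bt^{k-1}$ together with the vertex-tree at $(k,0)$) nor to the kept lower trees (attached at $(1,0),\ldots,(k,0)$); hence $\rho^{k+1} \in V(\bq)\setminus V(\bq_k)$, so that $\lambda(\rho^{k+1}) \in [m,M]$, where $m$ and $M$ denote respectively the minimum and maximum of $\lambda$ over $V(\bq)\setminus V(\bq_k)$. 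The well-labeling of the double forest and the floor edge between $\rho^k$ and $\rho^{k+1}$ then give $|\lambda(\rho^k) - \lambda(\rho^{k+1})| \le 1$, so $\lambda(\br\rho_k) = \lambda(\rho^k) \in [m-1,M+1]$. A short case analysis on whether $\min_{\br I^{(2)}} \lambda$ is achieved at $\br\rho_k$ itself (which contributes a corner to $\br I^{(2)}$ via the vertex-tree of $\br\bff'$) or at a corner of a removed lower tree (whose labels lie in $[m,M]$) yields
\[
\lgth(\br\xi') \;=\; \lambda(\br\rho_k) - \min_{\br I^{(2)}} \lambda + 1 \;\le\; \max\big(1,\, (M+1) - m + 1\big) \;=\; \varpi,
\]
and therefore $R \le \pzl \le \lgth(\br\xi') \le \varpi$, completing the proof.
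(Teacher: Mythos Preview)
Your proof is correct and follows the same route as the paper's: decompose $\bq$ as a two-step gluing of $\bq_k$ with the complementary quadrilateral, observe that the first (disjoint) gluing preserves distances within $\bq_k$, and apply Lemma~\ref{lemdistglue}\ref{distgluei} to the second (self-)gluing. The paper's proof compresses all of this into the single sentence ``the lengths of the glued geodesics are bounded by $\varpi$''; you have merely unpacked that sentence, supplying the common-origin argument for $R\le\pzl$ and the label estimate $\lgth(\br\xi')\le\varpi$ explicitly.
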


\begin{proof}
Observe that~$\bq$ may be obtained by
gluing~$\bq_k$ along its sides~$\xi$ and~$\br\gamma$ with the quadrilateral coded by the double forest obtained by taking the last $h-k$ trees in~$\bff$ and
the first $h-k$ trees in~$\bar{\bff}$, well labeled by the restriction of~$\lambda$, along its sides~$\gamma$ and~$\br\xi$. The lemma is then a straightforward consequence of Lemma~\ref{lemdistglue}.\ref{distgluei} since the lengths of the glued geodesics are bounded by the quantity~$\varpi$.
\end{proof}

We now prove Theorem~\ref{thmslquad} by proceeding similarly as in Section~\ref{secslcond}. Recall the
notation $(C(t),\Lambda(t),t\in \R)$, $\tau_k$, $\br\tau_k$ from Section~\ref{secUIPQ}. Let $\P_\infty$ be the law of $(C,\Lambda)$ and assume
without loss of generality that the latter is the canonical
process. Although we use the same notation~$\P_\infty$ as
in Section~\ref{secslcond}, we believe that there is
little risk of confusion. For $j\geq 1$, let~$\FF_j$ be the
$\sigma$-algebra generated by $(C(i),\Lambda(i),0\le i\le j)$, and
let~$\GG_j$ be the one generated by $(C(i),\Lambda(i+1),-j\le i\le -1)$. Note that
$\FF_{\tau_h}$ is the $\sigma$-algebra generated by the~$h$ leftmost
trees of~$\bT_\infty$ in the upper half-plane, together
with their labels, as well as the label of the root~$\rho^h$. Similarly, 
$\GG_{-\br\tau_{h+1}}$ is the $\sigma$-algebra generated by the~$h$
leftmost trees of~$\bT_\infty$ in the lower half-plane, together with their
labels, as well as the label of the root~$\rho^0$: the meaning of the shift by $+1$ in the process $\Lambda$ is
that we do not want to incorporate the information of the label of the
root $\rho^{h+1}$ in $\GG_{-\br\tau_{h+1}}$. 

Next, for~$a$, $\br a$, $h\in \N$ and
$\delta\in \Z$, we denote by $\P_{a,\br a,h,\delta}$ the distribution of
\[\big(C\rst_{[-2\br a-h,2a+h]},\Lambda\rst_{[-2\br a-h,2a+h]}\big)\]
under $\P_\infty[\;\cdot\mid\tau_h=2a+h,\,\br \tau_{h+1}+1=-2\br a-h,\,\Lambda(\tau_h)=\delta]$. The
corresponding double forest encoded by this\MEMS{}{ random} process is thus composed
of a spine~$\rho^0$, \dots, $\rho^h$ of length~$h$, on which are grafted $2h$ Bienaym\'e--Galton--Watson trees with Geometric($1/2$)
offspring distribution and uniform admissible labels, conditioned on
the fact that the total number of edges in the upper half-plane~$h$ trees is~$a$,
the number of edges in the lower half-plane~$h$ trees is $\br a$, and the
label of the last root $\rho^h$ is~$\delta$. The following lemma gives an absolute continuity relation between the
laws $\P_{a,\br a,h,\delta}$ and~$\P_\infty$. Its proof, which we
omit, is similar to that of Lemma~\ref{lemacslice}, using the enumeration results of Proposition~\ref{countquad}. Recall from Propositions~\ref{countslice} and~\ref{countquad} the definitions of~$Q_\ell$ and~$M_\ell$.

\begin{lmm}\label{sec:conditioned-version}
Fix the integers $0<k<h$, as well as positive integers $a$, $\bar{a}\in\N$
and $\delta\in \Z$. 
For every nonnegative functional~$G$ that is $\FF_{\tau_k}\vee \GG_{-\br\tau_{k+1}}$-measurable, we
have
\[\E_{a,\bar{a},h,\delta}[G]=\E_{\infty}\big[\Psi_{a,\bar{a},h,\delta}(\tau_k,-(\br\tau_{k+1}+1),k,\Lambda(\tau_k))\cdot
G\big],\]
where
\[\Psi_{a,\bar{a},h,\delta}(s,t,h',j)=
	\frac{Q_{h-h'}(2a+h-s)}{Q_h(2a+h)}
	\frac{Q_{h-h'}(2\bar{a}+h-t)}{Q_h(2\bar{a}+h)}
	\frac{M_{h-h'}(\delta-j)}{M_h(\delta)}
.\]
\end{lmm}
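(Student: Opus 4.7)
The plan is to adapt the argument of Lemma~\ref{lemacslice} directly. By linearity and a monotone class argument, it suffices to prove the identity when $G$ is the indicator of a single event~$E$ that completely specifies the $\FF_{\tau_k}\vee\GG_{-\br\tau_{k+1}}$-measurable data: namely a well-labeled configuration made of $k$ fixed upper trees of total area~$\alpha$ and $k$ fixed lower trees of total area~$\bar{\alpha}$, together with the labels $\lambda(\rho^0)=0,\lambda(\rho^1),\ldots,\lambda(\rho^k)=j$ of the spine vertices up to $\rho^k$. This configuration automatically satisfies $\tau_k=s:=2\alpha+k$ and $-(\br\tau_{k+1}+1)=t:=2\bar{\alpha}+k$, so the arguments of~$\Psi_{a,\bar{a},h,\delta}$ take the constant values $(s,t,k,j)$ on~$E$.

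The left-hand side is then evaluated as follows. Since $\P_{a,\bar{a},h,\delta}$ is the uniform distribution on well-labeled double forests with parameters $(a,\bar{a},h,\delta)$, the quantity $\P_{a,\bar{a},h,\delta}(E)$ equals the number of completions of~$E$ into such a double forest, divided by the total count. A completion consists in adjoining $h-k$ further upper trees carrying $a-\alpha$ edges, $h-k$ further lower trees carrying $\bar{a}-\bar{\alpha}$ edges, and a spine of $h-k$ additional edges whose labels (uniform in $\{-1,0,1\}$) sum to $\delta-j$. Proposition~\ref{countquad}, applied once for the completion and once for the total, yields
\[\P_{a,\bar{a},h,\delta}(E)=\frac{12^{(a-\alpha)+(\bar a-\bar\alpha)+(h-k)}\,Q_{h-k}(2a+h-s)\,Q_{h-k}(2\bar a+h-t)\,M_{h-k}(\delta-j)}{12^{a+\bar a+h}\,Q_h(2a+h)\,Q_h(2\bar a+h)\,M_h(\delta)}=12^{-(\alpha+\bar\alpha+k)}\,\Psi_{a,\bar a,h,\delta}(s,t,k,j)\,.\]

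For the right-hand side we have
\[\E_\infty\big[\Psi_{a,\bar a,h,\delta}(\tau_k,-(\br\tau_{k+1}+1),k,\Lambda(\tau_k))\cdot\ind_E\big]=\Psi_{a,\bar a,h,\delta}(s,t,k,j)\cdot\P_\infty(E)\,,\]
so it is enough to verify that $\P_\infty(E)=12^{-(\alpha+\bar\alpha+k)}$. This is a direct computation from the definition of~$\P_\infty$: a Bienaym\'e--Galton--Watson tree with Geometric($1/2$) offspring assigns weight $2^{-2n-1}$ to any prescribed plane tree with $n$ edges, so the $2k$ specified tree shapes (of total size $\alpha+\bar\alpha$) contribute $2^{-2(\alpha+\bar\alpha)-2k}$; the $\alpha+\bar\alpha$ non-spine edge labels contribute $3^{-(\alpha+\bar\alpha)}$; and the $k$ spine edge labels between $\rho^0$ and $\rho^k$ contribute $3^{-k}$. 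Their product is $(2^2\cdot 3)^{-(\alpha+\bar\alpha+k)}=12^{-(\alpha+\bar\alpha+k)}$, which matches the preceding computation and concludes the proof.

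The whole argument is a direct enumeration, so no serious obstacle is expected. The main point at which the quadrilateral case genuinely departs from the slice case is that the spine labels here follow a uniform $\{-1,0,1\}$ walk inherited from the labeling rule on~$\bT_\infty$, rather than the shifted geometric walk used in Section~\ref{secUIHPQ}; this is why $M_\ell$ appears in~$\Psi$ in place of~$P_\ell$, and why the single factor $12=4\cdot 3$ absorbs both the combinatorial and labeling contributions (in contrast with the pair of factors $12$ and $8$ appearing in Proposition~\ref{countslice}).
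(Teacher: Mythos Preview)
Your proof is correct and follows precisely the approach the paper indicates: the paper omits the proof entirely, stating only that it is similar to that of Lemma~\ref{lemacslice} using Proposition~\ref{countquad}, and your argument is exactly this adaptation, reducing to indicators of fully specified configurations and counting completions. Your explicit computation of $\P_\infty(E)=12^{-(\alpha+\bar\alpha+k)}$ is a clean way to close the argument and makes transparent why the factor $12$ absorbs both the tree-shape and labeling contributions in the quadrilateral setting.
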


From now on, in addition to the sequence~$(h_n)$, we fix three sequences $(a_n)$, $(\bar{a}_n)$, $(\delta_n)$ as in~\eqref{2anhndn}. The following is a tedious but
straightforward consequence of the local limit theorem~\cite[Theorem~8.4.1]{BGT}.

\begin{lmm}\label{lltquad}
If the integer-valued sequence $(h'_n)$ satisfies
$h'_n/\sqrt{2n}\to H'\in (0,H)$, then
\[\sup_{\substack{0\leq s\leq a_n\\[2pt]0\leq t\leq\br a_n},\,j\in \Z}\left|\Psi_{a_n,\br
    a_n,h_n,\delta_n}(s,t,h'_n,j)-\psi_{A,\br
    A,H',\Delta}\Big(\frac sn,\frac tn,H',\Big(\frac 9{8n}\Big)^{\frac14}j\Big)\right|\ton 0.\]
\end{lmm}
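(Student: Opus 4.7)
The plan is to reduce the statement to three separate uniform local limit theorems, one for each factor appearing in the product
\[
\Psi_{a_n,\br a_n,h_n,\delta_n}(s,t,h'_n,j)
=\frac{Q_{h_n-h'_n}(2a_n+h_n-s)}{Q_{h_n}(2a_n+h_n)}
\cdot\frac{Q_{h_n-h'_n}(2\br a_n+h_n-t)}{Q_{h_n}(2\br a_n+h_n)}
\cdot\frac{M_{h_n-h'_n}(\delta_n-j)}{M_{h_n}(\delta_n)}.
\]
Because $\psi_{A,\br A,H',\Delta}$ factorizes in the same way, and the two target $q$-ratios involve disjoint variables from the $p$-ratio, it suffices by the triangle inequality and the boundedness-below of the denominators (granted by the assumptions $a_n/n\to A$, $\br a_n/n\to\br A$, $h_n/\sqrt{2n}\to H$ and $H'\in(0,H)$) to establish uniform convergence of each of the three factors separately.

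For the $Q$-factors, the point is the classical local limit theorem for first-passage times of simple random walk (in the domain of attraction of a stable law of index~$1/2$, see~\cite[Theorem~8.4.1]{BGT}): there exists a normalizing sequence $c_n$ of order $n^{-1}$ such that, uniformly in lattice points $(\ell,u)$ satisfying $\ell/\sqrt{2n}\to L$ and $u/n\to 2A'$ in compact subsets of $(0,\infty)^2$ with the appropriate parity constraint, one has $c_n^{-1}\, Q_\ell(u)\to q_L(A')$. Applying this to numerator and denominator of each $Q$-ratio (with $\ell=h_n-h'_n$, $u=2a_n+h_n-s$ and $\ell=h_n$, $u=2a_n+h_n$ respectively, and similarly for $\br a_n$, $t$), the normalization cancels, and we obtain pointwise convergence of the first two factors to $q_{H-H'}(A-s/n)/q_H(A)$ and to the analogous ratio in the barred variables. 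Similarly, for the $M$-factor, the local CLT for sums of i.i.d.\ uniform $\{-1,0,1\}$ variables (variance~$2/3$) gives a uniform Gaussian approximation $\sqrt{\ell}\,M_\ell(j)\to \phi(j/\sqrt{2\ell/3})$ on compact rescaled sets, and after substitution $\ell\sim H\sqrt{2n}$, $j\sim(8n/9)^{1/4}\Delta$, the ratio converges to $p_{H-H'}(\Delta-\Delta')/p_H(\Delta)$ as desired.

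The main obstacle is upgrading these pointwise local CLT statements into the uniform bound of the lemma. I would handle the regions away from the bulk as follows. Fix $\eta>0$, and split the supremum according to the scaled variables $(s/n,t/n,(9/8n)^{1/4}j)$ belonging or not to the compact set $K_\eta=[\eta,A-\eta]\times[\eta,\br A-\eta]\times[-\eta^{-1},\eta^{-1}]$. On $K_\eta$, the Gnedenko local CLTs above give uniform convergence of each factor, with denominators uniformly bounded below by a positive constant, so the product converges uniformly to~$\psi$. Off $K_\eta$, the argument splits into (i) the regime where $A-s/n$ or $\br A-t/n$ is small, and (ii) the regime where $|j|(9/8n)^{1/4}$ is large. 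In (i), one uses the elementary bound $Q_\ell(u)\leq \ell/u$ together with the Gaussian-type estimate $Q_\ell(u)\leq C\,\ell\,u^{-3/2}e^{-\ell^2/(2u)}$ and the explicit form of $q_x$ to check directly that both discrete and continuous quantities are small uniformly; in (ii), a Hoeffding-type tail bound on $M_\ell$ together with the Gaussian tail of~$p_t$ does the same job. Choosing~$\eta$ arbitrarily small then concludes the proof.

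Each step is classical; the only delicate aspect is keeping track of lattice parity constraints in the $Q$-factors (so that the factor $2$ appearing in the local CLT for bipartite walks cancels between numerator and denominator) and confirming that the discrete tail estimates match the Gaussian tails of the limit uniformly. These are routine computations of the type already carried out, in the simpler one-dimensional setting, in the proof of Lemma~\ref{sec:conditioned-version-5}.
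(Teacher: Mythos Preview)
Your approach is exactly what the paper intends: it offers no proof beyond citing the local limit theorem of~\cite[Theorem~8.4.1]{BGT}, and your factorization into three ratios together with a bulk/tail split is the natural way to make that citation precise.

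There is, however, a genuine problem with your boundary analysis for the $Q$-factors. You claim that when $A-s/n$ is small, ``both discrete and continuous quantities are small uniformly''. The continuous quantity $q_{H-H'}(A-s/n)/q_H(A)$ indeed vanishes as $s/n\to A$. But the discrete one does not: for $s\in[0,a_n]$ the argument $2a_n+h_n-s$ always lies in $[a_n+h_n,\,2a_n+h_n]$, hence stays of order~$n$ and bounded below by roughly~$An$. The local limit theorem then gives $n\,Q_{h_n-h'_n}(2a_n+h_n-s)\to q_{H-H'}\big((2a_n+h_n-s)/(2n)\big)$, and at $s=a_n$ this limit is $q_{H-H'}(A/2)>0$, not~$0$. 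So your proposed tail estimate fails, and in fact the statement as written cannot hold at that endpoint.

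This points to a factor-of-$2$ slip in the displayed statement (the same slip is present in Lemma~\ref{sec:conditioned-version-5}): the time rescaling used throughout is $t\mapsto t/(2n)$, consistent with $C_{(n)}(s)=C(2ns)/\sqrt{2n}$ and with the passage from $\Psi(\tau_k,\ldots)$ to $\psi(T_K,\ldots)$ in the application. With the intended scaling $s/(2n)$, the quantity $A-s/(2n)$ ranges over $[A/2,A]$ for $s\in[0,a_n]$, a compact set away from~$0$, and your bulk argument via the local limit theorem already gives uniform convergence of the $Q$-ratios with no tail analysis needed. Your Hoeffding-type tail bound for the $M$-factor, on the other hand, is genuinely required (since $j$ is unrestricted) and is correct as you describe it.
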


We proceed to the conditioned version of Theorem~\ref{thmddstar}. Recall the definition of~$\dov{D}_{(n)}$ given in~\eqref{eqdovDn}.

\begin{prp}\label{cvencquadcond}
On $\CC\times\CC\times\CC^{(2)}$, the triple $\big(C_{(n)},\Lambda_{(n)},\dov{D}_{(n)}\big)$ considered under the distribution $\P_{a_n,\br a_n,h_n,\delta_n}$ converges in distribution to $\big(X,W,\dov{D}_{X,W}=\dov{D}^{(0,H)}\big)$, considered under~$\Quad_{A,\br A,H,\Delta}$.
\end{prp}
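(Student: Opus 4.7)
The plan is to follow the same strategy as for Proposition~\ref{cvencslice}, but with an extra layer of care because, in contrast with Corollary~\ref{sec:metric-spaces-coded-2}, the restriction of $\dov{D}^{(0,H)}$ to $[\br T_{H'},T_{H'}]$ does not coincide with $\dov{D}^{(0,H')}$ for $H'<H$; only the two-sided bound of Proposition~\ref{boundgluquad} is available. The joint convergence of the first two coordinates $(C_{(n)},\Lambda_{(n)})$ under $\P_{a_n,\br a_n,h_n,\delta_n}$ toward $(X,W)$ under $\Quad_{A,\br A,H,\Delta}$ is standard, obtained by combining the corresponding unconditional convergence under $\P_\infty$ (which follows from Proposition~\ref{cvencbp}) with the absolute continuity statement of Lemma~\ref{sec:conditioned-version} and the local limit estimate of Lemma~\ref{lltquad}.

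The main work is thus the third coordinate. Fix $\eps\in(0,H)$, set $h_n^\eps=h_n-\lfloor\eps\sqrt{2n}\rfloor$, and consider $\dov{D}_{(n)}^\eps$, the rescaled distance in the truncated quadrilateral coded by the first $h_n^\eps$ trees of the upper forest and the last $h_n^\eps$ trees of the lower one. The discrete analog of Proposition~\ref{boundgluquad} is Lemma~\ref{sec:conditioned-version-1}: for $i,j$ in the truncated index set,
\[
\dov{D}_n(i,j)\leq \dov{D}_n^\eps(i,j)\leq \dov{D}_n(i,j)+\varpi_n\,,
\]
where $\varpi_n$ is bounded by twice the label oscillation in the removed tree, plus~$2$. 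Combined with the fact that the labels on the spine form a conditioned random walk whose rescaling converges to a Brownian bridge and that the trees themselves contribute labels of order $n^{1/4}$ with an oscillation modulus that vanishes over intervals of length $o(n)$, we obtain that, under $\P_{a_n,\br a_n,h_n,\delta_n}$,
\[
\dist_{\CC^{(2)}}\big(\dov{D}_{(n)}^\eps,\dov{D}_{(n)}\big)\longrightarrow \mathrm{err}(\eps)
\]
in probability as $n\to\infty$, where $\mathrm{err}(\eps)$ tends to $0$ in probability as $\eps\to 0$; this uses the a.s.\ continuity of $W$ at the endpoints $A,-\br A$ under $\Quad_{A,\br A,H,\Delta}$ and of $X$ at $T_H,\br T_H$.

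It therefore suffices to show that, for each fixed $\eps>0$, the truncated process $\dov{D}_{(n)}^\eps$ under $\P_{a_n,\br a_n,h_n,\delta_n}$ converges in distribution to $\dov{D}^{(0,H-\eps)}$ under $\Quad_{A,\br A,H,\Delta}$, where the latter is to be interpreted as the metric obtained from $\dov{D}^{(0,H)}$ by the (continuous) analog of the truncation, whose proximity to $\dov{D}^{(0,H)}$ as $\eps\downarrow 0$ is again governed by the modulus of $W$ via Proposition~\ref{boundgluquad}. Since $\dov{D}_{(n)}^\eps$ is measurable with respect to $\FF_{\tau_{h_n^\eps}}\vee\GG_{-\br\tau_{h_n^\eps+1}}$, Lemma~\ref{sec:conditioned-version} and Lemma~\ref{lltquad} convert the conditional expectation of a bounded continuous test function into an unconditional expectation under $\P_\infty$ weighted by $\Psi_{a_n,\br a_n,h_n,\delta_n}(\tau_{h_n^\eps},-(\br\tau_{h_n^\eps+1}+1),h_n^\eps,\Lambda(\tau_{h_n^\eps}))$, whose limit is $\psi_{A,\br A,H,\Delta}(T_{H-\eps},-\br T_{H-\eps},H-\eps,W_{T_{H-\eps}})$. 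Applying Theorem~\ref{thmddstar} with $H-\eps$ in place of $H$, which provides the joint convergence of $\dov{D}_{(n)}^\eps$ under $\P_\infty$ to $\dov{D}^{(0,H-\eps)}$ under $\Plane$, together with the absolute continuity relation of Lemma~\ref{sec:conditioned-version-4}, identifies the limit as precisely $\dov{D}^{(0,H-\eps)}$ under $\Quad_{A,\br A,H,\Delta}$.

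Combining these two approximations by the principle of accompanying laws of \cite[Theorem~9.1.13]{Stroock11}, and letting $\eps\downarrow 0$, yields the joint convergence of the full triple. The delicate step, which is the main obstacle compared with the slice case, is precisely the quantitative control of the discrepancy between $\dov{D}_{(n)}^\eps$ and $\dov{D}_{(n)}$ in Lemma~\ref{sec:conditioned-version-1}: one must verify that $\varpi_n$, which is a discrete version of $\omega(W;I^{(H-\eps,H)})$, is indeed of order $n^{1/4}$ with vanishing contribution as $\eps\to 0$, uniformly in $n$, despite the conditioning on the endpoints $\tau_{h_n}=2a_n+h_n$, $\br\tau_{h_n+1}+1=-2\br a_n-h_n$ and $\Lambda(\tau_{h_n})=\delta_n$. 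This uniformity is obtained from the absolute continuity density $\Psi_{a_n,\br a_n,h_n,\delta_n}$, which is uniformly bounded on the relevant range by Lemma~\ref{lltquad}, so that the unconditional tail estimates on label oscillations transfer to the conditioned setting without loss.
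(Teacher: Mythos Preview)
Your proposal is correct and follows essentially the same approach as the paper's proof: truncate at $h_n^\eps$, control the discrepancy $\dov{D}_{(n)}^\eps$ versus $\dov{D}_{(n)}$ via Lemma~\ref{sec:conditioned-version-1}, pass from $\P_{a_n,\br a_n,h_n,\delta_n}$ to $\P_\infty$ by the absolute continuity of Lemma~\ref{sec:conditioned-version} together with Lemma~\ref{lltquad}, invoke Theorem~\ref{thmddstar} at width $H-\eps$, identify the limit via Lemma~\ref{sec:conditioned-version-4}, and close with the principle of accompanying laws and Proposition~\ref{boundgluquad}. One minor omission: to compare $\dov{D}_{(n)}^\eps$ and $\dov{D}_{(n)}$ as elements of $\CC^{(2)}$ you also need to bound $|\dov D_n(i,j)-\dov D_n(i^\eps,j^\eps)|$ for $i,j$ outside the truncated range, which the paper handles separately via~\eqref{dd0boundquad}; and your control of the error term can be obtained more directly from the already-established convergence of $(C_{(n)},\Lambda_{(n)})$ rather than through a transfer argument on tail estimates.
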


\begin{proof}
The arguments are very close to those used in the proof of Proposition~\ref{cvencslice} in Section~\ref{secslcond}, adding Lemma~\ref{sec:conditioned-version-1} and Proposition~\ref{boundgluquad} to cover the additional difficulty. The joint convergence of the first two coordinates is also standard. Then, fix $\eps\in (0,H)$ and set $h^\eps_n=h_n-\lfloor \eps
\sqrt{2n}\rfloor$. Let~$\dov{D}^\eps_{n}$ and~$\dov{D}^\eps_{(n)}$ be defined as in~\eqref{eqdovDn} and above, but with~$h_n^\eps$ instead of~$h_n$. For simplicity, for every $i\in \R$, let
\[i^\eps=(\br\tau_{h^\eps_n+1}+1)\vee i\wedge \tau_{h^\eps_n}\]
and define~$j^\eps$ similarly for any $j\in\R$. Define also
$\kappa^\eps_n=(\tau_{h_n}-\tau_{h^\eps_n})+ 
(\br\tau_{h^\eps_n+1}-\br\tau_{h_n+1})$. From~\eqref{dd0boundquad}, we obtain 
\begin{align*}
\big|\dov{D}_n(i,j)-\dov{D}_{n}(i^\eps,j^\eps)\big|&\leq
\dov{D}_n(i,i^\eps)+\dov{D}_n(j,j^\eps)
\leq 4\big(\omega(\Lambda_n;\kappa^\eps_n)+1\big).
\end{align*}
Using Lemma~\ref{sec:conditioned-version-1}, we have for every $i$, $j\in
[\br\tau_{h^\eps_n+1}+1,\tau_{h^\eps_n}]$,
\begin{equation*}
\big|\dov{D}_{n}(i,j)-\dov{D}_n^\eps(i,j)\big|\leq \omega(\Lambda_n; \kappa^\eps_n)+2.
\end{equation*}
These two facts together then imply that 
\begin{equation*}
\dist_{\CC^{(2)}}\big(\dov{D}^\eps_{(n)},\dov{D}_{(n)}\big)
  \leq \frac{\kappa^\eps_n}{2n}
  +5\omega\big(\Lambda_{(n)},\kappa^\eps_n/2n\big)
  +\bO\big(n^{-1/4}\big).
\end{equation*}
We now use the convergence of the first two coordinates, implying, for every $\eta>0$, 
\begin{equation}
  \label{eq:17}
  \limsup_{n\to\infty}\P_{a_n,\bar{a}_n,h_n,\delta_n}\left(\dist_{\CC^{(2)}}\big(\dov{D}^\eps_{(n)},\dov{D}_{(n)}\big)\geq
  \eta\right)\leq
\Quad_{A,\bar{A},H,\Delta}\big(\kappa^\eps+5\omega(W;\kappa^\eps)\geq \eta\big),
\end{equation}
where $\kappa^\eps=A-T_{H-\eps}+\br T_{H-\eps}+\bar{A}$. Since a.s.\ under
$\Quad_{A,\bar{A},H,\Delta}$, the quantity $\kappa^\eps$ tends to $0$ as
$\eps\to 0$, we
deduce that the left-hand side in~\eqref{eq:17} also converges to~$0$. It
remains to show that $\dov{D}^\eps_{(n)}$ under
$\P_{a_n,\bar{a}_n,h_n,\delta_n}$ converges toward
$\dov{D}^{(0,H-\eps)}$ under $\Quad_{A,\bar{A},H,\Delta}$ to
conclude, by the principle of accompanying laws, that
$\dov{D}_{(n)}$ converges to the distributional limit of
$\dov{D}^{(0,H-\eps)}$ as $\eps\to 0$, which is
$\dov{D}^{(0,H)}$ by Proposition~\ref{boundgluquad}. 
To this end, we consider the restrictions
$C^\eps_{(n)}$, $\Lambda^\eps_{(n)}$ of $C_{(n)}$, $\Lambda_{(n)}$ to the
intervals $[(\br\tau_{h^\eps_n+1}+1)/2n,\tau_{h^\eps_n}/2n]$ and, letting $F$ be a
nonnegative bounded continuous function, we observe that, using Lemma~\ref{sec:conditioned-version}, then Theorem~\ref{thmddstar} (for the choice of $H-\eps$ instead of $H$) and Lemma~\ref{lltquad}, and finally Lemma~\ref{sec:conditioned-version-4}, we have
\MEMS{
\begin{multline*}
\E_{a_n,\bar{a}_n,h_n,\delta_n}\big[F\big(C^\eps_{(n)},\Lambda^\eps_{(n)},D^\eps_{(n)}\big)\big]\\
\shoveright{=\E_\infty\big[\Psi_{a_n,\bar{a}_n,h_n,\delta_n}(\tau_{h^\eps_n},-1-\br\tau_{h^\eps_n+1},\Lambda(\tau_{h^\eps_n}))\,
	G\big(C^\eps_{(n)},\Lambda^\eps_{(n)},D^\eps_{(n)}\big)\big]}\\
\shoveleft{\ton \Plane\bigg[\psi_{A,\bar{A},H,\Delta}(T_{H-\eps},-\br T_{H-\eps},H-\eps,W_{T_{H-\eps}})}
	\\\shoveright{G\big(X^{(0,H-\eps)},W^{(0,H-\eps)},\dov{D}^{(0,H-\eps)}\big)\bigg]}\\
=\Quad_{A,\bar{A},H,\Delta}\big[G\big(X^{(0,H-\eps)},W^{(0,H-\eps)},\dov{D}^{(0,H-\eps)}\big)\big].
\end{multline*}
}{
\begin{multline*}
\E_{a_n,\bar{a}_n,h_n,\delta_n}\big[F\big(C^\eps_{(n)},\Lambda^\eps_{(n)},D^\eps_{(n)}\big)\big]\\
\shoveright{=\E_\infty\big[\Psi_{a_n,\bar{a}_n,h_n,\delta_n}(\tau_{h^\eps_n},-1-\br\tau_{h^\eps_n+1},\Lambda(\tau_{h^\eps_n}))\,
	G\big(C^\eps_{(n)},\Lambda^\eps_{(n)},D^\eps_{(n)}\big)\big]}\\
\ton \Plane\bigg[\psi_{A,\bar{A},H,\Delta}(T_{H-\eps},-\br T_{H-\eps},H-\eps,W_{T_{H-\eps}})\,
	G\big(X^{(0,H-\eps)},W^{(0,H-\eps)},\dov{D}^{(0,H-\eps)}\big)\bigg]\\
=\Quad_{A,\bar{A},H,\Delta}\big[G\big(X^{(0,H-\eps)},W^{(0,H-\eps)},\dov{D}^{(0,H-\eps)}\big)\big].
\end{multline*}
}
This concludes the proof.
\end{proof}

From there, we easily obtain the wanted GHP convergence by arguments similar as those developed in the proof of Theorem~\ref{thmslslice} at the end of Section~\ref{secslcond}.

\section{Construction from a continuous unicellular map}\label{sec:univ}

Our proof of Theorem~\ref{mainthm} gives a description of the limiting
Brownian surfaces as gluings of elementary pieces, which appear either 
in the Brownian plane or in the Brownian half-plane. Although this construction
has a clear geometric content, it can be arguably cumbersome to work
with, having in mind, for instance, the universal character that the
spaces $\BS{g}{\bL}$ are expected to bear.

Indeed, we believe that Brownian surfaces arise as universal limits
for many more classes of maps satisfying mild conditions (for instance
uniformly distributed maps) and a more direct description seems to be useful in order to show
such results. In particular, we believe that the Brownian torus is the
scaling limit of essentially 
simple triangulations, as considered in~\cite{BeHuLe19arX}. In fact, most of the known results of
convergence toward the Brownian sphere use a re-rooting technique due to
Le~Gall~\cite{legall11}, which, very roughly speaking, says that, if
maps in a given class are properly encoded by discrete objects
converging to the random snake driven by a normalized Brownian excursion and
if these maps and the limiting object exhibit a property of invariance
under uniform re-rooting, then the limiting space is the Brownian
sphere. We expect this approach to be generalizable to our context and
we now give a description of Brownian surfaces that is a direct
generalization of the classical definition of the Brownian
sphere. This can be thought of a continuum version of the
Cori--Vauquelin--Schaeffer bijection, building on a continuum version
of a unicellular map (a map with only one internal face). 

For a function $f\in \CC$ and $s$, $t\in I(f)$ with $t < s$, we extend~\eqref{defunderline} by setting
\begin{equation*}
\un{f}(s,t)= \inf_{I(f)\setminus[t,s]}f
\end{equation*}
and we set, for $s$, $t\in I(f)$,
\begin{equation}\label{tildedf}
\tilde d_f(s,t)=f(s)+f(t)-2\max\big\{\un{f}(s,t),\un{f}(t,s)\big\}.
\end{equation}
The difference with~\eqref{df} is that we now take into account the
minimum of~$f$ on the ``interval'' from $s\vee t$ to~$s\wedge t$ on
the ``circle'' $I(f)/\{\br\tau(f)=\tau(f)\}$. 

\paragraph{The Brownian sphere.}
As a warm-up, let us first recall the definition of the Brownian
sphere. It is the metric space $\BS{0}{\varnothing}=\big([0,1],\tilde d_Z\big)/\{d_\be=0\}$, where~$Z$ is the random snake driven by a normalized Brownian excursion~$\be$.

Recall that the Continuum Random Tree (CRT) introduced by Aldous~\cite{aldouscrt91,aldouscrt93} is the $\R$-tree\footnote{See Section~\ref{Rtrees}.} $\cT_\be=([0,1]/\{d_\be=0\},d_\be)$, so that the Brownian sphere $\BS{0}{\varnothing}$ may actually be seen as a quotient of the CRT. In fact, Le~Gall~\cite{legall06} showed that the pseudometric $\tilde{d}_Z/\{d_\be=0\}(s,t)=0$ if and only if $\tilde{d}_Z(s,t)=0$ or $d_\be(s,t)=0$, so that the topological space $\BS{0}{\varnothing}$ is obtained by a continuous analog to the Cori--Vauquelin--Schaeffer bijection.

\paragraph{The Brownian disk.}
Let us turn to the Brownian disk with perimeter $L\in(0,\infty)$. It is the metric space $\BS{0}{(L)}=\big([0,1],\tilde d_W\big)/\{d_X=0\}$, where $(X,W)$ is the pair encoding a slice with area~$1$, width~$L$ and tilt~$0$, that is, distributed according to $\Slice_{1,L,0}$ (defined in Section~\ref{sec:boundary-triangles}).

The most natural continuous object generalizing the CRT in the case of the disk is the gluing $\Mi^{[0]}_{(L)}=([0,1],d_X)/R$ where~$R$ is the coarsest equivalence relation containing $\{d_X=0\}$ and $\{(0,1)\}$. As $\tilde d_W(0,1)=0$, the Brownian disk is also $([0,1],\tilde d_W)/R$ and can be seen as a quotient of~$\Mi^{[0]}_{(L)}$. Visually, $\Mi^{[0]}_{(L)}$ is obtained by taking a circle of length~$L$ and gluing a Brownian forest of mass~$1$ and length~$L$ on it. The random snake~$W$ then assigns Brownian labels to it (with a Brownian bridge multiplied by~$\sqrt 3$ on the circle and standard Brownian motions everywhere else).

\paragraph{The general case.}
The CRT and the structure~$\Mi^{[0]}_{(L)}$ are the continuous
equivalent to the encoding objects of Section~\ref{secCMS} in the
particular cases of the sphere and the disk. In general, we have a
similar yet even more intricate construction, which we now
describe. Let $g\ge 0$ be fixed and $\bL=(L^1,\dots,L^\bb)$ be a 
$\bb$-tuple of positive real numbers. Let then 
$\big(S, \left({A^e}\right)_{e\in \vec E(S)}, \left({H^e}\right)_{e\in
  \vec I(S)}, \left(L^e \right)_{e\in \vec B(S)},\left( \Lambda^v\right)_{v\in V(S)} \big)$ be a random vector distributed according to the distribution $\mathrm{Param}_\bL$, defined around~\eqref{eqlimvect}. Conditionally
given this vector, we consider the following collection of
processes. For each $e\in\vec E(S)$,  
\begin{itemize} 
	\item the process $X^e$ is a first-passage bridge of standard Brownian motion from~$0$ to~$-H^e$ with duration~$A^e$;
	\item the process $Z^e$ is a random snake driven by the reflected process $X^e-\un{X}^e$;
\end{itemize}
the processes $(X^e,Z^e)$, $e\in\vec E(S)$, being independent. Independently, the process~$\zeta^e$ is a Brownian bridge 
\begin{itemize}
	\item of duration~$H^e$ from~$\Lambda^{e^-}$ to~$\Lambda^{e^+}$, with variance~$1$ if $e\in\vec I(S)$;
	\item of duration~$L^e$ from~$\Lambda^{e^-}$ to~$\Lambda^{e^+}$, with variance~$3$ if $e\in\vec B(S)$.
\end{itemize}
Furthermore, for $e\in\vec I(S)$, the bridges are linked through the relation 
\[\zeta^{\bar e}(s) = \zeta^e(H^e-s),\qquad 0\le s \le H^e,\]
and, except for these relations, are independent. We then set, for each $e\in\vec E(S)$,
\[W^e_t=Z^e_t+\zeta^e_{-\un{X}^e_t},\qquad 0\le t\le A^e.\]

In the end, we obtain a collection of processes $(X^e,W^e)$, $e\in\vec E(S)$, which are linked through the relations linking~$\zeta^e$ with~$\zeta^{\bar e}$, translating the fact that the labels of the floors of forests grafted on both sides of the same internal edge of the scheme should correspond.

We arrange the half-edges $e_1$, \ldots, $e_\kappa$ incident to the internal face of~$S$ according to the contour order, starting from the root, and we define the concatenation
\[
(W_s)_{0\le s\le 1} = W^{e_1} \bullet \dots \bullet W^{e_\kappa},
\]
which is a continuous process. We define~$\tilde d_W$ by~\eqref{tildedf} as above and now define the equivalence relation along which to glue.

Roughly speaking, we glue together Brownian forests coded by the $X^e$'s according to the scheme structure. For $s\in [0,1)$, we denote by~$[s]$ the integer in $\{1,\dots,\kappa\}$ such that
\[\sum_{i=1}^{[s]-1} A^{e_i} \le s < \sum_{i=1}^{[s]} A^{e_i}\qquad\text{ and }\qquad \langle s \rangle = s- \sum_{i=1}^{[s]-1} A^{e_i} \in \big[0,A^{e_{[s]}}\big).\]
By convention, we also set $[1]=1$ and $\langle 1 \rangle =0$. We define the relation~$R$ on $[0,1]$ as the coarsest equivalence relation for which $s\binR t$ if one of the following occurs:
\begin{subequations}
\begin{flalign}
\hspace{5mm}
&[s]=[t]\quad \text{ and }\quad  d_{X^{e_{[s]}}}(\langle s \rangle,\langle t \rangle)=0\, ;&\label{idsamefor}\\
&e_{[s]}=\bar e_{[t]},\ X^{{[s]}}\langle s \rangle = \un X^{{[s]}}\langle s \rangle,\ X^{{[t]}}\langle t \rangle = \un X^{{[t]}}\langle t \rangle\MEMS{}{\ }\text{ and }\MEMS{}{\ } X^{{[s]}}\langle s \rangle = H^{e_{[t]}} - X^{{[t]}}\langle t\rangle;&\label{idfacefor}
\end{flalign}
\end{subequations}
where we wrote $X^{[s]}\langle s \rangle$ instead of $X^{e_{[s]}}(\langle s \rangle)$ for short. Equation~\eqref{idsamefor} identifies numbers coding the same point in one of the Brownian forests, while Equation~\eqref{idfacefor} identifies the floors of forests ``facing each other'': the numbers~$s$ and~$t$ should code floor points (second and third equalities) of forests facing each other (first equality) and correspond to the same point (fourth equality). 

\begin{prp}\label{BSsingle}
The Brownian surface~$\BS{g}{\bL}$ has same distribution as $([0,1],\tilde d_W)/R$.
\end{prp}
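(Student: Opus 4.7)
The strategy is to realize the right-hand side as the scaling limit of an analogous discrete Cori--Vauquelin--Schaeffer-type quotient, and to invoke Theorem~\ref{mainthm} to identify this limit with $\BS{g}{\bL}$. Let $(M_n,\lambda_n)\in \bM^{[g]}_{n,\bl_n}$ be uniformly distributed, so that it encodes via the generalized CVS bijection of Section~\ref{secCMS} a uniform pointed quadrangulation in $\bQ^{[g]}_{n,\bl_n 0}$. Let $S_n$ be its scheme. By Proposition~\ref{sldata}, with probability tending to $1$ we have $S_n^\tp=S$ and the rescaled combinatorial parameters converge to the tuple distributed according to $\mathrm{Param}_\bL$. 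On this event, decompose $(M_n,\lambda_n)$ into the collection of labeled forests $F^e_n$ grafted on each half-edge $e\in\vec E(S)$, and read off their contour and label processes $(X^e_n,W^e_n)$. Conditionally on the macroscopic parameters, the results of Sections~\ref{seccvcs} and~\ref{seccvquad} (more precisely, joint versions across half-edges of Corollary~\ref{cvencslice} and Proposition~\ref{cvencquadcond}) provide, after rescaling, the joint convergence of the $(X^e_n,W^e_n)$ to the independent processes $(X^e,W^e)$ used in the proposition. Concatenating in contour order around the unique internal face $f_*$ and rescaling time to $[0,1]$ yields the global process $(X,W)$ of the proposition as the limit of the discrete analogs $(X_n,W_n)$.

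The key discrete step is then to observe that $Q_n$, viewed as a compact metric space on its vertex set, is isometric up to an $O(1)$ additive error to the quotient $([0,L_n],\tilde d_{W_n})/R_n$. Here $L_n$ is the number of corners of $f_*$, $\tilde d_{W_n}$ is the discrete cyclic cactus pseudometric mimicking~\eqref{tildedf} for the label process traced around $f_*$, and $R_n$ is the discrete analog of $R$: it identifies two indices corresponding to corners of the same vertex in $M_n$, which decomposes into (a) pairs of corners within the same forest having the same underlying vertex, corresponding to $d_{X^{e}_n}=0$ and to~\eqref{idsamefor}; and (b) pairs of corners belonging to floor vertices of forests facing each other across an internal edge of $S$, corresponding to~\eqref{idfacefor}. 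The upper bound $d_{Q_n}(v_i,v_j)\le \tilde d_{W_n}(i,j)+2$ is the classical maximal-wedge-path cactus bound, obtained by concatenating the two chains of successors starting from $c_i$ and $c_j$ up to their coalescence. The asymptotic matching bound up to $o(n^{1/4})$ follows by showing that such maximal-wedge-paths are near-geodesics. Combined with the convergence of encoding processes from the first paragraph -- uniform convergence of $W_n$ to $W$ implying uniform convergence of $\tilde d_{W_n}$ to $\tilde d_W$, and convergence of $X_n$ to $X$ together with the continuity of the hitting-time conditions in~\eqref{idfacefor} implying convergence of $R_n$ to $R$ -- one obtains the GHP convergence
\begin{equation*}
\Omega_n\big(([0,L_n],\tilde d_{W_n})/R_n\big)\ton ([0,1],\tilde d_W)/R\,.
\end{equation*}

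Since Theorem~\ref{mainthm} gives $\Omega_n(Q_n)\to \BS{g}{\bL}$ in GHP, and since the discrete identification shows $\Omega_n(Q_n)$ and $\Omega_n(([0,L_n],\tilde d_{W_n})/R_n)$ are within $o(1)$ in GHP distance, both sequences share the same limit, and the proposition follows by uniqueness. The main obstacle in this plan is the asymptotic tightness of the cactus bound used in the second paragraph: for pairs of corners belonging to different pieces of the scheme decomposition, the maximal wedge path may cross several gluings, and ensuring that it remains an asymptotic geodesic requires care beyond the classical Brownian sphere analysis. A possible alternative that avoids this combinatorial step is to work directly in the continuum and verify inductively, following the sequence of gluing operations from Section~\ref{sec:pfthm1}, that the partial distances produced by the gluings coincide with the restrictions of $\tilde d_W$ to the corresponding subintervals. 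This inductive verification relies on the structural identifications of the sets $\{D_{X,W}=0\}$ and $\{\wt D=0\}$ furnished by Lemmas~\ref{sec:brownian-plane-its-2} and~\ref{lemddovd}, applied successively within the Brownian plane and half-plane pieces assembled by the gluing.
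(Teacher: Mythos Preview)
Your discrete approach has a genuine gap, and your own diagnosis of it is not quite right. You claim that $Q_n$ is isometric to $([0,L_n],\tilde d_{W_n})/R_n$ up to $O(1)$, with the matching bound coming from ``maximal wedge paths being near-geodesics''. But wedge paths are \emph{not} near-geodesics in general: if they were, the scaling limit would satisfy $D=\tilde d_W$ and be an $\R$-tree, not a surface. What is true is only the one-sided chain inequality $\tilde d_{W_n}/R_n\le d_{Q_n}$ (each arc of $Q_n$ contributes exactly~$1$ to a chain). The reverse comparison $d_{Q_n}\le\tilde d_{W_n}/R_n+o(n^{1/4})$ amounts to controlling the number of links in an optimal chain, which is precisely the type of deep ``$D=D^*$'' statement established by Le~Gall for the sphere, and is not available here as an input. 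Relatedly, your passage to the limit ``convergence of $R_n$ to $R$ implies convergence of the quotients'' is not automatic: quotient pseudometrics are infima over chains of unbounded length and are not continuous operations for GHP.

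The paper avoids both difficulties by arguing entirely in the continuum, and its argument is in spirit the alternative you sketch in your last paragraph, but carried out differently. Rather than proceeding inductively along the gluing steps of Section~\ref{sec:pfthm1}, the paper compares the two quotient-metric formulas \emph{globally}. First it identifies, via a Pitman transform on $X^{\bar e}$ for $e\in\vec I(S)$, the pair $(X^e,W^e)$ as an encoding of the elementary piece $\cEP^e$ with law $\Slice_{A^e,L^e,\Delta^e}$ or $\Quad_{A^e,A^{\bar e},H^e,\Delta^e}$. It then defines the natural surjection $\bpi:[0,1]\to\BS{g}{\bL}$ sending $s$ to the image of $\langle s\rangle$ in $\cEP^{e_{[s]}}$, and checks the pointwise equality
\[
\big(\tilde d_W/R\big)(s,t)=\d_{\sfS}\big(\bpi(s),\bpi(t)\big)
\]
by a direct comparison of chains: any chain in the gluing infimum (over the piece-wise distances $\d_\epsilon$) translates, after the change of variables $s\mapsto\tilde s$, into a chain for $\tilde d_W/R$ using relations~\eqref{idsamefor}--\eqref{idfacefor} and the fact that shuttle/maximal-geodesic gluings correspond to $\tilde d_W=0$; and conversely. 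No scaling limit, no near-geodesic estimate, and no appeal to Lemmas~\ref{sec:brownian-plane-its-2} or~\ref{lemddovd} is needed.
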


Let us give a similar interpretation as in the case of the disk. Let first $(X_s)_{0\le s\le 1}$ be the continuous process obtained by shifting and concatenating~$X^{e_1}$, \ldots, $X^{e_\kappa}$. Then~$\BS{g}{\bL}$ may be seen as a quotient of $\Mi^{[g]}_{\bL}=([0,1],d_X)/R$, which can be pictured as follows. Starting from the random vector $\big(S, \left({A^e}\right)_{e\in \vec E(S)}, \left({H^e}\right)_{e\in\vec I(S)}, \left(L^e \right)_{e\in \vec B(S)} \big)$, we first construct the metric graph obtained from~$S$ by assigning either the length~$H^e$ or~$L^e$ to the edge corresponding to~$e$. For every half-edge~$e$ incident to the internal face of~$S$, we then glue a Brownian forest of mass~$A^e$ and length~$H^e$ or~$L^e$ on~$e$. We equip this space~$\Mi^{[g]}_{\bL}$ with Brownian labels (with variance~$\sqrt 3$ on the boundary edges) and define $\BS{g}{\bL}$ from there by the same process as in the case of the Brownian disk.

\begin{proof}[Proof of Proposition~\ref{BSsingle}]
First of all, recall from Section~\ref{secslEP} that the Brownian surface~$\BS{g}{\bL}$ is defined as the gluing along geodesic sides of a collection of continuum elementary pieces distributed as follows. Conditionally given
\[\big(S, \left({A^e}\right)_{e\in \vec E(S)}, \left({H^e}\right)_{e\in\vec I(S)}, 
	\left(L^e \right)_{e\in \vec B(S)},\left( \Lambda^v\right)_{v\in V(S)} \big),\]
the elementary pieces $\cEP^e$, $e\in\vec E(S)$, are only dependent through the relation linking~$\cEP^e$ with~$\cEP^{\bar e}$ and, setting $\Delta^e=\Lambda^{e^+}-\Lambda^{e^-}$,
\begin{itemize}
	\item if $e\in\vec B(S)$, then~$\cEP^e$ is a slice with area~$A^e$, width~$L^e$ and tilt~$\Delta^e$;
	\item if $e\in\vec I(S)$, then~$\cEP^e$ is a quadrilateral with half-areas~$A^e$ and~$A^{\bar e}$, width~$H^e$ and tilt~$\Delta^e$.
\end{itemize}

Furthermore, it is straightforward from the definition of the pairs $(X^e,W^e)$, $e\in\vec E(S)$, that, if $e\in\vec B(S)$, then the pair $(X^e,W^e-\Lambda^{e^-})$ is distributed as $\Slice_{A^e,L^e,\Delta^e}$.
When $e\in\vec I(S)$, we denote by
\[
\big(\overl{X}^e,\overl W^e\big)=\big(X^{e}_{s+A^{e}}-2\un X^{e}_{s+A^{e}}-H^{e},W^{e}_{s+A^{e}}\big)_{-A^{e}\le s \le 0}
\]
the process obtained by shifting the Pitman transform of~$X^e$ in order to obtain a process from $-H^{e}$ to~$0$, as well as changing the time range to $[-A^{e}, 0]$. By standard results on Brownian motion and random snakes, the pair obtained by concatenating $(\overl X^{\bar e},\overl W^{\bar e}-\Lambda^{\bar e^-})$ with $(X^e,W^e-\Lambda^{e^-})$ has the law of a process distributed as $\Quad_{A^e,A^{\bar e},H^e,\Delta^e}$.

As a result, we may assume that the elementary piece $\cEP^e$ is encoded by
\begin{itemize}
	\item the pair $(X^e,W^e-\Lambda^{e^-})$ if $e\in\vec B(S)$;
	\item the concatenation of $(\overl X^{\bar e},\overl W^{\bar e}-\Lambda^{\bar e^-})$ with $(X^e,W^e-\Lambda^{e^-})$ if $e\in\vec I(S)$.
\end{itemize}
This yields a collection of elementary pieces with the proper laws and dependencies; the fact that, for $e\in\vec I(S)$, $\cEP^e$ and~$\cEP^{\bar e}$ are the same with exchanged shuttles and maximal geodesics is a simple application of the Pitman transform.

\medskip
For $s\in [0,1]$, we denote by $\bpi(s)$ the projection in the gluing~$\BS{g}{\bL}$ of the point~$\langle s \rangle$ of the elementary piece~$\cEP^{e_{[s]}}$. We claim that $\bpi:[0,1]\to \BS{g}{\bL}$ is onto. Indeed, for each half-edge $\epsilon\in\vec E(S)$, recall that the elementary piece~$\cEP^{\epsilon}$ is defined as a quotient of $[0,A^{\epsilon}]$ and observe that $\{\langle s \rangle\,: s \text{ such that } e_{[s]}=\epsilon\}=[0,A^{\epsilon})$; furthermore, the ``missing point'' $A^\epsilon$ of~$\cEP^{\epsilon}$ is glued to a point~$0$ of some elementary piece, which is~$\bpi(s)$ for some~$s$ satisfying $\langle s \rangle=0$. Writing $\d_{\sfS}$ the distance in~$\BS{g}{\bL}$ and $\d_R=\tilde d_W/R$, it is sufficient to show that, for $s$, $t\in[0,1]$,
\[
\d_R(s,t)=\d_{\sfS}\big(\bpi(s),\bpi(t)\big).
\]

As the pseudometric~$d_f$ defined in~\eqref{df} is unchanged by the addition of an additive constant, setting
\[
\d_{\epsilon} = \begin{cases}
		d_{W^\epsilon} &\text{ if } \epsilon\in\vec B(S)\\
		\dov d_{\overl W^{\bar\epsilon}\bullet W^\epsilon} &\text{ if } \epsilon\in\vec I(S)
	\end{cases},
\]
the quantity $\d_{\sfS}\big(\bpi(s),\bpi(t)\big)$ is the infimum of sums of the form $\displaystyle\sum_{i=1}^\ell \d_{\epsilon_i}(s_i,t_i)$ where
\begin{itemize}
	\item $\epsilon_1=e_{[s]}$, $s_1=\langle s \rangle$, $\epsilon_\ell=e_{[t]}$, $s_\ell=\langle t \rangle$;
	\item for all $i$, it holds that $s_i$, $t_i \in \begin{cases}
	[0,A^{\epsilon_i}]	&\text{ if } \epsilon_i\in\vec B(S)\\
	[-A^{\bar\epsilon_i},A^{\epsilon_i}]	&\text{ if } \epsilon_i\in\vec I(S)
	\end{cases}$;
	\item for all $i$,
	\begin{enumerate}[label=\alph*)]
		\item either $\epsilon_i=\epsilon_{i+1}\in\vec B(S)$ and $d_{X^{\epsilon_i}}(t_i,s_{i+1})=0 $\,;\label{identa}
		\item or $\epsilon_i=\epsilon_{i+1}\in\vec I(S)$ and $d_{\overl X^{\bar\epsilon_i}\bullet X^{\epsilon_i}}(t_i,s_{i+1})=0 $\,;\label{identb}
		\item or the point~$t_i$ of~$\cEP^{\epsilon_i}$ is glued to the point~$s_{i+1}$ of~$\cEP^{\epsilon_{i+1}}$.\label{identc}
	\end{enumerate}
\end{itemize}
As $\d_{\epsilon}(u,v)=\infty$ whenever $uv<0$, we may furthermore assume that, for all $i$, $s_i t_i \ge 0$. Now, for each~$i$, we set
\[
\tilde s_i = \sum_{j=1}^{[\epsilon_i]-1} A^{e_j} + s_i \ \text{ if } s_i \ge 0,\qquad
\tilde s_i = \sum_{j=1}^{[\bar\epsilon_i]} A^{e_j} + s_i \ \text{ if } s_i < 0,
\]
where we wrote $[\epsilon]$ the index of the half-edge~$\epsilon$ in the ordering~$e_1$, \ldots, $e_\kappa$ of~$\vec E(S)$. We define~$\tilde t_i$ similarly. It is easy to check that $\tilde s_1=s$, $\tilde t_\ell=t$ and that, for each~$i$, we have $\d_{\epsilon_i}(s_i,t_i)=d_W(\tilde s_i,\tilde t_i)$. Furthermore, for each~$i$, we have the following.
\begin{itemize}
	\item[\ref{identa}] If $\epsilon_i=\epsilon_{i+1}\in\vec B(S)$ and $d_{X^{\epsilon_i}}(t_i,s_{i+1})=0 $, then, unless $t_i=s_{i+1}=A^{\epsilon_i}$ (in which case $\tilde t_i=\tilde s_{i+1}$), it holds that $s_i<A^{\epsilon_i}$ and $t_i<A^{\epsilon_i}$, which yields that $\tilde t_i \binR \tilde s_{i+1}$ by~\eqref{idsamefor}.
	\item[\ref{identb}] If $\epsilon_i=\epsilon_{i+1}\in\vec I(S)$ and $d_{\overl X^{\bar\epsilon_i}\bullet X^{\epsilon_i}}(t_i,s_{i+1})=0 $, then,
	\begin{itemize}
		\item if $t_i s_{i+1} \ge0$, then $\tilde t_i \binR \tilde s_{i+1}$ by~\eqref{idsamefor} as above;
		\item if $t_i s_{i+1} < 0$, then $\tilde t_i \binR \tilde s_{i+1}$ by~\eqref{idfacefor}.
	\end{itemize}
	\item[\ref{identc}] If the point~$t_i$ of~$\cEP^{\epsilon_i}$ is glued to the point~$s_{i+1}$ of~$\cEP^{\epsilon_{i+1}}$, then it implies that $\tilde d_W(\tilde t_i,\tilde s_{i+1})=0$ (recall the situation depicted in Figure~\ref{gluings}).
\end{itemize}
As a result, since $\tilde d_W\le d_W$, it holds that $\d_R(s,t)\le \d_{\sfS}\big(\bpi(s),\bpi(t)\big)$. The converse inequality is very similar, noting that~$R$ identifies points in the elementary piece~$\cEP^\epsilon$ as does $d_{X^{\epsilon}}=0$ or $d_{\overl X^{\bar\epsilon}\bullet X^{\epsilon}}=0$, and that~$\tilde d_W$ encodes all the functions~$\d_{\epsilon}$, together with the gluings of the elementary pieces. The use of~$\tilde d_W$ and not~$d_W$ takes into account the gluings of shuttles with maximal geodesics of elementary pieces ``overflying'' the root, as, for instance in Figure~\ref{gluings}, the shuttle of~$\cEP^{e_{14}}$ with part of the maximal geodesic of~$\cEP^{e_1}$, or part of the shuttle of~$\cEP^{e_{12}}$ with part of the maximal geodesic of~$\cEP^{e_7}$. The details are left to the reader.
\end{proof}

\appendix

\section{Technical lemmas on the Brownian plane}\label{appBP}

We now recall the definition of the Brownian plane from~\cite{CuLG12Bplane}, then show that it is equivalent to the one we gave in Section~\ref{secBP}, and we finally prove Proposition~\ref{cvencbp}.

\subsection{Equivalence of definitions of the Brownian plane}

The original definition goes as follows. Let $(\fX_t,t\in \R)$ be such that $(\fX_t,t\geq 0)$ and
$(\fX_{-t},t\geq 0)$ are two independent three-dimensional Bessel processes. Since the overall minimum of~$\fX$ is reached at~$0$, the maximum in the definition of~$\tilde{d}_\fX$ -- given in~\eqref{tildedf} -- is equal to
\[\max\big(\un\fX(s,t),\un\fX(t,s)\big)=\begin{cases}
	\inf\{\fX_u, u\in [s\wedge t,s\vee t]\}& \text{ if }st\geq 0\\
	\inf\{\fX_u,u\notin[s\wedge t,s\vee t]\}& \text{ if }st<0
\end{cases}.\]
Next, define $(\fW_t,t\in \R)$ to be a centered Gaussian process
conditionally given~$\fX$, with covariance function specified by 
\[\E[|\fW_s-\fW_t|^2\mid \fX]=\tilde{d}_\fX(s,t).\] 
The Brownian plane was defined in~\cite{CuLG12Bplane} as 
\[\big(\wt{M}_{\fX,\fW},\wt{D}_{\fX,\fW}\big)= \big(\R/\{\wt{D}_{\fX,\fW}=0\},\wt{D}_{\fX,\fW}\big)\qquad\text{ where }\qquad
\wt{D}_{\fX,\fW}=d_\fW/\big\{\tilde{d}_\fX=0\big\}.\]

\medskip
The following proposition shows that the definition given in Section~\ref{secBP} is equivalent to the one above. Recall the piece of notation $\dun{X}_t=\un{X}(0\wedge t,0\vee t)$ and define the process $(\Pi_t=X_t-2\dun{X}_t, t\in\R)$ by taking the Pitman transform of~$X$ on~$\Rp$ and on~$\Rm$.

\begin{prp}\label{PitmanBP}
The process $(\Pi,W)$ considered under $\Plane$ has same distribution as $(\fX,\fW)$ defined above. Moreover, as metric spaces, 
$\big(\wt{M}_{\Pi,W},\wt{D}_{\Pi,W}=d_W/\big\{\tilde{d}_\Pi=0\big\}\big)$ and
$(M_{X,W},D_{X,W})$ are a.s.\ equal.
\end{prp}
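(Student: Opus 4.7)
My plan is to reduce both parts of the proposition to a single almost sure pointwise identity of pseudometrics on $\R\times\R$, namely
\begin{equation}\label{planidentity}
d_X(s,t)=\tilde d_\Pi(s,t)\,,\qquad s,t\in\R\,.
\end{equation}
Once~\eqref{planidentity} is established, the identification of metric spaces is immediate: the equivalence relations $\{d_X=0\}$ and $\{\tilde d_\Pi=0\}$ on~$\R$ coincide, and therefore $D_{X,W}=d_W/\{d_X=0\}=d_W/\{\tilde d_\Pi=0\}=\wt D_{\Pi,W}$ as pseudometrics on~$\R$, so both quotient constructions produce the very same metric space. For the distributional equality, I would first invoke the classical Pitman $2M-X$ theorem applied separately to the two independent standard Brownian motions $(X_t,t\ge 0)$ and $(X_{-t},t\ge 0)$ to obtain that $(\Pi_t,t\ge 0)$ and $(\Pi_{-t},t\ge 0)$ are two independent three-dimensional Bessel processes, so that $\Pi$ has the same law as~$\fX$. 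Conditionally given~$X$ (equivalently, given~$\Pi$), the snake~$W$ is centered Gaussian with covariance $d_X=\tilde d_\Pi$, which matches the conditional law of~$\fW$ given~$\fX$; combining these two facts yields $(\Pi,W)\stackrel{d}{=}(\fX,\fW)$.

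The core of the proof is therefore to establish~\eqref{planidentity}, which I would prove by a case analysis on the signs of~$s$ and~$t$, using the classical Pitman identities $\inf_{u\ge t}\Pi_u=-\dun X_t$ for $t\ge 0$ and its analog $\inf_{u\le s}\Pi_u=-\dun X_s$ for $s\le 0$, together with the basic fact that $\Pi_0=0$ and that~$\Pi$ tends almost surely to~$+\infty$ at both $\pm\infty$ and is strictly positive elsewhere on each half-line, so that $\inf_{(-\infty,0]}\Pi=\inf_{[0,\infty)}\Pi=0$. In the ``opposite signs'' case $s\le 0\le t$, the inner infimum $\un\Pi(s,t)$ equals~$0$ (attained at~$0$), while the outer infimum $\un\Pi(t,s)$ equals $\min(-\dun X_s,-\dun X_t)$ by the two Pitman identities; substituting $\Pi_s=X_s-2\dun X_s$ and $\Pi_t=X_t-2\dun X_t$ into the defining formula of~$\tilde d_\Pi(s,t)$ and simplifying gives $X_s+X_t-2\min(\dun X_s,\dun X_t)=X_s+X_t-2\un X(s,t)=d_X(s,t)$.

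In the ``same signs'' case, say $0\le s\le t$, the outer infimum $\un\Pi(t,s)$ vanishes (attained at~$0$), so that $\tilde d_\Pi(s,t)=\Pi_s+\Pi_t-2\inf_{[s,t]}\Pi$; writing $a=\dun X_s$ and $c=\un X(s,t)$, and noting that $\dun X_t=\min(a,c)$, \eqref{planidentity} reduces to the single identity $\inf_{[s,t]}\Pi=c-a-\min(a,c)$. When $a\le c$, this is verified directly since then $\dun X_u=a$ for every $u\in[s,t]$, so $\Pi_u=X_u-2a$ on this interval and its infimum is $c-2a$. When $a>c$, introducing $u_a=\inf\{u\in[s,t]:X_u=a\}$ one has $\Pi_{u_a}=-a$, $\Pi_u=X_u-2a\ge -a$ for $u\in[s,u_a]$, and for $u\in[u_a,t]$ the equality $\dun X_u=\un X(s,u)$ combined with the trivial bound $X_u-2\un X(s,u)\ge -\un X(s,u)\ge -a$ shows $\Pi_u\ge -a$. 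The case $s\le t\le 0$ follows by a symmetric argument. The main obstacle I anticipate is simply the careful bookkeeping in this last subcase, but it is a routine computation once organized properly around the hitting time~$u_a$.
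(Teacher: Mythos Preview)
Your proposal is correct and follows essentially the same approach as the paper: both reduce everything to the pointwise identity $d_X=\tilde d_\Pi$, derive the distributional statement from Pitman's $2M-X$ theorem together with the Gaussian conditional covariance, and verify the identity by a case analysis on the signs of $s,t$. Your auxiliary variables $a=\dun X_s$, $c=\un X(s,t)$ and the hitting time $u_a$ correspond exactly to the paper's case split $\dun X_s=\dun X_t$ versus $\dun X_s>\dun X_t$ and its point $r\in[s,t]$ with $X_r=\dun X_r=\dun X_s$; the key identity you isolate, $\inf_{[s,t]}\Pi=c-a-\min(a,c)$, is precisely the paper's formula $\un\Pi(s,t)=\un X(s,t)-\dun X_s-\dun X_t$.
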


\begin{proof}
We claim that $\tilde d_{\Pi}=d_X$. This entails that, conditionally
given~$X$, the process~$W$ is also Gaussian with $\E[(W_s-W_t)^2\mid
X]=\tilde d_{\Pi}(s,t)$ and, since $\Pi$ has same distribution as~$\fX$ by
Pitman's $2M-X$ theorem \cite[Theorem~1.3]{Pi2M-X}, we see that $(\Pi,W)$ and $(\fX,\fW)$ have same
distribution. We then have
\[\wt{D}_{\Pi,W}=d_W/\big\{\tilde{d}_\Pi=0\big\}=d_W/\{d_X=0\}=D_{X,W}.\] 

Checking that $\tilde d_{\Pi}=d_X$ is a classical exercise, based on the fact that, for $0\le s< t$,
\begin{align}\label{dXXp}
\un\Pi(s,t)&=\un{X}(s,t)-\dun{X}_s-\dun{X}_t,\qquad\text{ and }\qquad\inf_{u\geq s}\Pi_u=-\dun{X}_s.
\end{align}
The right equation is obtained from the left one by letting
$t\to\infty$, noting that, for~$t$ large enough,
$\un{X}(s,t)=\dun{X}_t$. The left equation comes from a straightforward
case analysis. If $\dun{X}_s=\dun{X}_t$, then, for all $u\in[s,t]$,
$\dun{X}_u=\dun{X}_s=\dun{X}_t$ and so $\Pi_u=X_u-\dun{X}_s-\dun{X}_t$\,;
taking the infimum on $u\in[s,t]$ gives the result. If
$\dun{X}_s>\dun{X}_t$, then $\un{X}(s,t)=\dun{X}_t$ so the right-hand
side is $-\dun{X}_s$. Let $r\in[s,t]$ be such that
$X_r=\dun{X}_r=\dun{X}_s$. We have $\Pi_r=X_r-2\dun{X}_r=-\dun{X}_s$ and,
for $u\ge s$, $\Pi_u=X_u-2\dun{X}_u\ge -\dun{X}_u\ge -\dun{X}_s$.

For $0\le s< t$, the left equation of~\eqref{dXXp} entails
\begin{align*}
\tilde d_{\Pi}(t,s)&=\Pi_s+\Pi_t-2\un\Pi(s,t)\\
	&=X_s-2\dun{X}_s+X_t-2\dun{X}_t-2\big(\un{X}(s,t)-\dun{X}_s-\dun{X}_t\big)=d_X(s,t).
\end{align*}
For $s<0<t$, we have that 
\[\un\Pi(t,s)=\inf_{u\geq t}\Pi_u\,\wedge\, \inf_{u\leq s}\Pi_u=- \big(\un X(0,t)\vee \un X(s,0)\big)=\un{X}(s,t)-\un X(s,0)-\un X(0,t)\]
and we conclude as above. The remaining case $s<t<0$ is treated similarly.
\end{proof}

\subsection{Convergence of the UIPQ to the Brownian plane}

We use here the setting of Section~\ref{secUIPQ}. The proof of Proposition~\ref{cvencbp} will follow similar lines as that of Proposition~\ref{cvencbhp}, using the coupling results of~\cite{CuLG12Bplane}. As the law of~$\fX$ is obtained from that of~$X$ by taking the Pitman transform on~$\Rp$ and on~$\Rm$, the same should be done for the contour process~$C$ of the tree~$\bT_\infty$. We thus define the process $(\fC(t)=C(t)-2\dun{C}(t),t\in\R)$.

Note that this gives an alternate natural contour process since, for $i\in \Z$, it holds that
\[\fC(i) = d_{\bT^{\Upsilon(i)}}\big(v_i,\rho^{|\Upsilon(i)|}\big)+|\Upsilon(i)|=d_{\bT_\infty}(v_i,\rho^0).\]

\begin{figure}[ht!]
  \centering
  \includegraphics[width=.9\textwidth]{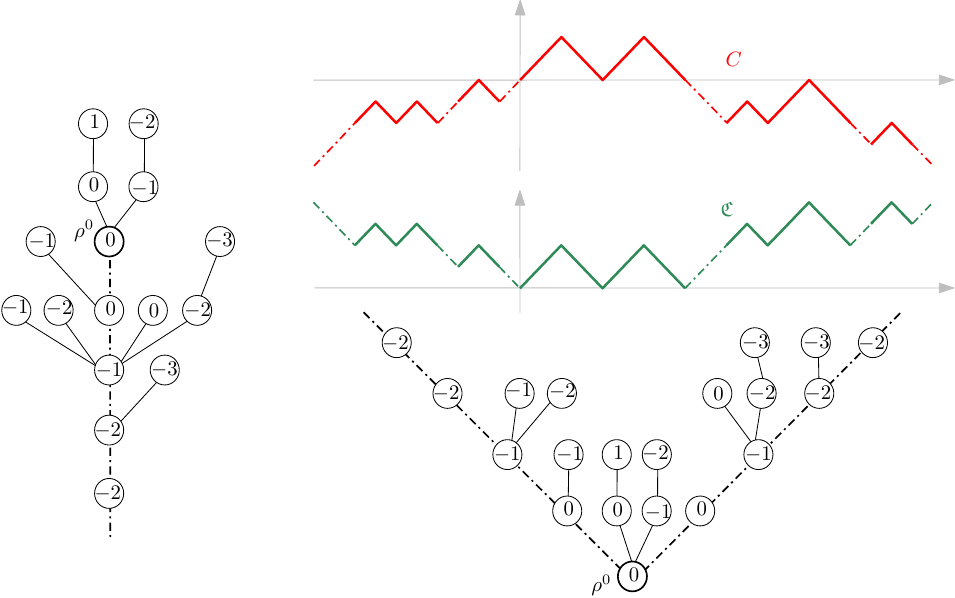}
  \caption{\textup{\textgras{Left.}} Representation of the infinite tree from Figure~\ref{fig:contour2} after moving the trees in such a way that, for $k\ge 0$, $\rho^k$ is located at $(0,-k)$ with~$\bT^k$ grafted on its right and~$\bT^{-k}$ on its left. \textup{\textgras{Top right.}} Taking the Pitman transform of the contour process on~$\Rp$ and on~$\Rm$ yields an alternate contour process. The differing parts are represented with dot-dashed lines; they correspond to the edges of the infinite spine of the tree. Visually, the process~$C$ records the height of a particle moving at speed one around the tree when represented as on the left. \textup{\textgras{Bottom right.}} Representation of the tree from Figure~\ref{fig:contour2} where the root of~$\bT^k$ is now located at $(k,|k|)$ for each $k\in\Z$. Note that, in this representation, the roots of~$\bT^{-k}$ and~$\bT^k$ differ so that the spine is duplicated. The process~$\fC$ records the height of a particle moving at speed one around this bi-infinite tree.}
  \label{fig:contourPit}
\end{figure}
In this setting of discrete trees, the Pitman transform on the contour process is very visual: it merely consists of going from reading the trees while moving \emph{down} between trees to reading them while moving \emph{up} between trees; see Figure~\ref{fig:contourPit} for an illustration. We may now proceed to the proof of the convergence of the UIPQ to the Brownian plane.

\begin{proof}[Proof of Proposition~\ref{cvencbp}]
Similarly as in the proof of Proposition~\ref{cvencbhp}, we fix some number $K>0$ and will sample a large plane quadrangulation such that, its properly scaled version and its limit, the Brownian sphere, are indistinguishable from the rescaled UIPQ and the Brownian plane, in a neighborhood of~$0$ of amplitude~$K$. We use again a superscript prime symbol~$\prime$ for the objects related to the plane quadrangulation and its limit. Here, some care will also be needed when taking an inverse Pitman transform, since this operation a priori involves more than just a neighborhood of~$0$.

We fix $L>0$ and $n\geq 1$, and consider a
uniform random element 
$(M'_n,\lambda'_n)$ of $\RbM^{[0]}_{a_n,\varnothing}$, where
$a_n=\lfloor n L\rfloor$, that is, $M'_n$ is a uniform rooted plane
tree with~$a_n$ edges, which we view as a map with a unique face
$f_*$, and $\lambda'_n$ is a labeling function uniformly distributed among those yielding a well-labeled tree. We let
$(\fC'_n,\fL'_n)$ be the contour and label function of this tree,
we let $Q'_n=\CVS(M'_n,\lambda'_n;f_*)$ be the
quadrangulation encoded by $(M'_n,\lambda'_n)$, and we set
$\fD'_n(i,j)=d_{Q_n'}(v_i,v_j)$ for $0\leq i,j\leq 2a_n$, where $v_i$ is
the $i$-th visited vertex in $M'_n$ in contour order, starting from
the root corner, and viewed as a vertex of $Q_n'$. We extend $\fD'_n$
into a continuous function on $[0,2a_n]^2$ by bilinearity, and all
processes $\fC'_n$, $\fL'_n$, $\fD'_n$ to $[-2a_n,2a_n]$ by the same
formulas as~\eqref{eq:2} and~\eqref{eq:3} but with $l_n=0$. We also
define the rescaled versions $\fC'_{(n)}$, $\fL'_{(n)}$, $\fD'_{(n)}$
exactly as in~\eqref{rescaleprime}. The joint convergence
\begin{equation}
  \label{eq:12}
  \big(\fC'_{(n)},\fL'_{(n)},\fD'_{(n)}\big)\tod\big(\fX',\fW',\fD'\big)
\end{equation}
on the space $\CC([-L,L])\times \CC([-L,L])\times \CC([-L,L]^2)$ is
then a consequence of \cite[Theorem~3]{miermont11}, where the limit is as follows. Restricted to $[0,L]$, the process~$\fX'$ is a
Brownian excursion of duration~$L$ and~$\fW'$ is the random snake driven by~$\fX'$, while $\fD'$ is a random pseudometric, which
is an explicit function of $(\fX',\fW')$. Moreover, all these processes are extended to $[-L,L]$ by a simple 
translation of their argument by~$L$. 

Let us now recall the relevant aspects of the coupling results of
\cite{CuLG12Bplane}, between the pairs $(\fX,\fW)$ and
$(\fX',\fW')$. It will be convenient to let
\[\fS_x=\inf\{t\leq 0:\fX_t=x\},\qquad \fT_x=\sup\{t\geq 0:\fX_t=x\}.\]
Fix $r>0$ and $\eps>0$. Then by \cite[Lemmas~5 and~6]{CuLG12Bplane}, it is possible to find $A>1$ and then $\alpha>0$ and
$L_0>0$ large, such that for $L>L_0$
the two processes $(\fX,\fW)$ and $(\fX',\fW')$, can be
coupled in such a way that on some event $\FF$ of probability
$\P(\FF)\geq 1-\eps$, the following properties hold.
\begin{itemize}
\item
For every $s$, $t\in [-\alpha,\alpha]$, one has
\begin{equation}\label{eq:7}
\fX_t=\fX'_t,\qquad \fW_t=\fW'_t.
\end{equation}
\item 
It holds that
\begin{equation}\label{eq:11}
-\alpha<\fS_{A^4} \qquad \text{ and }\qquad \fT_{A^4}<\alpha.
\end{equation}
\item 
For every $s$, $t\in [\fS_A,\fT_A]$, the two conditions
$\max\big(\wt{D}_{\fX,\fW}(0,t),
\wt{D}_{\fX,\fW}(0,s)\big)\leq r$ and $\max(\fD'(0,t),
\fD'(0,s))\leq r$ are equivalent, and, if these are
satisfied, one has
\[\fD'(s,t)=\wt{D}_{\fX,\fW}(s,t).\]
\end{itemize}

This choice of coupling being fixed, let us now define $(X_t,t\in \R)$
as the unique process such that $\fX_t=X_t-2\dun{X}_t$ is the Pitman
transform of~$X$ on~$\Rp$ and on~$\Rm$; more explicitly
\[X_t=
\begin{cases} \fX_t-2\inf_{s\geq t}\fX_s & \text{ if }t\geq 0\\
 \fX_t -2\inf_{s\leq t}\fX_s & \text{ if }t< 0
\end{cases}.\]
Let us also define $W=\fW$. Then $X$ indeed has the law of a two-sided
Brownian motion, and~$W$ is the random snake driven by~$X$, so that $(X,W)$ has law $\Plane$. 
Note that, in this particular coupling, we have $\br T_x=\fS_x$ and $T_x=\fT_x$ for every $x\geq
0$, and also $D_{X,W}=\wt{D}_{\fX,\fW}$, by the observation in the proof of Proposition~\ref{PitmanBP}. 
Moreover, on the event~$\FF$, the restriction $X\rst_{[\br T_{A^2},T_{A^2}]}$ is actually a function of $\fX'\rst_{[-\alpha,\alpha]}$. Indeed, by~\eqref{eq:7} and~\eqref{eq:11},
\[X_t=
\begin{cases}
   \fX'_t-2\inf_{t\leq s\leq \alpha}\fX'_s & \text{ if }0\leq t\leq T_{A^2}\\
  \fX'_t -2\inf_{-\alpha\leq s\leq t}\fX'_s & \text{ if }\br T_{A^2}\leq t< 0
\end{cases},\]
since, for $0\leq t\leq T_{A^2}$, one has $\un\fX(t,\infty)=\un\fX(t,T_{A^2})=\un\fX(t,\alpha)$, and similarly in negative times.

By choosing appropriately the values of $r$, and enlarging the
values of $A$ and $\alpha$ if necessary, then, similarly to the proof of
Proposition~\ref{cvencbhp}, we obtain that~\eqref{eq:7} holds on
$[-K,K]$, and that the restrictions to $[-K,K]^2$ of~$\fD'$ and $D_{X,W}$ coincide 
with probability at least $1-\eps$.

Next, keeping $K$, $\eps$ fixed, and possibly up to choosing $L$ even
larger, we need to couple the processes
$(C_{(n)},\Lambda_{(n)},D_{(n)})$ and
$(\fC'_{(n)},\fL'_{(n)},\fD'_{(n)})$ appropriately. 
To this end, we use the techniques of \cite[Proposition~9]{CuLG12Bplane}. The latter states that for $\eps>0$, there exists
$\alpha>0$ (independent of the choice of $L$ arising in the definition
of the scaling constant~$a_n$) such that for every $n$ large enough, one may
couple the quadrangulations
$Q'_n$ and $Q_\infty$ in such a way that, with probability at
least $1-\eps$, the balls of radius $\alpha a_n^{1/4}$ around the root
of~$Q'_n$ and~$Q_\infty$ are isometric. The proof proceeds by
coupling the encoding labeled trees $(M'_n,\lambda'_n)$,
and $(\bT_\infty,\lambda_\infty)$ in such a way
that, with even larger probability, the first $\lfloor \delta
a_n^{1/2}\rfloor$ generations of these trees coincide, for some
$\delta>0$, and the minimal value of $\lambda_\infty$ taken on the
vertices $\rho^0$, $\rho^1$, \ldots, $\rho^{\lfloor \delta a_n^{1/2}\rfloor}$
of $\bT_\infty$
is less than $-4\alpha a_n^{1/4}$. By choosing $R$ and then $L$ large enough in the
first place, for our choice of $K$, we may also require that with probability at least
$1-\eps$, 
\begin{itemize}
\item 
the contour and label processes $\fC'_n$, $\fL'_n$ of $(M'_n,\lambda'_n)$
and $\fC$, $\fL$ of $(\bT_\infty,\lambda_\infty)$ on the interval $[-2nK,2nK]$ involve
only vertices of generations less than $\lfloor 
Rn^{1/2}\rfloor$, and
\item
the most recent common ancestor of the vertices at generation $\lfloor
\delta a_n^{1/2}\rfloor$ has generation at least $\lfloor 
Rn^{1/2}\rfloor$.
\end{itemize}
In particular, on this event, the restriction of the
process $C_n'$ to $[-2nK,2nK]$ is equal to the restriction of the
process $C$ on this same event -- in words, the second itemized event
means that the spine of~$\bT_\infty$ is determined by the data of
$M'_n$ up to generation $Rn^{1/2}$. Since the process $C$ is the
inverse Pitman transform of~$\fC$, it is then a simple exercise to
conclude that $(C'_{(n)},\Lambda'_{(n)},D'_{(n)})$, which coincides
with $(C_{(n)},\Lambda_{(n)},D_{(n)})$ on $[-K,K]$ with high probability,
converges to some $(X',W',D')$, which coincides with $(X,W,D_{X,W})$ on $[-K,K]$
with high
probability. 
\end{proof}

\section{Scaling limit of size parameters in labeled maps}\label{appdata}

\subsection{Preliminaries}\label{secB1}

In this appendix, we prove Proposition~\ref{sldata}, following the
method of \cite[Proposition~7]{bettinelli10} and
\cite[Proposition~7]{bettinelli11b}. In the meantime, we obtain an
asymptotic enumeration result for $\RbQnln$ in
Proposition~\ref{cardRbQ} below, which will also allow us to
deduce Theorem~\ref{thmboltz} and Corollary~\ref{corboltz} from Theorem~\ref{mainthm}.

Recall that $(g,\kk)\notin \{(0,0),(0,1)\}$, that $\bL=(L^1,\dots,L^\kk)$ is a fixed $\kk$-tuple such that $L^1$, \dots, $L^{\bb}>0$, while~$L^{\bb+1}$, \dots, $L^{\kk}=0$, and that we consider a fixed sequence of $\kk$-tuples $\bl_n=(l_n^1,\dots,l_n^\kk) \in (\Zp)^\kk$, $n\ge 1$, such that $l_n^i/{\sqrt{2n}} \to L^i$ as $n\to\infty$, for $1\le i \le \kk$.

We furthermore assume that~$n$ is sufficiently large so that $l_n^i\ge 1$ for each $i\le\bb$. We denote by~$\RbS$ the set of rooted genus~$g$ schemes with~$\kk$ holes, such that~$\ch_1$, \dots, $\ch_{\bb}$ are faces. Note that our assumption on~$n$ ensures that $S_n\in\RbS$.

\paragraph{``Free'' parameters and notation.}
For every scheme $\bs\in\RbS$, not necessarily dominant, we arbitrarily fix, once and for all, half-edges $\epsilon_0\in \vec I(\bs)$, and $\epsilon_i\in\vec B_i(\bs)$, for $1\le i \le \bb$. We fix an orientation~$I(\bs)$ of $\vec I(\bs)$ that contains~$\epsilon_0$ and we set $I'(\bs)=I(\bs)\setminus\{\epsilon_0\}$. We also let~$v_0$ be the root vertex of~$\bs$, and $V'(\bs)=V(\bs)\setminus \{v_0\}$, as in Section~\ref{secslEP}. Finally, we set
\begin{align*}
\vec B_0(\bs)&=\bigsqcup_{\bb+1\leq i\leq \kk}\vec B_i(\bs),
&\vec B_+(\bs)&=\bigsqcup_{1\leq i\leq \bb}\vec B_i(\bs),
\\
\vec B'_i(\bs)&=\vec B_i(\bs)\setminus\{\epsilon_i\},\text{ for } 1\le i\le \bb,
&\vec B'_+(\bs)&=\bigsqcup_{1\leq i\leq \bb}\vec B'_i(\bs) .
\end{align*}
The motivation for introducing~$\vec B_+$ and~$\vec B_0$ is that we need a different treatment depending whether the hole perimeters are in the scale~$\sqrt n$ or $\sO(\sqrt n)$. The sets with a prime symbol should be thought of as the sets containing the parameters on which there is a ``degree of freedom.'' (\textit{The reason for removing one element from~$I$ will become clear in a moment. We will not need a~$\vec B'_0$ since the corresponding perimeters are all asymptotically null in the scale~$\sqrt n$ of interest.}).

From now on, we use the shorthand piece of notation $\bx^{\mathcal E}$ for a family $(x^j)_{j\in\mathcal E}$ indexed by a set~$\mathcal E$. For any subset~$\mathcal F\subseteq\mathcal E$, we also denote by $\bx^{\mathcal F}=(x^j)_{j\in\mathcal F}$ the subfamily indexed by~$\mathcal F$, and, in the case of real nonnegative numbers, by $\|\bx\|_{\mathcal F}=\sum_{j\in\mathcal F} x^j$ (note in particular that $\|\bx\|_{\varnothing}=0$).

\paragraph{Counting scheme-rooted labeled maps with given size parameters.}
For the time being, we do not take the areas parameters into account. We fix a rooted scheme $\bs\in\RbS$, and size parameters $\bh^{\vec I(\bs)}$, $\bl^{\vec B(\bs)}$ and $\blambda^{V(\bs)}$. We say that a labeled map is \emph{scheme-rooted on~$\bs$} if its scheme carries an extra root and the scheme rooted at this extra root is~$\bs$. We consider the elements of $\smash{\bM^{[g]}_{n,\bl}}$ scheme-rooted on~$\bs$ whose size parameters are $\bh^{\vec I(\bs)}$, $\bl^{\vec B(\bs)}$ and $\blambda^{V(\bs)}$. Reasoning as in Lemmas~\ref{countslice} and~\ref{countquad}, we can express the number of such elements as
\[
12^{n-\frac{\|\bh\|}2}2^{\|\bh\|+\|\bl\|}\,Q_{\|\bh\|+\|\bl\|}(2n+\|\bl\|)\,
	\prod_{e\in I(\bs)}3^{h^e}M_{h^e}(\delta\!\lambda^e)
	\prod_{e\in \vec B(\bs)}2^{2l^e+\delta\!\lambda^e}P_{l^e}(\delta\!\lambda^e),\]
the products over~$I(\bs)$ and~$\vec B(\bs)$ respectively counting the number of ways to label the vertices along the edges of~$I(\bs)$ and~$\vec B(\bs)$, and the remaining term counting the labeled forests, which can be seen as one big labeled forest obtained by concatenating all the labeled forests indexed by the half-edges of~$\vec E(\bs)$. After recalling that $\sum_{e\in \vec B_i(\bs)}\delta\!\lambda^e=0$ and that $\|\bl\|_{\vec B_i(\bs)}=l^i$ for every $i\in \{1,2,\ldots,\kk\}$ corresponding to an external face of~$\bs$, we may
recast this quantity as 
\[12^n\, 8^{\|\bl\|}	\,Q_{\|\bh\|+\|\bl\|}(2n+\|\bl\|)
	\prod_{e\in I(\bs)}M_{h^e}(\delta\!\lambda^e)
	\prod_{e\in \vec B(\bs)}P_{l^e}(\delta\!\lambda^e).
\]

Consequently, the number of elements of $\smash{\RbM^{[g]}_{n,\bl}}$ scheme-rooted on~$\bs$ (these labeled maps are thus rooted twice) whose size parameters are $\bh^{\vec I(\bs)}$, $\bl^{\vec B(\bs)}$ and $\blambda^{V(\bs)}$ is equal to
\begin{equation}\label{sahll}
\SR_n^\bs(\bh,\bl,\blambda)=(2n+\|\bl\|)\,
12^n\, 8^{\|\bl\|}	\,Q_{\|\bh\|+\|\bl\|}(2n+\|\bl\|)
	\prod_{e\in I(\bs)}M_{h^e}(\delta\!\lambda^e)
	\prod_{e\in \vec B(\bs)}P_{l^e}(\delta\!\lambda^e),
\end{equation}
since there are $2n+\|\bl\|$ possible rootings of the map.

\paragraph{Counting rooted labeled maps.}
Next, for $n$, $h\in\N$, $\bs\in\RbS$, we set
\begin{equation}\label{Z1shn}
\ZZ_1^\bs(h,n)=\sum_{\cT_\bs(h,n)}
		\prod_{e\in I(\bs)}M_{h^e}(\delta\!\lambda^e)
	\prod_{e\in \vec B(\bs)}P_{l^e}(\delta\!\lambda^e),
\end{equation}
where the sum is taken over the set~$\cT_\bs(h,n)$ of all size parameters from labeled maps in~$\bM^{[g]}_{n,\bl_n}$ scheme-rooted on~$\bs$, having~$h$ edges in total on the internal edges of~$\bs$. More precisely, it is the set of tuples
\[\Big(\bh^{\vec I(\bs)}, \bl^{\vec B(\bs)}, \blambda^{V(\bs)}\Big)\in \N^{\vec I(\bs)}\times \N^{\vec B(\bs)} \times \Z^{V(\bs)}\]
such that
\begin{multicols}{2}\begin{itemize}
	\item $\|\bh\|=2h$,
	\item $h^{\bar e}=h^e$, for all $e \in \vec I(\bs)$,
	\item $\|\bl\|_{\vec B_i(\bs)}=l_n^i$, for $1\le i \le \kk$, 
	\item $\lambda^{v_0}=0$.
\end{itemize}\end{multicols}\noindent
Note that the conditions $\|\bl\|_{\vec B_i(\bs)}=l_n^i$ may only be satisfied if~$\bl_n$ is \emph{compatible} with~$\bs$ in the sense that $l_n^i=0 \iff \vec B_i(\bs)=\varnothing$ for all~$i$. As a result, $\cT_\bs(h,n)=\varnothing$ and thus $\ZZ_1^\bs(h,n)=0$ whenever~$\bl_n$ is not compatible with~$\bs$.

By double counting the elements of $\RbM^{[g]}_{n,\bl_n}$ scheme-rooted on~$\bs$, we thus obtain that
\begin{equation}\label{RbMnln}
\big|\RbM^{[g]}_{n,\bl_n}\big|
	=12^n 8^{\|\bl_n\|}
		\sum_{\bs\in\RbS} \frac{2n+\|\bl_n\|}{2|E(\bs)|}
		\sum_{h\in\N} Q_{2h+\|\bl_n\|}(2n+\|\bl_n\|)\, \ZZ_1^\bs(h,n),
\end{equation}
since we sum over $\bigcup_{\bs\in\RbS,h\in\N} \{\bs\}\times \cT_\bs(h,n)$ the number $\SR_n^\bs(\bh,\bl_n,\blambda)$ given by~\eqref{sahll}, divided by the number $2\,|E(\bs)|$ of possible extra rootings on the scheme.

\subsection{Asymptotics of the scheme}

When we work with a fixed scheme, which will be the case in all but the fourth paragraph, we drop the argument from the sets in the notation in order to ease the reading, thus writing $I$ instead of $I(\bs)$ for instance.

\paragraph{Law of the scheme.} 
Recall that the triple $(M_n,\lambda_n,S_n)$ is a rooted, scheme-rooted, labeled map, where $(M_n,\lambda_n)$ is uniformly distributed over~$\RbM^{[g]}_{n,\bl_n}$, while, conditionally given it, $S_n$ is rooted by uniformly choosing its root among $\big\{e,\bar e\,:\, e\in \vec I(S_n)\cup\vec B_+(S_n)\big\}$. Let us fix a rooted scheme $\bs\in\RbS$ whose root or its reverse belongs to $\vec I\cup\vec B_+$. Writing~$\un\bs$ the nonrooted scheme corresponding to~$\bs$, observe that the set of rooted labeled maps in~$\RbM^{[g]}_{n,\bl_n}$ with scheme~$\un\bs$ is in bijection with the set of rooted labeled maps in~$\RbM^{[g]}_{n,\bl_n}$ scheme-rooted on~$\bs$. Then,
\begin{align*}
\P(S_n=\bs)
	&=\sum_{\substack{(\bm,\lambda)\in\RbM^{[g]}_{n,\bl_n}\\\text{with scheme }\un\bs}}
		\P\big((M_n,\lambda_n)=(\bm,\lambda),\,S_n=\bs\big)\\
	&=\sum_{\substack{(\bm,\lambda)\in\RbM^{[g]}_{n,\bl_n}\\\text{scheme-rooted on }\bs}}
		\P\big((M_n,\lambda_n)=(\bm,\lambda)\big)\,
		\P\big(S_n=\bs\mid (M_n,\lambda_n)=(\bm,\lambda)\big)\\
	&=\sum_{h\in\N} \sum_{\cT_\bs(h,n)} \SR_n^\bs(\bh,\bl_n,\blambda)
		\frac1{\big|\RbM^{[g]}_{n,\bl_n}\big|}
		\frac{1}{|\vec{I}\,|+2\,|\vec B_+|}
	=\frac{\ZZ_1^\bs(n)}{\ZZ_1(n)},
\end{align*}
where
\begin{equation}\label{Zphi}
\ZZ_1^\bs(n)=\frac{1}{|\vec I(\bs)|+2\,|\vec B_+(\bs)|}\sum_{h\in\N} Q_{2h+\|\bl_n\|}(2n+\|\bl_n\|)\, \ZZ_1^\bs(h,n)
\end{equation}
and $\ZZ_1(n)=\sum_{\bs\in\RbS} \ZZ_1^\bs(n)$ is the proper normalization constant.

\paragraph{Schemes with tadpoles.}
Here, we fix a scheme~$\bs$ whose external faces among~$\ch_i$, $\bb+1\le i\le \kk$, are all tadpoles. Equivalently, each $\vec B_i$, $\bb+1\le i \le \kk$, is either empty or a singleton. In this case, by the Euler characteristic formula,
\begin{equation}\label{Eulertad}
|V'|-|I|-|\vec B'_+|=-2g,
\end{equation}
since~$\bs$ has $|I|+|\vec B'_+|+\bb+|\vec B_0|$ edges and $1+\bb+|\vec B_0|$ faces.

Assuming that~$\bl_n$ is compatible with~$\bs$, we write the sum over~$\cT_\bs(h,n)$ in~\eqref{Z1shn} as an integral under the Lebesgue measure
\[\d\rmL_\bs=\d\bh^{I'}\otimes\d\bl^{ \vec B'_+}\otimes\d\blambda^{V'}
	\quad\text{ over }\quad (\Rp)^{I'}\times (\Rp)^{ \vec B'_+} \times \R^{V'}\,\]
and obtain
\begin{align}\label{Z1hn}
\ZZ_1^\bs(h,n)&=
	\prod_{e\in \vec B_0}P_{l^e}(0)
	\int \d\rmL_\bs\,
		\prod_{e\in I}M_{\un h^e}(\delta\!\un\lambda^e)
		\prod_{e\in \vec B_+}P_{\underline l^e}(\delta\!\un\lambda^e),
\end{align}
where $l^e=l_n^i$ if $e$ is the unique element of~$\vec B_i$, for $i>\bb$, and
\begin{multicols}{2}\begin{itemize}[itemsep=10pt]
	\item $\un h^e=\lceil h^e \rceil$, for $e\in I'$,
	\item $\underline l^e=\lceil l^e\rceil$, for $e\in \vec B'_+$, 
	\item $\un\lambda^v=\lceil\lambda^v\rceil$, for $v\in V'$,
	\item $\smash{\un h^{\epsilon_0}=h-\sum\limits_{e\in I'} \un h^e}$,
	\item $\smash{\underline l^{\epsilon_i}=l_n^i-\sum\limits_{e\in\vec B'_i} \underline l^e}$, for $1\le i \le \bb$,
	\item $\un\lambda^{v_0}=0$.
\end{itemize}\end{multicols}\vspace*{2mm}\noindent
(\textit{Note that the ceiling function is superfluous for integer parameters; we kept it for notational simplicity.}) In order to deal with the cases where $\un h^{\epsilon_0}\le 0$ or $\underline l^{\epsilon_i}\le 0$, we simply declare\footnote{This is just a convenience. Note that we set $M_0(0)=P_0(0)=0$ here, although it would be more natural from a combinatorial point of view to set both these quantities to~$1$.} $M_\ell(j)=P_\ell(j)=0$ whenever $\ell\le 0$.

Observe that~$\bl_n$ compatible with~$\bs$ means that~$\vec B_0$ corresponds to $\{ i >\bb : l^i_n \ge 1\}$. 
We then make the changes of variables in the natural scales to obtain
\begin{multline}\label{Z1shnint}
\ZZ_1^\bs(h,n)=
		3^{\frac{\bb}{2}-g}\,
		2^{\frac{|V'|}{2}-\frac g2-\frac34\bb}\,
		n^{\frac{|V'|}{2}+\frac g2-\frac{\bb}4-\frac12}
		\prod_{i > \bb\, :\, l^i_n \ge 1} P_{l^i_n}(0)
	\\\times
		\int \d\rmL_\bs\,
		\prod_{e\in I} \Big(\frac{8n}9\Big)^{\frac14} M_{\dun h^e}(\delta\!\dun\lambda^e)
		\prod_{e\in \vec B_+} \Big(\frac{8n}9\Big)^{\frac14} P_{\dunderline l^e}(\delta\!\dun\lambda^e),
\end{multline}
where
\begin{multicols}{2}\begin{itemize}[itemsep=10pt]
	\item $\dun h^e=\lceil \sqrt{2n}\, h^e \rceil$, for $e\in I'$,
	\item $\dunderline l^e=\lceil \sqrt{2n}\,l^e\rceil$, for $e\in \vec B'_+$, 
	\item $\smash{\dun\lambda^v=\lceil(\tfrac{8n}9)^{\frac14} \lambda^v\rceil}$, for $v\in V'$,
	\item $\smash{\dun h^{\epsilon_0}=h-\sum\limits_{e\in I'} \dun h^e}$,
	\item $\smash{\dunderline l^{\epsilon_i}=l_n^i-\sum\limits_{e\in\vec B'_i} \dunderline l^e}$, for $1\le i \le \bb$,
	\item $\dun\lambda^{v_0}=0$.
\end{itemize}\end{multicols}\vspace*{2mm}\noindent

We finally use the same method to treat the summation over $h\in\N$ in~\eqref{Zphi}, that is, we see it as an integral and do the proper change of variables. We write $\bl_n\comp\bs$ to mean ``$\bl_n$ compatible with~$\bs$'':
\begin{equation}\label{intZ1s}
\ZZ_1^\bs(n)=\frac{\ind_{\bl_n\comp\bs}}{|\vec I\,|+|\vec B_+|}\sqrt{\frac2n}\int_{\Rp} \d\sfh\, n Q_{2\lceil \sqrt{2n}\, \sfh \rceil+\|\bl_n\|}(2n+\|\bl_n\|)\, \ZZ_1^\bs\big(\lceil \sqrt{2n}\, \sfh \rceil,n\big) .
\end{equation}

Setting $h^{\epsilon_0}=\sfh-\sum_{e\in I'} h^e$, $l^{\epsilon_i}=L^i-\sum_{e\in\vec B'_i} l^e$ for $1\le i \le \bb$, and $\lambda^{v_0}=0$, by the local limit theorem \cite[Theorem~VII.1.6]{petrov75}, it holds that, when $h\sim \sqrt{2n}\,\sfh$,
\begin{equation}\label{equiPet1}
\Big(\frac{8n}9\Big)^{\frac14} M_{\dun h^e}(\delta\!\dun\lambda^e)\ton p_{h^e}(\delta\!\lambda^e),\qquad\qquad
\Big(\frac{8n}9\Big)^{\frac14} P_{\dunderline l^e}(\delta\!\dun\lambda^e)\ton p_{3l^e}(\delta\!\lambda^e),
\end{equation}
and
\begin{equation}\label{equiPet2}
n Q_{2\lceil \sqrt{2n}\, \sfh \rceil+\|\bl_n\|}(2n+\|\bl_n\|) \ton q_{2\sfh+\|\bL\|}(1).
\end{equation}
Consequently, provided the domination hypothesis obtained in the following paragraph, we get the following equivalent:
\begin{equation}\label{equivZ1s}
\ZZ_1^\bs(n){\build{\sim}{n\to\infty}{}} c_1^\bs(\bL)\,\ind_{\bl_n\comp\bs}\,
		n^{\frac{|V'|}{2}+\frac g2-\frac{\bb}4-1}
		\prod_{i > \bb\, :\, l^i_n \ge 1}P_{l^i_n}(0)
\end{equation}
where the constant~$c_1^\bs(\bL)$ is given by
\MEMS{
\begin{multline*}
c_1^\bs(\bL)=\frac{1}{|\vec I\,|+2\,|\vec B_+|}
	3^{\frac{\bb}{2}-g}\,
	2^{\frac{|V'|}{2}-\frac g2-\frac34\bb+\frac12}\,
		\\\times\int_{\Rp}\! \d\sfh\, q_{2\sfh+\|\bL\|}(1)\, \int\! \d\rmL_\bs\,
		\prod_{e\in I} p_{h^e}(\delta\!\lambda^e)
		\prod_{e\in \vec B_+} p_{3l^e}(\delta\!\lambda^e)
		.
\end{multline*}
}{
\begin{equation*}
c_1^\bs(\bL)=\frac{1}{|\vec I\,|+2\,|\vec B_+|}
	3^{\frac{\bb}{2}-g}\,
	2^{\frac{|V'|}{2}-\frac g2-\frac34\bb+\frac12}\,
		\int_{\Rp}\! \d\sfh\, q_{2\sfh+\|\bL\|}(1)\, \int\! \d\rmL_\bs\,
		\prod_{e\in I} p_{h^e}(\delta\!\lambda^e)
		\prod_{e\in \vec B_+} p_{3l^e}(\delta\!\lambda^e)
		.
\end{equation*}
}

\paragraph{Domination hypothesis.}

In order to show that the convergence is dominated, we use the bounds of Petrov \cite[Theorem~VII.3.16]{petrov75}, stating that there exist a constant~$C$ such that, for any $\ell\in\N$, $j\in\Z$, $i\in\N$, and $r\in\N$,
\begin{equation}\label{PetBound}
M_\ell(j) \vee P_\ell(j) \le C\, \frac{1}{\sqrt\ell}\qquad\text{ and }\qquad Q_i(\ell)\le C\, \frac{i}{\ell^{3/2}}\frac1{1+({i^2}/{\ell})^r}.
\end{equation}

We fix an arbitrary spanning tree of~$\bs$, that is, a tree with vertex-set $V$ and edge-set a subset of~$E$. We associate with any vertex $v\neq v_0$ the first edge of the unique path in the tree from~$v$ to~$v_0$ and we denote by~$e_v$ the unique half-edge of $ I\cup\vec B$ that corresponds to this edge.

We bound the integrand in~\eqref{Z1shnint} as follows. First, by~\eqref{PetBound}, we have, for $e\in I'$,
\[\Big(\frac{8n}9\Big)^{\frac14} M_{\dun h^e}(\delta\!\dun\lambda^e)
	\le \Big(\frac{8n}9\Big)^{\frac14}\frac{C}{\sqrt{\dun h^e}}
	\le \Big(\frac{8n}9\Big)^{\frac14}\frac{C}{\sqrt{\sqrt{2n}\, h^e}}
	=\sqrt{\frac23}\frac{C}{\sqrt{h^e}}
	\le \frac{C}{\sqrt{h^e}}.\]
For $h=\lceil \sqrt{2n}\, \sfh \rceil$ and $h^{\epsilon_0}=\sfh-\sum_{e\in I'} h^e$, a similar bound holds for $e=\epsilon_0$, up to possibly enlarging the constant. Indeed, it suffices to show that $\dun h^{\epsilon_0}$ is bounded from below by a constant times $\sqrt{2n}\,h^{\epsilon_0}$ in order to complete the computation. We may assume that $\dun h^{\epsilon_0}\ge 1$ as otherwise the left-hand side is null. Then, if $\sqrt{2n}\,h^{\epsilon_0}\le 2{|I'|}$, it immediately holds that $\dun h^{\epsilon_0}\ge \frac1{2|I'|} \sqrt{2n}\,h^{\epsilon_0}$. Otherwise, 
\[\dun h^{\epsilon_0}=h-\sum\limits_{e\in I'} \dun h^e \ge \sqrt{2n}\, h^{\epsilon_0}- |I'|\ge \frac12 \sqrt{2n}\, h^{\epsilon_0}.\]
In conclusion, up to changing the constant~$C$, it holds that, for all $e\in I$,
\[\Big(\frac{8n}9\Big)^{\frac14} M_{\dun h^e}(\delta\!\dun\lambda^e)
	\le \frac{C}{\sqrt{h^e}}.\]
Similarly, up to enlarging the constant~$C$ even more, setting $l^{\epsilon_i}=L^i-\sum_{e\in\vec B'_i} l^e$ for $1\le i \le \bb$, it holds that, for $e\in \vec B_+$,
\[\Big(\frac{8n}9\Big)^{\frac14} P_{\dunderline l^e}(\delta\!\dun\lambda^e)
	\le \frac{C}{\sqrt{l^e}}.\]
We use these bounds whenever $e\notin\vec E_V=\big\{ e_v: v\in V\setminus\{v_0\}\big\}$ and then, we operate the integral with respect to~$\d\blambda^{V'}$ vertices by vertices, starting from a leaf of the fixed spanning tree, then from a leaf of the tree remaining after removing the first vertex, and so on until only~$v_0$ remains. Since, for any $\ell\in\N$,
\[\int \d x\, \Big(\frac{8n}9\Big)^{\frac14} M_{\ell}\big(\lceil(\tfrac{8n}9)^{\frac14} x\rceil\big)
=1,\]
and similarly with~$P_\ell$ instead of~$M_{\ell}$, we obtain that, for $n$ sufficiently large and after integration with respect to~$\d\blambda^{V'}$, the integrand in~\eqref{Z1shnint} is bounded by
\begin{equation}\label{eq:6}
\ind_{\{\|\bh\|_{ I}\le 2\sfh\}}
	\ind_{\{\|\bl\|_{ \vec B'_+}\le 2\|\bL\|\}}
		\prod_{e\in I\setminus \vec E_V} \frac{C}{\sqrt{h^e}}
		\prod_{e\in \vec B_+\setminus \vec E_V}
			\frac{C}{\sqrt{l^e}}.
\end{equation}
This is integrable with respect to $\d\bh^{I'}\otimes\d\bl^{ \vec B'_+}$ and is bounded, after integration, by some constant times some power of~$\sfh$. Taking $r$ sufficiently large in~\eqref{PetBound} yields that this quantity multiplied by $ n Q_{2\lceil \sqrt{2n}\, \sfh \rceil+\|\bl_n\|}(2n+\|\bl_n\|)$ is integrable with respect to $\d\sfh$. The claimed dominated convergence follows.

\paragraph{Dominant schemes.}
We will now see which schemes are such that $\ZZ_1^\bs(n)$ has the highest possible order in~$n$. The exponent of~$n$ in the equivalent~\eqref{equivZ1s} is maximal when $|V(\bs)|$ is the largest; in this case,
\[|V(\bs)|=2(2g+\kk-1).\]
This equality is obtained as in the proof of Lemma~\ref{lemnbschemes}, since $|V(\bs)|$ being the largest means that the vertices have the lowest possible degrees, namely~$3$ for the internal vertices and~$1$ for the external vertices. More precisely, denoting by~$v$, $e$, $f$ the numbers of vertices, edges and faces of~$\bs$, as well as $t$ the number of tadpoles among~$\ch_{\bb+1}$, \dots, $\ch_\kk$, we obtain $f=\bb+t+1$, $2e= 3(v-\kk+\bb+t)+\kk-\bb-t$, and the result from the Euler characteristic formula $v-e+f=2-2g$.

Next, the local limit theorem \cite[Theorem~VII.1.6]{petrov75} yields the existence of a compact set $K\subset (0,\infty)$ such that, for all $\ell\in\N$, $\sqrt \ell\, P_{\ell}(0)\in  K$. Finally, for any $\bs\in\RbS$, we denote by~$\bs^\tp$ the scheme obtained by shrinking every tadpole among~$\ch_{\bb+1}$, \dots, $\ch_{\kk}$ into a vertex. For any fixed dominant scheme $\bd\in\RbS^\star$, observe that there exist exactly one scheme among $\{\bs\in\RbS:\bs^\tp=\bd\}$ that is compatible with~$\bl_n$, namely the one whose tadpoles among~$\ch_{\bb+1}$, \dots, $\ch_\kk$ are the holes indexed by $\{i>\bb:l_n^i\ge 1\}$. Furthermore, if $\bs\in\RbS$ is such that $\bs^\tp\in\RbS^\star$, then the external faces among~$\ch_i$, $\bb+1\le i\le \kk$, of~$\bs$ are all tadpoles. We may thus use the equivalent~\eqref{equivZ1s} for these schemes. Consequently, as $n\to\infty$,
\begin{equation}\label{domorder}
\sum_{\substack{\bs\in\RbS\\\bs^\tp=\bd}} \ZZ_1^\bs(n) = \Theta \Bigg(
		n^{\frac{5(g-1)}2 +\kk -\frac{\bb}4}
		\prod_{i > \bb\, :\, l^i_n \ge 1} (l^i_n)^{-\frac12}\Bigg).
\end{equation}

In particular, if~$\bs$ has only tadpoles among its external faces indexed by $\bb+1\le i\le \kk$ but is such that~$\bs^\tp$ is not dominant, then $\ZZ_1^\bs(n)$ is negligible with respect to this sum.

\paragraph{Nondominant schemes.}
We will now see that the above is the highest order in~$n$ and that it is only obtained for the schemes that are dominant after the tadpoles shrinkage. To this end, we fix an arbitrary scheme $\bs\in\RbS$. As above, we consider an arbitrary spanning tree of~$\bs$ and still denote by $e_v\in  I\cup\vec B$ the half-edge corresponding to $v\in V'$, as well as $\vec E_V=\big\{ e_v: v\in V'\big\}$.

In~\eqref{Z1shn}, we bound $M_{h^e}(\delta\!\lambda^e)$ or $P_{l^e}(\delta\!\lambda^e)$ thanks to~\eqref{PetBound} if $e\notin\vec E_V$ and we operate the sum over $\blambda^{V'}$ leaf by leaf as we did in the previous paragraph. Since, for any $\ell\in\N$, $\sum_{j\in\Z}M_\ell(j)=\sum_{j\in\Z}P_\ell(j)=1$, we obtain the bound
\[\ZZ_1^\bs(h,n)\le \sum_{\bh^{\vec I},\, \bl^{\vec B}}
	\prod_{e\in I\setminus \vec E_V} \frac{C}{\sqrt{h^e}}
	\prod_{e\in\vec B\setminus \vec E_V} \frac{C}{\sqrt{l^e}}
	,\]
where the sum is over the tuples $\bh^{\vec I}$, $\bl^{\vec B}$ satisfying the conditions of~$\cT_\bs(h,n)$. Seeing the sums as integrals under the simplex Lebesgue measures $\Delta_{I}^h$, and $\Delta_{\vec B_i}^{l^i_n}$ whenever $\vec B_i\neq\varnothing$ yields, after renormalization by~$h$ or~$l^i_n$, integrals of Dirichlet distributions (with parameter vectors containing only $\frac12$'s and $1$'s). As a result,
\begin{align*}
\ZZ_1^\bs(h,n)&\lesssim h^{|I| - 1 - \frac12 | I\setminus \vec E_V|}\,
	\prod_{\substack{1\le i \le \kk \\ \vec B_i\neq\varnothing}} 
		(l^i_n)^{|\vec B_i| - 1 - \frac12 |\vec B_i\setminus \vec E_V|}\,\\
	&\quad= h^{\frac12|I| + \frac12 | I\cap \vec E_V| - 1}\,
	\prod_{\substack{1\le i \le \kk \\ \vec B_i\neq\varnothing}} 
		(l^i_n)^{\frac12|\vec B_i| + \frac12 |\vec B_i\cap \vec E_V| - 1},
\end{align*}
where we used the symbol $\lesssim$ to mean bounded up to a constant independent\footnote{Recall from Lemma~\ref{lemnbschemes} that the number of edges in the schemes from~$\RbS$ is bounded.} of~$\bs$, $h$, and~$n$. Since~$l^i_n$ is in the scale $\sqrt n$ for $i \le \bb$, the part of the product concerning these indices is bounded by a constant times
\[n^{\frac14| \vec B'_+| +\frac14 |\vec B_+\cap \vec E_V| - \frac{\bb}4}.\]
Recall that $ B_i\neq\varnothing \iff l^i_n \ge 1$ when $\bl_n$ is compatible with~$\bs$. Using~\eqref{intZ1s}, which is valid for any scheme, as well as the bound~\eqref{PetBound} as above to get integrability, we obtain
\begin{align*}
\ZZ_1^\bs(n)
		&\lesssim\ind_{\bl_n\comp\bs}\,
		n^{\frac14(|I| + | \vec B'_+|) +\frac14 |(I\cup \vec B_+)\cap \vec E_V| - \frac{\bb}4 - 1}\,
		\prod_{i > \bb\, :\, l^i_n \ge 1} 
			(l^i_n)^{\frac12|\vec B_i| + \frac12 |\vec B_i\cap \vec E_V| - 1}\\
	&\lesssim\ind_{\bl_n\comp\bs}\,
		n^{\frac14(|I| + | \vec B'_+|+ |V'|) - \frac{\bb}4 - 1}\,
		\prod_{i > \bb\, :\, l^i_n \ge 1} 
			(l^i_n)^{\frac12|\vec B_i| - 1},
\end{align*}
since $l_n^i=\bO(\sqrt n)$ for all~$i$, and $|(I\cup \vec B_+\cup\vec B_0)\cap \vec E_V|=|V'|$. Using again the Euler characteristic formula, as well as the bound $|V|\le 2(2g+\kk-1)$, we obtain
\[\ZZ_1^\bs(n)
	\lesssim\ind_{\bl_n\comp\bs}\,
	n^{\frac{5(g-1)}2 +\kk -\frac{\bb}4}
		\prod_{i > \bb\, :\, l^i_n \ge 1} (l^i_n)^{-\frac12} \Big(\frac{l^i_n}{\sqrt n} \Big)^{\frac12(|\vec B_i| - 1)},\]
which gives an order lower than that of~\eqref{domorder} as soon as there exists $i>\bb$ such that $|\vec B_i| \ge 2$ since $l^i_n=\sO(\sqrt n)$. As a result, the normalization constant $\ZZ_1(n)$ is of the order appearing in~\eqref{domorder} and $\ZZ_1^\bs(n)=\sO(\ZZ_1(n))$ whenever $\bs^\tp\notin\RbS^\star$. In particular, $\P(S_n^\tp\in\RbS^\star)\to 1$ as $n\to\infty$ and we obtain the first statement of Proposition~\ref{sldata}: with asymptotic probability~$1$, every vanishing face of~$M_n$ induces a tadpole in~$S_n$.

\subsection{Asymptotics of the size parameters}

\paragraph{Limiting distribution of the size parameters.}
Given a bounded continuous function~$\phi$ on the set
\[\bigcup_{\bs\in\RbS\,:\,\bs^\tp=\bs} \{\bs\}\times (\Rp)^{\vec I(\bs)}\times (\Rp)^{\vec B(\bs)}\times \R^{V(\bs)},\]
we set, for $n$, $h\in\N$, $\bs\in\RbS$,
\begin{equation*}
\ZZ_\phi^\bs(h,n)=\sum_{\cT_\bs(h,n)}
	\phi\left(\bs^\tp, \frac{\bh^{\vec I(\bs^\tp)}}{\sqrt{2n}}, \frac{\bl^{\vec B(\bs^\tp)}}{\sqrt{2n}},
			\frac{\blambda^{V(\bs^\tp)}}{(8n/9)^{1/4}}\right)
	\prod_{e\in I(\bs)}M_{h^e}(\delta\!\lambda^e)
	\prod_{e\in \vec B(\bs)}P_{l^e}(\delta\!\lambda^e),
\end{equation*}
and
\begin{equation*}
\ZZ_\phi^\bs(n)=\frac{1}{|\vec I(\bs)|+2\,|\vec B_+(\bs)|}\sum_{h\in\N} Q_{2h+\|\bl_n\|}(2n+\|\bl_n\|) \ZZ_\phi^\bs(h,n) ,
\end{equation*}
so that 
\begin{equation*}
\E\left[\phi\left(S_n^\tp, 
	\frac{\bH_{n}^{\vec I(S_n^\tp)}}{\sqrt{2n}}, \frac{\bL^{\vec B(S_n^\tp)}_{n}}{\sqrt{2n}},
	\frac{\bLambda_n^{V(S_n^\tp)}}{(8n/9)^{1/4}}\right)\right]
=\frac{1}{\ZZ_1(n)}\sum_{\bs\in\RbS} \ZZ_\phi^\bs(n).
\end{equation*}

Conducting with $\ZZ_\phi^\bs(n)$ exactly the same computations as the ones we did with $\ZZ_1^\bs(n)$, we obtain the same domination (up to $\sup|\phi|$) when $\bs^\tp$ is not dominant and a similar equivalent when $\bs^\tp$ is dominant, namely~\eqref{equivZ1s} where $c_1^\bs(\bL)$ is replaced with
\begin{multline*}
c_\phi^\bs(\bL)=\frac{1}{|\vec I(\bs)|+2\,|\vec B_+(\bs)|}\,
	3^{\frac{\bb}{2}-g}\,
	2^{\frac{|V'(\bs)|}{2}-\frac g2-\frac34\bb+\frac12}
		\MEMS{}{\\\times}
		\int_{\Rp} \d\sfh\, q_{2\sfh+\|\bL\|}(1)\, 
		\MEMS{\\\times}{}
		\int \d\rmL_\bs\,
		\phi\Big(\bs^\tp, \bh^{\vec I(\bs^\tp)}, \bl^{\vec B(\bs^\tp)}, \blambda^{V(\bs^\tp)}\Big)
		\prod_{e\in I(\bs)} p_{h^e}(\delta\!\lambda^e)
		\prod_{e\in \vec B_+(\bs)} p_{3l^e}(\delta\!\lambda^e)
		.
\end{multline*}

From the Euler characteristic formula, we obtain\MEMS{}{ that} $|\vec I(\bs)|+2\,|\vec B_+(\bs)|=2|E(\bs^\tp)|=2(6g+2p+\bb-3)$ does not depend on~$\bs$, and we remind that $|V'(\bs)|=4g+2p-3$ does not either. Let us consider a dominant scheme $\bd\in\RbS^\star$ and an integer $n\in\N$. We let $\bd_n\in\RbS$ be the unique scheme compatible with~$\bl_n$ and such that $\bd_n^\tp=\bd$. Recall that this is the scheme obtained from~$\bd$ by making into tadpoles the external vertices indexed by $\{i>\bb:l_n^i\ge 1\}$. Since the above integral only involves~$\bs^\tp$, we have $c_\phi^{\bd_n}(\bL)=c_\phi^\bd(\bL)$, and then
\[
\ZZ_1(n)=\sum_{\bs\in\RbS} \ZZ_1^\bs(n)
	\sim \sum_{\bd\in\RbS^\star} \ZZ_1^{\bd_n}(n)
	\sim n^{\frac{5(g-1)}2 +\kk -\frac{\bb}4}
		\prod_{i > \bb\, :\, l^i_n \ge 1}P_{l^i_n}(0)\,
		\sum_{\bd\in\RbS^\star} c_1^\bd(\bL),
\]
and 
\begin{equation}\label{limEphi}
\E\left[\phi\left(S_n^\tp, 
	\frac{\bH_{n}^{\vec I(S_n^\tp)}}{\sqrt{2n}}, \frac{\bL^{\vec B(S_n^\tp)}_{n}}{\sqrt{2n}},
	\frac{\bLambda_n^{V(S_n^\tp)}}{(8n/9)^{1/4}}\right)\right]
\ton\frac{1}{\sum\limits_{\bd\in\RbS^\star} c_1^\bd(\bL)}\,\sum_{\bd\in\RbS^\star} c_\phi^\bd(\bL).
\end{equation}

In passing, we obtain the following asymptotic formula for the cardinality of~$\RbQnln$, which readily yields Proposition~\ref{enumQnln} (corresponding to the case $\kk=\bb$), the unrooting giving a factor $1/4n$ coming from~\eqref{eqnrbQ}. We define the continuous function in $\bL\in (0,\infty)^\bb$
\[t_g(\bL) = \sum_{\bd\in\RbS^\star} c_1^\bd(\bL),\]
and we add the excluded cases $(g,\kk)=(0,0)$ and $(g,\kk,\bb)=(0,1,1)$, which are needed in Section~\ref{secboltz}: we set 
\[t_0(\varnothing)=\frac1{2\sqrt\pi}\qquad\text{ and }\qquad
	t_0(L)=\frac{2^{-\frac94}}{\pi\sqrt L} e^{-\frac{L^2}{2}}.\]

\begin{prp}\label{cardRbQ}
As $n\to\infty$, it holds that
\[\big|\RbQnln\big| \sim 4\,t_g(\bL)\, 12^n \, 8^{\|\bl_n\|} \,
		n^{\frac{5(g-1)}2 +\kk -\frac{\bb}4}
		\frac{e^\star}{e^\star+p_n^\lozenge}
		\prod_{i > \bb\, :\, l^i_n \ge 1}P_{l^i_n}(0),
\]
where $e^\star=6g+2p+\bb-3$ is the common number of edges of all dominant schemes, and $p_n^\lozenge=|\{i>\bb:l_n^i\ge 1\}|$ is the number of external faces among~$\ch_{\bb+1}$, \dots, $\ch_\kk$ in the maps of~$\bM^{[g]}_{n,\bl_n}$.

In the excluded cases $(g,\kk)=(0,0)$ and $(g,\kk,\bb)=(0,1,1)$, a similar formula holds:
\[\big|\RbQnzero\big| \sim 4\,t_0(\varnothing)\, 12^n \, n^{-\frac{5}2}
		\qquad\text{ and }\qquad
		\big|\RbQnof{(l_n)}{0}\big| \sim 4\,t_0(L)\, 12^n \, 8^{l_n} \, n^{-\frac{7}4}
\]
for $L>0$ and $l_n\sim \sqrt{2n}\,L$ as $n\to\infty$.
\end{prp}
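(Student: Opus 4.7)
The plan is to first reduce $|\RbQnln|$ to $|\RbM^{[g]}_{n,\bl_n}|$ and then invoke the asymptotics already established in the preceding subsections. The CVS construction gives a one-to-two correspondence between $\RbM^{[g]}_{n,\bl_n}$ and $\RbQ^{[g]}_{n,\bl_n 0}$, while marking a uniform internal vertex of an element of $\RbQnln$ as a $(\kk+1)$-th hole yields a bijection between the set $\{(Q,v^\ast):Q\in\RbQnln,\ v^\ast\text{ internal vertex of }Q\}$ and $\RbQ^{[g]}_{n,\bl_n 0}$. Combined with the count~\eqref{nbintvert} of internal vertices, this gives
\[
|\RbQnln| = \frac{2\,|\RbM^{[g]}_{n,\bl_n}|}{n + \|\bl_n\| + 2 - 2g - \kk}\,.
\]
Using the definition~\eqref{Zphi} of $\ZZ_1^\bs(n)$ to rearrange~\eqref{RbMnln}, I would rewrite
\[
|\RbM^{[g]}_{n,\bl_n}| = 12^n\, 8^{\|\bl_n\|}\,(2n+\|\bl_n\|)
\sum_{\bs\in\RbS} \frac{|\vec I(\bs)| + 2|\vec B_+(\bs)|}{2|E(\bs)|}\,\ZZ_1^\bs(n),
\]
and use that $(2n+\|\bl_n\|)/(n + \|\bl_n\| + 2 - 2g - \kk)\to 2$ as $n\to\infty$.

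The asymptotic analysis of the previous subsection shows that the sum above is dominated, as $n\to\infty$, by the contribution of the schemes $\bs\in\RbS$ whose tadpole shrinkage $\bs^\tp$ belongs to $\RbS^\star$, and that there is exactly one such $\bs$ for each $\bd\in\RbS^\star$ compatible with $\bl_n$: the one obtained from~$\bd$ by grafting a tadpole at each external vertex $\ch_i$ with $i>\bb$ and $l_n^i\ge 1$. The main bookkeeping step will be to compute the scheme-dependent factor for such an~$\bs$. Each added tadpole self-loop has its inner side on the face $\ch_i$ and its outer side on the internal face, so it contributes exactly one half-edge to $\vec B_0(\bs)$ and none to $\vec I(\bs)$; hence $|\vec I(\bs)|+2|\vec B_+(\bs)|=|\vec I(\bd)|+2|\vec B_+(\bd)|$ while $|E(\bs)|=|E(\bd)|+p_n^\lozenge$. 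Moreover, since every boundary edge of $\bd\in\RbS^\star$ bears exactly one half-edge in $\vec B_+(\bd)$ and every other edge two half-edges in $\vec I(\bd)$, the numerator equals $2|E(\bd)|$, which depends only on $(g,\bb,\pp)$ by the Euler formula and the degree constraint defining $\RbS^\star$. Setting $e^\star=|E(\bd)|$, the factor therefore reduces to $e^\star/(e^\star+p_n^\lozenge)$. Combining with the equivalent~\eqref{equivZ1s} for $\ZZ_1^\bs(n)$, summing over $\bd\in\RbS^\star$, and recalling the definition $t_g(\bL)=\sum_{\bd\in\RbS^\star}c_1^\bd(\bL)$ yields the announced asymptotic formula.

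The two excluded cases $(g,\kk)=(0,0)$ and $(g,\kk,\bb)=(0,1,1)$ fall outside the scheme decomposition and will be handled separately by classical means. For $(0,0)$, Tutte's formula $|\RbQnzero|=\frac{2\cdot 3^n}{(n+1)(n+2)}\binom{2n}{n}$ combined with Stirling's approximation immediately produces the stated equivalent with $t_0(\varnothing)=1/(2\sqrt\pi)$. For $(0,1,1)$, the degenerate scheme alluded to at the start of Section~\ref{sec:pfthm1} identifies $\RbQnof{(l_n)}{0}$, via the CVS construction, with twice the set of rooted well-labeled forests with $l_n$ trees and $n$ edges, a count accessible through Proposition~\ref{countslice}; an application of the local limit theorem to these shifted geometric and Bernoulli sums yields the second equivalent. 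I expect the most delicate point of the main proof to be the half-edge bookkeeping described above, which must be carried out carefully in order to identify the exact prefactor $e^\star/(e^\star+p_n^\lozenge)$; once this identification is granted, the rest follows routinely from the estimates already obtained for $\ZZ_1^\bs(n)$ in the previous subsection.
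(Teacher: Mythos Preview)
Your proposal is correct and follows essentially the same route as the paper: reduce $|\RbQnln|$ to $|\RbM^{[g]}_{n,\bl_n}|$ via the CVS correspondence and~\eqref{nbintvert}, rewrite through~\eqref{RbMnln} and~\eqref{Zphi} in terms of $\ZZ_1^\bs(n)$, restrict to schemes with $\bs^\tp\in\RbS^\star$, and identify the factor $e^\star/(e^\star+p_n^\lozenge)$ by the half-edge bookkeeping you describe. One small imprecision: in the case $(g,\kk,\bb)=(0,1,1)$, the CVS construction does not directly identify $\RbQnof{(l_n)}{0}$ with twice a set of well-labeled forests; it identifies $\RbM^{[0]}_{n,(l_n)}$ with half of the \emph{pointed} set $\RbQ^{[0]}_{n,(l_n,0)}$, so you must still divide by the number of internal vertices, and the passage from the cyclic labeled map to a linear forest (slice) introduces an additional factor $(2n+l_n)/l_n$, exactly as the paper records.
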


\begin{proof}
Recall from Section~\ref{secCMS} that $\RbM^{[g]}_{n,\bl_n}$ is in 1-to-2 correspondence with~$\smash{\RbQ^{[g]}_{n,\bl_n0}}$, and that~\eqref{RbMnln} gives its cardinality. Using~\eqref{nbintvert} then~\eqref{RbMnln} and~\eqref{Zphi}, we obtain that
\begin{align*}
\big|\RbQ^{[g]}_{n,\bl_n}\big|&=\frac2{n+\|\bl_n\|+2-2g-\kk}\, \big|\RbM^{[g]}_{n,\bl_n}\big|\\
	&=2\,\frac{2n+\|\bl_n\|}{n+\|\bl_n\|+2-2g-\kk}\, 12^n 8^{\|\bl_n\|}
		\sum_{\bs\in\RbS} \frac{|\vec I(\bs)|+2|\vec B_+(\bs)|}{2\,|E(\bs)|} \ZZ_1^\bs(n)\\
	&\sim 4\times 12^n \, 8^{\|\bl_n\|} \,
		\sum_{\bd\in\RbS^\star} \frac{|E(\bd)|}{|E(\bd_n)|} \ZZ_1^{\bd_n}(n)\\
	&\sim 4\times 12^n \, 8^{\|\bl_n\|} \, 
		n^{\frac{5(g-1)}2 +\kk -\frac{\bb}4}
		\frac{e^\star}{e^\star+p_n^\lozenge}
		\prod_{i > \bb\, :\, l^i_n \ge 1}P_{l^i_n}(0)\,
		\sum_{\bd\in\RbS^\star} c_1^\bd(\bL),
\end{align*}
which gives the desired first statement.

The excluded cases $(g,\kk)=(0,0)$ and $(g,\kk,\bb)=(0,1,1)$ are standard; they are obtained similarly, by computing $|\RbM^{[0]}_{n,\varnothing}\big|$ and $|\RbM^{[0]}_{n,(l_n)}\big|$. More precisely, it is well known that
\[\big|\RbM^{[0]}_{n,\varnothing}\big|= 3^n \frac{(2n)!}{n! (n+1)!}\sim \frac{12^n}{\sqrt{\pi}}\, n^{-\frac32} ,.\]
In order to compute the remaining cardinality, we proceed as in Section~\ref{secB1} and obtain
\[\big|\RbM^{[0]}_{n,(l_n)}\big| = \frac{2n+l_n}{l_n}\,12^n\, 8^{l_n}\, Q_{l_n}(2n+l_n)
	P_{l_n}(0),
\]
the division by~$l_n$ taking into account the fact that seeing an element of~$\RbM^{[0]}_{n,(l_n)}$ as a forest amounts to choose a first tree among~$l_n$. From the equivalents~\eqref{equiPet1} and~\eqref{equiPet2}, this yields
\[\big|\RbQnof{(l_n)}{0}\big| \sim \frac2n\, \frac{2n}{\sqrt{2n}\,L}\,12^n\, 8^{l_n}\, 
	\frac1n q_L(1)\, \Big(\frac{8n}9\Big)^{-\frac14} p_{3L}(0),
\]
which gives the desired result.
\end{proof}

\paragraph{Limiting distribution of the areas.}
We finally take into account the areas. To this end, observe that, conditionally given 
\[\left(S_n, {\bH_{n}^{\vec I(S_n)}}, {\bL^{\vec B(S_n)}_{n}}\right),\]
the area vector ${\bA_{n}^{\vec E(S_n)}}$ is distributed as follows. We arrange the half-edges~$e_1$, \dots, $e_\kappa$ incident to the internal face of~$S_n$ according to the contour order, starting arbitrarily, and let $x_i=\sum_{j=1}^i \ell_j$, where $\ell_j=H_{n}^{e_j}$ if $e_j\in \vec I(S_n)$ or $\ell_j=L_{n}^{e_j}$ if $e_j\in \vec B(S_n)$. Then, $A_n^{e_1}$, $A_n^{e_1}+A_n^{e_2}$, $A_n^{e_1}+A_n^{e_2}+A_n^{e_3}$, \dots, are distributed as the hitting times of the successive levels $-x_1$, $-x_1-x_2$, $-x_1-x_2-x_3$, \dots, by a simple random walk conditioned on hitting the final level $-\sum_{j=1}^\kappa x_j=-\|\bH_n\|-\|\bL_n\|$ at time $2n+\|\bl_n\|$. The desired convergence~\eqref{vectorscaling} easily follows from this together with~\eqref{limEphi}, as well as the fact that, for every $e\in \vec B_0(S_n)$, we have $A^e_n+L_n^e=\Theta((L_n^e)^2)$ in probability.

\subsection{Boltzmann quadrangulations}\label{secBoltzproof}

We finally prove Theorem~\ref{thmboltz}; in its setting,
\begin{align*}
\WW\left(F\left(\Omega_{a^{-1}}(Q)\right)
		\ind_{\bQ^{[g]}_{\bl_a\bzero^\pp}}	\right)\MEMS{\hspace{-25mm}\\}{}
&=\sum_{n\in [a^{-1}\!/K,a^{-1}K]\cap\Zp}
	\WW\left(\bQ^{[g]}_{n,\bl_a\bzero^\pp}\right)
	\WW\big[F\big(\Omega_{a^{-1}}(Q)\big)\bigm\vert \bQ^{[g]}_{n,\bl_a\bzero^\pp}\big]\\
&=a^{-1}\int_{a\lfloor a^{-1}\!/K\rfloor}^{a\lfloor a^{-1}K\rfloor}\d A\:
	\WW\left(\bQ^{[g]}_{\lfloor A/a\rfloor,\bl_a\bzero^\pp}\right)
	\WW\big[F\big(\Omega_{a^{-1}}(Q)\big) \bigm\vert \bQ^{[g]}_{\lfloor A/a\rfloor,\bl_a\bzero^\pp}\big].
\end{align*}

By Theorem~\ref{mainthm} and the definition of
$\sfS^{[g]}_{A,\bL}$, it holds that
\[\WW\big[F\big(\Omega_{a^{-1}}(Q)\big)\bigm\vert
	\bQ^{[g]}_{\lfloor A/a\rfloor,\bl_a\bzero^\pp}\big]
\build{\longrightarrow}{a\downarrow 0}{}
	\E\big[F\big(\sfS^{[g]}_{A,\bL\bzero^\pp}\big)\big],\]
while Proposition~\ref{enumQnln} yields that
\[\WW\left(\bQ^{[g]}_{\lfloor A/a\rfloor,\bl_a\bzero^\pp}\right)
\build{\sim}{a\downarrow 0}{}
	t_g\big(\bL/\sqrt{A}\big)
	(A/a)^{\frac{5g-7}{2}+\frac{3\bb}{4}+\pp}.\]
Hence, Theorem~\ref{thmboltz} will be proved if we can show that the
convergence in the last integral expression is dominated. However,
this is a direct consequence of the discussion of the domination
hypothesis around~\eqref{eq:6}. 
Corollary~\ref{corboltz} is proved is a very similar way, this time
summing over all possible values of the perimeters, which results
in the integral with respect to~$\d\bL$ on $(0,\infty)^\bb$.

\bibliographystyle{alphaabbr}
\bibliography{biblio}

\newcommand{\noopsort}[1]{} \def\polhk#1{\setbox0=\hbox{#1}{\ooalign{\hidewidth
  \lower1.5ex\hbox{`}\hidewidth\crcr\unhbox0}}}
\begin{thebibliography}{GKRV21}

\bibitem[ABA17]{ABAl17}
L.~Addario-Berry and M.~Albenque.
\newblock The scaling limit of random simple triangulations and random simple
  quadrangulations.
\newblock {\em Ann. Probab.}, 45(5):2767--2825, 2017.

\bibitem[ABA21]{ABAl21}
L.~Addario-Berry and M.~Albenque.
\newblock Convergence of non-bipartite maps via symmetrization of labeled
  trees.
\newblock {\em Ann. H. Lebesgue}, 4:653--683, 2021.

\bibitem[Abr16]{abr14}
C.~Abraham.
\newblock Rescaled bipartite planar maps converge to the {B}rownian map.
\newblock {\em Ann. Inst. Henri Poincar\'e Probab. Stat.}, 52(2):575--595,
  2016.

\bibitem[ABW17]{ABW17}
L.~Addario-Berry and Y.~Wen.
\newblock Joint convergence of random quadrangulations and their cores.
\newblock {\em Ann. Inst. Henri Poincar\'{e} Probab. Stat.}, 53(4):1890--1920,
  2017.

\bibitem[ADH13]{AbDeHo13}
R.~Abraham, J.-F. Delmas, and P.~Hoscheit.
\newblock A note on the {G}romov-{H}ausdorff-{P}rokhorov distance between
  (locally) compact metric measure spaces.
\newblock {\em Electron. J. Probab.}, 18:no. 14, 21, 2013.

\bibitem[AHS23]{AnHoSu23}
M.~Ang, N.~Holden, and X.~Sun.
\newblock The {SLE} loop via conformal welding of quantum disks.
\newblock {\em Electron. J. Probab.}, 28:Paper No. 30, 20, 2023.

\bibitem[AKM17]{AnKoMi17}
O.~Angel, B.~Kolesnik, and G.~Miermont.
\newblock Stability of geodesics in the {B}rownian map.
\newblock {\em Ann. Probab.}, 45(5):3451--3479, 2017.

\bibitem[Ald91]{aldouscrt91}
D.~J. Aldous.
\newblock The continuum random tree. {I}.
\newblock {\em Ann. Probab.}, 19(1):1--28, 1991.

\bibitem[Ald93]{aldouscrt93}
D.~J. Aldous.
\newblock The continuum random tree. {III}.
\newblock {\em Ann. Probab.}, 21(1):248--289, 1993.

\bibitem[AP15]{AlPo15}
M.~Albenque and D.~Poulalhon.
\newblock A generic method for bijections between blossoming trees and planar
  maps.
\newblock {\em Electron. J. Combin.}, 22(2):Paper 2.38, 44, 2015.

\bibitem[ARS22]{ARS22mod}
M.~Ang, G.~Remy, and X.~Sun.
\newblock The moduli of annuli in random conformal geometry.
\newblock {\em Preprint,
  \href{http://arxiv.org/abs/2203.12398}{\textup{\nolinkurl{arXiv:2203.12398}}}},
  2022.

\bibitem[AS03]{AnSc03}
O.~Angel and O.~Schramm.
\newblock Uniform infinite planar triangulations.
\newblock {\em Comm. Math. Phys.}, 241(2-3):191--213, 2003.

\bibitem[BBI01]{burago01}
D.~Burago, Y.~Burago, and S.~Ivanov.
\newblock {\em A course in metric geometry}, volume~33 of {\em Graduate Studies
  in Mathematics}.
\newblock American Mathematical Society, Providence, RI, 2001.

\bibitem[BC86]{BenCan86}
E.~A. Bender and E.~R. Canfield.
\newblock The asymptotic number of rooted maps on a surface.
\newblock {\em J. Combin. Theory Ser. A}, 43(2):244--257, 1986.

\bibitem[BCK18]{BeCuKo18}
J.~Bertoin, N.~Curien, and I.~Kortchemski.
\newblock Random planar maps and growth-fragmentations.
\newblock {\em Ann. Probab.}, 46(1):207--260, 2018.

\bibitem[BDG04]{BdFGmobiles}
J.~Bouttier, P.~D{i F}rancesco, and E.~Guitter.
\newblock Planar maps as labeled mobiles.
\newblock {\em Electron. J. Combin.}, 11:Research Paper 69, 27 pp.
  (electronic), 2004.

\bibitem[Bet10]{bettinelli10}
J.~Bettinelli.
\newblock Scaling limits for random quadrangulations of positive genus.
\newblock {\em Electron. J. Probab.}, 15:no. 52, 1594--1644, 2010.

\bibitem[Bet12]{bettinelli11}
J.~Bettinelli.
\newblock The topology of scaling limits of positive genus random
  quadrangulations.
\newblock {\em Ann. Probab.}, 40:no. 5, 1897--1944, 2012.

\bibitem[Bet15]{bettinelli11b}
J.~Bettinelli.
\newblock Scaling limit of random planar quadrangulations with a boundary.
\newblock {\em Ann. Inst. Henri Poincar\'e Probab. Stat.}, 51(2):432--477,
  2015.

\bibitem[Bet16]{Bet16geo}
J.~Bettinelli.
\newblock Geodesics in {B}rownian surfaces ({B}rownian maps).
\newblock {\em Ann. Inst. Henri Poincar\'e Probab. Stat.}, 52(2):612--646,
  2016.

\bibitem[Bet22]{Bet22}
J.~Bettinelli.
\newblock A bijection for nonorientable general maps.
\newblock {\em Ann. Inst. Henri Poincar\'{e} D}, 9(4):733--791, 2022.

\bibitem[BG09]{BoGu09}
J.~Bouttier and E.~Guitter.
\newblock Distance statistics in quadrangulations with a boundary, or with a
  self-avoiding loop.
\newblock {\em J. Phys. A}, 42(46):465208, 44, 2009.

\bibitem[BG12]{BoGu12}
J.~Bouttier and E.~Guitter.
\newblock Planar maps and continued fractions.
\newblock {\em Comm. Math. Phys.}, 309(3):623--662, 2012.

\bibitem[BGT89]{BGT}
N.~H. Bingham, C.~M. Goldie, and J.~L. Teugels.
\newblock {\em Regular variation}, volume~27 of {\em Encyclopedia of
  Mathematics and its Applications}.
\newblock Cambridge University Press, Cambridge, 1989.

\bibitem[BH99]{BrHa99}
M.~R. Bridson and A.~Haefliger.
\newblock {\em Metric spaces of non-positive curvature}, volume 319 of {\em
  Grundlehren der Mathematischen Wissenschaften [Fundamental Principles of
  Mathematical Sciences]}.
\newblock Springer-Verlag, Berlin, 1999.

\bibitem[BHL19]{BeHuLe19arX}
V.~Beffara, C.~B. Huynh, and B.~L{\'e}v{\^e}que.
\newblock Scaling limits for random triangulations on the torus.
\newblock {\em Preprint,
  \href{http://arxiv.org/abs/1905.01873}{\textup{\nolinkurl{arXiv:1905.01873}}}},
  2019.

\bibitem[BJM14]{BeJaMi14}
J.~Bettinelli, E.~Jacob, and G.~Miermont.
\newblock The scaling limit of uniform random plane maps, \textit{via} the
  {A}mbj{\o}rn--{B}udd bijection.
\newblock {\em Electron. J. Probab.}, 19:no. 74, 1--16, 2014.

\bibitem[BLG13]{BeLG}
J.~Beltran and J.-F. Le~Gall.
\newblock Quadrangulations with no pendant vertices.
\newblock {\em Bernoulli}, 19(4):1150--1175, 2013.

\bibitem[BM17]{BeMi17}
J.~Bettinelli and G.~Miermont.
\newblock Compact {B}rownian surfaces {I}. {B}rownian disks.
\newblock {\em Probab. Theory Related Fields}, 167:555--614, 2017.

\bibitem[BMR19]{BaMiRa}
E.~Baur, G.~Miermont, and G.~Ray.
\newblock Classification of scaling limits of uniform quadrangulations with a
  boundary.
\newblock {\em Ann. Probab.}, 47(6):3397--3477, 2019.

\bibitem[Bou19]{BouttierHDR}
J.~Bouttier.
\newblock {\em Planar maps and random partitions}.
\newblock Habilitation {\`a} diriger des recherches, {Universit{\'e} Paris XI},
  2019.

\bibitem[BR18]{BaRiUIHPQs}
E.~Baur and L.~Richier.
\newblock Uniform infinite half-planar quadrangulations with skewness.
\newblock {\em Electron. J. Probab.}, 23:Paper No. 54, 43, 2018.

\bibitem[CC18]{CaCuUIHPQ}
A.~Caraceni and N.~Curien.
\newblock Geometry of the uniform infinite half-planar quadrangulation.
\newblock {\em Random Structures Algorithms}, 52(3):454--494, 2018.

\bibitem[CD06]{ChDu06}
P.~Chassaing and B.~Durhuus.
\newblock Local limit of labeled trees and expected volume growth in a random
  quadrangulation.
\newblock {\em Ann. Probab.}, 34(3):879--917, 2006.

\bibitem[CD17]{ChapuyDolega}
G.~Chapuy and M.~Do{\l}{\k{e}}ga.
\newblock A bijection for rooted maps on general surfaces.
\newblock {\em J. Combin. Theory Ser. A}, 145:252--307, 2017.

\bibitem[CLG14]{CuLG12Bplane}
N.~Curien and J.-F. Le~Gall.
\newblock The {B}rownian plane.
\newblock {\em J. Theoret. Probab.}, 27(4):1249--1291, 2014.

\bibitem[CLG19]{CuLG19}
N.~Curien and J.-F. Le~Gall.
\newblock First-passage percolation and local modifications of distances in
  random triangulations.
\newblock {\em Ann. Sci. \'{E}c. Norm. Sup\'{e}r. (4)}, 52(3):631--701, 2019.

\bibitem[CM15]{CuMiUIHPQ}
N.~Curien and G.~Miermont.
\newblock Uniform infinite planar quadrangulations with a boundary.
\newblock {\em Random Structures Algorithms}, 47(1):30--58, 2015.

\bibitem[CMM13]{CuMIMeUIPQ}
N.~Curien, L.~M\'{e}nard, and G.~Miermont.
\newblock A view from infinity of the uniform infinite planar quadrangulation.
\newblock {\em ALEA Lat. Am. J. Probab. Math. Stat.}, 10(1):45--88, 2013.

\bibitem[CMS09]{ChMaSc}
G.~Chapuy, M.~Marcus, and G.~Schaeffer.
\newblock A bijection for rooted maps on orientable surfaces.
\newblock {\em SIAM J. Discrete Math.}, 23(3):1587--1611, 2009.

\bibitem[CS04]{CSise}
P.~Chassaing and G.~Schaeffer.
\newblock Random planar lattices and integrated super{B}rownian excursion.
\newblock {\em Probab. Theory Related Fields}, 128(2):161--212, 2004.

\bibitem[Cur19]{CurienStFlour}
N.~Curien.
\newblock Peeling random planar maps.
\newblock {\em Saint-Flour lecture notes}, 2019.

\bibitem[CV81]{CoVa}
R.~Cori and B.~Vauquelin.
\newblock Planar maps are well labeled trees.
\newblock {\em Canad. J. Math.}, 33(5):1023--1042, 1981.

\bibitem[Dav85]{david85}
F.~David.
\newblock Planar diagrams, two-dimensional lattice gravity and surface models.
\newblock {\em Nuclear Phys. B}, 257(1):45--58, 1985.

\bibitem[DDDF20]{DiDuDuFa20}
J.~Ding, J.~Dub\'{e}dat, A.~Dunlap, and H.~Falconet.
\newblock Tightness of {L}iouville first passage percolation for {$\gamma \in
  (0,2)$}.
\newblock {\em Publ. Math. Inst. Hautes \'{E}tudes Sci.}, 132:353--403, 2020.

\bibitem[DDG23]{DiDuGw23}
J.~Ding, J.~Dub\'{e}dat, and E.~Gwynne.
\newblock Introduction to the {L}iouville quantum gravity metric.
\newblock In {\em I{CM}---{I}nternational {C}ongress of {M}athematicians.
  {V}ol. 6. {S}ections 12--14}, pages 4212--4244. EMS Press, Berlin, [2023]
  \copyright 2023.

\bibitem[DKRV16]{DaKuRhVa16}
F.~David, A.~Kupiainen, R.~Rhodes, and V.~Vargas.
\newblock Liouville quantum gravity on the {R}iemann sphere.
\newblock {\em Comm. Math. Phys.}, 342(3):869--907, 2016.

\bibitem[Eyn16]{eynard16}
B.~Eynard.
\newblock {\em Counting surfaces}, volume~70 of {\em Progress in Mathematical
  Physics}.
\newblock Birkh\"{a}user/Springer, [Cham], 2016.
\newblock CRM Aisenstadt chair lectures.

\bibitem[GHS23]{GwHoSu23}
E.~Gwynne, N.~Holden, and X.~Sun.
\newblock Mating of trees for random planar maps and {L}iouville quantum
  gravity: a survey.
\newblock In {\em Topics in statistical mechanics}, volume~59 of {\em Panor.
  Synth\`eses}, pages 41--120. Soc. Math. France, Paris, 2023.

\bibitem[GKRV21]{GuKuRhVa21}
C.~Guillarmou, A.~Kupiainen, R.~Rhodes, and V.~Vargas.
\newblock Segal's axioms and bootstrap for {L}iouville theory.
\newblock {\em Preprint,
  \href{http://arxiv.org/abs/2112.14859}{\textup{\nolinkurl{arXiv:2112.14859}}}},
  2021.

\bibitem[GM17]{GwMi17}
E.~Gwynne and J.~Miller.
\newblock Scaling limit of the uniform infinite half-plane quadrangulation in
  the {G}romov-{H}ausdorff-{P}rokhorov-uniform topology.
\newblock {\em Electron. J. Probab.}, 22:Paper No. 84, 47, 2017.

\bibitem[GM19]{GwMi19a}
E.~Gwynne and J.~Miller.
\newblock Metric gluing of {B}rownian and {$\sqrt{8/3}$}-{L}iouville quantum
  gravity surfaces.
\newblock {\em Ann. Probab.}, 47(4):2303--2358, 2019.

\bibitem[GM21a]{GwMi21a}
E.~Gwynne and J.~Miller.
\newblock Convergence of the self-avoiding walk on random quadrangulations to
  {$\rm SLE_{8/3}$} on {$\sqrt{8/3}$}-{L}iouville quantum gravity.
\newblock {\em Ann. Sci. \'{E}c. Norm. Sup\'{e}r. (4)}, 54(2):305--405, 2021.

\bibitem[GM21b]{GwMi21b}
E.~Gwynne and J.~Miller.
\newblock Existence and uniqueness of the {L}iouville quantum gravity metric
  for {$\gamma\in(0,2)$}.
\newblock {\em Invent. Math.}, 223(1):213--333, 2021.

\bibitem[GMS20]{GwMiSh20}
E.~Gwynne, J.~Miller, and S.~Sheffield.
\newblock The {T}utte embedding of the {P}oisson-{V}oronoi tessellation of the
  {B}rownian disk converges to {$\sqrt{8/3}$}-{L}iouville quantum gravity.
\newblock {\em Comm. Math. Phys.}, 374(2):735--784, 2020.

\bibitem[GMS22]{GwMiSh22}
E.~Gwynne, J.~Miller, and S.~Sheffield.
\newblock An invariance principle for ergodic scale-free random environments.
\newblock {\em Acta Math.}, 228(2):303--384, 2022.

\bibitem[GRV19]{GuRhVa19}
C.~Guillarmou, R.~Rhodes, and V.~Vargas.
\newblock Polyakov's formulation of {$2d$} bosonic string theory.
\newblock {\em Publ. Math. Inst. Hautes \'{E}tudes Sci.}, 130:111--185, 2019.

\bibitem[HS23]{HoSu23}
N.~Holden and X.~Sun.
\newblock Convergence of uniform triangulations under the {C}ardy embedding.
\newblock {\em Acta Math.}, 230(1):93--203, 2023.

\bibitem[KPZ88]{KnPoZa88}
V.~G. Knizhnik, A.~M. Polyakov, and A.~B. Zamolodchikov.
\newblock Fractal structure of {$2$}{D}-quantum gravity.
\newblock {\em Modern Phys. Lett. A}, 3(8):819--826, 1988.

\bibitem[Kri05]{Krikun}
M.~Krikun.
\newblock Local structure of random quadrangulations.
\newblock {\em Preprint,
  \href{http://arxiv.org/abs/0512304}{\textup{\nolinkurl{arXiv:0512304}}}},
  2005.

\bibitem[KRV20]{KuRhVa21}
A.~Kupiainen, R.~Rhodes, and V.~Vargas.
\newblock Integrability of {L}iouville theory: proof of the {DOZZ} formula.
\newblock {\em Ann. of Math. (2)}, 191(1):81--166, 2020.

\bibitem[LG99]{legall99}
J.-F. Le~Gall.
\newblock {\em Spatial branching processes, random snakes and partial
  differential equations}.
\newblock Lectures in Mathematics ETH Z\"urich. Birkh\"auser Verlag, Basel,
  1999.

\bibitem[LG06]{legall05}
J.-F. Le~Gall.
\newblock A conditional limit theorem for tree-indexed random walk.
\newblock {\em Stochastic Process. Appl.}, 116(4):539--567, 2006.

\bibitem[LG07]{legall06}
J.-F. Le~Gall.
\newblock The topological structure of scaling limits of large planar maps.
\newblock {\em Invent. Math.}, 169(3):621--670, 2007.

\bibitem[LG10]{legall08}
J.-F. Le~Gall.
\newblock Geodesics in large planar maps and in the {B}rownian map.
\newblock {\em Acta Math.}, 205(2):287--360, 2010.

\bibitem[LG13]{legall11}
J.-F. Le~Gall.
\newblock Uniqueness and universality of the {B}rownian map.
\newblock {\em Ann. Probab.}, 41(4):2880--2960, 2013.

\bibitem[LG19a]{LGa19dis}
J.-F. Le~Gall.
\newblock Brownian disks and the {B}rownian snake.
\newblock {\em Ann. Inst. Henri Poincar\'{e} Probab. Stat.}, 55(1):237--313,
  2019.

\bibitem[LG19b]{LeGallBrownianGeometry}
J.-F. Le~Gall.
\newblock Brownian geometry.
\newblock {\em Jpn. J. Math.}, 14(2):135--174, 2019.

\bibitem[LG22a]{legall22}
J.-F. Le~Gall.
\newblock The {B}rownian disk viewed from a boundary point.
\newblock {\em Ann. Inst. Henri Poincar\'{e} Probab. Stat.}, 58(2):1091--1119,
  2022.

\bibitem[LG22b]{LGa22sta}
J.-F. Le~Gall.
\newblock Geodesic stars in random geometry.
\newblock {\em Ann. Probab.}, 50(3):1013--1058, 2022.

\bibitem[LGM11]{LGMi09}
J.-F. Le~Gall and G.~Miermont.
\newblock Scaling limits of random planar maps with large faces.
\newblock {\em Ann. Probab.}, 39(1):1--69, 2011.

\bibitem[LGP08]{lgp}
J.-F. Le~Gall and F.~Paulin.
\newblock Scaling limits of bipartite planar maps are homeomorphic to the
  2-sphere.
\newblock {\em Geom. Funct. Anal.}, 18(3):893--918, 2008.

\bibitem[LGR20]{LGRi20}
J.-F. Le~Gall and A.~Riera.
\newblock Growth-fragmentation processes in {B}rownian motion indexed by the
  {B}rownian tree.
\newblock {\em Ann. Probab.}, 48(4):1742--1784, 2020.

\bibitem[LGR21]{LGRi21}
J.-F. Le~Gall and A.~Riera.
\newblock Spine representations for non-compact models of random geometry.
\newblock {\em Probab. Theory Related Fields}, 181(1-3):571--645, 2021.

\bibitem[LZ04]{LaZv04}
S.~K. Lando and A.~K. Zvonkin.
\newblock {\em Graphs on surfaces and their applications}, volume 141 of {\em
  Encyclopaedia of Mathematical Sciences}.
\newblock Springer-Verlag, Berlin, 2004.

\bibitem[Mar22]{marzouk22}
C.~Marzouk.
\newblock On scaling limits of random trees and maps with a prescribed degree
  sequence.
\newblock {\em Ann. H. Lebesgue}, 5:317--386, 2022.

\bibitem[Mie08]{miermontsph}
G.~Miermont.
\newblock On the sphericity of scaling limits of random planar
  quadrangulations.
\newblock {\em Electron. Commun. Probab.}, 13:248--257, 2008.

\bibitem[Mie09]{miertess}
G.~Miermont.
\newblock Tessellations of random maps of arbitrary genus.
\newblock {\em Ann. Sci. \'Ec. Norm. Sup\'er. (4)}, 42(5):725--781, 2009.

\bibitem[Mie13]{miermont11}
G.~Miermont.
\newblock The {B}rownian map is the scaling limit of uniform random plane
  quadrangulations.
\newblock {\em Acta Math.}, 210(2):319--401, 2013.

\bibitem[MM03]{mm01}
J.-F. Marckert and A.~Mokkadem.
\newblock The depth first processes of {G}alton--{W}atson trees converge to the
  same {B}rownian excursion.
\newblock {\em Ann. Probab.}, 31(3):1655--1678, 2003.

\bibitem[MQ21]{MiQi21arX}
J.~Miller and W.~Qian.
\newblock Geodesics in the {B}rownian map: {S}trong confluence and geometric
  structure.
\newblock {\em Preprint,
  \href{http://arxiv.org/abs/2008.02242}{\textup{\nolinkurl{arXiv:2008.02242}}}},
  2021.

\bibitem[MS20]{MiSh21I}
J.~Miller and S.~Sheffield.
\newblock Liouville quantum gravity and the {B}rownian map {I}: the
  {$\mathrm{QLE}(8/3,0)$} metric.
\newblock {\em Invent. Math.}, 219(1):75--152, 2020.

\bibitem[MS21a]{MiSh210}
J.~Miller and S.~Sheffield.
\newblock An axiomatic characterization of the {B}rownian map.
\newblock {\em J. \'{E}c. polytech. Math.}, 8:609--731, 2021.

\bibitem[MS21b]{MiSh21II}
J.~Miller and S.~Sheffield.
\newblock {L}iouville quantum gravity and the {B}rownian map {II}: {G}eodesics
  and continuity of the embedding.
\newblock {\em Ann. Probab.}, 49(6):2732--2829, 2021.

\bibitem[MS21c]{MiSh21III}
J.~Miller and S.~Sheffield.
\newblock Liouville quantum gravity and the {B}rownian map {III}: the conformal
  structure is determined.
\newblock {\em Probab. Theory Related Fields}, 179(3-4):1183--1211, 2021.

\bibitem[Pet75]{petrov75}
V.~V. Petrov.
\newblock {\em Sums of independent random variables}.
\newblock Springer-Verlag, New York, 1975.
\newblock Translated from the Russian by A. A. Brown, Ergebnisse der Mathematik
  und ihrer Grenzgebiete, Band 82.

\bibitem[Pit75]{Pi2M-X}
J.~W. Pitman.
\newblock One-dimensional {B}rownian motion and the three-dimensional {B}essel
  process.
\newblock {\em Advances in Appl. Probability}, 7(3):511--526, 1975.

\bibitem[Pol81]{Pol81}
A.~M. Polyakov.
\newblock Quantum geometry of bosonic strings.
\newblock {\em Phys. Lett. B}, 103(3):207--210, 1981.

\bibitem[RW95]{RiWo95}
L.~B. Richmond and N.~C. Wormald.
\newblock Almost all maps are asymmetric.
\newblock {\em J. Combin. Theory Ser. B}, 63(1):1--7, 1995.

\bibitem[Sch98]{schaeffer98}
G.~Schaeffer.
\newblock {\em Conjugaison d'arbres et cartes combinatoires al{\'e}atoires}.
\newblock PhD thesis, Universit{\'e} Bordeaux {I}, 1998.

\bibitem[She23]{sheffieldICM}
S.~Sheffield.
\newblock What is a random surface?
\newblock In {\em I{CM}---{I}nternational {C}ongress of {M}athematicians.
  {V}ol. {II}. {P}lenary lectures}, pages 1202--1258. EMS Press, Berlin, [2023]
  \copyright 2023.

\bibitem[Str11]{Stroock11}
D.~W. Stroock.
\newblock {\em Probability theory}.
\newblock Cambridge University Press, Cambridge, second edition, 2011.
\newblock An analytic view.

\bibitem[Vil09]{villani09}
C.~Villani.
\newblock {\em Optimal transport, old and New}, volume 338 of {\em Grundlehren
  der Mathematischen Wissenschaften [Fundamental Principles of Mathematical
  Sciences]}.
\newblock Springer-Verlag, Berlin, 2009.

\bibitem[Wu22]{Wu22}
B.~Wu.
\newblock Conformal bootstrap on the annulus in {L}iouville {CFT}.
\newblock {\em Preprint,
  \href{http://arxiv.org/abs/2203.11830}{\textup{\nolinkurl{arXiv:2203.11830}}}},
  2022.

\end{thebibliography}
\end{document}